\newtheorem{theorem}{Theorem}
\newtheorem{assumption}[theorem]{Assumption}
\newtheorem{corollary}[theorem]{Corollary}
\newtheorem{definition}[theorem]{Definition}
\newtheorem{example}{Example}
\newtheorem{lemma}[theorem]{Lemma}
\newtheorem{proposition}[theorem]{Proposition}
\newtheorem{remark}[theorem]{Remark}
\numberwithin{equation}{section}
\numberwithin{example}{section}
\numberwithin{theorem}{section}
\definecolor{RED}{rgb}{1,0,0}\definecolor{BLUE}{rgb}{0,0,1}
\def\bR{\mathbb R}
\def\bN{\mathbb N}
\def\bE{\mathbb E}
\def\ac{\mathcal{AC}}
\def\calF{\mathcal F}
\def\calP{\mathcal P}
\newcommand{\norm}[1]{\| #1 \| }
\begin{document}

\begin{frontmatter}
\title{A weak convergence approach to large deviations for stochastic approximations}
\runtitle{Large deviations for stochastic approximations}

\begin{aug}
\author[A]{\fnms{Henrik}~\snm{Hult}\ead[label=e1]{hult@kth.se}},
\author[A]{\fnms{Adam}~\snm{Lindhe}\ead[label=e2]{adlindhe@kth.se}}
\author[A,B]{\fnms{Pierre}~\snm{Nyquist}\ead[label=e3]{pnyquist@chalmers.se}}
\and
\author[A]{\fnms{Guo-Jhen}~\snm{Wu}\ead[label=e4]{gjwu@kth.se}}
\address[A]{Department of Mathematics, KTH, 100 44 Stockholm, Sweden\printead[presep={,\ }]{e1,e2,e4}}
\address[B]{Department of Mathematical Sciences, Chalmers University of Technology and University of Gothenburg, 412 96 Gothenburg, Sweden\printead[presep={,\ }]{e3}}

\end{aug}

\begin{abstract}
The theory of stochastic approximations form the theoretical foundation for studying convergence properties of many popular recursive learning algorithms in statistics, machine learning and statistical physics. Large deviations for stochastic approximations provide asymptotic estimates of the probability that the learning algorithm deviates from its expected path, given by a limit ODE, and the large deviation rate function gives insights to the most likely way that such deviations occur. 
In this paper we prove a large deviation principle for general stochastic approximations with state-dependent Markovian noise and decreasing step size. Using the weak convergence approach to large deviations, we generalize previous results for stochastic approximations and identify the appropriate scaling sequence for the large deviation principle. We also give a new representation for the rate function, in which the rate function is expressed as an action functional involving the family of Markov transition kernels. Examples of learning algorithms that are covered by the large deviation principle include stochastic gradient descent, persistent contrastive divergence and the Wang-Landau algorithm.


\end{abstract}

\begin{keyword}[class=MSC]
\kwd[Primary ]{60F10} 
\kwd{62L20} 
\kwd[; secondary ]{60J20}
\end{keyword}

\begin{keyword}
\kwd{Large deviations}
\kwd{stochastic approximation}
\kwd{recursive algorithms}
\kwd{state-dependent noise}
\end{keyword}

\end{frontmatter}


\section{Introduction}
\label{sec:intro}
Stochastic approximation (SA) algorithms, first introduced by Robbins and Monro in the 1950's \cite{robbins-monroe}, has become one of the most important classes of stochastic numerical methods. Originally aimed at finding the the root of a continuous function given noisy observations, SA is now a fundamental tool in a range of areas such as statistics, optimization, electrical engineering, and machine learning, to mention but a few. Within the latter, the importance of SA algorithms is illustrated by the fact that a specific subclass of methods---stochastic gradient descent (SGD) methods---is central to the training of deep learning methods, and in reinforcement learning the standard methods (Q-learning and temporal-difference-learning) are variants of SA. The general class that is SA algorithms with state-dependent noise (see below for the definition) therefore constitute a rich and important family of stochastic recursive algorithms. In addition to the examples already mentioned (SGD, reinforcement learning), this class also includes persistent contrastive divergence, adaptive Markov chain Monte-Carlo (MCMC) and extended ensemble algorithms such as the Wang-Landau algorithm. 
%

The theory of SA stems from the pioneering work of Robbins and Monro \cite{robbins-monroe} and Kiefer and Wolfowitz \cite{kiefer-wolfowitz}, and remains an active research area within probability theory. This is in part due to the many and diverse applications of SA algorithms where, due to the complex nature of the systems under considerations, different variants of the original Robbins-Monro algorithm are needed. In turn, developing the theoretical foundation for SA algorithms, such as, e.g., convergence results, central limit theorems, concentration results and results on deviations, is of fundamental importance; monographs covering many of the standard results of the theory include \cite{borkar,Duflo,kusyin}. In this work, we add to the theoretical understanding of SA algorithms by developing general large deviation results for the associated stochastic processes. 

The basic SA algorithm with state-dependent noise considers a stochastic process $\{X_k\}_{k \in\mathbb{N}}$ on a probability space $(\Omega,\mathcal{F},P)$, with an associated noise sequence $\{Y_k\}_{k \in \mathbb{N}}$. The process $\{X_k\}_{k \in\mathbb{N}}$ is assumed to satisfy the recursion,  
\[
    X_{k+1} = X_k + \varepsilon_k g(X_k,Y_{k+1}), \qquad  k\geq 0,
\]
where $X_0=x_0\in\mathbb{R}^{d_1}$, $Y_0=y_0\in\mathbb{R}^{d_2}$, $g:\mathbb{R}^{d_1} \times\mathbb{R}^{d_2}\to \mathbb{R}^{d_1}$, and $\{\varepsilon_k\}$ is a sequence of step-sizes. The noise sequence $\{Y_k\}_{k \in \mathbb{N}}$ is state-dependent in such a way that
\[
    P(Y_{k+1}\in A| X_k,Y_k) = \rho_{X_k}(Y_k,A), \qquad A \in \mathcal{B}(\mathbb{R}^{d_2}),
\]
with $\rho_x(y,\cdot)$ being a probability measure on the Borel sets of $\mathbb{R}^{d_2}$, for any $x\in\mathbb{R}^{d_1}$ and $y\in\mathbb{R}^{d_2}$.

For many variants of SA, i.e., variants of the recursion defining $\{ X_k \} _ {k \in \mathbb{N}}$, it is possible to establish convergence and characterize the corresponding limit. The two main techniques for this are based on martingale theory and ordinary differential equations (ODEs), respectively. The latter approach, first introduced by Ljung in \cite{Ljung77} and referred to as the ODE method, is a powerful method for  
studying convergence of SA (see, e.g., \cite{borkar, kusyin}), that relies on the idea that, for large $k$, the stochastic approximation essentially follows a limit ODE. The following is a brief outline of the approach. Assume that, for each $x \in \mathbb{R}^{d_1}$, the transition kernel, $\rho_x$, admits a unique invariant distribution $\pi_x$. We can then rewrite the recursion for $X_{k+1}$ as
\begin{align*}
    \frac{X_{k+1}-X_k}{\varepsilon_k} = \bar g(X_k) + [g(X_k, Y_{k+1})- \bar g(X_k)],
\end{align*}
where $\bar g(x) = \int g(x,y) \pi_x(dy)$. Under appropriate conditions, ensuring that the influence of the noise $g(X_k, Y_{k+1})- \bar g(X_k)$ is small, for large $k$ and small $\varepsilon_k$, the algorithm will approximately follow the solution to the limit ODE
\begin{align}
\label{eq:limitODE}
    \dot x(t) = \bar g(x(t)).
\end{align}
Limit points of an SA algorithm can therefore be described as the forward limit set of \eqref{eq:limitODE}. 

Because of the inherent randomness of an SA algorithm, even for large $k$ there is a positive probability that the corresponding stochastic process deviates from a neighborhood of a limit point. To make an analogy, we can think of an SA algorithm approaching a point of convergence as (the algorithm) learning. If it then starts to deviate from such a point, we think of it as forgetting. From the mathematical point of view, large deviation theory describes the rate at which an SA algorithm deviates from a neighborhood of a limit point and characterizes the most likely trajectories along which such deviations occur. That is, to use the analogy again, large deviations provides insights into how an SA algorithm forgets and the rate at which it happens. Therefore, understanding large deviations for SA is a natural and very useful complement to any convergence analysis.
%

When proving convergence for an SA algorithm, a difficulty arises from the possibility that it diverges, in the sense that $|X_k| \to \infty$ with some positive probability. A simple and useful method to exclude the possibility of such a divergence is to project the SA updates on a compact set $C$. This amounts to considering the projected recursion
\[
    X_{k+1} = \text{proj}_C\left[X_k + \varepsilon_k g(X_k,Y_{k+1})\right], \qquad  k\geq 0,
\]
where $\text{proj}_C$ denotes the projection onto $C$; see e.g.\ \cite{DupKush87PTRF} for some initial large deviation results in this setting when $C$ is convex. Convergence of SA can also be proved using a more intricate analysis that involves projections on an increasing sequence of of compact sets as in \cite{andrieu}. In this paper we focus on the behavior of the algorithm close to a point of convergence, and therefore do not need to consider such projected recursions.  

The existing literature on large deviations for SA can be divided into works that consider constant step size, where $\varepsilon_k \equiv \varepsilon > 0$ does not depend on $k$, and works that consider decreasing step size ($\varepsilon_k \to 0$ as $k \to \infty$). For constant step size, the theory was first developed by Freidlin for dynamical systems in continuous time with noise that does not depend on the state \cite{fre76, fre78}. These results were generalized by Iscoe, Ney and Nummelin to Markov-additive processes in continuous and discrete time \cite{iscneynum}. The most general results are obtained by Dupuis in \cite{dup2}, where he considers discrete time systems with state-dependent noise. 
The results rely on the existence of an appropriate limiting Hamiltonian and the rate function is given by an action functional, where the local rate function is the Fenchel-Legendre transform of the limiting Hamiltonian; see Section \ref{sec:const_step} for additional details on the development of large deviations principles for SA with constant step size.

For SA with decreasing step size the first results are obtained by Kushner \cite{kus16}, who considers step size sequences of the form $\varepsilon_n = (n+1)^{-\rho}$, $\rho \in (0,1]$, update functions $g(x,y) = b(x) + y$, with $b(\cdot)$ Lipschitz continuous, and $\{Y_n\}$ a sequence of iid centered Gaussian variables. Kushner identifies the correct scaling sequence for the large deviations principle, under the assumption of the existence of an appropriate limiting time-dependent Hamiltonian. In \cite{dupkus2}, Dupuis and Kushner generalize the results of \cite{kus16}. Therein, they consider step-size sequences satisfying $\varepsilon_n \geq 0$,  $\sum_n \varepsilon_n = \infty$, $\varepsilon_n \to 0$, update functions of the form $g(x,y) = \bar{b}(x) + b(x,y)$, with additional assumptions on $b$ and the noise sequence $\{ Y_n \}$. Similar to Kushner in \cite{kus16}, Dupuis and Kushner also assume the existence of an appropriate limiting Hamiltonian; see Section \ref{sec:decr_step} for more details.

Throughout the existing literature, the large deviation results for SA are obtained by identifying a Hamiltonian $H(x,\alpha)$, that sometimes can be interpreted as a limiting log-moment generating function, and defining the local rate function as the convex conjugate of $H(x,\alpha)$. A problem with this approach is that the Hamiltonian is implicitly defined as a limit and its relation to the underlying dynamics, such as the family of transition kernels $\{\rho_x(y, dz), x \in \mathbb{R}^{d_1}, y \in \mathbb{R}^{d_2}\}$, can only be established in some special cases. 

In contrast, in this paper, we generalize the results of \cite{dupkus2}, for a decreasing step size sequence, to include state-dependent noise---which is needed in many applications, see Section \ref{sec:appl} for some examples---and the local rate function is expressed in terms of the family of transition kernels, as opposed to as the convex conjugate of a limiting Hamiltonian. In addition, our assumptions are more general, as the update function $g$ need not be bounded in $x$. The main challenges in proving the new results arise in proving the lower bound of the Laplace principle (see Theorem \ref{thm:main}), in part due to the state-dependent noise. To overcome this, a key ingredient in our approach is a novel use of local ergodicity at the time-scale of the noise process; more details are provided in Sections \ref{sec:main} and \ref{sec:lower}. We also remark that from a technical point of view, the setting of fixed step size is somewhat easier than the setting considered here. Analogous results to the ones developed in this paper can be obtained with minor modification of the weak-convergence techniques used here.  However, in the interest of keeping the paper at a reasonable length, the results for SA with fixed step size are left for future work. 

To summarize, the main contributions of this paper are the following. The weak-convergence approach is used to prove a large deviations principle (e.g.\ Laplace principle) for SA with state-dependent noise, see Theorem \ref{thm:main}. The result generalize existing results for decreasing step-size sequences and identifies the correct scaling sequence. A new representation of the rate function is provided, formulated in terms of the underlying dynamics, that does not rely on the assumption of a limiting Hamiltonian. Several examples from statistics, statistical physics and machine learning are shown to satisfy the assumptions of the large deviations principle.

The remainder of the paper is organized as follows. In Section \ref{sec:prel} we provide the preliminaries needed for the paper: notations and definitions (Section \ref{sec:notation}); large deviations and the Laplace principle (Section \ref{sec:LD}); the SA algorithm under consideration (Section \ref{sec:model}); the assumptions used throughout the paper (Section \ref{sec:ass}); a heuristic derivation of (the correct form of) the rate function appearing in the main result. Next, in Section \ref{sec:main} we state the main result Theorem \ref{thm:main}, a Laplace principle for the SA algorithm defined in Section \ref{sec:model}. In this section we also study the associated rate function: establish its continuity properties, provide alternative representations, establish a connection to a limiting Hamiltonian. For ease of comparison, a more detailed literature review is provided in Section \ref{sec:litRev}, when the setup and main results of this paper have already been stated. The proof of Theorem \ref{thm:main} is divided into two parts, carried out in Section \ref{sec:upper} (Laplace upper bound) and Section \ref{sec:lower} (Laplace lower bound), respectively. Section \ref{sec:limit} is dedicated to the proof of Theorem \ref{thm:limit}, a general version of a result (Theorem \ref{thm:convX}) used in Section \ref{sec:lower}.

\section{Preliminaries}
\label{sec:prel}
%
\subsection{Notation and definitions}
\label{sec:notation}
We use the conventional notation,  $\mathbb{N} = \{1,2,\dots\}$ and $\mathbb{N}_0 = \{0,1,2, \ldots\}$. Throughout the paper $\{\varepsilon_k\}_{k \in \mathbb{N}_0}$ denotes the sequence of step sizes (learning rate) of our SA algorithm. 
Given such a sequence, define the intermediate times $t_0=0$, $t_n=\sum_{k=1}^{n}\varepsilon_k$, and let $\mathbf{m}(t)=\max\{n:t_n\leq t\}$ be the maximum number of iterations that occurs before time $t$. Note that $\mathbf{m}(t_n)=n$. 

For $T > 0$, the space $C([0,T]:\mathbb{R}^{d})$ consists of $\mathbb{R}^{d}$-valued continuous functions defined on $[0,T]$ and $C_x([0,T]:\mathbb{R}^d)$ is the subspace of continuous functions starting at $x$ at time $0$. The space $C([0,T]:\mathbb{R}^d)$ is equipped with the supremum norm $\|f\|_\infty = \sup_{s,t \in [0,T]}\|f(s)-f(t)\|$, where $\|\cdot\|$ is the Euclidean norm on $\mathbb{R}^d$. For $x,y\in\mathbb{R}^d$, their inner product is denoted $\langle x,y\rangle$. 

Given a Polish space $\mathcal{X}$, with Borel $\sigma$-algebra $\mathcal{B}(\mathcal{X})$, the space of probability measures on $\mathcal{X}$ is denoted by $\mathcal{P}(\mathcal{X})$. We equip $\mathcal{P} (\mathcal{X})$ with the topology of weak convergence. Given $\mu\in\mathcal{P}(\mathcal{X})$, let 
$
    \mathcal{A}(\mu)\doteq \left\{ \gamma\in \mathcal{P}(\mathcal{X} \times \mathcal{X}):[\gamma]_1=[\gamma]_2=\mu\right\},
$
where $[\gamma]_1$ and $[\gamma]_2$ denote the first and second marginal of $\gamma$, respectively.

Let $\mathcal{X}$ and $\mathcal{Y}$ be Polish spaces and let $p(x,dy)$ be a collection of probability measures on $\mathcal{Y}$ parametrized by $x \in \mathcal{X}$. Then $p$ is a stochastic kernel on $\mathcal{Y}$ given $\mathcal{X}$ if, for every $A \in \mathcal{B} (\mathcal{Y})$, the map $x \mapsto p(x,A)) \in [0,1]$ is measurable. For a stochastic kernel $p$ on $\mathcal{Y}$ given $\mathcal{X}$ and $\theta\in\mathcal{P}(\mathcal{X})$, $\theta\otimes p$ is defined to be the unique probability measure on $(\mathcal{X}\times\mathcal{Y},\mathcal{B}(\mathcal{X}\times\mathcal{Y}))$ with the property that, for $A\in\mathcal{B}(\mathcal{X})$ and $B\in\mathcal{B}(\mathcal{Y})$,
\[
    \theta\otimes p(A\times B)\doteq \int_{A\times B}\theta(dx)p(x,dy) =\int_{A} p(x,B) \theta(dx).
\]
The formula is summarized by the notation $\theta\otimes p(dx \times dy)=\theta(dx)\otimes p(x,dy)$.

For a Markov chain $\{ X _i \} _{i \in \mathbb{N}}$ taking values in $\mathcal{X}$, the transition kernel of the chain is a stochastic kernel $p$ on $\mathcal{X}$ given $\mathcal{X}$, such that the distribution of $X_i$ given $X_{i-1}$ is $p(X_{i-1}, \cdot)$.  We say that a transition kernel $p(x, dy)$ satisfies the Feller property if, for any sequence $\{ x_n \} _{n \in \mathbb{N}}$ such that $x_n \to x \in \mathcal{X}$ as $n \to \infty$, $p(x_n, \cdot) \to p(x, \cdot)$ in $\mathcal{P}(\mathcal{X})$. Given a transition kernel $p(x,dy)$ on $\mathcal{X}$ and $k\in\mathbb{N}$, let $p^{(1)}(x,dy)=p(x,dy)$ and, for $k \geq 1$, $p^{(k)}(x,dy)$ denotes the $k$-step transition probability function defined recursively by 
\[
    p^{(k+1)}(x,A)=\int_{\mathcal{X}}p(y,A)p^{(k)}(x,dy), \quad A \in \mathcal{B}(\mathcal{X}).
\]

For $\theta\in\mathcal{P}(\mathcal{X})$, 
the relative entropy $R(\cdot\|\theta)$ is a map from $\mathcal{P}(\mathcal{X})$ into the extended real numbers, defined by
\begin{align*}
    R(\gamma\|\theta)\doteq \begin{cases} \int_{\mathcal{X}}\left(\log\frac{d\gamma}{d\theta}\right)d\gamma, & \gamma \ll \theta, \\
    +\infty, & \textrm{otherwise.}
    \end{cases}
\end{align*}
We refer to $R(\gamma\|\theta)$ as the relative entropy of $\gamma$ with respect to $\theta$. We recall the following properties of relative entropy (see Lemmas 1.4.1 and 1.4.3 in \cite{dupell4}): $R(\cdot | \cdot)$ is jointly convex and jointly lower semi-continuous with respect to the weak topology on $\mathcal{P}(\mathcal{X}) ^2$, and $R(\mu | \nu) = 0$ if and only if $\mu = \nu$. The following useful property follows from the chain rule for relative entropy (see Theorem 2.6 and Corollary 2.7 in \cite{buddup4}): given two transition kernels $p,q$, for any $\mu \in \mathcal{P} (\mathcal{X})$,
\[
    R(\mu \otimes p \| \mu \otimes q) = \int _{\mathcal{X}} R\left(p(x, \cdot) \| q(x, \cdot)\right) \mu (dx).
\]

\subsection{Large deviations}
\label{sec:LD}
At the heart of the theory of large deviations is the \textit{large deviation principle} (LDP): a sequence $\{ Z^n \}$ of random elements on some space $\mathcal{X}$ is said to satisfy an LDP with scaling sequence, or speed, $\{\beta_n\}$ and \textit{rate function} $I : \mathcal{X} \to [0, \infty]$, if $\beta_n\to \infty$ as $n\to\infty$, $I$ is lower semi-continuous, has compact sub-level sets and, for any measurable $A \subset \mathcal{X}$,
\begin{align*}
    - \inf _{z \in A ^\circ} I(z) &\leq \liminf _{n \to \infty} \frac{1}{\beta_n} \log P (Z ^n \in A ^\circ) \\
    &\leq \limsup _{n \to \infty} \frac{1}{\beta_n} \log P (Z ^n \in \bar A ) \leq - \inf _{z \in \bar A} I (z).    
\end{align*}
The gist of these inequalities is that, if $\{ Z^n \}$ satisfies an LDP with speed $\beta_n$ and rate function $I$, then for any $z \in \mathcal{X}$ and $n$ large, \[
    P (Z ^n \approx z) \simeq \exp\{-\beta_n I(z)\}.
\]
The definition of an LDP makes this statement rigorous in the limit $n \to \infty$. 

For any Polish space, an equivalent formulation of the LDP is the \textit{Laplace principle} (see e.g., Theorems 1.5 and 1.8 in \cite{buddup4}). Due to this equivalence, we will use the terminology of LDP and Laplace principle interchangeably throughout the paper. 


\begin{definition}[Laplace principle] Let $I$ be a rate function on $\mathcal{X}$. The sequence $\{X^n\}$ is said to satisfy the Laplace principle on $\mathcal{X}$ with rate function $I$ and scaling sequence $\{\beta_n\}$ if $\beta_n\to \infty$ as $n\to\infty$, and for all bounded continuous functions $F: \mathcal{X} \to \mathbb{R}$, 
\[
    \lim_{n\to\infty}\frac{1}{\beta_n}\log E e^{-\beta_n F(X^n)} = -\inf_{x\in\mathcal{X}}[F(x)+I(x)].
\]
The term Laplace principle upper bound refers to the validity of 
\[
    \limsup_{n\to\infty}\frac{1}{\beta_n}\log E e^{-\beta_n F(X^n)} \leq  -\inf_{x\in\mathcal{X}}[F(x)+I(x)],
\]
for all bounded continuous functions $F$, while the term Laplace principle lower bound refers to the validity of 
\[
    \liminf_{n\to\infty}\frac{1}{\beta_n}\log E e^{-\beta_n F(X^n)} \geq -\inf_{x\in\mathcal{X}}[F(x)+I(x)],
\]
for all bounded continuous functions $F$.
\end{definition}
Henceforth, when there is no ambiguity, we will refer to these only as upper and lower bounds, dropping the term ``Laplace principle''.

\subsection{Stochastic approximation}
\label{sec:model}
We here repeat the definition of the SA algorithm under consideration, first given in Section \ref{sec:intro}, and define an associated continuous-time process that will be used in the analysis. 

Let $(\Omega,\mathcal{F},P)$ be a probability space. Consider an SA algorithm $\{X_k\}_{n\in\mathbb{N}_0}$ of the Robbins-Monro type, with state-dependent noise sequence $\{Y_k\}_{k \in \mathbb{N}}$, starting from $X_0$ and satisfying the recursion, 
\[
    X_{k+1} = X_k + \varepsilon_{k+1} g(X_k,Y_{k+1}), \quad k \geq 0,
\]
where $g:\mathbb{R}^{d_1} \times\mathbb{R}^{d_2}\to \mathbb{R}^{d_1}$, and $\{Y_n\}_{n\in\mathbb{N}_0}$ starting from $Y_0$, and, for every $k\in\mathbb{N}_0$ and $A\in\mathcal{B}(\mathbb{R}^{d_2})$,
\[
    P(Y_{k+1}\in A| X_k,Y_k) = \rho_{X_k}(Y_k,A),
\]
with $\rho_x(y,\cdot)\in\mathcal{P}(\mathbb{R}^{d_2})$ for any $x\in\mathbb{R}^{d_1}$ and $y\in\mathbb{R}^{d_2}$.

The focus of this paper is the asymptotic behavior of $\{ X_n\}_{n\in\mathbb{N}}$ for large values of $n$. Therefore, for each $n\in\mathbb{N}$ and $x_0\in\mathbb{R}^{d_1}$, define a process $\{X^n_k\}_{k \geq n}$ that follows the same recursive iterations but starts from the $n$-th step recursion. That is, let $X^n_n=x_0$ and, for $k\geq n$, 
\begin{align}\label{eqn_recursion}
    X^n_{k+1} = X^n_k + \varepsilon_{n+k+1} g(X^n_k, Y_{n+k+1}).
\end{align}  
We consider a family of continuous interpolations of $\{X^n_k\}_{k\geq n}$: for each $n$, $X^n = \{X^n(t):t\in [0,T]\}$ is given by $X^n(t_{n+k}-t_n)=X^n_{n+k}$ for $k=0,1,\dots$, and for intermediate time points $t$, $X^n(t)$ is defined by a piecewise linear interpolation. Note that, for each $n$, $X^n \in C_{x_0}([0,T]:\mathbb{R}^{d_1})$.

\subsection{Assumptions}
\label{sec:ass}
In this section we state the assumptions we make on $\{X^n\}$, the family of continuous interpolations of the stochastic approximation $\{X^n_k\}_{k\geq n}$, both introduced in Section \ref{sec:model}. We have aimed for assumptions that are general enough to cover a large class of SA algorithms, while also being tangible from a modeling perspective (i.e., assumptions that are phrased in terms of objects that define the dynamics of $\{ X^n \}$). 

\begin{assumption}\label{sufficient_conditions}  The stochastic approximation \eqref{eqn_recursion} is assumed to satisfy the following conditions. 
\begin{enumerate}[label = (A.\arabic*)]
\item \label{ass:Lipschitz} The function $g$ is a measurable function, and for any $z\in\mathbb{R}^{d_2}$, $x\mapsto g(x,z)$ is Lipschitz continuous.
\item \label{ass:kernel} The transition kernel $\rho_x(y,dz)$ is of the form $\rho_x(y,dz) = \eta_x(y,z)\lambda(dz)$,
for some reference measure $\lambda\in\mathcal{P}(\mathbb{R}^{d_2})$. Moreover, $x\mapsto\eta_x(y,z)$ is uniformly continuous, in $(z,y)$, and for any $x$, $(y,z)\mapsto\eta_x(y,z)$ is continuous. 
\item \label{ass:cont_logMGF}The function 
\[
\Lambda(x,\alpha, y) = \log \int \exp\{\langle \alpha, g(x,z)\rangle\} \rho_x(y, dz),
\]
is continuous in $(x,\alpha)$, uniformly in $y$.
\item \label{ass:bounded_kernel}  For every compact set $K$, there is a constant $C(K)$, such that for all $y,z\in\mathbb{R}^{d_2}$, 
\[
\sup_{x,w\in K} \frac{\eta_x(y,z)}{\eta_w(y,z)}<C(K).
\]

\item \label{ass:transitivity} For any $x\in\mathbb{R}^{d_1}$, there exist positive integers $l_0$ and $n_0$ such that for all $y$ and $w$, 
\[
    \sum_{i=l_0}^{\infty} \frac{1}{2^i} \rho_x^{(i)}(y,dz) \ll \sum_{j=n_0}^{\infty} \frac{1}{2^j} \rho_x^{(j)}(w,dz),
\]
where $\rho_x^{(i)}$ denotes the $i$-step transition probability.
\item \label{ass:logMGF} For every $\alpha\in\mathbb{R}^{d_2}$,
\[
    \sup_{x\in\mathbb{R}^{d_1}}\sup_{y\in \mathbb{R}^{d_2}}\left( \log\int_{\mathbb{R}^{d_2}}e^{\langle \alpha,g(x,z)\rangle}\rho_x(y,dz) \right)<\infty,
\]
\[
    \sup_{x\in\mathbb{R}^{d_1}}\sup_{y\in\mathbb{R}^{d_2}}\left( \log\int_{\mathbb{R}^{d_2}}e^{\langle \alpha,z\rangle}\rho_x(y,dz) \right)<\infty.
\]
\item  \label{ass:limith} The sequence $\{\varepsilon_k\}_{k \in \mathbb{N}}$ satisfies $\varepsilon_k > 0$ for each $k \geq 1$,  $\lim_{k \to \infty} \varepsilon_k = 0$ and $\sum_{k} \varepsilon_k = \infty$. Let $\{\beta_n\}\doteq \{\mathbf{m}(t_n+T)-n\}$ and suppose that the function $h^n: [0,T] \to (0,\infty)$, given by,
\[
    h^n(t) =\beta_n\varepsilon_{n+i-1}, \quad\text{ for } t \in [t_{n+i-1}-t_n, t_{n+i}-t_n),  \quad i\in\{1,\dots,\beta_n\}, 
\]
converges uniformly on $[0,T]$ to some limit $h$.
\end{enumerate}
\end{assumption}

\begin{remark}
\label{rem:Feller}
 Throughout the proof of the Laplace principle, for $x \in \mathbb{R} ^{d_1}$, we require the Feller property of $\rho _x (y, dz)$. Therefore, we note here that this property follows from the continuity of $(y,z) \mapsto \eta_x(y,z)$ in \ref{ass:kernel} and Pratt's lemma. 
%
\end{remark}

Assumption \ref{ass:Lipschitz} is a standard assumption for the existence and uniqueness of a classical solution to an ordinary differential equation; Assumption \ref{ass:kernel} guarantees the Feller property (see Remark \ref{rem:Feller}) and the existence of an invariant probability measure for $\rho_x(y,dz)$; Assumption \ref{ass:transitivity} is a transitivity condition ensuring that the invariant probability measure is unique and the Markov chain with transition probability $\rho_x(y,dz)$ is ergodic. For each $x \in \mathbb{R}^{d_1}$, we let $\pi _x$ denote this unique invariant measure for $\rho _x$. Assumption \ref{ass:logMGF} is used to guarantee that the updates have finite exponential moments. Lastly, \ref{ass:limith} is needed to prove convergence of the stochastic approximation algorithm and the limit function $h$ may be interpreted as an asymptotic time-scale of the process $\{X^n\}$. For example, with $\varepsilon_k = 1/k$, a straightforward calculation shows that the limit function is given by $h(t) = e^{-t}(e^T-1)$. Note that, from the definition and the properties of $h^n$, it follows that the limiting function $h$ must be non-increasing.

\subsection{A heuristic derivation and form of the rate function for SA}
\label{sec:heuristic}
Before stating the Laplace principle for the sequence $\{ X^n\}$, we here present a heuristic derivation that suggests the correct form of the rate function. The derivation  contains several non-rigorous approximations and is only intended to provide the reader an intuitive understanding of the rigorous results that are stated and proved in Section \ref{sec:main} and onward.


As a starting point, we recall that the empirical measure of an ergodic Markov chain with transition probability $\rho(y, dz)$ satisfies an LDP with scaling sequence $\{n\}$ and rate function given by, 
\begin{align} \label{eq:emprate}
    J_0(\mu) = \inf_{\gamma \in \mathcal{A}(\mu)} R(\gamma||\mu \otimes \rho),
\end{align}
where $\mathcal{A}(\mu)$ is defined in Section \ref{sec:notation}; see, e.g., \cite[Ch.\ 8]{dupell4}. Take a bounded continuous function $g$ on $\mathbb{R}^{d_2}$ and consider the map $\mu \mapsto \int g(y) \mu(dy)$. The contraction principle then implies that 
%
the sample average $\frac{1}{n}\sum_{i=1}^{n} g(Y_i)$ satisfies an LDP with rate function,
\[
    L_0(\beta) = \inf _{\mu} \left\{ J_0(\mu) : \int g(y) \mu(dy) = \beta \right \} = 
    \inf_\mu\left\{\inf_{\gamma \in \mathcal{A}(\mu)} R(\gamma||\mu \otimes \rho) : \int g(y) \mu(dy) = \beta\right \}. 
\]
By incorporating a time variable, the continuous linear interpolation of $\frac{1}{n}\sum_{i=1}^{[nt]} g(Y_i)$ satisfies an LDP on $C_0([0,T]:\bR ^d)$ with rate function $J_1 : C_0 ([0,T]: \mathbb{R}^{d_1}) \to [0, \infty]$ given by,
\[
    J_1(\varphi) = \int_0^T L_0(\dot \varphi(t)) dt. 
\]

We first consider SA with fixed step-sizes: for each $n \in \mathbb{N}$, $\varepsilon_k \equiv 1/n$ for all $k$. Take $\varphi$ in $C([0,T]:\bR ^{d_1})$ and consider the probability that the trajectory of $X^n$ resides in a ball of radius $\sigma > 0$ around $\varphi$. In this case, $X^n$ can be approximated over a small interval $[s,s+\delta]$ of length $\delta > 0$ by,
\[
    X^n(s+\delta)-X^n(s)  \approx \frac{1}{n}\sum_{i=\lfloor ns\rfloor}^{\lfloor n(s+\delta)\rfloor} g(X^n(s),Y_i) \approx \frac{1}{n}\sum_{i=\lfloor ns\rfloor}^{\lfloor n(s+\delta)\rfloor} g(\varphi(s),Y_i) 
    ,
\]
where $Y_i$ is a Markov chain with transition probability $\rho_{\varphi(s)}(y,dz)$. Using the LDP for the sample average, the increment $X^n(s+\delta)-X^n(s)$ satisfies an LDP with rate function, 
\[
    \delta L(\varphi(s), \beta), 
\]
where, 
\begin{align}\label{eqn_local_rate_function}
    L(x,\beta) \doteq \inf_{\mu}\left\{\inf_{\gamma\in \mathcal{A}(\mu)} R(\gamma||\mu\otimes \rho_x): \beta=\int g(x,z)\mu(dz)\right\}.
\end{align}
%
Pasting together the local approximations over small intervals, we have that $X^n$ satisfies an LDP on $C([0,T]; \bR^d)$ with rate function,
\begin{align*}
    J_2(\varphi) = \int_0^T L\left(\varphi(s),\dot\varphi(s)\right)ds.
\end{align*}
To see this more explicitly, we have,
\begin{align*}
    &-\frac{1}{n}\log P\left(X^n(\cdot) \approx  \varphi(\cdot)\right)
    \approx -\frac{1}{n}\log P\left( X^n(j\delta)\approx \varphi(j\delta) \text{ for all } 1 \leq j\leq \lfloor \frac{T}{\delta}\rfloor \right)\\
    &\quad \approx -\frac{1}{n}\log P\left(X^n((j+1)\delta)-X^n(j\delta)\approx \delta \dot \varphi(j\delta)  \text{ for all }0\leq j\leq \frac{T}{\delta}-1\right)\\
    &\quad \approx -\frac{1}{n}\log \prod_{j=0}^{\lfloor \frac{T}{\delta}\rfloor -1} P\left(X^n((j+1)\delta)-X^n(j\delta)\in \delta \dot \varphi(j\delta) \mid X^n(j\delta) \right)\\
    &\quad \approx -\frac{1}{n}\log \prod_{j=0}^{\lfloor \frac{T}{\delta}\rfloor -1} \exp\left\{{-n\delta L\left(\varphi(j\delta), \dot \varphi(j\delta)\right)}\right\}\\
    &\quad \approx \sum_{j=0}^{\lfloor \frac{T}{\delta}\rfloor -1} \delta L(\varphi(j\delta), \dot \varphi(j\delta)) \\ 
    &\quad \approx \int_0^T L\left(\varphi(s),\dot\varphi(s)\right)ds.
\end{align*}

Consider now $\{X^n\}$ with decreasing step-size $\{\varepsilon_n\}$ as defined in Section \ref{sec:model}. For the scaling in the LDP we take the sequence $\{\beta_n\} =  \{\mathbf{m}(t_n+T)-n\}$ and define $h^n(t)$ as in Assumption \ref{ass:limith}. The decreasing step-sizes correspond to a change of time scale and the rate of change of $X^n$ over a small interval $[s,s+\delta]$ of length $\delta > 0$ may be approximated by,
\begin{align*}
    \frac{X^n(s+\delta)-X^n(s)}{\delta}& \approx \frac{1}{\delta}\sum_{i=\mathbf{m}(t_n + s)-n+1}^{\mathbf{m}(t_n +s+\delta)-n} \varepsilon_{n+i-1} g(\varphi(s),Y_i) \\ &\approx \frac{1}{\delta}\left(\frac{\sum_{i=\mathbf{m}(t_n + s)-n+1}^{\mathbf{m}(t_n +s+\delta)-n} \varepsilon_{n+i-1}}{\mathbf{m}(t_n + s +\delta)-\mathbf{m}(t_n + s)}\right)\left(\sum_{i=\mathbf{m}(t_n + s)-n+1}^{\mathbf{m}(t_n +s+\delta)-n}  g(\varphi(s),Y_i)\right) \\ &\approx \frac{1}{\mathbf{m}(t_n + s +\delta)-\mathbf{m}(t_n + s)}\left(\sum_{i=\mathbf{m}(t_n + s)-n+1}^{\mathbf{m}(t_n +s+\delta)-n}  g(\varphi(s),Y_i)\right),
\end{align*}
for which a LDP holds, similar to the case of constant step-sizes. 
Using a similar argument again, we have the following approximation, where $\tau_i  ^n = t_{n+i} - t_i$,
\begin{align*}
    &-\frac{1}{\beta_n}\log P\left(X^n(\cdot) \approx  \varphi(\cdot)\right)
    \approx -\frac{1}{\beta_n}\log P\left( X^n(j\delta)\approx \varphi(j\delta) \text{ for all } 1 \leq j\leq \lfloor \frac{T}{\delta}\rfloor \right)\\
    &\quad \approx -\frac{1}{\beta_n}\log P\left(X^n((j+1)\delta)-X^n(j\delta)\approx \delta \dot \varphi(j\delta)  \text{ for all }0\leq j\leq \frac{T}{\delta}-1\right)\\
    &\quad \approx -\frac{1}{\beta_n}\log \prod_{j=0}^{\lfloor \frac{T}{\delta}\rfloor -1} P\left(\frac{X^n((j+1)\delta)-X^n(j\delta)}{\delta}\approx \dot \varphi(j\delta) \mid X^n(j\delta) \right)\\
    &\quad \approx \frac{1}{\beta_n} \sum_{j=0}^{\lfloor \frac{T}{\delta}\rfloor -1}  (\mathbf{m}(t_n + (j+1)\delta)-\mathbf{m}(t_n + j\delta))L\left(\varphi(j\delta), \dot \varphi(j\delta)\right)\\
    &\quad \approx \frac{1}{\beta_n} \sum_{j=0}^{\lfloor \frac{T}{\delta}\rfloor -1} \left(\sum_{i=\mathbf{m}(t_n + j\delta)-n+1}^{\mathbf{m}(t_n +(j+1)\delta)-n} L(\varphi(\tau^n_i), \dot \varphi(\tau^n_i))  \right) \\
     &\quad \approx \frac{1}{\beta_n} \sum_{i=1}^{\beta_n}   \frac{1}{\varepsilon_{n+i-1}}L(\varphi(\tau^n_i), \dot \varphi(\tau^n_i)) \varepsilon_{n+i-1}\\
     &\quad \approx \sum_{i=1}^{\beta_n}   \frac{1}{h^n(\tau^n_i)}L(\varphi(\tau^n_i), \dot \varphi(\tau^n_i)) \varepsilon_{n+i-1}\\
    &\quad \approx \int_0^T \frac{1}{h(s)}L\left(\varphi(s),\dot\varphi(s)\right)ds.
\end{align*}
This calculation suggests that the appropriate rate function in the LDP for $X^n$, the piecewise linear interpolation of the SA, is given by,
\begin{align*}
    I(\varphi) = \int_0^T \frac{1}{h(t)}L(\varphi(t),\dot\varphi(t))dt,
\end{align*}
where $h(t)$ is the limit of $h^n(t)$. 

\section{Large deviations for stochastic approximations with state-dependent noise}
\label{sec:main}
The goal of this paper is to establish the LDP for the sequence of $X^n = \{X^n(t):t\in[0,T]\}$, the linear interpolations of $\{ X^n _k \} _{k \geq n}$ starting from $X^n_n = x_0 \in \bR^{d_1}$. To this end, 
we define the function $I: C_{x_0} ([0,T] : \bR ^{d_{1}}) \to [0, \infty]$ as,
 \begin{align}\label{eqn_rate_function}
    I(\varphi) = \begin{cases} \int_0^T \frac{1}{h(t)}L(\varphi(t),\dot\varphi(t))dt, & \varphi\in AC_{x_0}([0,T]:\mathbb{R}^{d_1}), \\
    +\infty, & \textrm{otherwise},
    \end{cases}
\end{align}
with the local rate function $L$ defined in \eqref{eqn_local_rate_function}. 
Note that we suppress the dependence on the choice of starting point $x_0$ in the notation. Recall from Section \ref{sec:LD} that, in the setting considered here, an LDP is equivalent to a Laplace principle. The following Laplace principle is the main result of the paper, where $I$ plays the role of the large deviation rate function. 
\begin{theorem}[Laplace principle]
\label{thm:main}
 Let $X^n = \{X^n(t):t\in[0,T]\}$ be the continuous interpolations of $\{X^n_k\}_{k\geq n}$ given by \eqref{eqn_recursion} and take $L$ as in \eqref{eqn_local_rate_function}. Under Assumptions \ref{ass:Lipschitz}-\ref{ass:limith}, $I$ is a rate function, and $\{X^n\} _{n \in  \mathbb{N}}$ satisfies a Laplace principle with scaling sequence $\beta_n=\mathbf{m}(t_n+T)-n$ and rate function $I$. 
\end{theorem}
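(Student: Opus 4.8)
The plan is to follow the weak convergence (Bou\'e--Dupuis) approach. The starting point is the variational representation: since $\{(\bar X^n_i,\bar Y^n_i)\}$ is a Markov chain whose step-$i$ transition is $\rho_{\bar X^n_{i-1}}(\bar Y^n_{i-1},\cdot)$, the representation for exponential functionals of Markov chains together with the chain rule for relative entropy gives, for every bounded continuous $F:C_{x_0}([0,T]:\mathbb R^{d_1})\to\mathbb R$,
\begin{equation*}
-\frac{1}{\beta_n}\log E e^{-\beta_n F(X^n)} = \inf E\left[\frac{1}{\beta_n}\sum_{i=1}^{\beta_n} R\bigl(\bar\mu^n_i\,\|\,\rho_{\bar X^n_{i-1}}(\bar Y^n_{i-1},\cdot)\bigr) + F(\bar X^n)\right],
\end{equation*}
where the infimum is over sequences of controlled stochastic kernels, $\bar Y^n_i$ is drawn from $\bar\mu^n_i$ given the past, $\bar X^n_0=x_0$, $\bar X^n_i=\bar X^n_{i-1}+\varepsilon_{n+i}g(\bar X^n_{i-1},\bar Y^n_i)$, and $\bar X^n$ is the corresponding interpolation; both Laplace bounds will be extracted from this identity. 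Separately, I would verify that $I$ is a rate function: for fixed $x$, $\beta\mapsto L(x,\beta)$ in \eqref{eqn_local_rate_function} is convex and lower semicontinuous, being an infimum over $\mu,\gamma$ of the jointly convex, jointly lower semicontinuous map $\gamma\mapsto R(\gamma\|\mu\otimes\rho_x)$ under affine constraints; Assumption \ref{ass:logMGF} forces $L(x,\beta)$ to be superlinear in $\|\beta\|$ uniformly for $x$ in compacts, and \ref{ass:kernel}, \ref{ass:cont_logMGF} give joint lower semicontinuity in $(x,\beta)$. Since $h$ is bounded above and below on $[0,T]$, standard lower-semicontinuity and coercivity results for integral functionals then show that $I$ in \eqref{eqn_rate_function} is lower semicontinuous and that its sublevel sets are equi-absolutely continuous, hence precompact in $C_{x_0}([0,T]:\mathbb R^{d_1})$ by Arzel\`a--Ascoli.

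For the Laplace upper bound I would fix $\delta>0$ and take controls within $\delta$ of the infimum above; boundedness of $F$ then bounds $\sup_n E[\frac1{\beta_n}\sum_i R(\bar\mu^n_i\|\cdots)]$. Encoding the controls as random measures on $[0,T]\times\mathbb R^{d_2}\times\mathbb R^{d_2}$ recording the interpolated time together with a consecutive pair $(\bar Y^n_{i-1},\bar Y^n_i)$, the superlinear growth in \ref{ass:logMGF} gives tightness of these measures and of $\{\bar X^n\}$ and uniform integrability of $g$ against the relevant marginals. Along a convergent subsequence the measures converge to $dt\otimes\gamma_t(dy\,dz)$ with $[\gamma_t]_1=[\gamma_t]_2=\mu_t$ --- the equality of one-step marginals surviving because consecutive noise indices differ by a single step and $h^n\to h$ by \ref{ass:limith} --- and $\bar X^n$ converges to an absolutely continuous $\bar X$ with $\dot{\bar X}(t)=\int g(\bar X(t),z)\mu_t(dz)$, using the Feller property (Remark \ref{rem:Feller}) and \ref{ass:Lipschitz}. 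Lower semicontinuity of relative entropy, the chain rule, and the identity $\frac1{\beta_n}=\varepsilon_{n+i}/h^n(\cdot)$ that brings in the $1/h$ weight then give $\liminf_n\frac1{\beta_n}\sum_i R(\bar\mu^n_i\|\cdots)\ge\int_0^T h(t)^{-1}R(\gamma_t\|\mu_t\otimes\rho_{\bar X(t)})\,dt\ge\int_0^T h(t)^{-1}L(\bar X(t),\dot{\bar X}(t))\,dt=I(\bar X)$, since $\gamma_t\in\mathcal A(\mu_t)$ and $\mu_t$ is admissible in \eqref{eqn_local_rate_function}. By Fatou (the integrand is bounded below because the cost is nonnegative and $F$ bounded) and minimization over paths, $\liminf_n(-\frac1{\beta_n}\log E e^{-\beta_n F(X^n)})\ge\inf_\varphi[F(\varphi)+I(\varphi)]$, and $\delta\downarrow0$ finishes the upper bound.

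The Laplace lower bound is the heart of the argument and is where the state-dependent noise makes itself felt. Given bounded continuous $F$, I would fix $\varphi^*\in AC_{x_0}$ with $F(\varphi^*)+I(\varphi^*)\le\inf_\varphi[F(\varphi)+I(\varphi)]+\delta$ and, using the continuity properties of $I$ established in Section \ref{sec:main}, reduce to $\varphi^*$ with piecewise-constant velocity. I would partition $[0,T]$ into short blocks $[s_j,s_{j+1})$ of length $\approx\Delta$ on which $\varphi^*$ has constant velocity $\beta_j$ and stays near $x_j:=\varphi^*(s_j)$, choose $(\mu_j,\gamma_j)$ nearly attaining $L(x_j,\beta_j)$, disintegrate $\gamma_j(dy\,dz)=\mu_j(dy)\bar\rho_j(y,dz)$ so that $\bar\rho_j$ is a stochastic kernel with invariant measure $\mu_j$, and take as control: at each step falling in block $j$, use the kernel $\bar\rho_j$, independently of the true state. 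Because $\varepsilon_k\to0$, block $j$ contains $N^n_j\approx\Delta\beta_n/h(s_j)\to\infty$ steps while $\bar X^n$ drifts only $O(\Delta)$ across it, and the decisive ingredient is local ergodicity at the time scale of the noise: on each block the controlled chain driven by the fixed kernel $\bar\rho_j$ equilibrates to $\mu_j$ --- with Assumption \ref{ass:transitivity} supplying ergodicity and \ref{ass:bounded_kernel}, \ref{ass:kernel} the required uniformity, which is the content of Theorems \ref{thm:convX} and \ref{thm:limit} --- so that block-averaged drifts converge to $\int g(x_j,z)\mu_j(dz)=\beta_j$ and block-averaged entropy costs to $R(\gamma_j\|\mu_j\otimes\rho_{x_j})$ in probability, the replacement of the varying $\bar X^n_{i-1}$ by $x_j$ being justified by \ref{ass:Lipschitz}, \ref{ass:cont_logMGF}, and the exponential moment bound \ref{ass:logMGF}. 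Summing over blocks with the step-size weighting of \ref{ass:limith} yields $\bar X^n\to\varphi^*$ in $C_{x_0}([0,T]:\mathbb R^{d_1})$ and $\frac1{\beta_n}\sum_i R(\bar\mu^n_i\|\cdots)\to\sum_j\Delta\,h(s_j)^{-1}R(\gamma_j\|\mu_j\otimes\rho_{x_j})$, which is within $O(\Delta)$ of $I(\varphi^*)$; feeding this into the representation gives $-\frac1{\beta_n}\log E e^{-\beta_n F(X^n)}\le I(\varphi^*)+F(\varphi^*)+o(1)$, and letting first $n\to\infty$, then $\Delta\downarrow0$, then $\delta\downarrow0$ completes the lower bound and hence the Laplace principle.

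The step I expect to be the main obstacle is exactly this lower-bound construction: arranging that the controlled noise chains equilibrate to the prescribed minimizers $\mu_j$ fast enough on each short macroscopic block, while the slowly moving state process simultaneously perturbs the reference kernel $\rho_x$ and the local cost, and while $g$ is only assumed Lipschitz --- not bounded --- in $x$. Keeping the local-ergodicity estimates uniform in the relevant parameters and making the three limits ($n\to\infty$, block length $\to0$, near-optimality gap $\to0$) interact correctly is the delicate point, and is precisely where Assumptions \ref{ass:bounded_kernel}--\ref{ass:logMGF} and the new use of local ergodicity at the noise time scale enter.
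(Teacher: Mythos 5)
Your proposal takes essentially the same approach as the paper: the representation formula of Proposition \ref{prop_representation}, the tightness/weak-limit argument via pair measures in $\mathbb{R}^{d_2}\times\mathbb{R}^{d_2}\times[0,T]$ for the upper bound, and the block construction of near-optimal controls exploiting local ergodicity at the noise time scale for the lower bound, all match what the paper carries out in Sections \ref{sec:upper}--\ref{sec:limit}. The technical safeguards you flag as the main obstacle -- keeping the controlled state near the reference trajectory when $g$ is unbounded, and balancing the three limits -- are exactly where the paper invests its effort, via the stopping index $\hat{i}^n$, the $l_0$ burn-in steps using $\rho_{\zeta^*(\tau^\ell_k)}$, and Lemmas \ref{lem:bounded_rel_ent}, \ref{lem:limit_m}, \ref{lem:limit_delta}.
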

\begin{proof}
    The proof relies on the weak convergence approach to large deviations; see \cite{dupell4, buddup4} for detailed accounts of the approach. In particular, the proof is divided into proving the (Laplace principle) upper bound,
\begin{align*}
    \liminf _{n \to \infty} - \frac{1}{\beta _n} \log E \left[ e^{-\beta _n F(X^n)} \right] \geq \inf _\varphi \left\{ F(\varphi) + I(\varphi) \right\},
\end{align*}
and the (Laplace principle) lower bound, 
\begin{align*}
    \limsup _{n \to \infty} - \frac{1}{\beta _n} \log E \left[ e^{-\beta _n F(X^n)} \right] \leq \inf _\varphi \left\{ F(\varphi) + I(\varphi) \right\},
\end{align*}
where the infima are over $\varphi \in \ac _{x_0} ([0,T]: \bR ^{d_1})$ and $F: C([0,T]:\mathbb{R}^{d_1}) \to \mathbb{R} $ is an arbitrary bounded continuous function. The upper bound is proved in Theorem \ref{thm:upper} and the lower bound in Theorem \ref{thm:lower}; the respective proofs are given in Sections \ref{sec:upper} and \ref{sec:lower}. Combining these results thus proves the stated Laplace principle for $X^n$.
\end{proof}
The starting point for proving the upper and lower bounds is the following representation formula, which is a straightforward modification of Theorem 4.5 in \cite{buddup4}.

\begin{proposition}\label{prop_representation}
Fix $n \in \bN$ and let $\{X^n(t):t\in[0,T]\}$ be the continuous interpolations of $\{X^n_k\}_{k\geq n}$ given by \eqref{eqn_recursion}, and $X^n _n = x$. For any bounded continuous function $F:C([0,T]:\bR ^{d_1}) \to \bR$, 
\begin{align}\label{eqn_representation}
    -\frac{1}{\beta_n}\log E e^{-\beta_n F(X^n)} = \inf_{\{\bar \mu^n_i\}} E\left[F(\bar X^n) + \frac{1}{\beta_n} \sum_{i=n+1}^{\beta_n+n} R(\bar \mu^n_i(\cdot) \| \rho_{\bar X^n_{i-1}}(\bar Y^n_{i-1},\cdot))\right],
\end{align}
where $\{\bar{\mu}^n_i\}_{i\in\{n+1,\dots,\beta_n+n\}}$ is a collection of random probability measures satisfying the following two conditions:
\begin{enumerate}
    \item $\bar{\mu}^n_i$ is measurable with respect to the $\sigma$-algebra $\mathcal{F}^n_{i-1}$, where $\mathcal{F}^n_{n}=\{\emptyset, \Omega\}$ and for $i\in\{n+1,\dots,\beta_n+n\}$, $\mathcal{F}^n_{i}=\sigma\{\bar{Y}^n_n,\dots, \bar{Y}^n_i\}$;
    \item the conditional distribution of $\bar{Y}^n_i$, given $\mathcal{F}^n_{i-1}$, is $\bar{\mu}^n_i$. 
\end{enumerate}
Moreover, $\{\bar{X}^n_k\}_{k\geq n}$ are defined by \eqref{eqn_recursion} with $\{Y_k\}$ replaced by $\{{\bar{Y}^n_k}\}$, and $\{\bar{X}^n(t):t\in[0,T]\}$ is the continuous interpolations of $\{\bar{X}^n_k\}_{k\geq n}$.
\end{proposition}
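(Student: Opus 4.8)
The plan is to derive \eqref{eqn_representation} from the general variational representation for exponential functionals of a random sequence driven by a family of stochastic kernels, which is Theorem 4.5 in \cite{buddup4}. The only thing that needs to be set up carefully is the right discrete-time stochastic system to which that theorem applies, and then the fact that the map from the noise sequence to the functional $F(X^n)$ is a bounded measurable function of the noise path; the rest is bookkeeping with the filtration $\{\mathcal F^n_i\}$ and the scaling constant $\beta_n$.

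First I would fix $n$ and view the noise block $(Y_{n}, Y_{n+1}, \dots, Y_{n+\beta_n})$ as the driving sequence. Conditionally on $X^n_n = x$ and $Y_n = y_0$, the pair $(X^n_k, Y_{k})$ evolves as a (time-inhomogeneous, because of the step sizes $\varepsilon_{n+k}$) Markov chain whose randomness enters only through the conditional law of $Y_{k+1}$ given the past, namely $\rho_{X^n_k}(Y_k,\cdot)$, and $X^n_{k+1}$ is a deterministic function of $(X^n_k, Y_{k+1})$ via \eqref{eqn_recursion}. Hence $F(X^n)$ is a bounded measurable function of $(Y_{n+1},\dots,Y_{n+\beta_n})$ (with $x, y_0$ as fixed parameters). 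Applying Theorem 4.5 of \cite{buddup4} with the reference kernels $\rho_{X^n_{i-1}}(Y_{i-1},\cdot)$ and the controlled kernels $\bar\mu^n_i$ — where $\bar\mu^n_i$ is $\mathcal F^n_{i-1}$-measurable and is the conditional law of $\bar Y^n_i$ given $\mathcal F^n_{i-1}$ — yields
\[
    -\log E e^{-\beta_n F(X^n)} = \inf_{\{\bar\mu^n_i\}} E\left[\beta_n F(\bar X^n) + \sum_{i=n+1}^{n+\beta_n} R\big(\bar\mu^n_i(\cdot)\,\|\,\rho_{\bar X^n_{i-1}}(\bar Y^n_{i-1},\cdot)\big)\right],
\]
and dividing by $\beta_n$ gives \eqref{eqn_representation}. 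The chain rule for relative entropy (quoted in Section \ref{sec:notation}) is what justifies that the entropy of the controlled path law relative to the reference path law decomposes as the sum of the per-step conditional entropies appearing on the right-hand side; one also uses that, under the controlled measure, $\bar X^n$ is constructed from $\{\bar Y^n_k\}$ by exactly the same recursion \eqref{eqn_recursion}, so the $X$-marginal contributes nothing extra to the entropy.

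The mild technical points to check are: (i) measurability of $(x,y)\mapsto\rho_x(y,A)$ as a stochastic kernel on $\mathbb R^{d_2}$ given $\mathbb R^{d_1}\times\mathbb R^{d_2}$, which follows from Assumption \ref{ass:kernel} (indeed $\rho_x(y,dz)=\eta_x(y,z)\lambda(dz)$ with $\eta$ jointly continuous in $(y,z)$ and continuous in $x$); (ii) that the infimum is genuinely attained over the stated class of adapted controlled kernels and not a strictly larger class — this is exactly the content of Theorem 4.5 in \cite{buddup4}, which identifies the extremal controls as the conditional laws of an adapted perturbed sequence; and (iii) that replacing the index range by $\{n+1,\dots,n+\beta_n\}$ and the initial $\sigma$-algebra by the trivial one $\mathcal F^n_n=\{\emptyset,\Omega\}$ is just a relabeling of the finite horizon $\beta_n$ in the cited theorem. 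I do not anticipate a genuine obstacle here; the statement is, as noted in the text, "a straightforward modification" of \cite[Theorem 4.5]{buddup4}, and the only place where one must be slightly attentive is keeping the deterministic $X$-update out of the entropy cost and confirming that the class of admissible controls in the representation coincides with the $\mathcal F^n_{i-1}$-measurable random probability measures described in conditions (1)–(2).
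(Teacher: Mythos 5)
Your proposal is correct and takes essentially the same route as the paper: both derive \eqref{eqn_representation} from the abstract relative-entropy variational formula of Budhiraja--Dupuis applied to the product base measure $\rho_{x_0}(y_0,dy_1)\cdots\rho_{x_{\beta_n-1}}(y_{\beta_n-1},dy_{\beta_n})$, with the chain rule supplying the per-step decomposition. The only cosmetic difference is that the paper's proof cites Proposition 2.3 of \cite{buddup4} plus the chain rule explicitly, whereas you cite Theorem 4.5 of \cite{buddup4} (the Markov-chain version that already has the chain-rule decomposition built in); these are the same argument in substance, and your remarks on measurability, the deterministic $X$-update contributing no entropy cost, and the index relabeling are all consistent with the paper.
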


\begin{proof}
Observe that $\{X^n(t):t\in[0,T]\}$ are determined by $\{x, X^n_{n+1},\dots, X^n_{\mathbf{m}(t_n+T)}\}$, which depends only on the state-dependent noise $\{Y_n,\dots, Y_{\mathbf{m}(t_n+T)-1}\}$ via the recursive formula. Therefore, the variational formula in  \cite[Proposition 2.3]{buddup4} combined with the chain rule for relative entropy (see Section \ref{sec:notation}),  with $\beta_n = \mathbf{m}(t_n+T)-n$ and base measure, 
\[
    \rho_{x^{\beta_n}_0}(y_0,dy_1)\rho_{x^{\beta_n}_1}(y_1, dy_2)\times\cdots\times \rho_{x^{\beta_n}_{\beta_n-1}}(y_{\beta_n-1},dy_{\beta_n}),
\]
gives the claimed result.
\end{proof}

Let us briefly outline the main ideas of the proof of 
the upper and lower bounds used to prove Theorem \ref{thm:main}. For the upper bound, for any $\varepsilon > 0$, from the representation formula we can choose a sequence of $\varepsilon$-optimal controls $\bar \mu ^n = \{\bar{\mu}^n_i\}_{n+1} ^{\beta_n+n} $; we suppress the dependence on $\varepsilon$ in the notation. This sequence in turn defines a controlled process $\bar X ^n = \{\bar{X}^n(t):t\in[0,T]\}$. To prove the upper bound, in Lemma \ref{lemma:tightUB} we show tightness of both the controls and the controlled process, and identify the limit $(\bar X, \bar \mu)$ along a convergent subsequence of $\{(\bar X ^n, \bar \mu ^n  )\}$. In particular we identify the limit ODE for $\bar X$, the limit of the controlled processes. With these results, the proof of the upper bound follows from fairly standard arguments involving Fatou's lemma, lower semi-continuity of relative entropy and the chain rule; see Section \ref{sec:upper} for the details.

The difficult part of proving Theorem \ref{thm:main} is in proving the lower bound. Whereas for the upper bound we can use the definition of the infimum in \eqref{eqn_representation} to obtain a suitable sequence of controls, for the lower bound we have to explicitly construct a sequence of nearly-optimal controls $ \bar \nu ^n = \{ \bar \nu ^n _i \} _{i=n+1} ^{\beta_n+n}$. This is carried out in Section \ref{sec:optimalControl}. The first step is to show that for any trajectory $\xi$ such that $I(\xi) < \infty$, for any $\varepsilon >0$, there is a piecewise linear $\xi ^*$ such that $|| \xi ^* - \xi || _\infty < \varepsilon$ and $I(\xi ^*) \leq I (\xi) + \varepsilon$ (see Lemma \ref{lem:piecewise_conti}). Such trajectories, along with transition kernels that are nearly-optimal for the local rate function $L$---see Lemma \ref{lem:optimal_control}---are used to construct the sequence of controls $\bar \nu ^n$ for each $n$. Moreover, in Lemma \ref{lem:tightness} we show tightness of the sequence $\{\bar \nu ^n \} _n$.

With suitable controls $\bar \nu ^n$ identified, we obtain an upper bound of the right-hand side of the representation formula \eqref{eqn_representation}. It remains to show that, asymptotically as $n \to \infty$, this upper bound is, in turn, bounded from above by $\inf _{\rho} \{ F(\rho) + I (\rho) \}$. This is achieved in Section \ref{sec:lower} through a series of approximations. An essential ingredient in the proof of the lower bound is to use the two time-scales that the controlled state process $\bar X^n$ and the controlled noise process $ \{ \bar Y ^n \}$ operate on: the controlled noise process will ultimately move at a (much) faster time scale than the state process, which helps us deal with the state-dependence in the noise. To make this rigorous, we divide $[0,T]$ into subintervals, each containing a given number of time points $t^n _j$ associated with the controlled process arising from the $\bar \nu ^n _j$s. We then use (local) ergodicity to show that, as the number of such time points in each subinterval grows, the controlled process converges, and identify the corresponding limit process (\ref{lem:limit_m}). Next, we show that as the number of intervals grows, this limit process converges to the trajectory $\xi$ of interest. In Section \ref{sec:lower} these approximations are combined to obtain the lower bound.    

\subsection{Alternative representations of the local rate function} 
\label{sec:altRep}
For each $x \in \bR^{d_1}$,  $J_x$ defined as in \eqref{eq:emprate} with $\rho(y, dz)$ replaced by $\rho_x(y, dz)$ is the rate function associated with the empirical measure of a Markov chain with transition probability $\rho_x(y, dz)$. An alternative representation of $J_x(\mu)$, due to Donsker and Varadhan \cite{donvar1}, is given by,
\begin{align}\label{eq:DVrate}
     \sup_{u > 0} \int \log\left(\frac{u(y)}{\rho_x u(y)} \right)\mu(dy), 
\end{align}
where the supremum is taken over strictly positive continuous functions $u$ and $\rho_x u(y) = \int u(z) \rho_x(y, dz)$. 

Another representation of $J_x$ is provided by Dinwood and Ney, see Lemma 3.1 in \cite{dinwoodie_ney}. For bounded Lipschitz functions $f$, let $T_{f}^x$ be the operator on the space of bounded measurable functions, equipped with the uniform metric, given by,
\[
    T_{f}^x(u)(y) = e^{f(y)} \rho_x u(y).
\]
With $r_f(x)$ the spectral radius of $T_f^x$, Dinwood and Ney show that $J_x(\mu)$ can be represented as
\begin{align} \label{eq:DNrate}
    \sup_{f} \left\{ \int f(y) \mu(dy) - r_f(x) \right\},
\end{align}
where the supremum is taken over bounded Lipschitz functions. As a result, the local rate function in \eqref{eqn_local_rate_function} can be written as, 
\begin{align*}
    L(x,\beta) = \inf_\mu \left\{J_x(\mu) : \beta = \int g(x,z)\mu(dz)\right\},
\end{align*}
where $J_x(\mu)$ is given by any of the expressions \eqref{eq:emprate}, \eqref{eq:DVrate} or \eqref{eq:DNrate}. 




\subsection{The limiting Hamiltonian}

Consider the Hamiltonian $H$ that is given as the Fenchel-Legendre transform of the local rate function $L$ in \eqref{eqn_local_rate_function}: 
\[
    H(x,\alpha) = \sup_\beta \{\alpha, \beta\rangle - L(x,\beta)\}. 
\]
We are now interested in making connections between $H$ and the type of limiting Hamiltonian assumed in previous works on large deviations for SA (see Sections \ref{sec:intro} and \ref{sec:litRev}). To this end, we have the following result, where we use the Laplace principle for the empirical measure of a Markov chain (see, e.g, Chapter 6 in \cite{buddup4}) and standard results from convex analysis to show that $H(x,\alpha)$ can be interpreted as a limiting log-moment generating function associated with the transition probability $\rho_x(y, dz)$. 
\begin{proposition}\label{prop:limitHamiltonian}
    Suppose \ref{ass:kernel}, \ref{ass:transitivity} and \ref{ass:logMGF} of Assumption \ref{sufficient_conditions} hold. 
   Take $x \in \bR^{d_1}$, let $\{Y_i\}$ be a Markov chain with transition kernel $\rho_x$ and $J_x$ be defined as in \eqref{eq:emprate}, with $\rho$ replaced by $\rho_x$. Then, 
   \begin{align}\label{eq:limitHamiltonian}
       H(x,\alpha) = \lim_n \frac{1}{n}\log E\left[\exp\left\{ \sum_{i=1}^n \langle \alpha, g(x,Y_i)\rangle \right\}\right], \qquad \alpha \in \mathbb{R}
^{d_1}.   \end{align}
\end{proposition}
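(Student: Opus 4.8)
The plan is to identify both sides of \eqref{eq:limitHamiltonian} with the Legendre transform of the same convex rate function. For the right-hand side, fix $x \in \mathbb{R}^{d_1}$ and let $\{Y_i\}$ be the Markov chain with kernel $\rho_x$. By \ref{ass:kernel} and \ref{ass:transitivity}, this chain is Feller and uniquely ergodic, so by the Donsker--Varadhan theory (see, e.g., Chapter 6 or 8 in \cite{buddup4}, and our \eqref{eq:emprate}), the empirical measures $L^n = \frac{1}{n}\sum_{i=1}^n \delta_{Y_i}$ satisfy an LDP on $\mathcal{P}(\mathbb{R}^{d_2})$ with scaling $\{n\}$ and rate function $J_x$. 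The map $\mu \mapsto \int g(x,z)\,\mu(dz)$ is linear; under \ref{ass:logMGF} the relevant moment generating functions are finite, which supplies both the exponential tightness/moment condition needed to push the LDP through an unbounded (but linearly growing) statistic and the finiteness of the candidate Hamiltonian. Applying the contraction principle (in the form valid for continuous, not-necessarily-bounded functionals together with the exponential moment bound of \ref{ass:logMGF}), the random variables $S_n \doteq \frac{1}{n}\sum_{i=1}^n g(x,Y_i)$ satisfy an LDP on $\mathbb{R}^{d_1}$ with rate function exactly $L(x,\cdot)$ as defined in \eqref{eqn_local_rate_function}.

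Next I would apply Varadhan's integral lemma to the continuous linear functional $\beta \mapsto \langle \alpha, \beta\rangle$ against the LDP for $\{S_n\}$. The moment condition in \ref{ass:logMGF} gives $\sup_{y}\log\int e^{\langle\alpha,g(x,z)\rangle}\rho_x(y,dz) < \infty$, which, combined with a crude uniform-in-$y$ bound on $E[\exp\{\gamma\sum_{i=1}^n\langle\alpha,g(x,Y_i)\rangle\}]$ for $\gamma$ slightly larger than $1$ (obtained by iterating the one-step bound and taking conditional expectations), verifies the superexponential moment hypothesis of Varadhan's lemma. Hence
\[
    \lim_n \frac{1}{n}\log E\left[\exp\left\{\sum_{i=1}^n\langle\alpha,g(x,Y_i)\rangle\right\}\right]
    = \sup_{\beta}\left\{\langle\alpha,\beta\rangle - L(x,\beta)\right\} = H(x,\alpha),
\]
which is precisely the claimed identity \eqref{eq:limitHamiltonian}. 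The equality of the Legendre transform with $H$ is immediate from the definition of $H$ stated just before the proposition.

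I expect the main obstacle to be the unboundedness of $g$ in the second variable: $g(x,\cdot)$ need not be bounded, so neither the contraction principle nor Varadhan's lemma can be invoked in their textbook (bounded-continuous) forms. The remedy is a standard truncation argument: approximate $g(x,\cdot)$ by $g_M(x,z) = g(x,z)\mathbf{1}_{\{|g(x,z)|\le M\}}$ (or a continuous truncation), prove the LDP/Varadhan identity for the bounded version, and then let $M\to\infty$, using the exponential moment bounds from \ref{ass:logMGF} to control the tail contributions uniformly in $n$. One should also check the mild point that the rate function $L(x,\cdot)$ obtained from the contraction is lower semicontinuous with compact sublevel sets, so that Varadhan's lemma genuinely yields the convex conjugate rather than merely a one-sided bound; this follows from lower semicontinuity of $J_x$ (inherited from lower semicontinuity of relative entropy) together with exponential tightness of $\{L^n\}$, which in turn follows from \ref{ass:logMGF}. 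Finally, a small technical remark: Varadhan's lemma directly gives $\sup_\beta\{\langle\alpha,\beta\rangle - L(x,\beta)\}$; since this is by definition $H(x,\alpha)$, no additional convex-analytic duality step (e.g., checking $L$ equals the biconjugate of $H$) is needed for the statement as posed, though such a step would be required if one also wanted to recover $L$ from $H$.
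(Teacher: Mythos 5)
Your route is correct but genuinely different from the paper's, and the difference is instructive. You pass through the contraction principle to obtain an LDP for $S_n = \frac{1}{n}\sum_i g(x,Y_i)$ and then apply Varadhan's lemma to the linear functional $\langle\alpha,\cdot\rangle$. The paper never contracts: it applies the Laplace principle directly on the empirical-measure level to the functional $\mu\mapsto\int\langle\alpha,g(x,y)\rangle\mu(dy)$, yielding $\hat H(x,\langle\alpha,g(x,\cdot)\rangle)=\sup_\mu\{\langle\langle\alpha,g(x,\cdot)\rangle,\mu\rangle-J_x(\mu)\}$ (first for bounded $f$, then extended to the unbounded $\langle\alpha,g(x,\cdot)\rangle$ by a monotone approximation together with the iterated one-step moment bound from \ref{ass:logMGF}), and then relates this to $L$ via a convex-analytic supporting-hyperplane argument showing $L=\hat H^*$. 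Your closing remark---that Varadhan already delivers $\sup_\beta\{\langle\alpha,\beta\rangle-L(x,\beta)\}$ so no separate duality step is needed---is correct, and in fact the paper's convex-analytic digression can be bypassed by the trivial interchange $\sup_\mu=\sup_\beta\sup_{\mu:\int g\,d\mu=\beta}$; in that narrow respect your sketch is leaner. Conversely, the contraction step is the weaker point of your plan: because $\mu\mapsto\int g(x,z)\mu(dz)$ is not continuous in the weak topology when $g(x,\cdot)$ is unbounded, you must handle the unboundedness both in the contraction and again in Varadhan, and your proposed truncation fix requires two interchanges of limits ($M\to\infty$ against $n\to\infty$, once for the rate function $L_M\to L$ and once for the moment generating functions) that are stated as ``standard'' but not carried out. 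The paper deals with the unboundedness exactly once, at the level of the test function in the Laplace principle, which is why its extension step is phrased as a single monotone-approximation plus dominated-convergence argument. If you keep the contraction route, the cleaner fix is the one the paper implicitly uses: drop the contraction entirely, apply Varadhan on $\mathcal P(\mathbb{R}^{d_2})$ to the affine functional of $\mu$, and interchange suprema.
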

\begin{proof}
     By the Feller property for $\rho$ (see Remark \ref{rem:Feller}) and \ref{ass:transitivity}, it follows that the empirical measure of $\{Y_i\}$ satisfies a Laplace principle on $\mathcal{P}(\mathbb{R}^{d_2})$ with rate function $J_x$, see \cite[Theorem 6.6]{buddup}.  For every bounded and measurable function $f$, the linear functional $\mu \mapsto \int f d\mu$, defined on $\mathcal{P}(\mathbb{R}^{d_2})$, is bounded and continuous. For each $x \in \mathbb{R}^{d_1}$, by the Laplace principle for the empirical measure of $\{Y_i\}$, the map $f \mapsto \hat H(x,f)$ given by the limit,
\begin{align*}
    \hat H(x, f) &= \lim_n \frac{1}{n}\log E\left[\exp\left\{ \sum_{i=1}^n f(Y_i)\right\}\right] \\
    &= \lim_n  \frac{1}{n}\log E\left[\exp\left\{-n \left \langle -f, \frac{1}{n}\sum_{i=1}^n \delta_{Y_i}\right\rangle\right\}\right],
\end{align*}
is well-defined on the set of bounded measurable functions.  Moreover,  $\hat H(x, f)$ may be identified as the Fenchel-Legendre transform of $J_x$,
\[
    \hat H(x, f) = \sup_{\mu} \{\langle f,\mu\rangle - J_x(\mu)\}. 
\]
By Assumption \ref{ass:logMGF}, 
the function $\hat H(x, \cdot)$ may be extended to the, possibly unbounded, function $\langle \alpha , g(x,\cdot) \rangle$. Indeed, 
the function $\langle \alpha , g(x,\cdot) \rangle$ can be approximated from below by bounded measurable functions and the dominated convergence theorem can be applied because of the upper bound, 
\begin{align*}
    &\sup_n \frac{1}{n}\log E\left[\exp\left\{ \sum_{i=1}^n \langle \alpha, g(x,Y_i) \rangle \right\}\right]\\ &\quad  = \sup_n \frac{1}{n}\log E\left[\exp\left\{ \sum_{i=1}^{n-1} \langle \alpha, g(x,Y_i) \rangle \right\} E\left[\exp\left\{  \langle \alpha, g(x,Y_n) \rangle \right\} \mid Y_{n-1}, \ldots, Y_1\right]\right] \\ & \quad 
    = \sup_n \frac{1}{n}\log E\left[\exp\left\{ \sum_{i=1}^{n-1} \langle \alpha, g(x,Y_i) \rangle \right\} \int \exp\left\{  \langle \alpha, g(x,y_n) \rangle \right\} \rho_x(Y_{n-1}, dy_n) \right] \\ & \quad 
    \leq \sup_n \frac{1}{n}\log \left(K \cdot E\left[\exp\left\{ \sum_{i=1}^{n-1} \langle \alpha, g(x,Y_i) \rangle \right\}\right] \right)  
    \\ & \quad 
    \leq K < \infty,
\end{align*}
where $K = \sup_{x \in \mathbb{R}^{d_1}} \sup_{y \in K} \log \int_K e^{\langle \alpha, g(x,z) \rangle} \rho_x(y, dz)$ is finite by \ref{ass:logMGF}.
It remains to  show that 
\[
H(x,\alpha) = \hat H(x, \langle \alpha, g(x,\cdot) \rangle),
\]
is the Fenchel-Legendre transform of $L$. We prove this by showing that $L(x,\beta) = \sup_{\alpha} \{\langle \alpha, \beta\rangle - \hat H(x,\langle \alpha, g(x,\cdot)\rangle )\}$, which is proved using a standard argument from convex analysis. Consider the set 
\[
    \Gamma_x = \{(r,s) \subset \mathbb{R} \times \mathbb{R}^{d_1}: r \geq J_x(\mu), \int g(x,y) \mu(dy) = s, \text{ some } \mu \in \mathcal{P}(\mathbb{R}^{d_2})\}. 
\]
Note that $\Gamma_x$ is convex for each $x$. By taking a normal of the form $(1, \lambda_\beta)$ to the tangent plane of $\Gamma_x$ at $(L(x,\beta), \beta)$
it follows that 
\[
    \langle (1, \lambda_\beta), (r,s) - (L(x,\beta), \beta) \rangle \geq 0, \quad (r,s) \in \Gamma_x.
\]
Moreover, the sup in 
\[
    \sup_{\alpha} \inf_{(r,s) \in \Gamma_x}\{\langle (1, -\alpha),(r,s) - (L(x,\beta), \beta))\rangle\},  
\]
is attained at $-\alpha = \lambda_\beta$. 
Therefore, we have that
\begin{align*}
    \sup_{\alpha} \inf_{(r,s) \in \Gamma_x}\{\langle (1, -\alpha),(r,s) - (L(x,\beta), \beta))\rangle\} = \inf_{(r,s) \in \Gamma}\{\langle (1, \lambda_\beta),(r,s) - (L(x, \beta), \beta))\rangle\} \geq 0. 
\end{align*}
Moreover, for all $\alpha$, 
\begin{align*}
    \inf_{(r,s) \in \Gamma_x}\{\langle (1, -\alpha),(r,s) - (L(x,\beta), \beta))\rangle\} \leq 0. 
\end{align*}
Combining the two, it holds that
\begin{align*}
    \sup_{\alpha} \inf_{(r,s) \in \Gamma_x}\{\langle (1, -\alpha),(r,s) - (L(x,\beta), \beta))\rangle\} = \inf_{(r,s) \in \Gamma_x}\{\langle (1, \lambda_\beta),(r,s) - (L(x,\beta), \beta))\rangle\} = 0, 
\end{align*}
which is equivalent to
\begin{align*}
    L(x,\beta) &= \sup_{\alpha} \inf_{(r,s) \in \Gamma_x}\{ r-\langle \alpha, s -\beta\rangle\} \\ &= \sup_{\alpha} \{\langle \alpha, \beta\rangle + \inf_{(r,s) \in \Gamma_x}\{ r - \langle \alpha, s \rangle\}\} \\
    &= \sup_{\alpha} \{\langle \alpha, \beta\rangle + \inf_{\mu}\{ J_x(\mu)  - \langle \alpha, \int g(x,y) \mu(dy) \rangle\}\} \\
    &= \sup_{\alpha} \{\langle \alpha, \beta\rangle - \sup_{\mu}\{ \int \langle \alpha, g(x,y)\rangle \mu(dy)  - J_x(\mu)\}\} \\
     &= \sup_{\alpha} \{\langle \alpha, \beta\rangle - \hat H(x,\langle \alpha, g(x,\cdot) \rangle )\}. 
\end{align*}
This completes the proof. 
\end{proof}

\begin{remark} \label{rem:limitingHamiltonian} 
    Using the representation \eqref{eq:limitHamiltonian} of the limiting Hamiltonian, it follows that the time-dependent limiting Hamiltonian, the Fenchel-Legendre transform of the time-dependent local rate function $L(t,x,\beta) = \frac{1}{h(t)}L(x,\beta)$, is given by, 
    \begin{align*}
         H(t,x,\alpha) &= 
         \sup_\beta \left\{\{\alpha, \beta\rangle - \frac{1}{h(t)}L(x,\beta)\right\} \\ &=\frac{1}{h(t)}
         \sup_\beta \left\{\left \langle \alpha h(t), \beta \right\rangle - L(x,\beta)\right\} \\
         &= \frac{1}{h(t)} H\left(x, \alpha h(t)\right).
    \end{align*}
\end{remark}

\subsection{Continuity of the local rate function}

In this section we prove that, under Assumption \ref{sufficient_conditions}, the local rate function $L$ in \eqref{eqn_local_rate_function} is continuous at every point where it is finite.   

\begin{lemma}\label{lem:conti_L}
Suppose \ref{ass:Lipschitz},\ref{ass:kernel} and \ref{ass:logMGF} hold. For any $(x_1,\beta_1) \in \bR ^{d_1} \times \bR ^{d_1}$ such that $L(x_1,\beta_1)<\infty$, $L$ is continuous at $(x_1,\beta_1)$. 
\end{lemma}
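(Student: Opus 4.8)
The plan is to prove continuity of $L$ at a point $(x_1,\beta_1)$ where $L(x_1,\beta_1)<\infty$ by establishing upper and lower semi-continuity separately, working from the variational formula $L(x,\beta) = \inf_\mu\{J_x(\mu): \int g(x,z)\mu(dz)=\beta\}$ with $J_x(\mu) = \inf_{\gamma\in\mathcal{A}(\mu)} R(\gamma\|\mu\otimes\rho_x)$. The main technical points will be (i) transferring a near-optimal $\mu$ for $(x_1,\beta_1)$ to nearby parameters $(x,\beta)$ while controlling the change in both the constraint and the relative entropy, and (ii) a compactness argument for the lower semi-continuity, using the exponential-moment bound \ref{ass:logMGF} to prevent mass escaping to infinity.

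For \emph{upper semi-continuity}, fix $\varepsilon>0$ and take $\mu^*$ (with an accompanying $\gamma^*\in\mathcal{A}(\mu^*)$) that is $\varepsilon$-optimal for $L(x_1,\beta_1)$, so $R(\gamma^*\|\mu^*\otimes\rho_{x_1}) \le L(x_1,\beta_1)+\varepsilon$ and $\int g(x_1,z)\mu^*(dz)=\beta_1$. Given $(x,\beta)$ near $(x_1,\beta_1)$, I would keep the \emph{same} $\mu^*$: by the chain rule for relative entropy, $R(\gamma^*\|\mu^*\otimes\rho_x) = R(\gamma^*\|\mu^*\otimes\rho_{x_1}) + \int \log\frac{d\rho_{x_1}(y,\cdot)}{d\rho_x(y,\cdot)}\, d\gamma^*$, and the second term is controlled: $\frac{d\rho_{x_1}(y,\cdot)}{d\rho_x(y,\cdot)}(z) = \eta_{x_1}(y,z)/\eta_x(y,z)$, which tends to $1$ uniformly in $(y,z)$ as $x\to x_1$ by the uniform continuity in \ref{ass:kernel}; hence this correction term is $o(1)$. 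This $\mu^*$ has the wrong constraint value $\int g(x,z)\mu^*(dz)$ rather than $\beta$, but by Lipschitz continuity of $g(\cdot,z)$ (\ref{ass:Lipschitz}) this value is within $O(\|x-x_1\|)$ of $\beta_1$, hence within $o(1)+\|\beta-\beta_1\|$ of $\beta$; a small perturbation of $\mu^*$ (e.g.\ mixing in a point mass or shifting a bounded fraction of the mass, using that the exponential-moment bound keeps $g$ effectively tame) corrects the constraint at the cost of an additional $o(1)$ in $J_x$. This gives $\limsup_{(x,\beta)\to(x_1,\beta_1)} L(x,\beta) \le L(x_1,\beta_1)+\varepsilon$, and letting $\varepsilon\downarrow 0$ yields upper semi-continuity. (Using the Donsker--Varadhan or Dinwoodie--Ney representation \eqref{eq:DVrate} or \eqref{eq:DNrate} from Section~\ref{sec:altRep} is an alternative route for this half, since there the $x$-dependence enters only through the operator $\rho_x$.)

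For \emph{lower semi-continuity}, take $(x_n,\beta_n)\to(x_1,\beta_1)$ and, passing to a subsequence, assume $L(x_n,\beta_n)$ converges to $\liminf L(x_n,\beta_n) =: \ell$; we may assume $\ell<\infty$. Choose $\mu_n$ (and $\gamma_n$) that are $1/n$-optimal for $L(x_n,\beta_n)$. The level sets $\{\mu: J_{x}(\mu)\le C\}$ are relatively compact uniformly in $x$ on compacts: this follows because $R(\cdot\|\cdot)$ has compact sub-level sets and, crucially, \ref{ass:logMGF} gives uniform-in-$x$ exponential tightness of $\mu\otimes\rho_x$ — so $\{\gamma_n\}$ is tight and, along a further subsequence, $\gamma_n\to\gamma$, $\mu_n\to\mu$ with $\gamma\in\mathcal{A}(\mu)$. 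Joint lower semi-continuity of relative entropy and the uniform convergence $\mu_n\otimes\rho_{x_n}\to\mu\otimes\rho_{x_1}$ (again from \ref{ass:kernel} plus the Feller property, Remark~\ref{rem:Feller}) give $R(\gamma\|\mu\otimes\rho_{x_1})\le\liminf R(\gamma_n\|\mu_n\otimes\rho_{x_n}) = \ell$. Finally the constraint passes to the limit: $\int g(x_n,z)\mu_n(dz)\to\int g(x_1,z)\mu(dz)$ by Lipschitz continuity of $g$ in $x$ together with the exponential-moment bound \ref{ass:logMGF} (which provides the uniform integrability needed since $g$ is unbounded in $z$), so $\int g(x_1,z)\mu(dz)=\beta_1$ and hence $L(x_1,\beta_1)\le R(\gamma\|\mu\otimes\rho_{x_1})\le\ell$. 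Combining the two inequalities proves continuity at $(x_1,\beta_1)$.

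The step I expect to be the main obstacle is handling the \emph{unboundedness of $g$ in the noise variable $z$} in both halves: in the upper bound when correcting the constraint value by perturbing $\mu^*$, and in the lower bound when passing the constraint $\int g(x_n,z)\mu_n(dz)=\beta_n$ to the limit under mere weak convergence of $\mu_n$. Both are resolved by the exponential-moment assumption \ref{ass:logMGF}, which controls $\int g(x,z)\mu(dz)$ uniformly whenever $J_x(\mu)$ is bounded (via a Donsker--Varadhan / entropy inequality bound $\langle\alpha,\int g\,d\mu\rangle \le J_x(\mu) + \log\int e^{\langle\alpha,g\rangle}\,d(\mu\otimes\rho_x)$, uniformly in $x$), yielding the uniform integrability that upgrades weak convergence to convergence of the constraint integrals. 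A secondary subtlety is the constraint-correction perturbation in the upper-bound argument, which must be done without blowing up $J$; this is where one exploits that near-optimal $\mu$'s concentrate their mass in a way compatible with the finite exponential moments.
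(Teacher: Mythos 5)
Your proposal takes a genuinely different route from the paper. The paper proves continuity of $L$ indirectly: it identifies $L(x,\cdot)$ as the Legendre--Fenchel transform of the limiting Hamiltonian $H(x,\cdot)$ from Proposition~\ref{prop:limitHamiltonian}, reduces the problem (via \cite[Lemma 4.16(f)]{buddup4}) to showing joint continuity of $H(x,\alpha)$, and establishes this by proving equicontinuity of the prelimit family $H_n(x,\alpha)=\frac1n\log E[\exp\{\sum_i\langle\alpha,g(x,Y_i)\rangle\}]$ using the uniform-in-$y$ continuity of $\Lambda(x,\alpha,y)$ (the condition \ref{ass:cont_logMGF}, although the lemma's hypotheses curiously omit it). You instead attack the variational formula for $L$ directly, proving upper and lower semi-continuity separately. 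Your lower semi-continuity half is essentially sound: the compactness of sub-level sets of $J_x$, joint lower semi-continuity of relative entropy, uniform Feller continuity of $\rho_{x_n}(y,\cdot)$ from \ref{ass:kernel}, and the entropy inequality that upgrades weak convergence of $\mu_n$ to convergence of $\int g\,d\mu_n$ via \ref{ass:logMGF} all fit together. Your use of the density ratio $\eta_{x_1}/\eta_x$ with the chain rule to transfer cost between nearby $x$-values is also clean and is analogous in spirit to the estimate the paper makes at the level of $H_n$.

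However, the upper semi-continuity half has a genuine gap at the constraint-correction step, and you flag it yourself but then dismiss it too quickly. Starting from a near-optimal $\mu^*$ for $(x_1,\beta_1)$, the same $\mu^*$ evaluated at nearby $x$ satisfies $\int g(x,z)\,\mu^*(dz)=\beta_1+o(1)$ rather than $\beta$, and you need to perturb $\mu^*$ to a $\tilde\mu$ hitting the exact constraint $\beta$ while increasing $J_x$ by only $o(1)$. The sentence ``mixing in a point mass or shifting a bounded fraction of the mass \ldots corrects the constraint at the cost of an additional $o(1)$ in $J_x$'' is not a proof: adding a point mass can send $J_x$ to $+\infty$, and a convex-combination argument $\tilde\mu=(1-\delta)\mu^*+\delta\nu$ only works if you can produce, for every direction in $\mathbb{R}^{d_1}$, a measure $\nu$ with finite $J_x(\nu)$ whose constraint vector $\int g(x,\cdot)\,d\nu$ points that way. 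In other words, you need the set $\{\int g(x,\cdot)\,d\mu:J_x(\mu)<\infty\}$ to contain $\beta_1$ in its interior, and if $\beta_1$ sits on the boundary of the effective domain of $L(x_1,\cdot)$ this argument breaks down. This is precisely the difficulty that the paper's route through duality avoids: once $H(x,\cdot)$ is known to be finite-valued, convex, and differentiable (cited from \cite{iscneynum,dup2}) and jointly continuous, standard facts about Legendre transforms of smooth finite convex functions (as packaged in \cite[Lemma 4.16(f)]{buddup4}) deliver continuity of $L$ wherever it is finite without ever constructing an explicit near-optimal perturbation. If you wish to keep the direct route, the upper bound would need either a proof that $\beta_1\in\operatorname{int}\operatorname{dom}L(x_1,\cdot)$ under \ref{ass:logMGF}, or a sharper construction of the correcting measures $\nu$ that uses the specific form of $J_x$ and the ergodicity of $\rho_x$ rather than an appeal to ``tameness.''
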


\begin{proof}
Let $H$ be the limiting Hamiltonian given by \eqref{eq:limitHamiltonian}. By Proposition \ref{prop:limitHamiltonian}, $L(x, \cdot)$ is equal to the Legendre-Fenchel transform of $H(x, \cdot)$. In \cite{iscneynum} the authors show that $\alpha \mapsto H(x,\alpha)$ is convex and smooth; see also Section 4.3 in \cite{dup2}. 
To prove the continuity of $L$ at $(x,\beta)$, by the arguments used in \cite[Lemma 4.16 (f)]{buddup4}, it suffices to show the continuity of $H(x,\alpha)$ in $(x,\alpha)$.
In turn, to prove that $(x,\alpha) \mapsto H(x,\alpha)$ is continuous, it suffices to show that the family $\{H_n\}_{n \in \mathbb{N}}$, defined by
\[
    H_n(x, \alpha) = \frac{1}{n}\log E\left[\exp\left\{ \sum_{i=1}^n \langle \alpha, g(x,Y_i)\rangle \right\}\right], \quad (x, \alpha) \in \bR^{d_1} \times \bR^{d_1}, 
\]
is equicontinuous. 

By Assumption \ref{ass:kernel}, for each $\varepsilon_1 > 0$ there exists $\delta_1 > 0$ such that $|x_1-x_2|< \delta_1$ and $|\alpha_1 - \alpha_2|< \delta_1$ implies that 
\begin{align*}
    -\varepsilon_1 \leq \Lambda(x_1, \alpha_1, y) - \Lambda(x_2, \alpha_2, y) \leq \varepsilon_1, \qquad y \in K. 
\end{align*}
By exponentiation, for each expression in the last display, and selecting $\varepsilon_1$ sufficiently small, there is, for each $\varepsilon > 0$, a $\delta > 0$ such that $|x_1-x_2|< \delta$ and $|\alpha_1 - \alpha_2|< \delta$ implies that, 
\begin{align*}
    1-\varepsilon \leq \frac{\int \exp\{\langle \alpha_1, g(x_1, z)\rangle\} \rho_{x_1}(y, dz)}{\int \exp\{\langle \alpha_2, g(x_2, z)\rangle\} \rho_{x_2}(y, dz)}\leq 1+\varepsilon, \qquad y \in K. 
\end{align*}

Repeatedly applying the inequalities in the previous display yields
\begin{align*}
    &(1-\varepsilon)^{n}\int\cdots\int e^{\langle \alpha_2, g(x_2,y_1)\rangle+\cdots+\langle \alpha_2, g(x_2,y_n)\rangle}\rho_{x_2}(y_0,dy_1)\cdots\rho_{x_2}(y_{n-1},dy_n) \\
    &\qquad\leq \int\cdots\int e^{\langle \alpha_1, g(x_1,y_1)\rangle+\cdots+\langle \alpha_1, g(x_1,y_n)\rangle}\rho_{x_1}(y_0,dy_1)\cdots\rho_{x_1}(y_{n-1},dy_n)\\
   &\qquad\leq (1+\varepsilon)^{n}\int\cdots\int e^{\langle \alpha_2, g(x_2,y_1)\rangle+\cdots+\langle \alpha_2, g(x_2,y_n)\rangle}\rho_{x_2}(y_0,dy_1)\cdots\rho_{x_2}(y_{n-1},dy_n) .
\end{align*}
Taking logarithm and scaling with $1/n$, and rearranging the inequalities, we obtain,
\[
   \log(1-\varepsilon) \leq H_n(x_1,\alpha_1)-H_n(x_2,\alpha_2)\leq \log(1+\varepsilon). 
\]
This proves that $\{H_n\}_{n \in \mathbb{N}}$ is equicontinuous and completes the proof.
\end{proof}

\section{Related work for constant and decreasing step size}
\label{sec:litRev}
The literature on large deviations for recursive algorithms of the form \eqref{eqn_recursion} is concerned with the two cases of (i) constant step size, where $\varepsilon_n = \varepsilon > 0$, does not depend on $n$, and (ii) decreasing step size, where $\varepsilon_n \to 0$ as $n \to \infty$. For constant step size, $t_n = \varepsilon n$ and $\mathbf{m}(t) = \lfloor n \varepsilon \rfloor$, and LDPs for the piecewise linearly interpolated process $X^\varepsilon(t)$ of $\{X^\varepsilon_n\}$ with interpolation time $\varepsilon$, are obtained as $\varepsilon \to 0$, akin to the small-noise results of Freidlin and Wentzell \cite{frewen2}. The rate function $I$ associated with such an LDP takes the form of an action functional: 
\begin{align*}
    I(\varphi) = \int_0^T L(\varphi(t), \dot \varphi(t)) ds, 
\end{align*}
if $\varphi$ is an absolutely continuous function, and $I(\varphi) = \infty$, otherwise, where $L$ is a local rate function.  

In the case of decreasing step size, LDPs are obtained for the process $\{X^n\}$ defined in \eqref{eqn_recursion}. In this case, with $\beta_n \doteq m(t_n+T)-n$, the limiting time scale is $h(t) = \lim_n h^n(t)$, where $h^n(t) = \beta_n \varepsilon_{n+k-1}$ for $t \in [t_{n+k-1} - t_n, t_{n+k}- t_n)$, $k \in \{1, \dots, \beta_n\}$. The associated rate function takes the form
\begin{align*}
    I(\varphi) = \int_0^T \frac{1}{h(t)} L(\varphi(t), \dot \varphi(t)) ds.  
\end{align*}

The main difference between the constant and decreasing step size settings is thus the inclusion of the limiting time scale $h(t)$ in the rate function. Note, however, that for some choices of $\varepsilon_n$, the limiting time scale $h(t)$ may be constant and equal to $1$. One such example is $\varepsilon_n = (n+1)^{-\alpha}$, for $\alpha \in (0,1)$.  

In the existing literature, the LDP is obtained by identifying a Hamiltonian $H(x,\alpha)$, that sometimes can be interpreted as a limiting log-moment generating function, and defining the local rate function $L(x,\beta)$ as the convex conjugate of $H(x,\alpha)$. An issue with this approach, and in applying it to different SA algorithms, is that the Hamiltonian is defined as a limit and its relation to the underlying dynamics, such as the transition kernel $\rho_x(y, dz)$, can only be established in some special cases.

\subsection{Large deviations for constant step size}
\label{sec:const_step}
The large deviations theory for stochastic approximation with constant step size originates from the work of Freidlin \cite{fre76, fre78}. Therein, focus is on dynamical systems in continuous time of the form,
\begin{align}\label{eq:dyn}
    \dot x^\varepsilon(t) = b(x^\varepsilon(t), \xi(t/\varepsilon)), \qquad x^\varepsilon(0) = x, 
\end{align}
over a finite time interval $[0,T]$.  The function $b$ is assumed bounded, with bounded derivatives, $\{\xi(t), t \geq 0\}$ is bounded and $\varepsilon \to 0$. It is also assumed that there is a limiting Hamiltonian $H(x,\alpha)$ such that for arbitrary step functions $\varphi$ and $\alpha$ from  $[0,T]$ to $\mathbb{R}^{d_1}$, the following limit exists:
\begin{align}\label{eq:FreH}
    \lim_{\varepsilon \to 0} \varepsilon \log E\left[\exp\left\{ \frac{1}{\varepsilon} \int_0^T \langle \alpha(t), b(\varphi(t), \xi(t/\varepsilon)) \rangle dt \right\} \right] = \int_0^T H(\varphi(t), \alpha(t)) dt. 
\end{align}
With $L$ as the convex conjugate of $H$, an LDP is proved for $\{x^\varepsilon\}$, as $\varepsilon \to 0$, on,
\[
C_{0,T}^x = \{\varphi \in C([0,T];\mathbb{R}^{d_1}), \varphi(0) = x\},
\]
with rate function given by,
\begin{align*}
    I(\varphi) = \int_0^T L(\varphi(t), \dot \varphi(t)) ds, 
\end{align*}
if $\varphi$ is absolutely continuous and $I(\varphi) = \infty$, otherwise. Moreover, when $\{\xi(t)\, t \geq 0\}$ is a finite state Markov chain, Freidlin identifies the limiting Hamiltonian as the largest eigenvalue of a tilted intensity matrix. 


In \cite{iscneynum}, Iscoe, Ney and Nummelin generalize the results of Freidlin by considering LDPs for Markov-additive processes in both continuous time and discrete time. In the discrete time setting, which relates more closely to the results of this paper, they consider a process of the form $(Y_n, X_n)$, where, 
\begin{align*}
    &P\left((Y_n, X_n-X_{n-1}) \in A \times \Gamma \,|\, (Y_{n-1}, X_{n-1}) = (y,x)\right) \\ & \quad = P((Y_n, X_n-X_{n-1}) \in A \times \Gamma \,|\, Y_{n-1} = y). 
\end{align*}
This corresponds to recursions of the form \eqref{eqn_recursion} where $g(x,y) = g(y)$ does not depend on $x$. They assume that there exists a probability measure $\nu$ on $E \times \mathbb{R}^{d_1}$, an integer $m_0$, and real numbers $0 < a \leq b < \infty$, such that, 
\begin{align*}
    a \nu(A \times \Gamma) \leq P^{m_0}( A \times \Gamma) \leq b \nu(A \times \Gamma),
\end{align*}
for all $x \in E$, sets $A$ in the relevant $\sigma$-algebra on the state space, $\Gamma \in\mathbb{R}^{d_1}$. With $\hat P(\alpha) = \hat P(y, A;\alpha) =  \int \exp\{\langle \alpha, x \rangle\} P(y, A \times dx)$ they derive an LDP, and more detailed asymtotics, for $P^n(y, A \times nF) = P((Y_n, X_n-X_0) \in A \times nF \,|\, Y_0 = y)$. In particular, it follows from Lemma 3.1 (ii) in \cite{iscneynum} that, 
\begin{align*}
     \lim_{n \to \infty} \frac{1}{n} \log \hat P^n(y, A; \alpha)  = \log \lambda(\alpha), \quad \alpha \in \mathcal{D}, 
\end{align*}
where $\lambda(\alpha)$ is the principal eigenvalue of $\hat P(\alpha)$. 

Dupuis then further develops the large deviations results for discrete systems of the form \eqref{eqn_recursion} with constant step size in \cite{dup2}, using milder conditions on the limiting Hamiltonian. 
More specifically, one of the results of the paper (see Section 4.3 of \cite{dup2}), is an LDP for the model \eqref{eqn_recursion} with $\varepsilon_n = \varepsilon$ and $g(x,y)$ bounded and uniformly (in $y$) Lipschitz-continuous in x, and measurable in $y$. Additional assumptions used therein, which can be viewed as stronger versions of \ref{ass:kernel} and \ref{ass:bounded_kernel}, are: the process $\{Y_n \}$ is sampled from a transition kernel $\rho_{X_n}(y, \cdot)$ with density $\eta_{X_n}(y,\cdot)$ with respect to a common reference measure $\lambda(dz)$ such that, for a given compact set $F_1$, 
\begin{itemize}
    \item[i)] there are $0< a \leq A < \infty$ such that for all $x \in F_1$, and all $y,z$, $a \leq \eta_x(y,z) \leq A$, and
    \item[ii)] $\eta_x(y,z)$ is Lipschitz continuous in $x$, uniformly in $y,z$, for $x \in F_1$.
\end{itemize}

\subsection{Large deviations for decreasing step size}
\label{sec:decr_step}
As described in Section \ref{sec:intro}, in the case of SA with decreasing step size, the theory of large deviations is not as well-developed. The first results are obtained by Kushner \cite{kus16}, who considers the recursion \eqref{eqn_recursion}, with $\varepsilon_n = (n+1)^{-\rho}$, $\rho \in (0,1]$, and $g(x,y)) = b(x) + y$, with $b(\cdot)$ Lipschitz continuous and the noise sequence $\{Y_n\}$ a sequence of iid centered Gaussian variables. The discrete time and time-changed analogue of \eqref{eq:FreH}, the assumption of a limiting Hamiltonian, is given by,
\begin{align*}
    & \lim_{n \to \infty} \lambda_n \log E\left[\exp\left\{ \sum_{i=0}^{N-1} \left\langle \alpha(i\Delta), \sum_{j=\mathbf{m}(t_n+i\Delta)}^{\mathbf{m}(t_n + (i+1)\Delta)-1} \varepsilon_j (b(x) + Y_j)/\lambda_n  \right\rangle\right\} \right] \\ & \quad = \int_0^T H(t,x,\alpha(t)) dt, 
\end{align*}
where $T = N\Delta$, $\Delta > 0$, and $\alpha$ is constant on intervals $[i\Delta, (i+1)\Delta)$. In \cite{kus16}, Kushner identifies the appropriate normalising sequence,
\[
\lambda_n = \sum_{j=n}^{m(t_n+T)} \varepsilon_j^2, 
\]
which can be shown to be asymptotically proportional to $1/\beta_n$ with $\beta_n$ as in Assumption \ref{ass:limith}.  He goes on to prove an LDP with rate function,
\begin{align*}
    I(\varphi) = \int_0^T L(t, \varphi(t), \dot \varphi(t)) ds, 
\end{align*}
where the local rate function $L(t, \varphi(t), \dot \varphi(t))$ is the convex conjugate of $H$.

In the follow-up work \cite{dupkus2}, Dupuis and Kushner develop the theory further by considering recursions of the form \eqref{eqn_recursion} with step-sizes satisfying $\varepsilon_n \geq 0,$ $\sum_n \varepsilon_n = \infty$, $\varepsilon_n \to 0$, and $g(x,y) = \bar{b}(x) + b(x,y)$, with $E[b(x,Y_n )] = 0$. It is further assumed that $Y_n = (\tilde Y_n, \hat Y_n)$, where $\{\tilde Y_n\}$ and $\{\hat Y_n\}$ are mutually independent, $\{\tilde Y_n\}$ is stationary and bounded, and $\{\hat Y_n\}$ is a stationary centered Gaussian process with summable correlation function. Moreover, $b(x,Y_n) = b_1(x,\tilde Y_n ) + b_0(x)\hat Y_n$, where $b_1(\cdot, \tilde y)$, $b_0$ and $\bar b$ are uniformly (in $\tilde y, x$) Lipschitz-continuous and bounded. Note that, in contrast to our setting, in \cite{dupkus2} it is not assumed that the distribution of the noise $Y_{n+1}$ may depend on the state $X_n$. 
It is assumed that there exists a continuous function $h_1$ such that, 
\[
\lim_{\delta \to 0} \lim_{n\to \infty} \frac{\varepsilon_{m_n(t+\delta)}}{\varepsilon_n} = h_1(t).
\]
The results in \cite{dupkus2} also rely on the assumptions of existence of a limiting Hamiltonian: i.e., that there is a continuous function $H(t, x, \alpha)$ with $\alpha \mapsto H(t,x,\alpha)$ continuously differentiable for each $t,x$ such that the following limit exists,
\begin{align*}
    &\lim_{\delta \to 0}\lim_{n \to \infty} \lambda_n \log E\left[\exp\left\{ \sum_{i=0}^{T/\delta-1} \left\langle \alpha(i\delta) \varepsilon_{m_n(i\delta)}, \sum_{j=m(i\delta)}^{m((i+1)\delta)-1} b(\psi(i\delta),Y_j)  \right\rangle\right\} \right] \\ & \quad = \int_0^T H(t,x,\alpha(t)) dt.
\end{align*}
A particular example studied in \cite{dupkus2} is when $\{Y_n, -\infty < n < \infty\}$ is bounded and stationary and there is a continuous $\hat H_0(\cdot, \cdot)$ with $\alpha \mapsto \hat H_0(\alpha, x)$ continuously differentiable for each $x$ such that 
\begin{align*}
    \lim_{N \to \infty} \frac{1}{N} \log E\left[e^{ \left\langle \alpha, \sum_{j=0}^{N-1} b(\psi, Y_j)   \right\rangle} \right]
     &= \lim_{N \to \infty} \frac{1}{N} \log E_0 \left[e^{\left\langle \alpha, \sum_{j=0}^{N-1} b(\psi, Y_j)   \right\rangle} \right] \\
     &= \hat H_0(\alpha, \psi),
\end{align*}
where $E_0$ denotes expectation conditional on $\{Y_j, j < 0\}$ and the convergence is uniform in the conditioning data. 
With the limiting Hamiltonian established in Proposition \ref{prop:limitHamiltonian}, we provide an analogous representation in the setting where the distribution of the noise may be state-dependent.

\section{Applications}
\label{sec:appl}
In this section we present applications to learning algorithms in statistics, machine learning and statistical physics that can be stated as stochastic approximations satisfying Assumption \ref{sufficient_conditions}. 

\subsection{Stochastic gradients}
Consider minimizing a function $G(x) = \sum_{m =1}^M G_m(x)$, by stochastic gradient descent (SGD). Let us assume that $x \mapsto \nabla G_m(x)$ is bounded and Lipschitz continuous for all $m \in \{1, \dots, M\}$. Consider a standard SGD algorithm; in the $k$th iteration an index $Y_{k+1}$ is selected uniformly at random on $\{1, \dots, M\}$ and updated according to,
\begin{align*}
    X_{n+1} = X_n - \varepsilon_{n+1} \nabla G_{Y_{n+1}}(X_n). 
\end{align*}
Consequently, $\{X_n\}$ satisfies the stochastic approximation  \eqref{eqn_recursion} where $\{Y_k\}$ is an iid sequence,  $\rho_x(y, \cdot) = \rho(\cdot)$ is the uniform distribution on the integers $\{1,\dots, M\}$, and  $g(x,m) = - \nabla G_m (x)$. Assumption \ref{sufficient_conditions} is automatically satisfied by the assumptions on $\nabla G_m$, $\lambda$ as the counting measure and since $\rho_x(y, m)$ does not depend on $x,y$. 

By Theorem \ref{thm:main} the continuous interpolations of $\{X^n_k\}$ given by \eqref{eqn_recursion} satisfies a Laplace principle with rate function $I$ given by \eqref{eqn_rate_function} where the local rate function $L$ is given by \eqref{eqn_local_rate_function}. Since, $\rho_x(y, m) = \rho(m) = 1/M$ does not depend on $x,y$ the local rate function simplifies to,
\begin{align*}
L(x,\beta) &= \inf_\mu \left \{ R(\mu \| \rho): \beta = - \sum_{m=1}^M \nabla G_m(x) \mu(m) \right\} \\
&= \sup_\alpha \{ \langle \alpha, \beta\rangle - \bar{H}(x,\alpha)\},  
\end{align*}
where,
\begin{align*}
    \bar{H}(x,\alpha) &= 
    \log \left( \frac{1}{M} \sum_{m=1}^M \exp\left\{
    - \langle \alpha, \nabla G_m(x) \rangle \right\}\right). 
\end{align*}
A concrete example arises in maximum likelihood estimation of a logistic regression with data $\{(\xi_m, \upsilon_m)\}_{m=1}^M$ where $\xi_m$ are explanatory variables and $\upsilon_m$ labels in $\{-1,1\}$ and $\phi$ represents a feature function. Then the negative log-likelihood to be minimized is given by,
\[
G(x) = \sum_{m=1}^M - \log \mathrm{sigm}\left(\upsilon_m x^T\phi(\xi_m) \right),
\] 
where $\mathrm{sigm}(t) = (1+e^{-t})^{-1}$ is the sigmoid function and 
\[
    \nabla G_m(x) = \upsilon_m \phi(\xi_m)\left(1-\mathrm{sigm}(\upsilon_m x^T\phi(\xi_m)\right), 
\]
which is bounded and Lipschitz continuous in $x$, for all $m \in \{1, \dots, M\}$. 

More general stochastic gradients appear in the minimization of functions of the form $\bar G(x) = \int G(x,y) \gamma(dy)$ for some distribution $\gamma$. With $\{Y_n\}$ iid with distribution $\gamma$ the algorithm,
\begin{align*}
    X_{n+1} = X_n - \varepsilon_{n+1} \nabla_x G(X_n, Y_{n+1}), 
\end{align*}
can be used to minimize $\bar G$. If $\nabla_x G(x,y)$ is bounded and Lipschitz continuous in $x$, then Assumption \ref{sufficient_conditions} is satisfied and the Laplace principle holds with local rate function,
\[
L(x,\beta) = \inf_\mu \left \{ R(\mu \| \gamma): \beta = - \int \nabla_x G(x,y) \mu(dy) \right\} = \sup_\alpha \{ \langle \alpha, \beta\rangle - \bar{H}(x,\alpha)\},  
\]
where,
\begin{align*}
    \bar{H}(x,\alpha) &= 
    \log \left( \int \exp\left\{
    - \langle \alpha, \nabla_x G(x,y) \rangle \right\}\gamma(dy) \right). 
\end{align*}

\subsection{Persistent contrastive divergence} 

Consider a parametrized probability density of the form,
\[
    p(v, h | x) = \exp\left\{ -E(v, h; x) + F(x)\right\}, 
\]
where $x$ denotes the parameters; $v$ represents observed (visible) variables, $h$ represents unobserved (hidden) variables, $E$ is referred to as the energy and $F$ as the free energy, 
\[
    F(x) = - \log \int \exp\{-E(v,h; x) \} \lambda(dv,dh). 
\]

In order to estimate the parameters, given independent observations $v^{(1)}, \dots, v^{(M)}$ from $p(v | x) = \int p(v,h|x) \lambda(dh)$, one can use a maximum likelihood approach. This amounts to minimizing the 
negative log-likelihood, which is proportional to,
\[
    - \log L(x) = - \frac{1}{m}\sum_{m=1}^M \log p(v^{(m)} | x). 
\]
To employ a gradient descent algorithm for this minimization task would require knowledge of the (negative) gradient $- \nabla _x \log L(x)$, which here takes the form,
\begin{align*}
    -\nabla_x \log L(x) &= - \frac{1}{m}\sum_{m=1}^M \frac{\nabla_x \int \exp\left\{-E(v^{(m)},h; x) + F(x) \right\} \lambda(dh)}{p(v^{(m)} | x)} \nonumber \\ 
    &= \frac{1}{m}\sum_{m=1}^M \frac{\int \left(\nabla_x E(v^{(m)},h; x) - \nabla_x F(x) \right)p(v^{(m)},h | x) \lambda(dh)}{p(v^{(m)} | x)} \nonumber \\ 
    &= \frac{1}{m}\sum_{m=1}^M \left[\int \nabla_x E(v^{(m)},h; x) p(h | v^{(m)},x) \lambda(dh) - \nabla_x F(x)  \right] \nonumber \\ 
     &= \frac{1}{m}\sum_{m=1}^M \left[\int \nabla_x E(v^{(m)},h; x) p(h | v^{(m)},x) \lambda(dh) \right. \\
     & \qquad \qquad \qquad \left. - \int \nabla_x E(v,h; x) p(v,h |x) \lambda(dv,dh) \right]. 
\end{align*}
Thus is, it would require being able to compute terms of the form, for $m=1, \dots, M$,
\begin{align}
\label{eq:RBM_pos_neg}
\nabla_x E(v^{(m)},h; x) p(h | v^{(m)},x) \lambda(dh) - \int \nabla_x E(v,h; x) p(v,h |x) \lambda(dv,dh),     
\end{align}
which may be intractable. Examples \ref{ex:RBM}-\ref{ex:expFamily} illustrate how in some cases simplifying model assumptions can assist in computing the first term in \eqref{eq:RBM_pos_neg} explicitly. However, for the general case we can write,
\[
    p(h | v, x) = \exp\left\{-E(v,h; x) + F_H(v,x)\right\}, 
\]
where,
\[ F_H(v,x) = - \log \int  \exp\left\{-E(v,h; x) \right\} \lambda(dh). 
\]
This can help us approximate the gradient in \eqref{eq:RBM_pos_neg} in the following way: First, we can construct Markov kernels, $\rho^{(m,1)}_x(y^{(1)}, dz^{(1)})$ and  $\rho^{(2)}_x(y^{(2)}, dz^{(2)})$, where $y^{(1)} = h$, $y^{(2)} = (v,h)$ and $\rho^{(m,1)}_x(y^{(1)}, dz^{(1)})$ has invariant distribution $p(h | v^{(m)},x)$ and $\rho^{(2)}_x(y^{(2)}, dz^{(2)})$ has invariant distribution $p(v,h|x)$. Second, we sample $Y_{n+1} = (Y_{n+1}^{(1)}, Y_{n+1}^{(2)})$ by drawing an index $m$ at random and drawing $Y_{n+1}^{(1)}$ from $\rho^{(m,1)}_{X_n}(Y_n^{(1)}, dz^{(1)})$ and $Y_{n+1}^{(2)}$ from $\rho^{(2)}_{X_n}(Y_n^{(2)}, dz^{(2)})$ independently of each other and updating,
\[
X_{n+1} = X_n - \varepsilon_{n+1} \left(\nabla_x E(v^{(m)}, Y_{n+1}^{(1)}; X_n) - \nabla_x E( Y_{n+1}^{(2)}; X_n) \right). 
\]
This can be identified as the stochastic recursion \eqref{eqn_recursion} with,
\[
    \rho_x(y, dz) = \frac{1}{m}\sum_{m=1}^M \rho^{(m,1)}_x(y^{(1)}, dz^{(1)}) \rho_x^{(2)}(y^{(2)}, dz^{(2)}),
\]
and, 
\[
g(x,y) = \nabla_x E(v^{(m)},y^{(1)}; x)  - \nabla_x E(y^{(2)}; x). 
\]

\begin{example}
\label{ex:RBM}
In Restricted Boltzmann Machines (RBMs) $v$ and $h$ are binary with $x = (W,b_V, b_H)$, where $W$ is a matrix, $b_V, b_H$ are vectors, and 
\[
E(v,h; W,b_V,b_H) = -v^T Wh - v^Tb_V - h^T b_H. 
\]
This form for $E(v,h;W,b_V, b_H)$ implies that the components of $h$ are conditionally independent given $v$ with success probability  $p(h_j = 1 | v,x) = \mathrm{sigm}(v^TW e_j + e_j^Tb_H)$. The first term in \eqref{eq:RBM_pos_neg} therefore reduces to,
\begin{align*}
   &\frac{1}{m}\sum_{m=1}^M \sum_h \left[\nabla_{W_{ij}} E(v^{(m)},h; W,b_V, b_H) \right] p(h | v^{(m)},W,b_V,b_H) \\ & \quad = \frac{1}{m}
   \sum_{m=1}^M \sum_h -v^{(m)}_i h_j \mathrm{sigm}(v^TWe_j + e_j^Tb_H)^{h_j}\mathrm{sigm}(-(v^TWe_j + e_j^Tb_H))^{1-h_j}  \\ & \quad = 
   - \frac{1}{m}\sum_{m=1}^M v^{(m)}_i \mathrm{sigm}(v^TWe_j + e_j^Tb_H). 
\end{align*}
The second term is given by the expectation $E[V_i H_j]$ under the joint distribution $p(v,h | x)$. Let $\rho_x((v_0,h_0), (v_1,h_1))$ denote a stochastic kernel with $p(v,h | x)$ as its invariant distribution and approximate $E[V_i H_j]$ by its expectation under $\rho_x((v_0,h_0), \cdot)$, where $\rho_x((v_0,h_0), \cdot)$ may be taken as the block-Gibbs sampler,
\begin{align*}
    &\rho_x((v_0,h_0), (v_1,h_1)) = p(h_1|v_0,x) p(v_1|h_1,x) \\ &\quad = \prod_{j=1}^{d_H} \mathrm{sigm}(v_0^TWe_j + e_j^Tb_H)^{h_{1j}}\mathrm{sigm}(-(v_0^TWe_j + e_j^Tb_H))^{1-h_{1j}} \\ & \qquad \times \prod_{i=1}^{d_V} \mathrm{sigm}(e_i^TWh_1 + e_i^Tb_V)^{v_{1i}}\mathrm{sigm}(-(e_i^TWh_1 + e_1^Tb_V))^{1-v_{1i}}.  
\end{align*}

The persistent contrastive divergence algorithm for estimating the parameters is then given by \eqref{eqn_recursion}, where $Y_{n+1} = (v_{n+1},h_{n+1})$ is now sampled from $\rho_{X_n}(Y_n, \cdot)$ and 
\[
g(x,y) = \frac{1}{m}\sum_{m=1}^M \left[\int \nabla_x E(v^{(m)},h; x) p(h | v^{(m)},x) \lambda(dh) - \nabla_x E(y; x)  \right].
\]
Because $\mathrm{sigm}$ is bounded and continuous and the state space $\{0,1\}^{d_V} \times \{0,1\}^{d_H}$ is finite, 
it is straightforward to verify Assumption \ref{sufficient_conditions}. 
\end{example}

\begin{example} 
\label{ex:expFamily}
Consider an exponential family with $E(v,h;x) = E(v;x) = x^T\phi(v) - \log c(v) $, that does not depend on hidden variables and is linear in the parameters $x$. Then, $\nabla_x E(v^{(m)}; x) = \phi(v^{(m)})$, whereas for the free energy we have $\nabla_x F(x) = E[\phi(V)]$, where the expectation is taken under $p(v | x)$ and may be intractable. Thus, $g(x,y)$ becomes,
\begin{align*}
   g(x,y) =  \frac{1}{m} \sum_{m=1}^M \phi(v^{(m)}) - \int \phi(v) p(v | x) \lambda(dv). 
\end{align*}
\end{example}

\subsection{The Wang-Landau Algorithm}

The Wang-Landau algorithm for general state spaces includes many popular multicanonical Monte Carlo methods, such as simulated tempering. Let $\{(\mathcal{Y}_i, \mathcal{B}_i, \lambda_i)\}_{i=1}^d$ be measure spaces with $\lambda_i$ being $\sigma$-finite for each $i$. Let $\mathcal{Y} = \cup_{i=1}^d \mathcal{Y}_i \times \{i\}$, be the union space equipped with the $\sigma$-field $\mathcal{B}$ generated by the sets $\{(A_i, i): i \in \{1,\dots, d\}, A_i \in \mathcal{B}_i\}$ and define the measure $\lambda$ on $\mathcal{B}$ by  $\lambda(A,i) = \lambda_i(A) I\{A \in \mathcal{B}_i\}$. Given non-negative integrable functions $f_i$, $i=1\dots,d$, let $x(i) = \int_{\mathcal{Y}_i} f_i(y) \lambda_i(dy)/Z$, where $Z = \sum_{i=1}^d \int_{\mathcal{Y}_i} f_i(y) \lambda_i(dy)$. Assuming that $x(i) > 0$ for each $i =1, \dots, d,$ the aim is to sample from $\pi$ on $\mathcal{B}$ given by
\[
    \pi(dy,i) \propto \frac{f_i(y)}{x(i)} I\{y \in \mathcal{Y}_i\} \lambda_i(dy),
\]
and to estimate the normalizing constants  $x(i)$. Let $\rho_x((y,i), (dz,j))$ be a Markov kernel with invariant density $\pi$. The original algorithm considers the case where $\pi$ is uniform in $i$, whereas the general case considered here is due to \cite{yvesLiu}. The basic for of the Wang-Landau algorithm initiates $(Y_0, I_0) \in \mathcal{Y}$, $\phi_0 \in (0,\infty)^d$ and $x_0 = \phi_0/\sum_{i=1}^d \phi_0(i)$. At each $k \geq 0$, given $(Y_k,I_k)$, $\phi_k$ and $x_k$, sample $(Y_{k+1}, I_{k+1})$ from $\rho_{x_k}((Y_k, I_k), \cdot)$ and update,
\begin{align*}
    \phi_{k+1}(i) &= \phi_k(i)(1+\varepsilon_k I\{I_{k+1} = i\}),\quad i=1, \dots, d,\\
    x_{k+1}(i) &= \frac{\phi_{k+1}(i)}{\sum_{j=1}^d \phi_{k+1}(j)}.  
\end{align*}

The Wang-Landau algorithm is a stochastic approximation with update function $g$ given by,
\[
g(\phi,(z,j)) = \phi + \phi(j) e_j,
\] 
where $e_j$ is the unit-vector in the $j$th coordinate.

\begin{example}[Multicanonical Monte Carlo]
\label{ex:MulticanMC}
Let $\Sigma$ be a finite state space, e.g.\ $\{-1,1\}^N$, and $E: \Sigma \to \mathbb{R}$ an energy function and consider the Gibbs distribution with density $\bar \pi$ proportional to $\exp\{-E(\sigma)\}$. A collection of energy levels $-\infty \leq E_0 < \dots  < E_d \leq \infty$ induces a partition $\mathcal{Y}_i = \{\sigma \in \Sigma: E_{i-1} < E(\sigma) \leq E_i\}$. With $x(i) = \bar \pi(\mathcal{Y}_i)$, $f_i(y) = E(y)$ and $\lambda_i = \lambda$, $x(i)$ can be estimated from samples from the measure $\pi(dy,i)$, obtained using the stochastic approximation scheme described above.    
\end{example}

\begin{example}[Estimation of free energy differences]
\label{ex:FreeEnergy}
Let $\Sigma$ be a finite state space, e.g.\ $\{-1,1\}^N$, some $N < \infty$, $E: \Sigma \times \Omega \to \mathbb{R}$ an energy function parametrized by a finite set $\Omega$ (for example temperatures), and consider the Gibbs distribution with density $p_{\Sigma, \Omega}$ proportional to $\exp\{-E(\sigma, \omega)\}$. The conditional density of the state given the parameter $\omega$ is given by 
\[
p_{\Sigma \mid \Omega}(\sigma | \omega) = \exp\{-E(\sigma, \omega) + F(\omega)\},
\] 
where $F(\omega) = -\log \sum_{\sigma} \exp\{-E(\sigma, \omega)\}$ is the free energy associated with $E$. Consider the problem of estimating free energy differences. That is, fix $\omega_1 \in \Omega$ and, for $\omega \in \Omega$, consider estimating $F(\omega) - F(\omega_1)$.  To put this in the Wang-Ladau framework, we can enumerate $\Omega = \{\omega_i\}_{i=1}^{d}$, take $\mathcal{Y}_i = \Sigma \times \Omega$, and $\lambda_i$ as the counting measure on $\mathcal{Y}$. With $f_i(\sigma, \omega) = \exp\{-E(\sigma, \omega_i)\} I\{\omega = \omega_i\}$, it follows that 
\begin{align*}
    -\log(x(i)/x(1)) = \log \sum_{\sigma} \exp\{-E(\sigma, \omega_1)\} - \log \sum_{\sigma} \exp\{-E(\sigma, \omega_i)\} = F(\omega_i) - F(\omega_1). 
\end{align*}  
Since, $-\log(x(i)/x(1))$ may be estimated by $-\log(\phi_{k}(i)/\phi_k(1))$, where $\phi_k$ is generated by the Wang-Landau algorithm, the free energy differences may be estimated accordingly. 

\end{example}

\section{Laplace upper bound}
\label{sec:upper}
In this section we take the first step 
towards proving Theorem \ref{thm:main}, by proving the Laplace principle upper bound, stated in Theorem \ref{thm:upper}.
\begin{theorem}
\label{thm:upper}
Assume \ref{ass:Lipschitz}-\ref{ass:limith}. With $I$ defined as in \eqref{eqn_rate_function}, 
for any bounded, continuous function $F:C([0,T]:\bR ^{d_1}) \to \bR$,
\begin{align}
\label{eq:upper_bound}
    \liminf _{n \to\infty} - \frac{1}{\beta_n} \log E \left[ e^{- \beta_n F(X ^n)} \right] \geq \inf _{\varphi} \left( F(\varphi) + I (\varphi) \right),
\end{align}
where the infimum is over $\varphi \in AC_{x_0}([0,T]:\bR ^d)$.
\end{theorem}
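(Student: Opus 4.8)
The plan is to use the weak-convergence representation formula of Proposition~\ref{prop_representation} together with a tightness-and-identification argument, which is the standard route for the Laplace upper bound in the weak-convergence approach. Fix a bounded continuous $F$ and $\varepsilon>0$. For each $n$ the representation gives
\[
    -\frac{1}{\beta_n}\log E e^{-\beta_n F(X^n)} = \inf_{\{\bar\mu^n_i\}} E\left[F(\bar X^n) + \frac{1}{\beta_n}\sum_{i=n+1}^{\beta_n+n} R\big(\bar\mu^n_i(\cdot)\,\|\,\rho_{\bar X^n_{i-1}}(\bar Y^n_{i-1},\cdot)\big)\right],
\]
so I would pick controls $\{\bar\mu^n_i\}$ that are $\varepsilon$-optimal in this infimum; it suffices to show
\[
    \liminf_{n\to\infty} E\left[F(\bar X^n) + \frac{1}{\beta_n}\sum_{i=n+1}^{\beta_n+n} R\big(\bar\mu^n_i(\cdot)\,\|\,\rho_{\bar X^n_{i-1}}(\bar Y^n_{i-1},\cdot)\big)\right] \geq \inf_\varphi\{F(\varphi)+I(\varphi)\} - \varepsilon.
\]

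The key steps, in order: (1) \emph{A priori bounds.} Using \ref{ass:logMGF} and a Jensen/Donsker--Varadhan-type estimate, show that along the $\varepsilon$-optimal controls the relative-entropy cost $\frac{1}{\beta_n}\sum_i R(\bar\mu^n_i\|\rho_{\bar X^n_{i-1}}(\bar Y^n_{i-1},\cdot))$ is bounded in expectation, and that this forces an exponential-moment bound on the increments $g(\bar X^n_{i-1},\bar Y^n_i)$; combined with the scaling $\beta_n\varepsilon_{n+i-1}\approx h^n$, this yields tightness of $\{\bar X^n\}$ in $C_{x_0}([0,T]:\bR^{d_1})$ (equicontinuity via the cost bound and \ref{ass:Lipschitz}). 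This is the content of Lemma~\ref{lemma:tightUB}, referenced in the outline. (2) \emph{Empirical-measure construction.} Introduce the random measures on $\bR^{d_1}\times\bR^{d_2}$ that put mass proportional to $\varepsilon_{n+i-1}$ at $(\tau^n_i,\bar Y^n_i)$ (or the pair $(\text{state},\text{noise})$), and argue these are tight; pass to a subsequential limit $\lambda(dt\,dy)$ disintegrated as $dt\otimes\mu_t(dy)$. (3) \emph{Identification of the limit dynamics.} Show that the limit $\bar X$ satisfies $\dot{\bar X}(t) = \frac{1}{h(t)}\int g(\bar X(t),y)\,\mu_t(dy)$ a.e.; here the Lipschitz and Feller properties \ref{ass:Lipschitz}, \ref{ass:kernel} and the uniform continuity in \ref{ass:cont_logMGF} let one replace $\bar X^n_{i-1}$ by $\bar X(\tau^n_i)$ in the integrand up to vanishing error. (4) \emph{Lower bound via lower semicontinuity and the chain rule.} Apply Fatou's lemma and joint lower semicontinuity of relative entropy, together with the chain-rule identity $R(\mu\otimes p\|\mu\otimes q)=\int R(p(x,\cdot)\|q(x,\cdot))\,\mu(dx)$, to bound $\liminf_n \frac{1}{\beta_n}\sum_i R(\cdot\|\cdot)$ from below by $\int_0^T \frac{1}{h(t)} J_{\bar X(t)}(\mu_t)\,dt$ where $J_x$ is the empirical-measure rate function \eqref{eq:emprate}; then since $\dot{\bar X}(t)=\frac{1}{h(t)}\int g(\bar X(t),y)\mu_t(dy)$, the definition \eqref{eqn_local_rate_function} of $L$ gives $J_{\bar X(t)}(\mu_t)\geq L(\bar X(t),h(t)\dot{\bar X}(t))$... wait, more precisely the time-rescaled local cost is $\frac{1}{h(t)}L(\bar X(t),\dot{\bar X}(t))$ by the scaling computed in the heuristic section, so the cost is $\geq I(\bar X)$. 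Combined with continuity of $F$ this yields $F(\bar X)+I(\bar X)\geq \inf_\varphi\{F(\varphi)+I(\varphi)\}$, and letting $\varepsilon\downarrow0$ finishes.

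The main obstacle I expect is Step (3)--(4), specifically handling the \emph{state-dependent} noise when identifying the limit: the controlled noise $\bar Y^n_i$ is sampled from $\rho_{\bar X^n_{i-1}}(\bar Y^n_{i-1},\cdot)$, a kernel whose base point $\bar X^n_{i-1}$ is itself moving, so one must show the state varies slowly relative to the noise and justify that the limiting occupation measure $\mu_t$ can be taken consistently with the cost $J_{\bar X(t)}(\mu_t)$ evaluated at the \emph{frozen} state $\bar X(t)$. The uniform-in-$y$ continuity assumptions \ref{ass:cont_logMGF}--\ref{ass:bounded_kernel} are precisely what is needed to control the discrepancy between $\rho_{\bar X^n_{i-1}}$ and $\rho_{\bar X(\tau^n_i)}$ in the relative-entropy cost, and \ref{ass:logMGF} provides the superlinearity/tightness needed to rule out escape of mass in the occupation measures; assembling these uniform estimates carefully is the technical heart of the upper bound, though it is considerably easier than the corresponding construction for the lower bound.
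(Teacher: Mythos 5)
Your overall strategy matches the paper's: the representation formula with $\varepsilon$-optimal controls, tightness of $(\bar X^n, \bar\mu^n)$, Fatou plus lower semicontinuity of relative entropy, the chain rule, and finally recognizing the local rate $L$. However, there are two places in Steps (2)--(4) where the argument as written would not close. The occupation measure you construct lives on (time, noise), or alternatively (state, noise), placing mass at $(\tau^n_i,\bar Y^n_i)$. But the object that makes the chain-rule/lower-semicontinuity argument deliver $J_{\bar X(t)}(\cdot)$ is a pair empirical measure of \emph{consecutive noise states} on $\bR^{d_2}\times\bR^{d_2}$: one needs $\lambda^n$ built from $\delta_{\bar Y^n_{i-1}}(dy)\,\bar\mu^n_i(dz)$ and a companion $\gamma^n$ built from $\delta_{\bar Y^n_{i-1}}(dy)\,\rho_{\bar X^n_{i-1}}(\bar Y^n_{i-1},dz)$, so that $R(\bar\mu^n_i(\cdot)\,\|\,\rho_{\bar X^n_{i-1}}(\bar Y^n_{i-1},\cdot)) = R(\lambda^n(\cdot|t)\,\|\,\gamma^n(\cdot|t))$ and the running cost becomes a relative entropy of joint pair measures. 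This is the precise sense in which the chain rule enters; the measures you describe do not carry the joint law of $(\bar Y^n_{i-1},\bar Y^n_i)$ and hence cannot see the constraint set $\mathcal{A}(\mu_t)$ that appears in $J_{\bar X(t)}$.

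Relatedly, to conclude $R(\lambda(\cdot|t)\,\|\,\bar\mu(\cdot|t)\otimes\rho_{\bar X(t)}) \geq J_{\bar X(t)}(\bar\mu(\cdot|t)) \geq L(\bar X(t),\dot{\bar X}(t))$ you must establish that both marginals of the limit $\lambda(\cdot|t)$ agree with $\bar\mu(\cdot|t)$, i.e.\ $\lambda(\cdot|t)\in\mathcal{A}(\bar\mu(\cdot|t))$. This is a separate martingale-difference estimate (roughly, $\frac{1}{\beta_n}\sum_i\big(\int f\,d\bar\mu^n_i - f(\bar Y^n_i)\big)\to 0$ in probability for a separating class of $f$), not a consequence of tightness alone, and your outline omits it; a similar estimate is also needed to identify the limit $\gamma$ of $\{\gamma^n\}$ in the required disintegrated form. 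Finally, a minor slip in Step (3): with $\bar\mu(\cdot|t)$ a probability kernel, the limit ODE is $\dot{\bar X}(t)=\int g(\bar X(t),y)\,\bar\mu(dy|t)$ with no extra $1/h(t)$; the $1/h$ enters only as the time-marginal density of the joint measure $\bar\mu$ and thus appears in the outer $dt$-integral of the rate, as you then correctly note.
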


Recall the representation formula \eqref{eqn_representation},
\begin{align*}
    -\frac{1}{\beta_n}\log E e^{-\beta_n F(X^n)} = \inf_{\{\bar \mu^n_i\}} E\left[F(\bar X^n) + \frac{1}{\beta_n} \sum_{i=n+1}^{\beta_n+n} R(\bar \mu^n_i(\cdot) \| \rho_{\bar X^n_{i-1}}(\bar Y^n_{i-1},\cdot))\right],
\end{align*}
where the full details of the random probability measures $\{\bar{\mu}^n_i\}_{i\in\{n+1,\dots,\beta_n+n\}}$ being minimized over are given in Proposition \ref{prop_representation}. From this formula, for fixed $n$ and arbitrary (fixed) $\varepsilon > 0$, it is possible to choose a sequence of controls $\{ \bar \mu ^n \}$ such that
\begin{align}
\label{eq:upper_RelEnt}
    -\frac{1}{\beta _n} \log E\left[ e^{- \beta_n F(X^n)} \right] + \varepsilon \geq E \left[ F(\bar X^n) +\frac{1}{\beta_n} \sum _{i=n+1} ^{\beta _n + n} R(\bar{\mu} _i ^n (\cdot) || \rho_{\bar{X} ^n _ {i-1}}(\bar Y _{i-1} ^n, \cdot  )) \right].
\end{align}
We augment the controls to also keep track of the time dependence of the $\bar \mu ^n _i$s: for a Borel set $A$ and $t \in [t_n, t_n +T]$, define $\bar \mu ^n (A|t)$ by
\[
    \bar \mu ^n (A |t) = \bar \mu ^n _i (A), \ \ \textrm{for } i \textrm{ such that } t \in [\tau _{i} ^n, \tau _{i+1} ^n), 
\]
where $\tau _i ^n = t_{n+i} -t_n$. The controlled measures $\bar \mu ^n$ can now be defined as
\[
    \bar \mu ^n (A \times C) = \int _C \frac{1}{h^n (t)} \bar \mu ^n (A|t) dt,
\]
where
\[
    h^n (t) = \beta_n \varepsilon _{n+i-1},
\]
with $i \in \{n+1, \dots, \beta_n + n \}$ such that $t \in [\tau_i ^n, \tau _{i+1} ^n)$. We also define a collection of sequences of measures, involving the controlled process $\bar X ^n$, the controlled noise $\bar Y ^n$ and the noise distribution $\rho$, that will play a role in the convergence analysis of the controlled process $\bar X ^n$ and the corresponding controls $\bar \mu ^n$: for $A, B \subset \bR ^{d_2} ,C \subset [0,T]$ Borel sets,
\begin{align*}
    \lambda ^n (A \times B \times C) &= \int _C \frac{1}{h^n(t)} \lambda ^n (A \times B|t)dt, \ \ \lambda ^n (A \times B |t) = \delta _{\bar Y ^n _{i-1}} (A)   \bar \mu ^ n _i (B), \\
    \gamma ^n (A \times B \times C) &= \int _C \frac{1}{h^n(t)} \gamma ^n (A \times B |t) dt,  \ \ \gamma ^n (A \times B |t) = \delta _{\bar Y ^n _{i-1}} (A) \rho_{\bar X ^n _{i-1}} (\bar Y ^n _{i-1}, B ). 
\end{align*}
In each definition, $i$ is such that $t \in [\tau ^n _i, \tau ^n _{i+1})$. From the definitions of $\bar \mu ^n$ and $\lambda ^n$, we have that $\bar \mu ^n (A \times C) = \lambda ^n (\bR ^{d_2} \times A \times C)$. The following lemma establishes the necessary tightness and characterises the limits of subsequences of the sequences of measures defined above.
\begin{lemma}
\label{lemma:tightUB}
    Assume \ref{ass:Lipschitz}-\ref{ass:limith} hold. Then $\{ \bar X ^n\}$, $\{\bar \mu ^n \}$, $\{ \bar \lambda ^n \}$ and $\{ \gamma ^n \}$ are tight sequences, and for every subsequence of $\{ \bar X^n, \bar \mu ^n \}$ there exists a further subsequence that converges to $(\bar X, \bar \mu)$, with the limit satisfying the following relations:
    \begin{align}
        \bar \mu (A \times C) = \int _C \frac{1}{h(t)} \bar \mu (A |t) dt, \label{eq:UBbarmu}\\
        \bar X (t) = x + \int _0 ^t \int _{\bR ^d} g(\bar X (s), y) \bar \mu (dy|s)ds. \label{eq:UBbarX}
    \end{align}
Furthermore, any limit point $\lambda$ and $\gamma$ of a convergent subsequence of $\{ \lambda ^n\}$ and $\{ \gamma ^n \}$, respectively, will have the following properties,
\begin{align*}
    \lambda (A \times B \times C) &= \int _C \frac{1}{h(t)} \lambda (A \times B|t)dt, \\
    \gamma (A \times B \times C) &= \int _C \frac{1}{h(t)} \left( \int _A \rho _{\bar X (t)}  (x,B) \bar \mu (dx |t) \right)dt,
\end{align*}
for some stochastic kernel $\lambda(dy \times dz |t)$, and 
\begin{align*}
    \lambda (A \times \bR ^{d_2} \times C) = \lambda (\bR ^{d_2} \times A \times C) = \bar \mu (A \times C) = \int _C \frac{1}{h(t)} \bar \mu (A |t) dt.
\end{align*}
\end{lemma}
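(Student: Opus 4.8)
The backbone of the argument is the a priori bound on the cost coming from the $\varepsilon$-optimality in \eqref{eqn_representation}: since $F$ is bounded, taking expectations in \eqref{eq:upper_RelEnt} gives $\sup_n E\bigl[\tfrac{1}{\beta_n}\sum_{i=n+1}^{\beta_n+n} R(\bar\mu^n_i(\cdot)\|\rho_{\bar X^n_{i-1}}(\bar Y^n_{i-1},\cdot))\bigr]=:M<\infty$. I would then combine this with \ref{ass:logMGF}, in the form that $\tilde\Lambda(\alpha):=\sup_{x,y}\log\int e^{\langle\alpha,g(x,z)\rangle}\rho_x(y,dz)$ and $\sup_{x,y}\log\int e^{\langle\alpha,z\rangle}\rho_x(y,dz)$ are finite for every $\alpha$, together with the Donsker--Varadhan inequality $\int\ell\,d\mu\le R(\mu\|\nu)+\log\int e^{\ell}\,d\nu$: for every $\theta>0$,
\[
\int\|g(\bar X^n_{i-1},z)\|\,\bar\mu^n_i(dz)\le\frac{1}{\theta}\Bigl(R\bigl(\bar\mu^n_i\big\|\rho_{\bar X^n_{i-1}}(\bar Y^n_{i-1},\cdot)\bigr)+C_1(\theta)\Bigr),
\]
and likewise with $\|g\|$ replaced by $\|z\|$. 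Since by \ref{ass:limith} the interval lengths satisfy $\beta_n\varepsilon_{n+i}=h^n(\cdot)\le\|h^n\|_\infty=:H_\infty<\infty$ uniformly in $n$, summing over $i$, weighting by $1/h^n$ and taking expectations bounds, uniformly in $n$, the first moments of the spatial marginals of $\bar\mu^n$ and of the occupation measures of the controlled noise $\{\bar Y^n_i\}$; together with the uniform tightness of $\{\rho_x(y,\cdot):x,y\}$ (from the exponential bound on $\|z\|$) this yields tightness of $\{\bar\mu^n\}$, of $\{\lambda^n\}$ (its two $\mathbb{R}^{d_2}$-marginals being exactly the noise occupation measure and $\bar\mu^n$) and of $\{\gamma^n\}$ (its second $\mathbb{R}^{d_2}$-marginal involving only $\rho$). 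Tightness in the time variable is free, as $[0,T]$ is compact and the time marginals equal $\frac{1}{h^n(t)}dt\to\frac{1}{h(t)}dt$.

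For $\{\bar X^n\}$ I would use the drift/martingale decomposition $\bar X^n=x+A^n+M^n$, where $A^n(t)=\int_0^t b^n(s)\,ds$ with $b^n$ piecewise constant, equal to $\int g(\bar X^n_{i-1},y)\bar\mu^n_i(dy)$ on the $i$-th interval, and $M^n$ is the martingale obtained by replacing each $\bar Y^n_i$ by its conditional law $\bar\mu^n_i$. The martingale is negligible: splitting $g=g\mathbf 1_{\{\|g\|\le c_n\}}+g\mathbf 1_{\{\|g\|>c_n\}}$ with $c_n=\beta_n^{1/4}$, the truncated contribution has predictable quadratic variation bounded by $c_n^2\sum_i\varepsilon_{n+i}^2\le c_n^2 H_\infty T/\beta_n\to0$, hence vanishes in $L^2$ uniformly on $[0,T]$ by Doob's inequality, while the remaining contribution is bounded in $L^1$ by $\tfrac{2}{\theta}\bigl(H_\infty M+T\,\epsilon_{c_n}(\theta)\bigr)$ with $\epsilon_c(\theta)\to0$ as $c\to\infty$ (again Donsker--Varadhan and \ref{ass:logMGF}), which is made arbitrarily small by sending $\theta\to\infty$ after $n\to\infty$. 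For the drift, the same estimate with a free $\theta$ gives, for Borel $B\subset[0,T]$,
\[
E\int_B\|b^n(s)\|\,ds\le N|B|+\frac{1}{\theta}\bigl(H_\infty M+T\,\epsilon_N(\theta)\bigr),
\]
which, choosing first $\theta$ large, then $N$ large, then $|B|$ small, shows that $\{b^n\}$ is uniformly integrable on $[0,T]\times\Omega$; hence $\{A^n\}$ is equicontinuous in probability and $\{\bar X^n\}$ is tight in $C_{x_0}([0,T]:\mathbb{R}^{d_1})$, with every subsequential limit absolutely continuous. Passing to a subsequence along which $(\bar X^n,\bar\mu^n,\lambda^n,\gamma^n)$ converges and invoking the Skorokhod representation theorem, I may assume the convergence to $(\bar X,\bar\mu,\lambda,\gamma)$ is almost sure.

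It remains to identify the limits. The time marginal of $\bar\mu$ is $\frac{1}{h(t)}dt$, equivalent to Lebesgue measure, so $\bar\mu$ disintegrates as $\bar\mu(dy\times dt)=\frac{1}{h(t)}\bar\mu(dy|t)\,dt$, which is \eqref{eq:UBbarmu}, and the same applies to $\lambda$ and $\gamma$. For \eqref{eq:UBbarX} one starts from $\bar X^n(t)=x+A^n(t)+M^n(t)$, replaces $\bar X^n_{i-1}$ by $\bar X^n(s)$ (step lengths tend to $0$ uniformly and $\{\bar X^n\}$ is tight), then $\bar X^n(s)$ by $\bar X(s)$ using \ref{ass:Lipschitz} and the uniform convergence, and finally passes to the weak limit in $\int_0^t\!\int g(\bar X(s),y)\,h^n(s)\,\bar\mu^n(dy\times ds)$; this last step is licit for the $y$-unbounded integrand $g$ precisely because of the uniform integrability obtained above, and produces $\bar X(t)=x+\int_0^t\!\int g(\bar X(s),y)\,\bar\mu(dy|s)\,ds$. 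Assumption \ref{ass:Lipschitz} also gives uniqueness for this integral equation, so all subsequential limits coincide. Next, the two $\mathbb{R}^{d_2}$-marginals of $\lambda$ are both $\bar\mu$: the second marginal of $\lambda^n$ is literally $\bar\mu^n$, the first is the noise occupation measure, and their difference is $\sum_i w^n_i(\delta_{\bar Y^n_{i-1}}-\bar\mu^n_{i-1})+\sum_i w^n_i(\bar\mu^n_{i-1}-\bar\mu^n_i)$ (with $w^n_i=\int\frac{1}{h^n}$ over the $i$-th interval), where the first sum is an $L^2$-small martingale sum and the second is made negligible by summation by parts, using $\sum_i|w^n_{i+1}-w^n_i|\to0$ (a consequence of \ref{ass:limith}). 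Finally, for $\gamma$ one passes to the limit in $\gamma^n(A\times B\times C)=\int_C\frac{1}{h^n(t)}\delta_{\bar Y^n_{i-1}}(A)\,\rho_{\bar X^n_{i-1}}(\bar Y^n_{i-1},B)\,dt$, using the Feller property of $\rho_x$ (Remark \ref{rem:Feller}) to replace $\rho_{\bar X^n_{i-1}}$ by $\rho_{\bar X(t)}$ and the just-obtained identification of the limiting noise occupation measure with $\bar\mu$, which yields $\gamma(A\times B\times C)=\int_C\frac{1}{h(t)}\bigl(\int_A\rho_{\bar X(t)}(x,B)\,\bar\mu(dx|t)\bigr)dt$; the equalities $\lambda(A\times\mathbb{R}^{d_2}\times C)=\lambda(\mathbb{R}^{d_2}\times A\times C)=\bar\mu(A\times C)$ then follow from the previous step.

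The delicate point is the tightness of $\{\bar X^n\}$ and the passage to the limit in its drift when $g$ is unbounded in the noise variable: the relative-entropy cost controls $\int\|g\|\,d\bar\mu^n_i$ only linearly in $R$, so a crude bound gives neither uniform integrability of $b^n$ nor a vanishing martingale, and does not even rule out macroscopic one-step jumps of $\bar X^n$. The device that resolves all of this at once is to keep the exponent $\theta$ in the Donsker--Varadhan bound free and to send it to infinity only after the truncation level and $n$; everything else is then a standard weak-convergence argument.
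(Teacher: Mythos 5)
Your proposal is correct and follows essentially the same route as the paper: extract $\varepsilon$-optimal controls from the representation formula, obtain the a priori bound $\sup_n E[\beta_n^{-1}\sum_i R(\bar\mu^n_i\|\rho_{\bar X^n_{i-1}}(\bar Y^n_{i-1},\cdot))]<\infty$, use relative-entropy/Donsker--Varadhan estimates with \ref{ass:logMGF} to get uniform integrability and hence tightness of the controls and the occupation measures, disintegrate the limits against $dt$ via Lemma 3.3.1 of \cite{dupell4} and the uniform convergence $h^n\to h$, and identify the $\mathbb{R}^{d_2}$-marginals of $\lambda$ and $\gamma$ with martingale-difference arguments. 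The main structural difference is that the paper's proof of this lemma delegates the tightness and identification of $(\bar X^n,\bar\mu^n)$ wholesale to Theorem \ref{thm:limit} (proved in Section \ref{sec:limit}); your drift/martingale decomposition $\bar X^n=x+A^n+M^n$, the truncation of the martingale increments, the uniform-integrability estimate for $b^n$, and the Gr\"onwall step are exactly what the paper packages as Lemmas \ref{lem:ui}--\ref{lem:1}, re-derived inline. Your truncation scheme (level $c_n=\beta_n^{1/4}$ growing, with the Donsker--Varadhan exponent $\theta$ sent to infinity last) differs superficially from the paper's (fixed truncation level $\theta$ on the increment $\varepsilon_j g$, sent to $0$ after $n\to\infty$), but both serve the same purpose and either works. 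One assertion to tighten: that $\sum_i|w^n_{i+1}-w^n_i|\to0$ is ``a consequence of \ref{ass:limith}'' is not immediate, since the weights $w^n_i=\varepsilon_{n+i}/(\beta_n\varepsilon_{n+i-1})$ need not have vanishing total variation under \ref{ass:limith} alone; the cleaner route (and the one the paper takes) is to compare $\sum_i w^n_i\int f_m\,d\bar\mu^n_i$ and $\sum_i w^n_i f_m(\bar Y^n_{i-1})$ by shifting the second sum one index and absorbing the discrepancy into a single $O(\|f_m\|_\infty/\beta_n)$ boundary term before invoking the martingale-difference bound, which avoids the summation-by-parts step and the extra hypothesis entirely.
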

Before giving the proof of Lemma \ref{lemma:tightUB}, we show how the result allows us to prove the upper bound \eqref{eq:upper_bound}.

\begin{proof}[Proof of Theorem \ref{thm:upper}]
As a first step, we use the chain rule to decompose the relative-entropy term on the right-hand side of \eqref{eq:upper_RelEnt},
\begin{align}
    R (\bar \mu ^n _i (\cdot) || \rho _{\bar X ^n _{i-1}} (\bar Y ^n _{i-1} , \cdot  )) &= R(\delta_{\bar Y ^n _{i-1}} (\cdot) || \delta _{\bar Y ^n _{i-1}} (\cdot)) + R (\bar \mu ^n _i (\cdot) || \rho_{\bar X ^n _{i-1}} (\bar Y  ^n _{i-1} , \cdot)) \notag \\
    & = R(\delta_{\bar Y ^n _{i-1}} (dy) \bar \mu ^n _i (dz) || \delta _{\bar Y ^n _{i-1}} (dy) \rho_{\bar X ^n _{i-1}} (y, dz )) \notag \\
    & = R (\lambda ^n (dy \times dz |t) || \gamma ^n (dy \times dz |t)). \label{eq:relEntDecomp}
\end{align}
By tightness, we can pick a subsequence, also labelled by $n$ for notational convenience, along which all the measures involved converge. Along this subsequence, because of how we chose the sequence $\{ \bar \mu ^n \}$, we also have the following lower bound:
\begin{align}
    \liminf _{n \to \infty} -\frac{1}{\beta _n} &E \left[ e^{- \beta _n F(X ^n)} \right] + \varepsilon \geq \liminf _{n \to \infty} E\left[ F(\bar X^n) +  \frac{1}{\beta _n} \sum _{i=n} ^{\beta _n + n -1 } R(\bar{\mu} _i ^n (\cdot) || \rho_{\bar{X} ^n _ i}(\bar Y _i ^n, \cdot  )) \right] \notag \\
    &= \liminf _{n \to \infty} E \left[ F(\bar X ^n) + R (\lambda ^n (dy \times dz \times dt) || \gamma ^n (dy \times dz \times dt)) \right] \label{eq:upperDecompEqual} \\
    &\geq E \left[ F(\bar X) + R(\lambda (dx \times dy \times dt) || \gamma (dx \times dy \times dt)) \right]. \label{eq:upperDecompIneq}
\end{align}
In the first step in the last display, the equality \eqref{eq:upperDecompEqual}, we use the decomposition \eqref{eq:relEntDecomp} combined with the definition of $h^n$ and the fact that the measures $\lambda ^n(\cdot |t), \gamma ^n (\cdot | t)$ are constant over the intervals $[\tau ^n _i, \tau ^n _{i+1})$. In the second step, the inequality \eqref{eq:upperDecompIneq}, we combine Lemma \ref{lemma:tightUB} with Fatou's lemma and the lower semi-continuity of relative entropy (see, e.g., \cite{dupell4, buddup4}). Next, we use the chain rule once more combined with the structure of the measures $\lambda$ and $\gamma$, 
\begin{align*}
    & E \left[ F(\bar X) + R(\lambda (dy \times dz \times dt) || \gamma (dy \times dz \times dt)) \right] \\
    & \quad = E \left[ F(\bar X) + \int _0 ^T \frac{1}{h(t)} R(\lambda (dy \times dz |t) || \bar \mu(dy |t) \rho_{\bar X (t)} (y, dz |t)) dt \right].
\end{align*}
The relative-entropy term on the right-hand side can be bounded from below by the local rate function $L$ in \eqref{eqn_local_rate_function}:
\begin{align*}
   & E \left[ F(\bar X) + \int _0 ^T \frac{1}{h(t)} R(\lambda (dy \times dz |t) || \bar \mu(dy |t) \rho_{\bar X (t)} (y, dz |t)) dt \right] \\
   &\quad \geq E \left[ F(\bar X) + \int _0 ^T \frac{1}{h(t)} L(\bar X (t), \dot{\bar X} (t))dt \right] \\
    & \quad \geq \inf _{\varphi} \{ F(\varphi) + \int _0 ^T \frac{1}{h(t)} L(\varphi (t), \dot \varphi (t))dt \},
\end{align*}
where the infimum is over $\varphi \in AC_{x_0} ([0,T]:\bR^{d_1})$. The integral on the right-hand side is precisely how the rate function $I$ was defined in Theorem \ref{thm:main}. Combining the sequence of inequalities therefore leads to the desired bound,
\begin{align*}
    \liminf _{n \to \infty} -\frac{1}{\beta _n} E \left[ e^{-\beta _n F(\bar X ^n)}  \right] + \varepsilon \geq \inf _{\varphi } \left\{ F(\varphi) + I(\varphi) \right\}.
\end{align*}
Since $\varepsilon$ was chosen arbitrarily, this shows the upper bound \eqref{eq:upper_bound} for the specific subsequence used. A standard argument by contradiction extends the upper bound to hold for the full sequence, thereby proving that the Laplace principle upper bound follows from Lemma \ref{lemma:tightUB}. 
\end{proof}

\smallskip
\begin{proof}[Proof of Lemma \ref{lemma:tightUB}]
Because we can always choose the controls $\{ \bar \mu ^n _i \}$ such that the expectation of the sum of the relative entropy terms, appearing in \eqref{eqn_representation}, is bounded, tightness of $\{ \bar X ^n \}$ and $\{ \bar \mu ^n \}$ follows from Theorem \ref{thm:limit}, which also gives the characterisation of the limit points as in \eqref{eq:UBbarX}-\eqref{eq:UBbarmu}. From the definition of the controlled process, tightness of $\{ \bar \mu ^n \}_n$ implies tightness of $\{ \delta _{\bar Y _i ^n} \} _{i=n} ^{\beta _n + n}$, as a sequence in $n$. This in turn gives tightness of $\{\lambda ^n\}$. The tightness of $\{ \gamma ^n \}$ is obtained by the tightness of $\{ \bar X ^n \}$ and $\{ \delta _{\bar Y _i ^n} \} _{i=n} ^{\beta _n + n}$ together with the uniform continuity of $\rho_x(y,dz)$.

To characterise limit points $\lambda$ of subsequences of $\{ \lambda ^n\}$, by  Lemma 3.3.1 in \cite{dupell4} and the uniform convergence of $h^n$ we have the decomposition $\lambda (dy \times dz \times dt ) = (h(t))^{-1} \lambda (dy \times dz |t)dt $, for some stochastic kernel $\lambda (dy \times dz |t)$. Moreover, note that $\lambda ^n (\bR ^{d_2} \times A \times C) = \bar \mu ^n (A \times C)$ implies that $\lambda (\bR ^{d_2} \times A \times C) = \bar \mu (A\times C)$. For the marginal obtained when integrating out the second variable, we use arguments similar to those used in proving Lemma 6.12 in \cite{buddup4}. Take $\{ f_m \}$ as a countable collection of bounded continuous functions that is also a separating class on $\bR ^{d_2}$. We will prove that, for any $\varepsilon>0$ and all $t\in [0,T]$, as $n \to \infty$,
\begin{align}
\label{eq:convLambda}
    P \left( \left|\left| \int _0 ^t \int \frac{1}{h^n(s)} f_m(y) \bar \mu ^n (dy|s)ds - \int _0 ^t \int \frac{1}{h^n(s)} f_m(y) \lambda ^n (dy \times \bR ^{d_2} |s)ds \right| \right| > \varepsilon \right) \to 0.
\end{align}
Suppose this limit holds. Because the collection of sets of the form $[0,t]$, for $t \in [0,T]$, is a separating class of $[0,T]$, \eqref{eq:convLambda} combined with Fatou's lemma ensures that w.p.\ 1 the limit of $\lambda ^n$ will satisfy $\lambda (A \times \bR ^{d_2} \times C) = \bar \mu (A \times C)$. 

To prove \eqref{eq:convLambda}, define $K_m = \norm{f_m} _\infty$. Suppose $n$ is such that $\beta _n > 4K_m/\varepsilon$--since $\beta _n \to \infty$ as $n \to \infty$, this is possible. Using the definitions                 of $\bar \mu ^n$ and $\lambda ^n$, and an application of Markov's inequality we have, 
\begin{align*}      
    & P \left( \left|\left| \int _0 ^t \int \frac{1}{h^n(s)} f_m(y) \bar \mu ^n (dy|s)ds - \int _0 ^t \int \frac{1}{h^n(s)} f_m(y) \lambda ^n (dy \times \bR ^{d_2} |s)ds \right| \right| > \varepsilon \right) \\
    & \quad = P \left( \left|\left| \frac{1}{\beta _n} \sum _{i=n +1} ^{\mathbf{m}(t_n +t)} \int f_m (y) \bar \mu ^n _i (dy) - \frac{1}{\beta _n} \sum _{i=n} ^{\mathbf{m} (t_n + t) -1} f_m (\bar Y ^n _i) \right| \right| > \varepsilon \right)  \\
    &\quad \leq P \left( \left|\left| \frac{1}{\beta _n} \sum _{i=n +1} ^{\mathbf{m}(t_n +t)} \int f_m (y) \bar \mu ^n _i (dy) - \frac{1}{\beta _n} \sum _{i=n+1} ^{\mathbf{m} (t_n + t)} f_m (\bar Y ^n _i) \right| \right| > \frac{\varepsilon}{2} \right) \\
    &\quad \quad \quad + P \left(\left| \left| \frac{1}{\beta _n} \left( f_m (\bar Y ^n _{\mathbf{m} (t_n +t)}) - f_m (\bar Y ^n _{n}) \right) \right| \right| > \frac{\varepsilon}{2} \right) \\
    &\quad \leq P \left( \left|\left| \frac{1}{\beta _n} \sum _{i=n+1} ^{\mathbf{m}(t_n +t)} \left( \int f_m (y) \bar \mu ^n _i (dy) -  f_m (\bar Y ^n _i) \right) \right| \right| > \frac{\varepsilon}{2} \right) \\
    & \quad \leq \frac{4}{\varepsilon ^2} E \left[ \frac{1}{\beta _n ^2} \sum_{i,j = n+1} ^{\mathbf{m}(t_n +t)} \Delta ^n _{m,i} \Delta ^n _{m, j}\right],
\end{align*}
where we have defined,
    \[
        \Delta ^n _{m,i} = \int f_m (y) \bar \mu ^n _i (dy) - f_m (\bar Y ^n _i).
    \]
The term  $P \left(\left| \left| \frac{1}{\beta _n} \left( f_m (\bar Y ^n _{\mathbf{m} (t_n +t)}) - f_m (\bar Y ^n _{n}) \right) \right| \right| > \frac{\varepsilon}{2} \right) = 0$, since, 
\[ 
    \frac{1}{\beta _n} \left( f_m (\bar Y ^n _{\mathbf{m} (t_n +t)}) - f_m (\bar Y ^n _{n}) \right)< \frac{\varepsilon}{4K_m}2\norm{f_m} = \frac{\varepsilon}{2}.
\]
The sequence $\{ \Delta ^n _{m,i} \} $ is a martingale difference sequence with respect to the filtration $\calF ^n _i = \sigma \left( (\bar X ^n _j, \bar Y ^n _j): \ j<i \right)$. Therefore, the off-diagonal terms in the sum have expected value 0: for $i > j$,
\begin{align*}
    E \left[ \Delta ^n _{m,i} \Delta ^n _{m, j} \right] = E\left[ E\left[ \Delta ^n _{m,i} \Delta ^{n} _{m,j} | \calF ^n _{i-1}\right] \right] = E\left[ E\left[ \Delta ^n _{m,i}  | \calF ^n _{i-1} \right] \Delta ^n _{m, j}\right] = 0.
\end{align*}
Combined with the previous inequalities this leads to the upper bound
\begin{align*}
    & P \left( \left|\left| \int _0 ^t \int \frac{1}{h^n(s)} f_m(y) \bar \mu ^n (dy|s)ds - \int _0 ^t \int \frac{1}{h^n(s)} f_m(y) \lambda ^n (dy \times \bR ^{d_2} |s)ds \right| \right| > \varepsilon \right) \\
    & \quad \leq \frac{4}{\varepsilon ^2} E \left[ \frac{1}{\beta _n ^2} \sum_{i = n+1} ^{\mathbf{m}(t_n +t) } \left( \Delta ^n _{m,i} \right) ^2 \right] \\
    & \quad \leq \frac{4}{\varepsilon ^2} E \left[ \frac{1}{\beta _n ^2} \sum_{i = n+1} ^{\beta _n +n } \left( 2 K_m \right) ^2 \right] \\
    & \quad \leq \frac{16 K_m ^2}{\varepsilon ^2 \beta _n}.
\end{align*}
We can make this arbitrarily small by choosing $n$ large enough, which proves \eqref{eq:convLambda}.

In order to show the claimed form for $\gamma$ we use a strategy similar to the one used for $\lambda$. Take $\{ f_m \}$ to now be a countable separating class on $\bR ^{d_2} \times \bR ^{d_2}$ of bounded continuous functions. We define a sequence of measures $\{ \eta ^n \}$ by
\begin{align*}
    \eta ^n (A \times B \times C) &= \int _C \frac{1}{h(t)} \eta ^n (A \times B |t) dt, \ \ \eta ^n (A \times B |t) = \int _A \rho_{\bar X ^n _{i-1}} (y, B ) \bar \mu ^n _{i-1} (dy).
\end{align*}
From the convergence of $\bar \mu ^n$ and the continuity of $\rho$, $\eta ^n$ converges to $\gamma$. To finish the proof we therefore show that $\gamma ^n$ must have the same limit as $\eta ^n$, by proving that, for arbitrary $\varepsilon >0$,
\begin{align*}
    &P \left( \left|\left| \int _0 ^t \!\!\! \int\!\!\! \int\!\!\!  \frac{1}{h^n(s)} f_m(y,z) \eta ^n (dy \times dz|s)ds - \int _0 ^t \!\!\!\int\!\!\! \int\!\!\! \frac{1}{h^n(s)} f_m(y,z) \gamma ^n (dy \times dz |s)ds \right| \right| > \varepsilon \right) \\ & \quad  \to 0.
\end{align*}
Similar to the above, take $K_m = \norm{f_m}_\infty$. Then,
\begin{align*}
   & P \left( \left|\left| \int _0^t\!\!\! \int\!\!\! \int\!\!\!  \frac{1}{h^n(s)} f_m(y,z) \eta ^n (dy \times dz|s)ds - \int _0^t\!\!\! \int\!\!\! \int\!\!\! \frac{1}{h^n(s)} f_m(y,z) \gamma ^n (dy \times dz |s)ds \right| \right| > \varepsilon \right) \\
   & \quad = \!P\! \Bigg( \Bigg\| \frac{1}{\beta _n}\!\!\! \sum _{i=n+1}^{\mathbf{m}(t_n +t)} \!\!\!\int \!\!\!\int f_m(y,z) \rho _{\bar X ^n _{i-1}} (y, dz) \bar \mu ^n _{i-1} (dy)\! \\ & \qquad \qquad  - \!\frac{1}{\beta_n} \!\!\!\sum _{i=n+1} ^{\mathbf{m} (t_n + t) }\!\! \int f_m (\bar Y ^n _{i-1}, z) \rho _{\bar X ^n _{i-1} } (\bar Y ^n _{i-1} , dz) \Bigg\| > \varepsilon \Bigg) 
   \\
   & \quad = P \Bigg( \Bigg\| \frac{1}{\beta _n} \sum _{i=n+1} ^{\mathbf{m}(t_n +t)} \left( \int\!\!\! \int f_m(y,z) \rho _{\bar X ^n _{i-1}} (y, dz) \bar \mu ^n _{i-1} (dy) \right. \\ & \qquad \qquad - \left. \int f_m (\bar Y ^n _{i-1}, z) \rho _{\bar X ^n _{i-1} } (\bar Y ^n _{i-1} , dz) \right) \Bigg\| > \varepsilon \Bigg) \\
   & \quad \leq \frac{1}{\varepsilon ^2} E \left[ \frac{1}{\beta _n ^2} \sum _{i,j=n} ^{\mathbf{m} (t_n +t)-1} \tilde \Delta ^n _{m,i} \tilde \Delta ^n _{m,j} \right],
\end{align*}
where we have defined
\[
    \tilde \Delta ^n _{m,i} = \int \int f_m(y,z) \rho _{\bar X ^n _{i}} (y, dz) \bar \mu ^n _i (dy)- \int f_m (\bar Y ^n _{i}, z) \rho _{\bar X ^n _{i} } (\bar Y ^n _{i} , dz).
\]
Similar to the convergence analysis for $\lambda ^n$, $\{ \tilde \Delta ^{n} _ {m,i}\}$ forms a martingale difference sequence with respect to the filtration $\calF ^n _j$. The off-diagonal terms thus disappear from the sum,
\[
    E \left[ \frac{1}{\beta _n ^2} \sum _{i,j=n} ^{\mathbf{m} (t_n +t) - 1} \tilde \Delta ^n _{m,i} \tilde \Delta ^n _{m,j} \right] = E \left[ \frac{1}{\beta _n ^2} \sum _{i=n} ^{\mathbf{m} (t_n +t) - 1} \left( \tilde \Delta ^n _{m,i} \right)^2  \right], 
\]
and we obtain the upper bound,
\begin{align*}
    & P \left( \left|\left| \int _0^t \!\!\!\int\!\!\! \int\!\!\!  \frac{1}{h^n(s)} f_m(y,z) \bar \eta ^n (dy \times dz|s)ds - \int _0^t\!\!\! \int\!\!\! \int\!\!\! \frac{1}{h^n(s)} f_m(y,z) \gamma ^n (dy \times dz |s)ds \right| \right| > \varepsilon \right) \\
    & \quad \leq \frac{1}{\varepsilon ^2} E \left[ \frac{1}{\beta _n ^2} \sum _{i=n} ^{\mathbf{m} (t_n +t) - 1} \left( \tilde \Delta ^n _{m,i} \right)^2  \right] \\
    & \quad \leq \frac{1}{\varepsilon ^2} E \left[ \frac{1}{\beta _n ^2} \sum _{i=n} ^{\beta _n +n -1} \left( 2K_m \right)^2  \right] \\
    & \quad = \frac{4 K_m ^2}{\varepsilon ^2 \beta _n}.
\end{align*}
We can choose $n$ large enough to make the expression in the last display arbitrarily small. Since $\varepsilon$ was taken arbitrarily, this proves the claimed convergence. Having already established that $\eta^n \to \gamma$, we conclude that $\gamma ^n \to \gamma$.
\end{proof}

\section{Laplace lower bound}
\label{sec:lower}
In this section we prove the Laplace principle lower bound.
\begin{theorem}
\label{thm:lower}
Assume \ref{ass:Lipschitz}-\ref{ass:limith}. With $I$ defined as in \eqref{eqn_rate_function}, 
for any bounded, continuous function $F:C([0,T]:\bR^{d_1}) \to \bR$,
\begin{align}
\label{eq:lower_bound}
     \limsup _{n \to\infty} - \frac{1}{\beta_n} \log E \left[ e^{- \beta_n F(X ^n)} \right] \leq \inf _{\varphi} \left( F(\varphi) + I (\varphi) \right),
\end{align}
where the infimum is over $\varphi \in AC_{x_0} ([0,T]: \bR ^{d_1})$.
\end{theorem}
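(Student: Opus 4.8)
The plan is to invoke the representation formula \eqref{eqn_representation}: since its left-hand side is an infimum over admissible controls, to establish \eqref{eq:lower_bound} it suffices to produce, for every $\varepsilon>0$, one explicit sequence of controls $\bar\nu^n=\{\bar\nu^n_i\}_{i=n+1}^{\beta_n+n}$ (with associated controlled process $\bar X^n$ and controlled noise $\bar Y^n$) for which
\[
\limsup_{n\to\infty} E\!\left[F(\bar X^n)+\frac{1}{\beta_n}\sum_{i=n+1}^{\beta_n+n} R\bigl(\bar\nu^n_i(\cdot)\,\big\|\,\rho_{\bar X^n_{i-1}}(\bar Y^n_{i-1},\cdot)\bigr)\right]\le \inf_{\varphi}\{F(\varphi)+I(\varphi)\}+c\,\varepsilon ,
\]
with $c$ independent of $\varepsilon$; letting $\varepsilon\downarrow 0$ then yields the claim. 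Throughout, the infima over $\varphi$ are over $AC_{x_0}([0,T]:\bR^{d_1})$.

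First I would reduce the target trajectory. Fix $\varepsilon>0$ and choose $\xi$ with $F(\xi)+I(\xi)\le\inf_\varphi\{F(\varphi)+I(\varphi)\}+\varepsilon$; boundedness of $F$ lets us take $I(\xi)<\infty$. By Lemma \ref{lem:piecewise_conti} there is a piecewise linear $\xi^*$ with $\|\xi^*-\xi\|_\infty<\varepsilon$ and $I(\xi^*)\le I(\xi)+\varepsilon$, and by continuity of $F$ we may further assume $|F(\xi^*)-F(\xi)|<\varepsilon$. On each of the finitely many linearity intervals $[s_{k-1},s_k)$ the velocity $v_k=\dot\xi^*$ is constant, $L(\xi^*(t),v_k)<\infty$ there, and $I(\xi^*)=\sum_k\int_{s_{k-1}}^{s_k} h(t)^{-1}L(\xi^*(t),v_k)\,dt$, so it is enough to track $\xi^*$ at entropy cost $I(\xi^*)$.

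Next I would construct the controls. For each piece, Lemma \ref{lem:optimal_control} supplies a transition kernel that is nearly optimal for the variational problem \eqref{eqn_local_rate_function}: a kernel $q$ whose unique invariant measure $\mu$ has $\int g(x,z)\mu(dz)$ close to $v_k$ and $R(\mu\otimes q\,\|\,\mu\otimes\rho_x)$ close to $L(x,v_k)$, uniformly for $x$ in a compact neighbourhood of the range of $\xi^*$. Partitioning $\{n+1,\dots,\beta_n+n\}$ into blocks according to which subinterval $\tau^n_i$ lies in, I would let $\bar\nu^n_i$ be the relevant near-optimal kernel evaluated at the running noise $\bar Y^n_{i-1}$ and a suitable state argument (the frozen value of $\xi^*$ on that piece, or $\bar X^n_{i-1}$); this is the construction of Section \ref{sec:optimalControl}. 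Tightness of $\{\bar\nu^n\}$, and of $\{\bar X^n\}$, then follows from Lemma \ref{lem:tightness} using the uniform bound on the relative-entropy cost.

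The core of the proof is the limit passage along a convergent subsequence, and this is where the two time-scales are essential. Within any fixed subinterval the state $\bar X^n$ moves only $O(\mathrm{length})$, so the controlled noise behaves as a time-homogeneous chain driven by the frozen near-optimal kernel, while the number of noise steps in the subinterval diverges as $n\to\infty$ (because $\varepsilon_k\to0$). Hence, by the local ergodicity result Lemma \ref{lem:limit_m}, which rests on Theorem \ref{thm:convX}/\ref{thm:limit}, the empirical measure of the controlled noise over each subinterval concentrates at the invariant measure $\mu$ of the chosen kernel; therefore the drift of $\bar X^n$ is $\int g(\xi^*(\cdot),z)\mu(dz)\approx v_k$ there, giving $\bar X^n\to\xi^*$ uniformly, and by the chain rule for relative entropy plus the same ergodic averaging (and the uniform convergence $h^n\to h$ of \ref{ass:limith}) the cost term converges to $\sum_k\int_{s_{k-1}}^{s_k} h(t)^{-1}L(\xi^*(t),v_k)\,dt=I(\xi^*)$. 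Combining, $\limsup_n E[F(\bar X^n)+\text{cost}]\le F(\xi^*)+I(\xi^*)\le\inf_\varphi\{F(\varphi)+I(\varphi)\}+c\,\varepsilon$ along the subsequence, and a standard contradiction argument upgrades this to the full sequence. The main obstacle is exactly this limit passage under state-dependent noise: one must control simultaneously the rate of ergodic averaging of the noise chain, the drift it induces on $\bar X^n$, and the relative-entropy cost, all while the kernel's state argument is itself fluctuating — which is why local ergodicity at the noise time-scale, rather than a single global ergodic theorem, is the right tool.
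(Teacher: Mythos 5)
Your proposal is correct and follows essentially the same route as the paper: the representation formula, reduction to a piecewise-linear $\zeta^*$ via Lemma \ref{lem:piecewise_conti}, block-wise construction of controls from the near-optimal kernels of Lemma \ref{lem:optimal_control}, tightness via Lemma \ref{lem:tightness}, and a two-time-scale/local-ergodicity passage to the limit via Theorem \ref{thm:limit} and Lemmas \ref{lem:limit_m}--\ref{lem:limit_delta}. The main items you leave implicit are the stopping-time truncation $\hat{i}^n$ that keeps $\bar X^n$ in a compact neighbourhood of $\zeta^*$ (so that \ref{ass:bounded_kernel} applies), the $\delta$-mixing with $\pi_x$ needed for ergodicity of the controlled chain, and the fact that the control's state argument is frozen at $\zeta^*(\tau^\ell_k)$ while the reference kernel in the entropy cost uses the running $\bar X^n_{i-1}$, a discrepancy the paper handles separately in Lemma \ref{lem:negligible}; also, Lemma \ref{lem:limit_m} is an ingredient in proving Theorem \ref{thm:convX}, not a consequence of it.
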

Together with the upper bound of Theorem \ref{thm:upper}, this proves the Laplace principle stated in Theorem \ref{thm:main}. The proof of the upper bound, given in Section \ref{sec:upper}, is aided by the fact that, by definition of the infimum, we can choose a sequence of nearly-optimal controls (see \eqref{eq:upper_RelEnt}). Proving the lower bound \eqref{eq:lower_bound} is considerably more involved because such a ($\varepsilon$-optimal) sequence is no longer readily available and we must instead explicitly construct a nearly-optimal controls. 

The proof of Theorem \ref{thm:lower} is split into two main parts: the first part is the construction of nearly-optimal controls and proving that the constructed sequence is tight, in Section \ref{sec:optimalControl}, whereas the second part consists of proving convergence of the controls and the associated controlled processes, in Section \ref{sec:conv}. In Section \ref{sec:proofLower} we combine these results to complete the proof of Theorem \ref{thm:lower}
.
\subsection{Construction and tightness of nearly-optimal controls}
\label{sec:optimalControl}
In this section we construct, for each $n$, a sequence of nearly-optimal controls to be used in proving the lower bound. Recall from Section \ref{sec:main} that the local rate function $L$, defined in \eqref{eqn_local_rate_function} is continuous at every point where it is finite (Lemma \ref{lem:conti_L}). 

As a first step, we show that for any $(x, \beta)$ such that $L(x, \beta) < \infty$, there exists nearly-optimal transition kernels with respect to the infimum in the definition of $L(x,\beta)$, see Lemma \ref{lem:optimal_control}. Next, in Lemma \ref{lem:piecewise_conti} we show that for any function $\zeta \in C([0,1]:\bR ^d)$ such that $I(\zeta)<\infty$, for any $\varepsilon >0 $ we can find a piecewise linear function, with a finite number of pieces, that is $\varepsilon$-close to $\zeta$ both in sup-norm and in evaluating $I$. From these two results we can construct a sequence of nearly-optimal controls $\bar \nu ^n$. In Lemma \ref{lem:tightness} we show that the sequence $\{ \bar \nu ^n \}_n$ is tight. 


Recall that, for each $x \in \mathbb{R}^{d_1}$, $\pi_x$ is the unique invariant measure of $\rho_x$. Our first result, concerning nearly-optimal transition kernels, is a direct consequence of the definition of $L$ and results in \cite{buddup4}.

\begin{lemma}\label{lem:optimal_control} Suppose \ref{ass:kernel}, \ref{ass:bounded_kernel} and \ref{ass:transitivity} hold. 
For any $(x,\beta) \in \mathbb{R} ^{d_1} \times \mathbb{R} ^{d_1}$ such that $L(x,\beta)<\infty$ and $\varepsilon>0$, there exists a probability measure $\nu^{x,\beta}(dy)$ such that,
\[
    \inf_{\gamma\in\mathcal{A}(\nu^{x,\beta})} R(\gamma\|\nu^{x,\beta}\otimes\rho_x(\cdot,\cdot) )\leq L(x,\beta)+\varepsilon\mbox{ and }\beta = \int g(x,y)\nu^{x,\beta}(dy).
\]
For any $\delta>0$, define a probability measure $\mu^{x,\beta,\delta}\doteq (1-\delta/2)\nu^{x,\beta} + (\delta/2)\pi_x$. There exists a transition kernel $q^{x,\beta,\delta}(y,dz)$ such that $\mu^{x,\beta,\delta}$ is the unique invariant measure of $q^{x,\beta, \delta}(y,dz)$ and the associated Markov chain is ergodic. In addition, 
\[
     R(\mu^{x,\beta,\delta} \otimes q^{x,\beta,\delta}(\cdot,\cdot)\|\mu^{x,\beta,\delta} \otimes \rho_x(\cdot,\cdot))\leq \inf_{\gamma\in\mathcal{A}[\nu^{x,\beta}]} R(\gamma\|\nu^{x,\beta}\otimes \rho_x)\leq L(x,\beta)+\varepsilon.
\]
Moreover, the selection of $\nu^{x,\beta}$ can be made measurable. 
\end{lemma}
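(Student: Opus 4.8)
The statement has three components, which I would establish in order: (i) existence of the near-optimal measure $\nu^{x,\beta}$ satisfying the moment constraint; (ii) construction of the ergodic kernel $q^{x,\beta,\delta}$ together with the relative-entropy bound; (iii) measurability of the selection. For (i) one reads the conclusion directly off the definition of $L$: since
\[
L(x,\beta)=\inf_{\mu}\Big\{\inf_{\gamma\in\mathcal{A}(\mu)}R(\gamma\|\mu\otimes\rho_x):\beta=\int g(x,z)\mu(dz)\Big\}<\infty,
\]
and the outer infimum runs over the nonempty set of $\mu$ with $\int g(x,z)\mu(dz)=\beta$, for each $\varepsilon>0$ there is a $\nu^{x,\beta}$ in that set with $\inf_{\gamma\in\mathcal{A}(\nu^{x,\beta})}R(\gamma\|\nu^{x,\beta}\otimes\rho_x)\le L(x,\beta)+\varepsilon$, while the constraint $\beta=\int g(x,y)\nu^{x,\beta}(dy)$ holds exactly by construction.

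For (ii), first observe that $\mathcal{A}(\nu^{x,\beta})$ is weakly compact (both marginals are fixed, so the family is tight, and it is closed because the marginal maps are continuous) and $\gamma\mapsto R(\gamma\|\nu^{x,\beta}\otimes\rho_x)$ is lower semi-continuous; since by (i) this infimum is finite, it is attained at some $\gamma^{*}\in\mathcal{A}(\nu^{x,\beta})$. Set $\gamma_\delta=(1-\delta/2)\,\gamma^{*}+(\delta/2)\,(\pi_x\otimes\rho_x)$. Because $\pi_x$ is invariant for $\rho_x$, both marginals of $\pi_x\otimes\rho_x$ equal $\pi_x$, hence both marginals of $\gamma_\delta$ equal $(1-\delta/2)\nu^{x,\beta}+(\delta/2)\pi_x=\mu^{x,\beta,\delta}$, i.e.\ $\gamma_\delta\in\mathcal{A}(\mu^{x,\beta,\delta})$. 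Moreover $\mu^{x,\beta,\delta}\otimes\rho_x=(1-\delta/2)(\nu^{x,\beta}\otimes\rho_x)+(\delta/2)(\pi_x\otimes\rho_x)$, so by joint convexity of relative entropy and $R(\pi_x\otimes\rho_x\|\pi_x\otimes\rho_x)=0$,
\[
R(\gamma_\delta\|\mu^{x,\beta,\delta}\otimes\rho_x)\le(1-\delta/2)\,R(\gamma^{*}\|\nu^{x,\beta}\otimes\rho_x)\le\inf_{\gamma\in\mathcal{A}(\nu^{x,\beta})}R(\gamma\|\nu^{x,\beta}\otimes\rho_x)\le L(x,\beta)+\varepsilon.
\]
Disintegrating $\gamma_\delta(dy\times dz)=\mu^{x,\beta,\delta}(dy)\,q^{x,\beta,\delta}(y,dz)$ on the Polish space $\bR^{d_2}$ yields a stochastic kernel for which $\mu^{x,\beta,\delta}$ is invariant (since $[\gamma_\delta]_2=\mu^{x,\beta,\delta}$), and the chain rule for relative entropy turns the last display into $R(\mu^{x,\beta,\delta}\otimes q^{x,\beta,\delta}\|\mu^{x,\beta,\delta}\otimes\rho_x)\le\inf_{\gamma\in\mathcal{A}(\nu^{x,\beta})}R(\gamma\|\nu^{x,\beta}\otimes\rho_x)\le L(x,\beta)+\varepsilon$, which is the asserted inequality. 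Uniqueness of the invariant measure and ergodicity come from the $\pi_x$-component forced into the kernel: writing everything via densities against $\lambda$ (which exist since $\rho_x(y,\cdot)\ll\lambda$ by \ref{ass:kernel}, $\pi_x\ll\lambda$ by invariance, and $\nu^{x,\beta}\ll\lambda$ because $\gamma^{*}\ll\nu^{x,\beta}\otimes\rho_x$ forces $\nu^{x,\beta}=[\gamma^{*}]_2\ll\int\rho_x(y,\cdot)\nu^{x,\beta}(dy)\ll\lambda$), the disintegration satisfies $q^{x,\beta,\delta}(y,\cdot)\ge c_x(y)\,\rho_x(y,\cdot)$ with $c_x=(\delta/2)\,d\pi_x/d\mu^{x,\beta,\delta}>0$ on the support of $\pi_x$; hence the chain inherits the transitivity of $\rho_x$ from \ref{ass:transitivity} (with \ref{ass:bounded_kernel} controlling the relevant density ratios), and standard Markov-chain theory gives that $\mu^{x,\beta,\delta}$ is the unique invariant measure and the chain is ergodic. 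This is precisely the construction used for the empirical-measure LDP in \cite{buddup4}.

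For (iii), the map $(x,\beta)\mapsto\nu^{x,\beta}$ can be chosen measurably by a measurable-selection theorem of Jankov--von Neumann / Kuratowski--Ryll-Nardzewski type: the set-valued map carrying $(x,\beta)$ to $\{\mu\in\mathcal{P}(\bR^{d_2}):\int g(x,y)\mu(dy)=\beta,\ \inf_{\gamma\in\mathcal{A}(\mu)}R(\gamma\|\mu\otimes\rho_x)\le L(x,\beta)+\varepsilon\}$ has nonempty values by (i) and a Borel graph (using continuity of $L$ from Lemma \ref{lem:conti_L}, the Feller property of $\rho_x$ from \ref{ass:kernel} and Remark \ref{rem:Feller}, continuity of $g$ from \ref{ass:Lipschitz}, and lower semi-continuity of relative entropy), hence admits a measurable selection.

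\textbf{Main obstacle.} The only step that is not essentially bookkeeping is the ergodicity of $q^{x,\beta,\delta}$: invariance and the relative-entropy estimate fall out of convexity and the chain rule as soon as the mixture $\gamma_\delta$ is written down, but promoting the pointwise domination $q^{x,\beta,\delta}(y,\cdot)\ge c_x(y)\rho_x(y,\cdot)$ to uniqueness of the invariant measure and ergodicity is where Assumptions \ref{ass:bounded_kernel} and \ref{ass:transitivity} enter and where care is required.
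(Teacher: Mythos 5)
Your treatment of part (i) is identical to the paper's: read $\nu^{x,\beta}$ off the definition of $L(x,\beta)$ as an infimum.

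For part (ii) you take a genuinely different and more self-contained route. The paper simply cites Lemma~6.17 of \cite{buddup4}; you unpack the construction: show the inner infimum over $\mathcal{A}(\nu^{x,\beta})$ is attained at some $\gamma^{*}$ (tightness of $\mathcal{A}(\nu^{x,\beta})$, closedness of the marginal constraints, lower semi-continuity of $R$), form the mixture $\gamma_\delta=(1-\delta/2)\gamma^{*}+(\delta/2)(\pi_x\otimes\rho_x)$, verify both marginals equal $\mu^{x,\beta,\delta}$ by invariance of $\pi_x$, get the entropy bound from joint convexity of $R$ together with $R(\pi_x\otimes\rho_x\|\pi_x\otimes\rho_x)=0$, and disintegrate to produce $q^{x,\beta,\delta}$. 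The chain rule then recovers the stated inequality. This reproduces the internal logic of the cited lemma and makes it visible; it is a clean alternative to the bare citation. You correctly flag that promoting the minorization $q^{x,\beta,\delta}(y,\cdot)\geq c_x(y)\rho_x(y,\cdot)$ to uniqueness and ergodicity of the invariant measure is where the real work lies, and that \ref{ass:transitivity} is what drives it.

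There is, however, a genuine gap in part (iii). You assert that the graph
\[
E=\left\{(x,\beta,\mu): J(x,\mu)\leq L(x,\beta)+\varepsilon,\ \textstyle\int g(x,y)\mu(dy)=\beta\right\},
\qquad
J(x,\mu)\doteq\inf_{\gamma\in\mathcal{A}(\mu)}R(\gamma\|\mu\otimes\rho_x),
\]
is Borel, citing continuity of $L$, the Feller property, continuity of $g$, and lower semi-continuity of $R$. None of these by themselves imply that $(x,\mu)\mapsto J(x,\mu)$ is Borel: $J$ is an infimum over $\gamma\in\mathcal{A}(\mu)$, and an infimum of lower semi-continuous functions is not lower semi-continuous (or even measurable) in general. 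The paper's proof devotes its main effort exactly to this point, proving that $(x,\mu)\mapsto J(x,\mu)$ is \emph{jointly} lower semi-continuous, and that argument relies crucially on Assumption~\ref{ass:bounded_kernel} to control $\log\bigl(\eta_x(y,z)/\eta_{x'}(y,z)\bigr)$ uniformly in $(y,z)$. In your write-up \ref{ass:bounded_kernel} appears only in the ergodicity discussion, so the step that actually makes the selection problem tractable is missing. Separately, the selection theorems you invoke (Jankov--von Neumann, Kuratowski--Ryll-Nardzewski) have different hypotheses than the one the paper uses (Borel graph with $\sigma$-compact sections, as in \cite{brown}); in particular the paper obtains $\sigma$-compactness of the sections from compactness of the sub-level sets of $J(x,\cdot)$, a fact you never bring in. To close the gap you would need to (a) prove joint lower semi-continuity of $J$ using \ref{ass:bounded_kernel}, and (b) verify the hypotheses of whichever selection theorem you intend to apply.
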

\begin{proof}
For the existence part, we note that the existence of $\nu^{x,\beta}$ follows from the definition of $L(x,\beta)$ in terms of an infimum. The existence of $\mu^{x,\beta,\delta}$ and $q^{x,\beta,\delta}$ then follows from Lemma 6.17 in \cite{buddup4}, which relies on assumptions \ref{ass:kernel} and \ref{ass:transitivity}.

We now prove that the selection of $\nu^{x,\beta}$ can be made measurable. To this end we appeal to a measurable selection theorem, such as Theorem 1 in \cite{brown}, which says that there exists a Borel measurable selection of $E \subset U \times V$, where $U$ and $V$ are complete separable metric spaces, if $E$ is a Borel set and for each $u \in U$ the section $E_u = \{v: (u,v) \in E\}$ is $\sigma$-compact. With $U = \mathbb{R}^{d_1} \times \mathbb{R}^{d_1}$, $V = \mathcal{P}(\mathbb{R}^{d_1})$ and the identification $u = (x,\beta)$ and $v = \mu$ the measurable selection of $\nu^{x,\beta}$ follows if
$E = \{(x,\beta, \mu) \in \mathbb{R}^{d_1} \times \mathbb{R}^{d_1} \times \mathcal{P}(\mathbb{R}^{d_1}): J(x,\mu) \leq L(x, \beta) + \varepsilon, \int g(x,y) \mu(dy) = \beta\}$ is Borel measurable and for each $(x,\beta) \in \mathbb{R}^{d_1} \times \mathbb{R}^{d_1}$ the section $E_{(x,\beta)} = \{\mu \in \mathcal{P}(\mathbb{R}^{d_1}): (x,\beta, \mu) \in E\}$ is $\sigma$-compact. Here we denote, 
\[
    J(x,\mu) = \inf_{q \in \mathcal{A}(\mu)} \int R(q(y,\cdot) \\ \rho_x(y,\cdot)) \mu(dy). 
\]
It holds that, for each $x \in \mathbb{R}^{d_1}$,  $\mu \mapsto J(x,\mu)$ is lower semi-continuous and has compact sub-level sets; see, e.g., \cite{buddup4} or \cite{dupell2}. In fact, under Assumption \ref{ass:kernel} and \ref{ass:bounded_kernel} it is lower semi-continuous as a function of $(x,\mu)$. To prove this, we show that for each $a >0$, the set $\{(x,\mu): J(x,\mu) > a\}$ is open. Take $(x,\mu)$ such that $J(x,\mu) > a$, $\varepsilon \in (0, (J(x,\mu) - a)/2)$, and $(x',\mu')$ such that $|x-x'|<\delta$ and $d_w(\mu, \mu') < \delta$, where $d_w$ metrizes weak convergence. By continuity of $x \mapsto \eta_x(y,z)$ and Assumption \ref{ass:bounded_kernel}, we can choose $\delta$ sufficiently small that, 
\[  
    \log \frac{\eta_x(y,z)}{\eta_{x'}(y,z)} < \varepsilon, \quad \text{for all } (y,z).
\]
Consequently,  
\begin{align*}
   J(x, \mu) &= \inf_{q \in \mathcal{A}(\mu)} \int R(q(y,\cdot) \| \rho_{x}(y,\cdot)) \mu(dy) \\ 
    & = \inf_{q \in \mathcal{A}(\mu)} \left[\int R(q(y,\cdot) \| \rho_{x'}(y,\cdot)) \mu(dy)
    + \int \int q(y,z) \log \frac{\eta_x(y,z)}{\eta_{x'}(y,z)} \lambda(dz)\mu(dy)\right] \\
    &\leq J(x',\mu) + \varepsilon.
\end{align*}
By lower semi-continuity of $\mu \mapsto J(x',\mu)$, it follows that for $\delta$ sufficiently small $J(x',\mu') < J(x',\mu) + \varepsilon$ and we conclude that 
$J(x,\mu) < J(x',\mu') + 2\varepsilon$. By the choice of $\varepsilon$, $J(x',\mu') > a$. This completes the proof of lower semi-continuity of $J(x,\mu)$. Since a lower semi-continuous function is measurable, $L$ is continuous and $x \mapsto g(x,y)$ is continuous, it follows that the set $E$ is measurable. For each $(x,\beta)$ the section $E_{x,\beta}$ is a subset of the sub-level set $\{\mu: J(x,\mu) \leq L(x,\beta) + \varepsilon\}$, and hence $\sigma$-compact. 

\end{proof}

In proving the lower bound, Theorem \ref{thm:lower}, we may assume $\inf_{\varphi}\{F(\varphi)+I(\varphi)\}<\infty$, as otherwise the bound is trivially true. By the definition of the infimum, for any $\varepsilon>0$, there is $\zeta\in C([0,1]:\mathbb{R}^{d_1})$ such that,
\[
    F(\zeta)+I(\zeta)\leq \inf_{\varphi}\{F(\varphi)+I(\varphi)\} + \varepsilon.
\]
Recall that $F$ is bounded and $I$ is of the form,
\[
    I(\zeta) = \int_0^T \frac{1}{h(t)}L(\zeta(t),\dot{\zeta}(t))dt.
\]
We can therefore assume that $L(\zeta(t),\dot{\zeta}(t))<\infty$ for all $t\in[0,T]$. Moreover, the following lemma states that we can focus on $\zeta$ that are piecewise linear with finitely many pieces. 

\begin{lemma}\label{lem:piecewise_conti} Assume \ref{ass:kernel}, \ref{ass:transitivity} and \ref{ass:logMGF}. 
For every $\zeta\in C([0,1];\mathbb{R}^{d_1})$ satisfying $I(\zeta)<\infty$, and every $\varepsilon>0$, there exists a $\zeta^*\in C([0,1]:\mathbb{R}^{d_1})$ that is piecewise linear with finitely many pieces, such that $\|\zeta^*-\zeta\|_{\infty}<\varepsilon$ and, 
\[
    I(\zeta^*)=\int_0^T \frac{1}{h(t)}L(\zeta^*(t),\dot{\zeta}^*(t))dt \leq \int_0^T \frac{1}{h(t)}L(\zeta(t),\dot{\zeta}(t))dt+\varepsilon =I(\zeta)+\varepsilon. 
\]
\end{lemma}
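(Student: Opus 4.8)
The plan is the standard two-step approximation for action-functional rate functions: first mollify $\zeta$, then replace the mollification by a piecewise-linear interpolant, controlling the cost throughout by convexity of $L$ in the velocity variable (from $L(x,\cdot)=H(x,\cdot)^*$, Proposition \ref{prop:limitHamiltonian}), the continuity of $L$ at points where it is finite (Lemma \ref{lem:conti_L}), and the uniform continuity of $1/h$ on $[0,T]$ ($h$ being continuous, non-increasing and positive). We may assume $I(\zeta)<\infty$, so $\zeta\in AC_{x_0}$, $\dot\zeta\in L^1$, and $L(\zeta(t),\dot\zeta(t))<\infty$ for a.e.\ $t$. Two structural facts I would record first: (i) \ref{ass:logMGF} forces the uniform bound $L(x,\beta)\ge c\|\beta\|-C(c)$ for every $c>0$, with $C(c)<\infty$ — take $\alpha=\pm ce_i$ in $L(x,\beta)=\sup_\alpha\{\langle\alpha,\beta\rangle-H(x,\alpha)\}$ and use $\sup_{x,y}\Lambda(x,\alpha,y)<\infty$ — so every sublevel set $\{L\le M\}$ lies in $\mathbb{R}^{d_1}\times\{\|\beta\|\le R_M\}$ with $R_M<\infty$; and (ii) since $L$ is continuous wherever finite, $\{L<\infty\}$ is open and $L$ is uniformly continuous on compact subsets of it.

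\textbf{Step 1 (mollification).} For $\kappa>0$ I would set $\zeta_\kappa(t)=\kappa^{-1}\int_t^{t+\kappa}\zeta(s)\,ds$ (extending $\zeta$ past $T$ by its terminal value), so that $\dot\zeta_\kappa(t)=\kappa^{-1}(\zeta(t+\kappa)-\zeta(t))$ is continuous and bounded and $\zeta_\kappa\to\zeta$ uniformly. Since $\dot\zeta_\kappa(t)$ is the average of $\dot\zeta$ over $[t,t+\kappa]$, Jensen's inequality in the convex second argument gives $L(\zeta_\kappa(t),\dot\zeta_\kappa(t))\le\kappa^{-1}\int_t^{t+\kappa}L(\zeta_\kappa(t),\dot\zeta(s))\,ds$; integrating against $1/h$, using Fubini, $\zeta_\kappa(t)\to\zeta(s)$ for $s\in[t,t+\kappa]$, and the continuity of $L$ and of $1/h$, one gets $\limsup_{\kappa\downarrow0}I(\zeta_\kappa)\le I(\zeta)$. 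If needed, a further small perturbation — a convex combination with the zero-cost path $\psi$ solving the limit ODE $\dot\psi=\bar g(\psi)$, $\psi(0)=x_0$ (for which $L(\psi,\dot\psi)\equiv0$) — pushes the trajectory $\{(\zeta_\kappa(t),\dot\zeta_\kappa(t))\}$ into a compact subset of the open set $\{L<\infty\}$ at cost and sup-norm increase at most $\varepsilon$. After relabelling, I may therefore assume $\zeta$ is Lipschitz, $\dot\zeta$ continuous, and $\{(\zeta(t),\dot\zeta(t)):t\in[0,T]\}$ a compact subset of $\{L<\infty\}$.

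\textbf{Step 2 (interpolation and limit).} Let $\zeta^*_N$ interpolate this $\zeta$ linearly at the nodes $jT/N$; on $I_j=[t_{j-1},t_j]$ its velocity is the average $\bar v^N_j=|I_j|^{-1}\int_{I_j}\dot\zeta$. Uniform continuity of $\zeta$ gives $\|\zeta^*_N-\zeta\|_\infty\le\sup_j\mathrm{osc}(\zeta,I_j)\to0$. By the Lebesgue differentiation theorem $\bar v^N_j\to\dot\zeta(t)$ and $\zeta^*_N(t)\to\zeta(t)$ for a.e.\ $t$, with all velocities trapped in a fixed ball and all positions in a compact neighbourhood of $\{(\zeta(t),\dot\zeta(t))\}$ inside $\{L<\infty\}$, so $h(t)^{-1}L(\zeta^*_N(t),\dot\zeta^*_N(t))\to h(t)^{-1}L(\zeta(t),\dot\zeta(t))$ a.e.\ with uniformly bounded integrands; dominated convergence then gives $I(\zeta^*_N)\to I(\zeta)$. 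Choosing $N$ large enough that $\|\zeta^*_N-\zeta\|_\infty<\varepsilon$ and $I(\zeta^*_N)\le I(\zeta)+\varepsilon$ (with $\varepsilon$ distributed across Step 1) yields the required $\zeta^*$.

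The hard part will be Step 1: $L$ is continuous only where it is finite, and not uniformly so in the velocity variable, so every estimate that moves the spatial argument of $L$ — the Jensen bound above, and the interpolation error — is only legitimate once $(\zeta(t),\dot\zeta(t))$ has been confined to a compact subset of the open set $\{L<\infty\}$, on which $L$ is genuinely uniformly continuous. Producing that confinement while keeping both the cost increase and the sup-norm displacement below $\varepsilon$ (in particular, ruling out that the corrective perturbation introduces points of infinite cost, and controlling approach to the boundary of $\{L<\infty\}$) is the delicate point; the uniform superlinear growth from \ref{ass:logMGF} (making sublevel sets of $L$ uniformly bounded in the velocity) and the convexity of $L(x,\cdot)$ are what make it go through.
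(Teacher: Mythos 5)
Your overall strategy (establish the uniform superlinearity bound $\sup_x\sup_{\|\alpha\|=M}H(x,\alpha)<\infty$ from \ref{ass:logMGF}, use continuity of $L$ from Lemma~\ref{lem:conti_L}, then approximate first by a bounded-velocity path and then by piecewise linear interpolation) is conceptually the same as the paper's. The paper, however, outsources the passage to bounded velocity to Lemma~4.17 of \cite{buddup4} (and the piecewise-linear reduction to the argument of Lemma~4.21(e) there), after verifying the superlinearity hypothesis exactly as you do. You instead try to reproduce that bounded-velocity reduction directly via mollification, and that is where the argument has a genuine gap.

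The difficulty is the one you yourself flag at the end. The Jensen estimate
$L(\zeta_\kappa(t),\dot\zeta_\kappa(t))\le \kappa^{-1}\int_t^{t+\kappa}L(\zeta_\kappa(t),\dot\zeta(s))\,ds$
uses convexity in $\beta$ only, so the right-hand side evaluates $L$ at the spatial argument $\zeta_\kappa(t)$, not $\zeta(s)$, and nothing in $I(\zeta)<\infty$ prevents this from being $+\infty$ on a set of positive measure: $\{L<\infty\}$ is open but need not contain a uniform neighbourhood of the a.e.-defined graph $\{(\zeta(s),\dot\zeta(s))\}$, because $\dot\zeta$ is only $L^1$ and can push arbitrarily close to the boundary. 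Your proposed repair — a convex combination with the zero-cost path $\psi$ solving $\dot\psi=\bar g(\psi)$ — does not close this gap, for the reason that $L$ is convex in $\beta$ for each fixed $x$ but is \emph{not} jointly convex in $(x,\beta)$ (the constraint set and the reference kernel $\rho_x$ both move with $x$, and nothing in the paper asserts or uses joint convexity). So for the combined path $\chi=(1-\theta)\zeta_\kappa+\theta\psi$ one has no bound $L(\chi,\dot\chi)\le(1-\theta)L(\zeta_\kappa,\dot\zeta_\kappa)+\theta L(\psi,\dot\psi)$, and hence neither the ``cost increase at most $\varepsilon$'' claim nor the confinement of $\{(\chi(t),\dot\chi(t))\}$ to a compact subset of $\{L<\infty\}$ follows. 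The correct resolution (the content of Lemma~4.17 in \cite{buddup4}) uses the uniform superlinearity more carefully — essentially, one removes or reroutes the small-measure set of times where $\|\dot\zeta\|$ is large, replacing the velocity there by $\bar g$ of the current state rather than by a global convex combination, and then controls the error through the tail bound $\int\|\dot\zeta\|\,\mathbf{1}\{\|\dot\zeta\|>R\}\to0$. Once you have a bounded-velocity path whose graph lies in a compact subset of $\{L<\infty\}$, your Step~2 (interpolation, Lebesgue differentiation, dominated convergence) is fine; the missing piece is a rigorous Step~1, which your convexity-based shortcut does not supply.
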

\begin{proof}
The proof relies on parts of several different results from \cite{buddup4}. First, since $(x,\beta) \mapsto L(x,\beta)$ is continuous by Lemma \ref{lem:conti_L}, it suffices---see the argument used for Part (e) of Lemma 4.21 in \cite{buddup4}---to show that, for the given $\varepsilon > 0$, there is a $\zeta ^* _1 \in C([0,T]:\bR ^{d_1})$ such that $\{ \dot \zeta ^* _1 (t) : t \in [0,T] \}$ is bounded, $\norm{\zeta - \zeta ^*_1} _{\infty} < \varepsilon$, and
\[
    I(\zeta^*)\leq I(\zeta)+\varepsilon.
\]
The existence of such an $\zeta_1^*$ is the topic of Lemma 4.17 in \cite{buddup4}. The same arguments as used in the proof of that result applies also in the setting considered here, if we can show that $L$ is uniformly superlinear in $\beta$, see Section \ref{sec:notation}. Recall that $H$ is the Lengendre-Fenchel transform of $L$. The uniform superlinearity of $L$ then holds if, 
\begin{align}
\label{eq:boundH}
    \sup_{x\in\mathbb{R}^{d_1}}\sup_{\alpha\in\mathbb{R}^{d_1}:\|\alpha\|=M} H(x,\alpha)<\infty,
\end{align}
for every $M < \infty$; see \cite[Lemma 4.14(c)]{buddup4} for why this bound ensures the superlinearity of $L$. Combining these arguments, to prove the existence of $\zeta ^* _1$ with the properties described above, it is enough to prove \eqref{eq:boundH}. 

To show \eqref{eq:boundH}, we recall the alternative representation from Proposition \ref{prop:limitHamiltonian},
\[
    H(x,\alpha) \doteq \lim_{n\to\infty} \frac{1}{n}\log\left(\int\cdots\int e^{\langle \alpha, g(x,y_1)\rangle+\cdots+\langle \alpha, g(x,y_n)\rangle}\rho_x(y_0,dy_1)\cdots\rho_x(y_{n-1},dy_n)\right).
\]
Moreover, Assumption \ref{ass:logMGF} ensures that, for every $\alpha \in \bR ^{d_1}$,
\[
    \hat{C}_\alpha = \sup_x\sup_y\left( \log\int_{\mathbb{R}^{d_2}}e^{\langle \alpha,g(x,z)\rangle}\rho_x(y,dz) \right)<\infty.
\]
Combining the two, we have that, for every $\alpha \in \bR ^{d_1}$,
\[
H(x,\alpha)\leq \hat{C}_\alpha<\infty.
\] 
In addition, for every $(x,y) \in \mathbb{R}^{d_1} \times \mathbb{R}^{d_2}$, the function, 
\[
    \alpha \mapsto \log\int_{\mathbb{R}^{d_2}}e^{\langle \alpha,g(x,z)\rangle}\rho_x(y,dz), 
\]
is convex. Because the supremum of a collection of convex functions is also convex, it holds that $\alpha \mapsto \hat{C}_\alpha$ is a convex function, with finite values for all $\alpha\in\mathbb{R}^{d_1}$. Therefore, $\hat{C}_\alpha$ is continuous in $\alpha$, due to it being convex and finite-valued for any $\alpha$, and we have, 
\[
    \sup_{x\in\mathbb{R}^{d_1}}\sup_{\alpha\in\mathbb{R}^{d_1}:\|\alpha\|=M} H(x,\alpha)\leq \sup_{\alpha\in\mathbb{R}^{d_1}:\|\alpha\|=M}\hat{C}_\alpha<\infty,
\]
for every $M<\infty$. 

This shows \eqref{eq:boundH}, which ensures the uniform superlinearity of $L$, and in turn the existence of an $\zeta ^* _1 \in C([0,T]:\bR ^{d_1})$ such that $\{ \dot \zeta ^* _1 (t) : t\in [0,T] \}$ is bounded, $\| \zeta - \zeta ^*_1 \| < \varepsilon$, and $(\zeta _1 ^*) \leq I(\zeta) + \varepsilon$. Using the continuity of $L$, a function $\zeta ^*$ with the claimed properties can then be obtained as a piecewise linear approximation of $\zeta ^* _1$.
\end{proof}

With Lemmas \ref{lem:optimal_control} and \ref{lem:piecewise_conti} established, in addition to the continuity of $L$, see Lemma \ref{lem:conti_L}, we are now ready to construct the (nearly-optimal) controls that will play a central role in the proof of the lower bound Theorem \ref{thm:lower}. A crucial part of the construction of the controls is to divide the interval $\{n,n+1,\dots, n+\beta_n\}$ into $\ell+1$ segments. Let $\ell,m\in\mathbb{N}$, where for any $\ell \leq \beta _n$, $m$ is the largest integer such that $\ell m\leq \beta_n$. Note that $m = m_n$ depends on $n$ and increases proportionally to $\beta_n$. The idea is that, for fixed $\ell$, we can freeze the state dependence of the noise sequence and use a (local) ergodicity argument to establish convergence. To carry out this strategy, we define $\tau^\ell_k, k = 0,1,\dots,\ell+1$ as,
\[
\tau^\ell_k = \lim_{n\to\infty}\sum_{i=n}^{ n+km\wedge \beta_n }\varepsilon_i,
\]
the limiting times associated to the $\ell$ intervals. From the definition it follows that $\tau^\ell_0 = 0$ and $\tau^\ell_{\ell+1} = T$.

The controls will be defined in terms of the transition probabilities obtained in Lemma \ref{lem:optimal_control}. Set $\bar{X}^n_n=x_0$, $\bar{Y}^n_n= y_0$, and recall that $\zeta(0)=x_0$. Given $\delta>0$, in the first interval, that is, for $j=n,\dots,n+m$, we define $\hat \nu ^n _j$ and $\bar{Y}^n_j$ as follows. The controls are 
\[
    \hat{\nu}^n_j(dz)=\begin{cases}
    &\rho_{\zeta^*(0)}(\bar{Y}^n_{j-1},dz)\quad n \leq j< n+l_0,\\
    &q^{\zeta^*(0),\dot{\zeta}^*(0),\delta}(\bar{Y}^n_{j-1},dz)\quad n+l_0 \leq j \leq n+m-1,
    \end{cases}
\]
where $l_0$ is the constant appearing in the transitivity condition \ref{ass:transitivity}, whereas  $\bar Y^n_j$ is sampled from $\hat \nu ^n _j$. More precisely, $\hat \nu ^n _j$ is the conditional distribution of $\bar Y ^n _j$ given $\calF ^n _{j-1}$. These controlled measures are such that for the first $l_0$ variables $\bar Y ^n _{n}, \dots, \bar Y ^n _{n+l_0 -1}$, the conditional distribution is the same as the noise distribution with fixed $x$-argument, and for the remaining variables, $\bar Y ^n _{n+l_0}, \dots, \bar Y ^n _{n +m}$, the conditional distribution is the transition kernel of Lemma \ref{lem:optimal_control} associated with the triplet $(\zeta (0), \dot \zeta (0), \delta)$. 

For the following intervals, that is, for each $1 \leq k \leq \ell$, for $j=n+km+1,\dots,n+km+m$, we define, 
\[
    \hat{\nu}^n_j(dz)= \begin{cases}
    &\rho_{\zeta^*(\tau^\ell_k)}(\bar{Y}^n_{j-1},dz)\quad n+km+1 \leq j< n+km+l_0+1,\\
    &q^{\zeta^*(\tau^\ell_k),\dot{\zeta}^*(\tau^\ell_k),\delta}(\bar{Y}^n_{j-1},dz),\quad  n+km+l_0+1 \leq j \leq n+(k+1)m,
    \end{cases} 
\]
where $\bar Y ^n _{j}$ is sampled from $\hat \nu ^n _j$. For the final interval, for $j = n+\ell m +1, \dots , n+\beta_n$, we set
\[
\hat{\nu}^n_j(dz) = \rho_{\bar{X}^n_{j-1}}(\bar{Y}^n_{j-1},dz),
\]
where $\bar Y ^n _{j}$ is, again, sampled from $\hat \nu ^n _j$.

Having defined the controlled noise variables, $\bar Y^n_j$, $j=n, \dots, n+\beta_n$, the  controlled process $\bar X ^n$ is defined as, 
\[
    \bar{X}^n_j = \bar{X}^n_{j-1} + \varepsilon_j g(\bar{X}^n_{j-1},\bar{Y}^n_j), \ \ j=n, \dots, n +\beta_n.
\]
To make sure that the controlled process $\bar {X}^n_i$ is not too far away from the path $\zeta^*(t)$, we define the stopping index $\hat{i}^n$ as, 
\[
\hat{i}^n = \inf\left\{ i\leq \beta_n + n: \|\bar X^n_i - \zeta^*(t^n_i)\|>1\right\}\wedge (\beta_n+n),
\]
and the stopping time $\hat{S}^n $ as,
\[
\hat S^n = \sum_{i = n}^{\hat{i}^n}\varepsilon_i.
\]
Observe that since $\bar{X}^n_n = \zeta(0) = x$, it follows by construction that $\hat{i}^n>n$ and $\hat{S}^n>0$. Now we define the controls $\bar \nu^n$ as, 
\begin{align*}
\bar \nu^n_j = \left\{\begin{array}{ll}
                \hat \nu^n_j(dz), & \text{ if } j<\hat{i}^n,\\
                \rho_{\bar X^n_{j-1}}(\bar Y^n_{j-1},dz), & \text{ otherwise}.\end{array} \right.
\end{align*}
This defines the controls, that is the conditional distributions $\{ \bar \nu ^n _j \}_{j=n}^{n+\beta_n}$ for the noise, and the corresponding controlled process $\bar X ^n = \{ \bar X ^n _j\}_{j=n}^{n+\beta_n}$. To have a control in continuous time, we define $\bar{\nu}^n(A|t) = \bar{\nu}^n_i(A)$ for $t\in [t_{i-1}-t_{n},t_{i}-t_n)$ and the measure $\bar\nu ^n \in \mathcal{P}(\mathbb{R}^{d_2}\times [0,T])$ by, 
\[
\bar\nu^n(A\times B) = \int_B\frac{1}{h^n(t)}\bar\nu^n(A|t)dt.
\]
Throughout the paper, unless there is ambiguity, we suppress the dependence on $\delta$ in the control sequence $\{\Bar{\nu}^n_j\}$ in the notation.
The following lemma gives the tightness of the control sequence $\{ \bar \nu ^n \} _n$, which will be used in the convergence analysis needed for proving Theorem \ref{thm:lower}.
%
%
\begin{lemma}
\label{lem:tightness}
Under \ref{ass:Lipschitz}-\ref{ass:logMGF}, the control sequence $\{\bar{\nu}^n\}_n$ is tight.
\end{lemma}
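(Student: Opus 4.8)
The plan is to verify the standard sufficient condition for tightness of a sequence of $\mathcal{P}(\mathbb{R}^{d_2}\times[0,T])$-valued random variables: it is enough to produce a measurable $\psi:\mathbb{R}^{d_2}\to[0,\infty]$ with compact sublevel sets such that $\sup_n E\!\left[\int_{\mathbb{R}^{d_2}\times[0,T]}\psi(y)\,\bar\nu^n(dy\times dt)\right]<\infty$; indeed, $\{Q:\int\psi\,dQ\le M\}$ is then a compact subset of $\mathcal{P}(\mathbb{R}^{d_2}\times[0,T])$, and Markov's inequality makes $\sup_n P(\bar\nu^n\notin\{Q:\int\psi\,dQ\le M\})$ go to $0$ as $M\to\infty$. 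Since $[0,T]$ is compact and the (deterministic) time marginal of $\bar\nu^n$ has density $1/h^n$ with $h^n\to h>0$ uniformly by \ref{ass:limith}, only the $\mathbb{R}^{d_2}$-coordinate needs control, so I would take $\psi(y)=\|y\|$.

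To get the moment bound, write $\int_{\mathbb{R}^{d_2}\times[0,T]}\|y\|\,\bar\nu^n(dy\times dt)=\sum_{i=n+1}^{n+\beta_n}\frac{\varepsilon_i}{\beta_n\varepsilon_{i-1}}\int\|z\|\,\bar\nu^n_i(dz)$, where the weights $\varepsilon_i/\varepsilon_{i-1}$ are bounded uniformly in $i$ and $n$ because $\beta_n\varepsilon_{i-1}$ is bounded away from $0$ and $\infty$ for large $n$ (again by \ref{ass:limith}). As $\bar Y^n_i$ has conditional law $\bar\nu^n_i$ given $\mathcal{F}^n_{i-1}$, this reduces the task to showing $\sup_n\frac{1}{\beta_n}\sum_{i=n+1}^{n+\beta_n}E\big[\|\bar Y^n_i\|\big]<\infty$. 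I would split the sum by the type of step in the construction of $\bar\nu^n_i$. At every index where $\bar\nu^n_i$ is a kernel of the form $\rho_x(\bar Y^n_{i-1},\cdot)$ — the first $l_0$ steps of each block, the whole final block, and every step at or after the stopping index $\hat i^n$ — Assumption \ref{ass:logMGF}, applied with $\alpha=\pm d_2 e_j$ for $j=1,\dots,d_2$, yields $\int e^{\|z\|}\rho_x(y,dz)\le C_1'$ and hence $\int\|z\|\rho_x(y,dz)\le C_0$, both uniform in $(x,y)$; so $E[\|\bar Y^n_i\|\mid\mathcal{F}^n_{i-1}]\le C_0$ there.

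The remaining indices are those inside a block where $\bar\nu^n_i=q^{x,\beta,\delta}(\bar Y^n_{i-1},\cdot)$ with $(x,\beta)$ one of the finitely many pairs of the form $(\zeta^*(\tau^\ell_k),\dot\zeta^*(\tau^\ell_k))$. For these I would apply the Donsker--Varadhan variational inequality,
\[
\int\|z\|\,q^{x,\beta,\delta}(y,dz)\;\le\;\log\!\int e^{\|z\|}\rho_x(y,dz)\;+\;R(q^{x,\beta,\delta}(y,\cdot) \| \rho_x(y,\cdot))\;\le\;\log C_1'+R(q^{x,\beta,\delta}(y,\cdot) \| \rho_x(y,\cdot)),
\]
so everything comes down to bounding $R(q^{x,\beta,\delta}(y,\cdot) \| \rho_x(y,\cdot))$ uniformly in $y$. \emph{This is the main obstacle.} Lemma \ref{lem:optimal_control} only delivers the \emph{averaged} estimate $\int R(q^{x,\beta,\delta}(y,\cdot)\|\rho_x(y,\cdot))\,\mu^{x,\beta,\delta}(dy)\le L(x,\beta)+\varepsilon$ against the invariant measure $\mu^{x,\beta,\delta}$, whereas here the integral would be against the non-stationary law of $\bar Y^n_{i-1}$, and one cannot replace that law by its ergodic limit since lower semicontinuity of relative entropy runs the wrong way. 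To circumvent this I would extract, from the construction underpinning Lemma \ref{lem:optimal_control} (Lemma 6.17 in \cite{buddup4}) together with the density-ratio bound \ref{ass:bounded_kernel}, a pointwise control of $q^{x,\beta,\delta}$ relative to $\rho_x$ — either a Radon--Nikodym density $dq^{x,\beta,\delta}(y,\cdot)/d\rho_x(y,\cdot)$ bounded uniformly in $y$, giving $R\le R^*<\infty$ for all $y$, or a domination $q^{x,\beta,\delta}(y,\cdot)\le M_\delta\,\mu^{x,\beta,\delta}$, which bounds $\int\|z\|\,q^{x,\beta,\delta}(y,dz)$ directly because $\int\|z\|\,\mu^{x,\beta,\delta}(dz)<\infty$ (this last finiteness itself follows from the Donsker--Varadhan inequality applied to the coupling defining $\nu^{x,\beta}$ and to the $\rho_x$-invariant measure $\pi_x$, using \ref{ass:logMGF}). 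Either route gives, uniformly over the finitely many relevant $(x,\beta)$ (here continuity of $L$ and compactness of the range of $(\zeta^*,\dot\zeta^*)$ are used), $E[\|\bar Y^n_i\|\mid\mathcal{F}^n_{i-1}]\le\log C_1'+R^*$ on the $q$-steps. Combining with the $\rho$-step bound yields $\sup_n\frac1{\beta_n}\sum_{i=n+1}^{n+\beta_n}E[\|\bar Y^n_i\|]\le\max\{C_0,\log C_1'+R^*\}<\infty$, and the reduction of the first paragraph then gives tightness of $\{\bar\nu^n\}_n$.
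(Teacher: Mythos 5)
Your high-level strategy --- show $\sup_n E\bigl[\int \psi\, d\bar\nu^n\bigr]<\infty$ for a tightness function $\psi$ and conclude via Prokhorov/Markov --- is the same as the paper's (the paper proves the equivalent uniform integrability statement $\lim_{C\to\infty}\limsup_n E\bigl[\int_0^T\int_{\|z\|>C}\|z\|\,\bar\nu^n(dz\,dt)\bigr]=0$). Your treatment of the $\rho$-steps via \ref{ass:logMGF} is fine. But you have correctly identified the obstacle in your own argument, and your proposed patch for it is not sound: there is no reason a pointwise (in $y$) bound on $R\bigl(q^{x,\beta,\delta}(y,\cdot)\,\|\,\rho_x(y,\cdot)\bigr)$ should hold, the construction in Lemma 6.17 of Budhiraja--Dupuis does not supply one, and the paper nowhere establishes it. Your alternative (domination $q^{x,\beta,\delta}(y,\cdot)\le M_\delta\,\mu^{x,\beta,\delta}$) is likewise not available. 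So as written, the $q$-step bound is a genuine gap, not a detail to be checked later.

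The paper circumvents this entirely by not trying to bound $E[\|\bar Y^n_i\|]$ step by step. Instead it applies the convex duality inequality $ab\le e^{\sigma a}+\tfrac1\sigma(b\log b-b+1)$ directly with $a=\|z\|$ and $b=d\bar\nu^n_i/d\rho_{\bar X^n_{i-1}}(\bar Y^n_{i-1},\cdot)$, i.e.\ the Radon--Nikodym derivative \emph{against the actual controlled base kernel}. Summing over $i$, this bounds the tail integral by $e^{-\sigma C}\sup_{x,y}\int e^{2\sigma\|z\|}\rho_x(y,dz)+\tfrac1\sigma\cdot\bigl(\text{running cost}\bigr)$, where the running cost is exactly $\frac{1}{\beta_n}\sum_i R\bigl(\bar\nu^n_i\,\|\,\rho_{\bar X^n_{i-1}}(\bar Y^n_{i-1},\cdot)\bigr)$. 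The crucial companion fact, proved separately in Lemma \ref{lem:bounded_rel_ent}, is that the expectation of this running cost is bounded uniformly in $n$. And the proof of \emph{that} lemma is where the ``averaged vs.\ pointwise'' tension you hit is resolved: by the $L^1$-ergodic theorem, the time-average $\frac1m\sum_j R\bigl(q^{\zeta^*(\tau^\ell_k),\dot\zeta^*(\tau^\ell_k),\delta}(\bar Y^n_j,\cdot)\,\|\,\rho_{\zeta^*(\tau^\ell_k)}(\bar Y^n_j,\cdot)\bigr)$ converges to the $\mu^{\zeta^*(\tau^\ell_k),\dot\zeta^*(\tau^\ell_k),\delta}$-average --- which is precisely the quantity Lemma \ref{lem:optimal_control} controls --- plus a second term handled by \ref{ass:bounded_kernel}. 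In short: the paper never needs a $y$-uniform relative-entropy bound; it trades it for a time-averaged one via ergodicity, and packages everything into the running cost before invoking convex duality. If you want to salvage your route, you would need to import that ergodic-theorem argument (or just cite Lemma \ref{lem:bounded_rel_ent} directly and then run the paper's duality bound) rather than hope for a pointwise domination of $q^{x,\beta,\delta}$.
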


\begin{proof}
    The proof relies on arguments similar to those used for Lemma 4.11 in \cite{buddup4} and Proposition 5.3.2 in \cite{dupell}. We will need that,  under the given conditions,
    \[
    \sup_nE\left[\frac{1}{\beta_n}\sum_{i=n}^{\beta_n+n-1}R(\bar \nu_{i+1} ^n(\cdot)||\rho_{\bar{X}_{i+1} ^n}(\bar{Y}^n_i,\cdot))\right]<\infty,
    \]
    for the constructed sequence of controls. The statement in the last display is proved in Lemma \ref{lem:bounded_rel_ent}. To prove the claimed tightness, it is sufficient to prove that $\bar\nu^n$ satisfies the uniform integrability property,
    \begin{equation*}
    \lim_{C\to \infty}\limsup_n E\left[\int_0^T\int_{\|z\|>C} \|z\|\bar\nu^n(dz\times dt) \right]= 0.
    \end{equation*}
    To prove this uniform integrability, we use the inequality $ab\leq e^{\sigma a} + \frac{1}{\sigma}(b\log(b) - b+1)$ with $a = \|z\|$ and $b = \frac{d \bar \nu^n_i(\cdot)}{d\rho_{\tilde{X}^n_i}(\tilde{Y}^n_i,\cdot)}$ evaluated at points $z$ with $\|z\| > C$. For $t \in [0,T]$, and fixed $C$ and $n$, we have,
\begin{align*}
    &\int_{\| z\|>C} \| z\|d \bar\nu^n_i(dz) \\
    & \quad = \int_{\|z\|>C} \| z\|\frac{d\bar\nu^n_i(z)}{d\rho_{\bar{X}^n_i}(\bar{Y}^n_i,z)}\rho_{\bar{X}^n_i}(\bar{Y}^n_i,dz)\\
    & \quad \leq \int_{\|z\|>C} e^{\sigma\| z\|} \rho_{\bar{X}^n_i}(\bar{Y}^n_i,dz) \\
    &\qquad  + \frac{1}{\sigma}\int_{\|z\|>C}\left(\frac{d\nu^n_i(z)}{d\rho_{\bar{X}^n_i}(\bar{Y}^n_i,z)}\log \left(\frac{d\bar\nu^n_i(z)}{d\rho_{\bar{X}^n_i}(\bar{Y}^n_i,z)}\right) -\frac{d\bar\nu^n_i(z)}{d\rho_{\bar{X}^n_i}(\bar{Y}^n_i,z)} +1  \right)\rho_{\bar{X}^n_i}(\bar{Y}^n_i,dz)\\
    &\quad \leq  \int_{\| z\|>C} e^{\sigma\| z\|} \rho_{\bar{X}^n_i}(\bar{Y}^n_i,dz) + \frac{1}{\sigma}R(\bar\nu^n_i(\cdot)||\rho_{\bar{X}^n_i}(\bar{Y}^n_i,\cdot))\\
    &\quad \leq e^{-\sigma C}\sup_x\sup_y \int e^{2\sigma \|z\|}\rho_x(y,dz) + \frac{1}{\sigma}R(\bar\nu^n_i(\cdot)||\rho_{\bar{X}^n_i}(\bar{Y}^n_i,\cdot)),
\end{align*}
where in the last step we have used Assumption \ref{ass:logMGF} to guarantee that the first term is finite. Moreover, the bounded expected running cost, see \eqref{eq:finite_control} and Lemma \ref{lem:bounded_rel_ent}, ensures that the second term in the last display is bounded in $n$. Using the bound in the previous display yields,
\begin{align*}
    &E\left[\int_0^T\int_{\|z\|>C} \|z\|\bar \nu^n(dz\times dt) \right] \\
    &\leq\! E\left[\sum_{i=n}^{\beta_n+n-1} \!\!\!\int_{t_i,t_{i+1}}\!\! \frac{1}{h_n(t)} e^{-\sigma C}\sup_x\sup_y \!\!\int \!\!e^{2\sigma \|z\|}\rho_x(y,dz) \!+\! \frac{1}{h_n(t)}\frac{1}{\sigma}R(\bar\nu^n_i(\cdot)||\rho_{\bar{X}^n_i}(\bar{Y}^n_i,\cdot))dt\right] \\
    &=\leq  e^{-\sigma C}\sup_x\sup_y \int e^{2\sigma \|z\|}\rho_x(y,dz) + \frac{1}{\sigma}E\left[\frac{1}{\beta_n}\sum_{i=n}^{n+\beta_n-1}R(\bar\nu^n_i(\cdot)||\rho_{\bar{X}^n_i}(\bar {Y}^n_i,\cdot))\right].
\end{align*}
The first term does not depend on $n$ and by Lemma \ref{lem:bounded_rel_ent} the second term is bounded. Sending first $C\to \infty$ and then $\sigma\to\infty$ yields the uniform integrability and the tightness, which completes the proof. 
\end{proof}
\subsection{Convergence of controls and controlled processes}
\label{sec:conv}
A key step in the weak convergence approach is to show convergence of the controls and associated controlled processes, and to identify the limit objects and their properties. In this section we carry out such an analysis for the pairs $(\bar \nu ^n, \bar X ^n)$. 

Take $\varepsilon >0$ and let $\zeta \in C([0,T]:\mathbb{R} ^{d_1}) $ be $\varepsilon$-optimal with regards to $\inf _\varphi \{ F(\varphi) + I(\varphi)\}$. From Lemma \ref{lem:piecewise_conti}, using the continuity of $F$, we know that there is a piecewise linear $\zeta ^* \in C([0,T]:\mathbb{R} ^{d_1})$, with finitely many pieces, such that $|| \zeta - \zeta ^*||_\infty < \varepsilon$, and, 
\begin{align}
\label{eq:boundFI}
F(\zeta^*) + I(\zeta ^*) \leq F(\zeta) + I(\zeta) + \varepsilon.
\end{align}
For such a $\zeta ^*$, consider the associated measures $\{\nu ^{\zeta ^* (t), \dot \zeta ^* (t)}: t \in [0,T]\}$ from Lemma \ref{lem:optimal_control}; throughout the section we suppress the dependence on $\varepsilon$ in the notation. The following theorem is the main result of this section. 

\begin{theorem}
\label{thm:convX}
Fix $\varepsilon >0$ and $\zeta ^*$ according to \eqref{eq:boundFI}. Under \ref{ass:Lipschitz}-\ref{ass:limith}, for every subsequence of $\{(\bar \nu^n,\bar{X}^n)\}$, there exists a further subsequence that converges weakly to $(\bar \nu, \zeta^*)$, where $\bar \nu$ satisfies,
\[
    \bar \nu (A \times B) = \int _B \frac{1}{h(t)}\bar \nu (A|t)dt,
\]
and $\bar \nu (\cdot|t) = \nu ^{\zeta^* (t), \dot \zeta ^* (t)} (\cdot)$.
\end{theorem}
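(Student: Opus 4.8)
The statement asserts three things: tightness of $\{(\bar\nu^n,\bar X^n)\}$ (so that convergent subsequences exist), identification of every weak limit point of the controls as the specific measure $\bar\nu(A\times B)=\int_B h(t)^{-1}\nu^{\zeta^*(t),\dot\zeta^*(t)}(A)\,dt$, and identification of every limit point of the controlled processes as $\zeta^*$ itself. Tightness of $\{\bar\nu^n\}$ is already available from Lemma \ref{lem:tightness}; tightness of $\{\bar X^n\}$ follows from the uniform bound on the expected running cost (Lemma \ref{lem:bounded_rel_ent}) together with Assumption \ref{ass:limith} and the Lipschitz bound on $g$, exactly as in the upper-bound analysis via Theorem \ref{thm:limit}. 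So the substance is in the two identifications, and these must be done \emph{together}, because the control on each block $k$ is frozen at $\zeta^*(\tau^\ell_k)$ while the true controlled state is $\bar X^n$, and only after we know $\bar X^n\to\zeta^*$ do the frozen kernels $q^{\zeta^*(\tau^\ell_k),\dot\zeta^*(\tau^\ell_k),\delta}$ coincide (in the limit) with what the definition of $\bar\nu$ demands.

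The plan is to run a two-parameter limit: first $n\to\infty$ with $\ell$ (hence $\delta$) fixed, then $\ell\to\infty$ and $\delta\to0$. For fixed $\ell$, on the $k$-th block the controlled noise $\{\bar Y^n_j\}$ is, after the first $l_0$ burn-in steps, a time-homogeneous ergodic Markov chain with transition kernel $q^{\zeta^*(\tau^\ell_k),\dot\zeta^*(\tau^\ell_k),\delta}$ and invariant law $\mu^{\zeta^*(\tau^\ell_k),\dot\zeta^*(\tau^\ell_k),\delta}$. A local ergodic theorem at the fast time-scale (this is the role played by Lemma \ref{lem:limit_m}, anticipated in Section \ref{sec:main}) shows that the empirical occupation measure of $\{\bar Y^n_j\}$ over the block converges to $\mu^{\zeta^*(\tau^\ell_k),\dot\zeta^*(\tau^\ell_k),\delta}$, so along the block the increments of $\bar X^n$ average to $\int g(\zeta^*(\tau^\ell_k),y)\,\mu^{\zeta^*(\tau^\ell_k),\dot\zeta^*(\tau^\ell_k),\delta}(dy)$. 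Using $\beta=\int g(x,y)\nu^{x,\beta}(dy)$ and $\mu^{x,\beta,\delta}=(1-\delta/2)\nu^{x,\beta}+(\delta/2)\pi_x$, this block-average is $\dot\zeta^*(\tau^\ell_k)+O(\delta)$ uniformly, and the time-scale factor $1/h^n\to1/h$ converts the block length in index-count to the correct $\Delta t$. Summing over the $\ell$ blocks and using continuity of $\zeta^*$ and of $L$ (hence of the selections, made measurable in Lemma \ref{lem:optimal_control}), the piecewise-frozen limit process converges uniformly, as $\ell\to\infty$ and $\delta\to0$, to the solution of $\dot x(t)=\dot\zeta^*(t)$, $x(0)=x_0$, i.e. to $\zeta^*$. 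A Gronwall/stopping-time argument shows that for $n$, $\ell$ large and $\delta$ small the stopping index $\hat i^n$ equals $\beta_n+n$ with probability tending to one, so that $\bar\nu^n=\hat\nu^n$ on all of $[0,T]$ and the final interval $[\tau^\ell_\ell,T]$ shrinks to a point as $\ell\to\infty$; hence it contributes nothing in the limit. Passing $n\to\infty$ then $\ell\to\infty$, $\delta\to0$ in the measure $\bar\nu^n$ and using the continuity of $t\mapsto\nu^{\zeta^*(t),\dot\zeta^*(t)}$ yields $\bar\nu(A|t)=\nu^{\zeta^*(t),\dot\zeta^*(t)}(A)$, which is the asserted form.

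The main obstacle is the local-ergodicity step: one must quantify, uniformly in $n$ and in the block index $k$, that the empirical measure of a \emph{growing} number $m=m_n\to\infty$ of steps of the frozen chain $q^{x,\beta,\delta}$ concentrates on $\mu^{x,\beta,\delta}$, despite the chain being re-initialised at the (random) state $\bar Y^n_{n+km}$ inherited from the previous block. The burn-in of $l_0$ steps plus the transitivity Assumption \ref{ass:transitivity} is what makes the ergodic averaging insensitive to this initial condition; Assumption \ref{ass:bounded_kernel} gives the uniform control over the Radon--Nikodym densities needed to keep the error bounds independent of $x=\zeta^*(\tau^\ell_k)$ as $k$ ranges over the blocks. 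One also has to be careful that the convergence $\bar X^n\to\zeta^*$ and the ergodic averaging are not circular: the stopping index $\hat i^n$ keeps $\bar X^n$ within distance $1$ of $\zeta^*$ for free, so the frozen kernels are always evaluated at points within a fixed compact neighbourhood of the image of $\zeta^*$, and on that compact set all the uniform estimates of Assumptions \ref{ass:cont_logMGF}, \ref{ass:bounded_kernel} apply; then the quantitative ergodic bound upgrades ``within $1$'' to ``within $o(1)$'', closing the loop. The remaining arguments — tightness, lower semicontinuity bookkeeping, and the $\delta\to0$, $\ell\to\infty$ passages — are routine given the structure already set up in Sections \ref{sec:upper} and \ref{sec:optimalControl}.
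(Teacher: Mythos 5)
Your plan is correct and follows essentially the same route as the paper: tightness via the bounded expected running cost (Lemma \ref{lem:bounded_rel_ent}) together with the generic convergence result (Theorem \ref{thm:limit}); the $m\to\infty$ (equivalently $n\to\infty$ at fixed $\ell$) block-ergodic step (Lemma \ref{lem:limit_m}); the $\delta\to0$, $\ell\to\infty$ passage yielding $\zeta^*$ (Lemma \ref{lem:limit_delta}); the use of the stopping index $\hat i^n$ to keep all estimates on a compact set, with Corollary \ref{cor:barX} ensuring the stopping does not bite in the limit; and uniqueness of the limit ODE (Lemma \ref{lem_unique}). The observations about the burn-in of $l_0$ steps, the role of \ref{ass:transitivity} and \ref{ass:bounded_kernel}, and the non-circularity of the $\bar X^n\to\zeta^*$ loop all match the paper's argument.
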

The proof relies on showing that the limit $\bar X$ of $\bar X^n$ satisfies, 
\[
     \bar{X}(t) = x + \int_0^t\int_{\mathbb{R}^{d_2}}g(\bar{X}(s),y)\bar \nu(dy|s)ds,
\]
and that, by construction of the $\nu ^{\zeta ^*(t), \dot \zeta^* (t)}$-measures, $\zeta ^*$ satisfies the equation,
\begin{align}
\label{eq:ODEzeta}
    \zeta ^* (t) = \int _0 ^t \int _{\mathbb{R}^{d_2}} g(\zeta ^*(s),y) \nu ^{\zeta^* (s), \dot \zeta ^* (s) } (dy)ds.
\end{align}
To see the latter, and that the solution of \eqref{eq:ODEzeta} is piecewise linear, first note that for any $s \in [0,T]$, by definition we have,
\[
    \int _{\mathbb{R}^{d_2}} g(\zeta ^*(s),y) \nu ^{\zeta^* (s), \dot \zeta ^* (s) } (dy) = \dot \zeta ^* (s).
\]
Moreover, because $\zeta ^*$ is piecewise linear, $\dot \zeta ^*$ is constant on intervals. Time-integrals over such intervals are therefore just the length of the interval times the corresponding value of $\dot \zeta ^*$. Integrating over multiple intervals will then result in a sum of the corresponding values of $\dot \zeta ^*$ times the length of the different intervals: this is precisely $\zeta ^*$. Thus, the piecewise linear function $\zeta ^*$ satisfies \eqref{eq:ODEzeta}. Moreover, in Lemma \ref{lem_unique} in the Appendix we show that \eqref{eq:ODEzeta} has a unique solution, which must then be $\zeta ^*$.

Theorem \ref{thm:convX} is proved by a series of lemmas and theorems. We start with an ancillary result, Lemma \ref{lem:mark_l0}, which will be used to prove tightness of the controlled processes $\bar X ^n$ for generic controlled measures with bounded relative entropy with respect to $\rho$ along the controlled process, see Theorem \ref{thm:limit}. The proof is identical to the proof of Lemma 6.16(b) in \cite{buddup4}; we omit the details.
\begin{lemma}
\label{lem:mark_l0}
Let $l_0$ be the constant in the transitivity condition \ref{ass:transitivity}. If a Borel set $A$ has the property that $\rho_x^{l_0}(y,A)>0$ for some $x,y$, then $\pi_x(A)>0$.
\end{lemma}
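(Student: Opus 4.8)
The plan is to exploit the invariance of $\pi_x$ together with the absolute-continuity relation in \ref{ass:transitivity}. Fix $x$ and let $l_0,n_0$ be the integers from \ref{ass:transitivity}. For a Borel set $B$ and points $y,w$ introduce the (sub-probability, in fact total-mass $<1$) measures
\[
    \mu_y(B) = \sum_{i=l_0}^{\infty}\frac{1}{2^i}\,\rho_x^{(i)}(y,B), \qquad \nu_w(B) = \sum_{j=n_0}^{\infty}\frac{1}{2^j}\,\rho_x^{(j)}(w,B),
\]
so that \ref{ass:transitivity} reads $\mu_y \ll \nu_w$ for all $y,w$. Note that $w\mapsto \nu_w(B)$ is measurable, being a convergent series of the measurable maps $w\mapsto \rho_x^{(j)}(w,B)$.

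First I would record the consequence of invariance: since $\pi_x$ is the invariant measure of $\rho_x$, one has $\pi_x(A)=\int \rho_x^{(j)}(w,A)\,\pi_x(dw)$ for every $j\ge 1$. Multiplying by $2^{-j}$ and summing over $j\ge n_0$ (the interchange of sum and integral being justified by nonnegativity, Tonelli) gives
\[
    \Big(\sum_{j=n_0}^{\infty}\frac{1}{2^j}\Big)\,\pi_x(A) = \int_{\mathbb{R}^{d_2}} \nu_w(A)\,\pi_x(dw).
\]
Since $\sum_{j\ge n_0}2^{-j}>0$, it suffices to show that the right-hand side is strictly positive, for which it is enough to show $\nu_w(A)>0$ for every $w$ (the integral of a strictly positive measurable function against a probability measure is strictly positive).

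Now I would use the hypothesis $\rho_x^{(l_0)}(y,A)>0$ for the given $y$. This forces $\mu_y(A)\ge 2^{-l_0}\rho_x^{(l_0)}(y,A)>0$. If we had $\nu_w(A)=0$ for some $w$, then $\mu_y\ll\nu_w$ would give $\mu_y(A)=0$, a contradiction; hence $\nu_w(A)>0$ for all $w$. Combining with the displayed identity yields $\pi_x(A)>0$, which is the claim. There is no real obstacle here beyond the routine measurability of $w\mapsto\nu_w(A)$ and the elementary fact that a strictly positive integrand integrates to something strictly positive against a probability measure; the whole argument is, as noted in the text, the same as that of Lemma 6.16(b) in \cite{buddup4}.
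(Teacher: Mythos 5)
Your proof is correct and uses exactly the ingredients the paper points to: the invariance relation $\pi_x = \pi_x\rho_x^{(j)}$ summed over $j\ge n_0$, combined with the absolute-continuity condition \ref{ass:transitivity} to show $\nu_w(A)>0$ for every $w$. The paper itself defers to Lemma 6.16(b) of \cite{buddup4} without reproducing the argument, and this is the standard argument behind that cited result, so the approach matches.
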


Using Lemma \ref{lem:mark_l0}, we now prove that the expected running cost associated with the controlled measures $\{\bar \nu ^n \}_n$ is bounded.

\begin{lemma}
\label{lem:bounded_rel_ent}
Under \ref{ass:kernel}, \ref{ass:bounded_kernel}, \ref{ass:transitivity} and \ref{ass:logMGF}, with $\bar \nu ^n = \{\bar \nu ^n _j \} _{j=n+1} ^{\beta_n + n}$ defined as in Section \ref{sec:optimalControl}, it holds that,
\begin{align*}
     \sup_nE\left[\frac{1}{\beta_n}\sum_{i=n}^{\beta_n+n-1}R(\bar \nu_{i+1} ^n(\cdot)||\rho_{\bar{X}_{i+1} ^n}(\bar{Y}^n_i,\cdot))\right]<\infty.
\end{align*}
\end{lemma}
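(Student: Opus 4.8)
The plan is to split $\sum_{i=n}^{\beta_n+n-1}R(\bar\nu^n_{i+1}(\cdot)\|\rho_{\bar X^n_i}(\bar Y^n_i,\cdot))$ according to the three regimes in the construction of the controls $\{\bar\nu^n_j\}$ in Section \ref{sec:optimalControl}. Let $K$ be the closed unit neighbourhood of $\zeta^*([0,T])$; it is compact because $\zeta^*$ is piecewise linear with finitely many pieces. By definition of the stopping index $\hat i^n$, whenever $\bar\nu^n_{i+1}$ differs from the reference kernel $\rho_{\bar X^n_i}(\bar Y^n_i,\cdot)$ one has $i+1<\hat i^n$ and hence $\bar X^n_i\in K$; and $\zeta^*(\tau^\ell_k)\in K$ always. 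When $i+1\ge\hat i^n$, or when $i+1$ lies in the final (unfrozen) segment, $\bar\nu^n_{i+1}(\cdot)=\rho_{\bar X^n_i}(\bar Y^n_i,\cdot)$ and the term vanishes. It remains to bound the contributions of the at most $(\ell+1)l_0$ ``preamble'' indices, where $\bar\nu^n_{i+1}=\rho_{\zeta^*(\tau^\ell_k)}(\bar Y^n_i,\cdot)$, and of the indices (at most $\beta_n$ in all) where $\bar\nu^n_{i+1}=q^{\zeta^*(\tau^\ell_k),\dot\zeta^*(\tau^\ell_k),\delta}(\bar Y^n_i,\cdot)$; write $q_k:=q^{\zeta^*(\tau^\ell_k),\dot\zeta^*(\tau^\ell_k),\delta}$ and $\mu_k:=\mu^{\zeta^*(\tau^\ell_k),\dot\zeta^*(\tau^\ell_k),\delta}$.

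For a preamble index, using $\rho_x(y,dz)=\eta_x(y,z)\lambda(dz)$ from \ref{ass:kernel},
\[
R\bigl(\rho_{\zeta^*(\tau^\ell_k)}(\bar Y^n_i,\cdot)\,\|\,\rho_{\bar X^n_i}(\bar Y^n_i,\cdot)\bigr)=\int\log\frac{\eta_{\zeta^*(\tau^\ell_k)}(\bar Y^n_i,z)}{\eta_{\bar X^n_i}(\bar Y^n_i,z)}\,\rho_{\zeta^*(\tau^\ell_k)}(\bar Y^n_i,dz)\le\log C(K)
\]
by \ref{ass:bounded_kernel}, so these indices contribute at most $(\ell+1)l_0\log C(K)/\beta_n\to0$. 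For a $q$-index, peel off the $\bar X^n_i$-dependence by the chain rule,
\[
R\bigl(q_k(\bar Y^n_i,\cdot)\,\|\,\rho_{\bar X^n_i}(\bar Y^n_i,\cdot)\bigr)=R\bigl(q_k(\bar Y^n_i,\cdot)\,\|\,\rho_{\zeta^*(\tau^\ell_k)}(\bar Y^n_i,\cdot)\bigr)+\int\log\frac{\eta_{\zeta^*(\tau^\ell_k)}(\bar Y^n_i,z)}{\eta_{\bar X^n_i}(\bar Y^n_i,z)}\,q_k(\bar Y^n_i,dz),
\]
whose last term is again at most $\log C(K)$ by \ref{ass:bounded_kernel}, contributing at most $\log C(K)$ after dividing by $\beta_n$.

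It remains to bound $\sum R(q_k(\bar Y^n_i,\cdot)\|\rho_{\zeta^*(\tau^\ell_k)}(\bar Y^n_i,\cdot))$. I would group the indices segment by segment and apply the chain rule for relative entropy once more: conditionally on $\mathcal F^n$ at the start of the $q$-phase of segment $k$, the controlled noise over that phase is a Markov chain with kernel $q_k$, so the conditional expectation of the sum over the phase equals the relative entropy, started from the post-preamble state, between the law of the $q_k$-driven noise over the phase and that of the $\rho_{\zeta^*(\tau^\ell_k)}$-driven noise over the phase. By Lemma \ref{lem:optimal_control}, $\int R(q_k(y,\cdot)\|\rho_{\zeta^*(\tau^\ell_k)}(y,\cdot))\,\mu_k(dy)=R(\mu_k\otimes q_k\,\|\,\mu_k\otimes\rho_{\zeta^*(\tau^\ell_k)})\le L(\zeta^*(\tau^\ell_k),\dot\zeta^*(\tau^\ell_k))+\varepsilon$, and $q_k$ is ergodic with invariant measure $\mu_k$; moreover, by Lemma \ref{lem:mark_l0}, the $l_0$-step $\rho_{\zeta^*(\tau^\ell_k)}$-preamble makes the law of the starting state of the $q$-phase absolutely continuous with respect to $\pi_{\zeta^*(\tau^\ell_k)}$, hence — via the $(\delta/2)\pi_{\zeta^*(\tau^\ell_k)}$-component of $\mu_k$ — comparable to $\mu_k$. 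These facts let one bound the expected cost of the $q$-phase of segment $k$ by $N_k\bigl(L(\zeta^*(\tau^\ell_k),\dot\zeta^*(\tau^\ell_k))+\varepsilon\bigr)$ plus a correction that is $O(1)$ uniformly in $n$, where $N_k$ is the number of $q$-indices in segment $k$; the $q$-phases of distinct segments being disjoint, $\sum_k N_k\le\beta_n$.

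Finally, let $\bar L_\ell:=\max_k L(\zeta^*(\tau^\ell_k),\dot\zeta^*(\tau^\ell_k))$, the maximum taken over the finitely many (at most $\ell+1$) frozen segments, which is finite because the controls $q_k$ are well-defined and Lemma \ref{lem:optimal_control} requires $L(\zeta^*(\tau^\ell_k),\dot\zeta^*(\tau^\ell_k))<\infty$. The expectation of the full sum is then at most $(\bar L_\ell+\varepsilon)\sum_k N_k+(\ell+1)O(1)+\beta_n\log C(K)+(\ell+1)l_0\log C(K)\le(\bar L_\ell+\varepsilon+\log C(K))\beta_n+(\ell+1)\bigl(O(1)+l_0\log C(K)\bigr)$; dividing by $\beta_n$ and using that $\ell$, $\bar L_\ell$ and $C(K)$ are independent of $n$ while $\beta_n\to\infty$, the quantity in the statement is at most $\bar L_\ell+\varepsilon+\log C(K)+o(1)$, which is bounded uniformly in $n$ and proves the lemma. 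I expect the step of the previous paragraph to be the main obstacle: Lemma \ref{lem:optimal_control} controls only the $\mu_k$-average of $R(q_k(\cdot,\cdot)\|\rho_{\zeta^*(\tau^\ell_k)}(\cdot,\cdot))$, so one must genuinely handle the transient of the $q_k$-chain before it has equilibrated, which is precisely the role of the $\rho$-preamble, Lemma \ref{lem:mark_l0}, the $\pi_x$-component of $\mu^{x,\beta,\delta}$, and (for integrability of the quantities involved) the exponential-moment bound \ref{ass:logMGF}; if the construction of $q^{x,\beta,\delta}$ in Lemma \ref{lem:optimal_control} already provides $\sup_y R(q^{x,\beta,\delta}(y,\cdot)\|\rho_x(y,\cdot))<\infty$, uniformly for $x$ in compacts and $\beta$ in the finite slope set of $\zeta^*$, then the segment grouping and ergodicity are unnecessary and each $q$-term is bounded by a constant directly.
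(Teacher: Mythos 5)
Your decomposition is essentially the paper's: both split $R(\bar\nu^n_j\|\rho_{\bar X^n_{j-1}})$ into a $q_k$-versus-$\rho_{\zeta^*(\tau^\ell_k)}$ piece and a $\rho_{\zeta^*}$-versus-$\rho_{\bar X^n}$ correction, bound the latter via \ref{ass:bounded_kernel} on the compact set forced by the stopping time, and bound the former via Lemma~\ref{lem:optimal_control}, the $L^1$-ergodic theorem for the $q_k$-chain, and Lemma~\ref{lem:mark_l0} plus the $(\delta/2)\pi_x$-component of $\mu_k$ to control the post-preamble starting state. The step you flag as the ``main obstacle'' is exactly where the paper does the work (it establishes a $\mu_k$-full set $\Phi_k$ of good initial states for the $L^1$-ergodic average, then shows $\rho_{\zeta^*(\tau^\ell_k)}^{l_0}(y,\Phi_k^c)=0$), so your sketch matches the paper's argument.
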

\begin{proof}
First observe that for $i\geq \hat{i}^n$ we have that $\bar \nu ^n _{i+1} = \rho _{\bar{X}_{i} ^n}(\bar{Y}^n_i,\cdot)$, and consequently,
\[
R(\bar \nu_{i+1} ^n(\cdot)||\rho_{\bar{X}_{i} ^n}(\bar{Y}^n_i,\cdot)) = 0.
\]
From here on we only need to consider indices $j<\hat{i}^n$.
    For each $n$, with $\ell$ and $m$ as in Section \ref{sec:optimalControl}, from the definition of the $\bar \nu ^n _j$s we have,
    \begin{align*}
\frac{1}{\beta_n} \sum_{i=n}^{\beta_n +n -1} R\left( \bar{\nu}^n_{i+1}(\cdot) \| \rho_{\bar X^n_{i+1}}(\bar Y^n_{i},\cdot)\right) &= 
\frac{1}{\beta_n} \sum_{k=0}^{{\ell}}\sum_{j=1}^{m} R\left( \bar{\nu}^n_{n+km+j}(\cdot)\| \rho_{\bar X^n_{n+km+j-1}}(\bar Y^n_{n+km+j-1},\cdot)\right).
    \end{align*}
Using the definition of relative entropy, for each $k$ and $j$ in the relevant ranges, we can re-write the relative entropy-term on the right-hand side of the last display as,
\begin{align}
\label{eqn_relative_entropy}
    &R\left( \bar{\nu}^n_{n+km+j}(\cdot)\| \rho_{\bar X^n_{n+km+j}}(\bar Y^n_{n+km+j-1},\cdot)\right) \nonumber\\
    & \quad = R\left( \bar{\nu}^n_{n+km+j}(\cdot)\| \rho_{\zeta^*(\tau^\ell_k)}(\bar Y^n_{km+j-1},\cdot)\right) \\ & \qquad 
    \!+\! \int_{\mathbb{R}^{d_2}}\!\!\! \left(\log\frac{d\rho_{\zeta^*(\tau^\ell_k)}(\bar Y^n_{n+km+j-1},y)}{d\rho_{\bar X^n_{n+km+j}}(\bar Y^n_{n+km+j-1},y)}\right)\bar{\nu}^n_{n+km+j}(dy). \nonumber 
\end{align}
Note also that $R\left( \bar{\nu}^n_{n+km+j-1}(\cdot)\| \rho_{\zeta^*(\tau^\ell_k)}(\bar Y^n_{km+j-1},\cdot)\right) = 0$ for $j \leq l_0$.

Take $\delta >0$. For any $k \in \{1, \dots, \ell\}$, consider the integral
\begin{align*}
    \int _{\bR ^{d_2}} R\left( q^{\zeta^*(\tau^\ell_k),\dot{\zeta} ^* (\tau^\ell_k),\delta}(y,\cdot)\| \rho_{\zeta^*(\tau^\ell_k}(y,\cdot)\right)\mu^{\zeta^*(\tau^\ell_k),\dot{\zeta}^*(\tau^\ell_k), \delta}(dy).
\end{align*}
We will show that, as $m \to \infty$, which corresponds to the limit $n \to \infty$, this integral approximates 
\begin{align*}
 \frac{1}{m}\sum_{j=l_0+1}^{m} R\left( q^{\zeta^*(\tau^\ell_k),\dot{\zeta}^*(\tau^\ell_k),\delta}(\bar{Y}^n_{km+j-1},\cdot)\| \rho_{\zeta^*(\tau^\ell_k)}(\bar Y^n_{km+j-1},\cdot)\right),
\end{align*}
that is the (normalised) sum over first term appearing in the alternative representation \eqref{eqn_relative_entropy} of the running cost. To show this, we use arguments similar to those used in the proof of Proposition 6.15 in \cite{buddup4}. First, from Lemma \ref{lem:optimal_control}, 
\begin{align*}
    & \bE \left[ \frac{1}{m}\sum_{j=l_0 + 1}^{m} R\left( q^{\zeta^*(\tau^\ell_k),\dot{\zeta}^*(\tau^\ell_k),\delta}(\bar{Y}^n_{km+j-1},\cdot)\| \rho_{\zeta^*(\tau^\ell_k)}(\bar Y^n_{km+j-1},\cdot)\right) \right] \\
    &\quad =  \int _{\bR ^{d_2}} R\left( q^{\zeta^*(\tau^\ell_k),\dot{\zeta}^*(\tau^\ell_k),\delta}(y,\cdot)\| \rho_{\zeta^*(\tau^\ell_k}(y,\cdot)\right)\mu^{\zeta^*(\tau^\ell_k),\dot{\zeta}^*(\tau^\ell_k), \delta}(dy) \\
    &\quad \leq L\left(\zeta ^* (t ^n _{n +km}), \dot \zeta ^* (t^n _{n+km})\right) + \varepsilon,
\end{align*}
and from the properties of $\zeta ^*$ this upper bound is finite. The non-negativity of the relative entropy, and the properties of the $\mu^{\zeta^*(\tau^\ell_k),\dot{\zeta^*}(\tau^\ell_k), \delta}$-measures, and associated Markov chains, the $L^1$-ergodic theorem implies the convergence,
\begin{align*}
     &\lim_{m\to\infty}E \Bigg[ \Bigg\|\frac{1}{m}\sum_{j=l_0 +1 }^{m} R\left( q^{\zeta^*(\tau^\ell_k),\dot{\zeta^*}(\tau^\ell_k),\delta}(\bar{Y}^n_{km+j-1},\cdot)\| \rho_{\zeta^*(\tau^\ell_k)}(\bar Y^n_{km+j-1},\cdot)\right)\\
    &\qquad\qquad\qquad\qquad - \int R\left( q^{\zeta^*(\tau^\ell_k),\dot{\zeta}(\tau^\ell_k),\delta}(y,\cdot)\| \rho_{\zeta^*(\tau^\ell_k)}(y,\cdot)\right)\mu^{\zeta^*(\tau^\ell_k),\dot{\zeta^*}(\tau^\ell_k),\delta}(dy)\Bigg\| \Bigg]=0.
\end{align*}
As a consequence of the convergence in the last display, it follows that, for any $y_k$,
\begin{align*}
& E _{y_k} \Bigg[ \Bigg\|\frac{1}{m}\sum_{j=l_0 + 1}^{m} R\left( q^{\zeta^*(\tau^\ell_k),\dot{\zeta}(\tau^\ell_k),\delta}(\bar{Y}^n_{km+j-1},\cdot)\| \rho_{\zeta^*(\tau^\ell_k)}(\bar Y^n_{km+j-1},\cdot)\right)\\
    &\qquad\qquad\qquad\qquad- \int R\left( q^{\zeta^*(\tau^\ell_k),\dot{\zeta}(\tau^\ell_k),\delta}(y,\cdot)\| \rho_{\zeta^*(\tau^\ell_k)}(y,\cdot)\right)\mu^{\zeta^*(\tau^\ell_k),\dot{\zeta^*}(\tau^\ell_k),\delta}(dy)\Bigg\| \Bigg],
\end{align*}
converges in probability to 0, as $m \to \infty$. This is turn ensures that, for any $k \in \{1, \dots ,\ell \}$, there is a further subsequence of $\{m \}$---we abuse notation and denote this subsequence by $\{ m \}$ as well---and a Borel set $\Phi _k$ such that $\mu^{\zeta^*(\tau^\ell_k),\dot{\zeta^*}(\tau^\ell_k),\delta} (\Phi _k) =1$, and for any $\bar Y ^n _{n +km +l_0} = y_k \in \Phi _k$,
\begin{align*}
    &\lim_{m\to\infty}E_{y_k}\left[ \left|\frac{1}{m}\sum_{j=l_0+1}^{m} R\left( q^{\zeta^*(\tau^\ell_k),\dot{\zeta^*}(\tau^\ell_k),\delta}(\bar{Y}^n_{km+j-1},\cdot)\| \rho_{\zeta^*(\tau^\ell_k)}(\bar Y^n_{km+j-1},\cdot)\right)\right. \right.\\
    &\qquad\qquad\qquad\qquad\left.\left.- \int R\left( q^{\zeta^*(\tau^\ell_k),\dot{\zeta^*}(\tau^\ell_k),\delta}(y,\cdot)\| \rho_{\zeta^*(\tau^\ell_k)}(y,\cdot)\right)\mu^{\zeta^*(\tau^\ell_k),\dot{\zeta^*}(\tau^\ell_k),\delta}(dy)\right| \right] =0.
\end{align*}
We now show that $\bar Y ^n _{n +km + l_0} \in \Phi _k$ w.p.\ 1. Because $\mu^{\zeta^*(\tau^\ell_k),\dot{\zeta^*}(\tau^\ell_k),\delta} (\Phi _k) =1$ and $ \pi_{\zeta^*(\tau^\ell_k)}\ll \mu^{\zeta^*(t^n_{km}),\dot\zeta^*(t^n_{km}),\delta}$, it holds that $\pi_{\zeta^*(\tau^\ell_k)}(\Phi_k^c)=0$. Lemma \ref{lem:mark_l0} then implies that $\rho_{\zeta^*(\tau^\ell_k)}^{l_0}(y,\Phi_k^c)=0$. 
This, combined with the fact that we only consider a finite number $\ell$ terms, gives the convergence,
\begin{align*}
    &\lim_{m\to\infty}\max_{k\in\{1,\dots,\ell\}}E \left[\left|\frac{1}{m}\sum_{j=l_0}^{m-1} R\left( q^{\zeta^*(\tau^\ell_k),\dot{\zeta}(\tau^\ell_k),\delta}(\bar{Y}^n_{km+j},\cdot)\| \rho_{\zeta^*(\tau^\ell_k}(\bar Y^n_{km+j},\cdot)\right)\right.\right.\\
    &\qquad\qquad\qquad\qquad\left.\left. - \int R\left( q^{\zeta^*(\tau^\ell_k),\dot{\zeta}(\tau^\ell_k),\delta}(y,\cdot)\| \rho_{\zeta^*(\tau^\ell_k}(y,\cdot)\right)\mu^{\zeta^*(\tau^\ell_k),\dot{\zeta^*}(\tau^\ell_k)}(dy)\right| \right] =0.
\end{align*}
It follows that,
\[
     \sup_m E\left[ \frac{1}{m}\sum_{j=0}^{m-1} R\left( q^{\zeta^*(\tau^\ell_k),\dot{\zeta}(\tau^\ell_k),\delta}(\bar{Y}^n_{km+j},\cdot)\| \rho_{\zeta^*(\tau^\ell_k)}(\bar Y^n_{km+j},\cdot)\right)\right] < \infty.
\]
Next, we consider the second term in \eqref{eqn_relative_entropy},
\begin{align}
\label{eq:running2}
\int_{\mathbb{R}^{d_2}} \left(\log\frac{d\rho_{\zeta^*(\tau^\ell_k)}(\bar Y^n_{n+km+j-1},y)}{d\rho_{\bar X^n_{n+km+j}}(\bar Y^n_{n+km+j-1},y)}\right)\bar{\nu}^n_{n+km+j}(dy).
\end{align}
By the continuity of $\zeta^*$, and because $n+km+j<\hat{i}^n$, there exists a compact set $K$ such that $\zeta^*(\tau^l_k),\Bar{X}^n_{n+km+j}\in K$. By Assumption \ref{ass:bounded_kernel}, there exists a $C$ such that $\log\frac{d\rho_{\zeta^*(\tau^\ell_k)}(y,z)}{d\rho_{\bar{X}^n_{n+km+j}}(y,z)}\leq C$, for all $n$. This ensures that \eqref{eq:running2} is bounded. We conclude that the sums over the terms appearing in the representation \eqref{eqn_relative_entropy} are both bounded in $n$. Consequently, the expected running cost associated with $\{ \bar \nu ^n \}_n$ is bounded.



\end{proof}
In proving Theorem \ref{thm:convX}, the main step is to prove a version of the theorem, Theorem \ref{thm:limit}, for a general class of control measures, satisfying bounded expected running cost; by Lemma \ref{lem:bounded_rel_ent}, we know that the sequence of control measures constructed in Section \ref{sec:optimalControl} belongs to this class. To this end, 
%
we consider a (generic) sequence of measures $\tilde \nu ^n _i \in \calP (\bR ^{d_2})$ such that,
\begin{equation}
\label{eq:finite_control}
    \sup_nE\left[\frac{1}{\beta_n}\sum_{i=n}^{\beta_n+n-1}R( \tilde \nu_i^n(\cdot)||\rho_{\tilde{X}_i^n}(\tilde{Y}^n_i,\cdot))\right]<\infty.
\end{equation}
Define the corresponding controlled process $\{\tilde X ^n _k\} _{k \geq n}$ as before: $\tilde X ^n _n = x$ and
\[
\tilde X ^n _{k+1} = \tilde X ^n _k + \varepsilon _k g(\tilde X ^n_k, \tilde Y ^n _k),
\]
where $\tilde \nu ^n _k$ is the conditional distribution for $\tilde Y ^n _k$ given $\sigma \left( \tilde Y ^n _n, \dots , \tilde Y ^n _{i-1} \right)$. Similar to before, we take $\tilde X ^n \in C([0,T]:\bR ^{d_{1}})$ as the linear interpolation with breakpoints $\tilde X ^n (t_{n+k} -t_n) = \tilde X ^n _k$. We also abuse notation a bit and define $\tilde \nu \in \calP (\bR ^{d_2} \times [0,T])$ as,
\[
    \tilde \nu ^n (A \times B) = \int _B \frac{1}{h^n(t)} \tilde \nu ^n (A|t) dt,
\]
where $\tilde \nu ^n (A|t) = \tilde \nu ^n _i (A)$ when $t \in [t_{n+i-1} - t_n, t_{n+i}-t_n)$. 



\begin{theorem}
\label{thm:limit}
Under \ref{ass:Lipschitz}-\ref{ass:limith}, for every subsequence of $\{(\tilde \nu^n,\tilde{X}^n)\}$ where $\{ \tilde \nu^n\}$ satisfies \eqref{eq:finite_control}, there exists a further subsequence that converges weakly to $( \tilde \nu,\tilde {X})$. Furthermore, there exists a stochastic kernel $\tilde \nu(dy|t)$ such that, 
\begin{equation*}
    \tilde \nu(A\times B) = \int_B \tilde \nu(A|t)\frac{1}{h(t)}dt,
\end{equation*}
and $\tilde {X}$ satisfies,
\begin{equation}\label{eqn:x_bar}
     \tilde{X}(t) = x + \int_0^t\int_{\mathbb{R}^{d_2}}g(\tilde{X}(s),y)\tilde \nu(dy|s)ds.
\end{equation}
\end{theorem}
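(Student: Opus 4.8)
The plan is to use the by-now-standard weak-convergence machinery underlying this paper: establish tightness of the controls, then tightness of the controlled processes, extract a jointly convergent subsequence, and identify the limit by passing to the limit in the (discrete) integral equation satisfied by $\tilde X^n$. The structure is parallel to that of Lemma \ref{lemma:tightUB}, but now the controls $\{\tilde\nu^n\}$ are only assumed to satisfy the bounded-running-cost condition \eqref{eq:finite_control} rather than being optimal, so a separate tightness argument for $\{\tilde X^n\}$ is required.

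First I would prove tightness of $\{\tilde\nu^n\}$ in $\mathcal{P}(\mathbb{R}^{d_2}\times[0,T])$. The second marginal of $\tilde\nu^n$ is the probability measure $h^n(t)^{-1}\,dt$, which by \ref{ass:limith} (and positivity of $h$ on $[0,T]$) converges to $h(t)^{-1}\,dt$, so tightness of the time marginal is automatic; it remains to control the $\mathbb{R}^{d_2}$-marginal. This is exactly the argument of Lemma \ref{lem:tightness}: from the inequality $ab\le e^{\sigma a}+\sigma^{-1}(b\log b-b+1)$ applied with $a=\|z\|$ and $b=d\tilde\nu^n_i/d\rho_{\tilde X^n_i}(\tilde Y^n_i,\cdot)$, together with the finite exponential moments of \ref{ass:logMGF} and the bound \eqref{eq:finite_control}, one obtains the uniform integrability property $\lim_{C\to\infty}\limsup_n E[\int_0^T\int_{\|z\|>C}\|z\|\,\tilde\nu^n(dz\times dt)]=0$, which gives tightness. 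Any limit point $\tilde\nu$ then disintegrates as $\tilde\nu(dy\times dt)=\tilde\nu(dy|t)\,h(t)^{-1}\,dt$ for some stochastic kernel $\tilde\nu(dy|t)$; this uses the uniform convergence $h^n\to h$ and Lemma 3.3.1 in \cite{dupell4}, exactly as for the measure $\lambda$ in the proof of Lemma \ref{lemma:tightUB}.

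Second, tightness of $\{\tilde X^n\}$ in $C([0,T]:\mathbb{R}^{d_1})$. Write $\tilde X^n(t)=x+A^n(t)+M^n(t)$, where $A^n$ is the conditional-drift part obtained by replacing $g(\tilde X^n_{i-1},\tilde Y^n_i)$ by $\int g(\tilde X^n_{i-1},y)\,\tilde\nu^n_i(dy)$, and $M^n$ is the resulting $\{\mathcal{F}^n_i\}$-martingale. For $M^n$, its predictable quadratic variation is bounded by $\sum_i\varepsilon_{n+i}^2\int\|g(\tilde X^n_{i-1},y)\|^2\,\tilde\nu^n_i(dy)$; after truncating the noise at level $C$ (the tail being controlled by the uniform integrability just established) and using $\sum_{i=n}^{n+\beta_n}\varepsilon_i^2\le(\sup_{i\ge n}\varepsilon_i)\sum_{i=n}^{n+\beta_n}\varepsilon_i\to 0$, Doob's inequality gives $E[\sup_{t\le T}\|M^n(t)\|]\to 0$. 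For $A^n$, applying the Young-type inequality above with $a=\|g(\tilde X^n_{i-1},z)\|$ (bounding $e^{\sigma\|g\|}$ by finitely many directional exponential moments from \ref{ass:logMGF}, so that boundedness of $g$ in $x$ is never needed) yields $\int\|g(\tilde X^n_{i-1},y)\|\,\tilde\nu^n_i(dy)\le K(\sigma)+\sigma^{-1}R(\tilde\nu^n_i\|\rho_{\tilde X^n_i}(\tilde Y^n_i,\cdot))$ with $K(\sigma)<\infty$ for every $\sigma$; hence, for $s<t$, $E[\|A^n(t)-A^n(s)\|]\le K(\sigma)\sum_{i\in(s,t]}\varepsilon_{n+i}+\sigma^{-1}\|h^n\|_\infty\,\beta_n^{-1}\sum_i E\,R(\cdot\|\cdot)$. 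The key point is that the relative-entropy budget is only bounded, not vanishing, so equicontinuity cannot be read off directly; but the coefficient $\sigma^{-1}$ in front of it can be made arbitrarily small while $K(\sigma)$ stays finite, and the first term is of order $t-s$, so the modulus of continuity of $A^n$ is controlled uniformly in $n$. Together with the a priori bound $\sup_n E[\sup_{t\le T}\|\tilde X^n(t)\|]<\infty$, obtained from the same estimate over all of $[0,T]$, this gives tightness of $\{\tilde X^n\}$, hence of $\{(\tilde\nu^n,\tilde X^n)\}$.

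Third, identification of the limit. Along a convergent subsequence $(\tilde\nu^n,\tilde X^n)\to(\tilde\nu,\tilde X)$. Since $M^n\to 0$ and the discretization error (from replacing $\tilde X^n_{i-1}$ by $\tilde X^n(s)$ on the $i$-th interval, of order $\varepsilon_{n+i}\|g(\tilde X^n_{i-1},\tilde Y^n_i)\|$) vanishes by the same estimates, it suffices to pass to the limit in $\int_0^t\int g(\tilde X^n(s),y)\,\hat\nu^n(dy\times ds)$, where $\hat\nu^n(dy\times ds)=h^n(s)\,\tilde\nu^n(dy\times ds)=\tilde\nu^n(dy|s)\,ds$ converges weakly to $\hat\nu(dy\times ds)=\tilde\nu(dy|s)\,ds$ by the uniform convergence of $h^n$. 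Using $\|\tilde X^n-\tilde X\|_\infty\to 0$, the Lipschitz continuity of $g$ in its first argument, the uniform integrability of the controls, and a truncation of $g$ in its (possibly unbounded) second argument, the limit equals $\int_0^t\int g(\tilde X(s),y)\,\tilde\nu(dy|s)\,ds$, which is precisely \eqref{eqn:x_bar}. I expect the main obstacles to be (i) this final limit passage through the unbounded $g$ and (ii), on the tightness side, the equicontinuity estimate for $A^n$ under only a bounded rather than vanishing relative-entropy budget; the remaining steps are routine adaptations of the arguments in \cite{buddup4, dupell4}.
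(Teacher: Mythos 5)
Your proposal follows essentially the same route as the paper's proof: tightness of $\{\tilde\nu^n\}$ via the exponential Young inequality and \ref{ass:logMGF} under the bounded running-cost hypothesis; tightness of $\{\tilde X^n\}$ via the decomposition into a conditional-drift part and a martingale remainder (your $A^n$ is, up to centering, exactly the paper's intermediate process $S^n$ defined in \eqref{eq:Sn}, and your $M^n$ is what Lemma \ref{lem_3} controls via a martingale-difference argument and Doob's inequality); and identification of the limit by truncation, Lipschitz continuity of $g$ in $x$, uniform integrability, and Gr\"onwall (paper's Lemma \ref{lem:1}). The one place where you flagged a possible obstacle — equicontinuity of $A^n$ under a merely bounded, not vanishing, running-cost budget — is handled the same way in Lemma \ref{mcon}: the key point, which you only implicitly state, is that the pathwise bound on $\sup_{|s-t|<\delta}\|A^n(t)-A^n(s)\|$ replaces the relative-entropy sum over the short interval $(s,t]$ by the sum over all of $[0,T]$, and this full-horizon budget enters with the small prefactor $\sigma^{-1}$ (equivalently, the paper's uniform-integrability tail) while the bounded-$g$ part contributes $O(\delta)$; choosing $\sigma$ large first and then $\delta$ small closes the argument, and this is precisely how the paper proceeds. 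The only cosmetic difference is that the paper truncates the increments $\Lambda^n_j$ at level $\theta$ in Lemma \ref{lem_3} rather than truncating $\|g\|$ at level $C$, but both achieve the same control of the second moment of the martingale increments.
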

The proof of Theorem \ref{thm:limit} is presented in Section \ref{sec:limit}. Note that the form of the limit measure $\tilde \nu$ is a direct consequence of Lemma 3.3.1 in \cite{dupell4} and the uniform convergence $h^n \to h$, ensured by \ref{ass:limith}. We also have that since $\hat{S}^n$ takes values in the compact set $[0,T]$, there is a subsequence that converges to $\hat{S}\in [0,T]$.  Theorem \ref{thm:convX} follows from this result if we can show that the limit point for the appropriate subsequences of the specific choice of control measures in Section \ref{sec:optimalControl} have the claimed form. We prove this in two steps, carried out in Lemmas \ref{lem:limit_m} and \ref{lem:limit_delta}. The strategy is to first send $m$ to infinity, and find the corresponding limit point $\bar x ^\ell$ of $\bar X ^n$, see Lemma \ref{lem:limit_m}. Recall that from how we chose $m$ and $\ell$, for fix $\ell$, taking $n$ to infinity also means taking $m$ to infinity, and vice versa; as before, at times we suppress the dependence on $\delta$ and $\varepsilon$ in the notation. Next, we send $\ell$ to infinity and $\delta$ to 0, and show that the corresponding limit for the $\bar x ^\ell$ is $\zeta ^*$, see Lemma \ref{lem:limit_delta}.  That is, we show the following convergence results: 
\[
    \bar{X}^n \xrightarrow[Lemma\hspace{0.3em} \ref{lem:limit_m}]{m\to\infty}\bar{x}^{\ell}\xrightarrow[Lemma\hspace{0.3em} \ref{lem:limit_delta}]{\delta\to 0, \hspace{0.3em}\ell\to\infty}  \zeta ^*.
\]
We start with the first part: using Theorem \ref{thm:limit}, applied to the sequence $\{ (\bar \nu ^n, \bar X ^n) \}$, we characterise the limit point $\bar x ^\ell$ and prove that $\bar{X}^n \to \bar{x}^{\ell}$ in probability as $m \to \infty$. 

\begin{lemma}\label{lem:limit_m}Under \ref{ass:Lipschitz}-\ref{ass:limith}, for any $\delta>0$, $\ell\in\mathbb{N}$, $\{\bar X^n\} _n$ is tight. Moreover, the convergent subsequences of $\{\bar X^n\}$ converge to $\bar x^{\ell}$ in probability, as $m\to \infty$, where $\bar x^{\ell}$ satisfies,
\begin{align*}
    \bar x^{\ell}(t)  = x_0 &+ \sum_{i=0}^{k-1}\int_{\tau^\ell_i}^{\tau^\ell_{i+1}}\int_{\mathbb{R}^{d_2}} g(\bar x^{\ell}(s),y)\mu^{\zeta^*(\tau ^\ell _i),\dot{\zeta^*}(\tau^\ell_i),\delta}(dy)ds \\
    &+ \int_{\tau^\ell_k}^t\int_{\mathbb{R}^{d_2}} g(\bar x^{\ell}(s),y)\mu^{\zeta^*(\tau ^\ell _k),\dot{\zeta^*}(\tau ^\ell _k),\delta}(dy)ds, 
\end{align*}
for $t\in[\tau^\ell_k,\tau^\ell_{k+1})$ and $t\leq\hat{S^n}$.
\end{lemma}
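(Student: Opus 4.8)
The plan is to derive the lemma from Theorem \ref{thm:limit} together with a local ergodic-averaging argument, in the spirit of Proposition 6.15 and Lemma 6.16 in \cite{buddup4}. First, by Lemma \ref{lem:bounded_rel_ent} the constructed controls $\bar\nu^n=\{\bar\nu^n_j\}$ satisfy the bounded expected running-cost condition \eqref{eq:finite_control}. Hence Theorem \ref{thm:limit} applies directly to the pair $(\bar\nu^n,\bar X^n)$: the sequence $\{\bar X^n\}_n$ is tight, and along any subsequence there is a further subsequence along which $(\bar\nu^n,\bar X^n)\to(\bar\nu,\bar x^\ell)$ weakly, where $\bar\nu(A\times B)=\int_B \tfrac{1}{h(t)}\bar\nu(A|t)\,dt$ for some stochastic kernel $\bar\nu(\cdot|t)$ and
\[
    \bar x^\ell(t) = x_0 + \int_0^t\int_{\mathbb{R}^{d_2}} g(\bar x^\ell(s),y)\,\bar\nu(dy|s)\,ds .
\]
Since $\hat S^n$ takes values in the compact set $[0,T]$, we may also pass to a further subsequence along which $\hat S^n\to \hat S$. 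It therefore remains to (i) identify $\bar\nu(\cdot|t)=\mu^{\zeta^*(\tau^\ell_k),\dot\zeta^*(\tau^\ell_k),\delta}$ for $t\in[\tau^\ell_k,\tau^\ell_{k+1})$ with $t\le \hat S$, and (ii) argue that the resulting path is deterministic.

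The key structural observation for (i) is that, on the $k$-th block, the controls $\hat\nu^n_j$ depend only on the \emph{frozen} state $\zeta^*(\tau^\ell_k)$, not on $\bar X^n_{j-1}$; moreover $\bar X^n$ coincides with the frozen-control trajectory up to the stopping index $\hat i^n$, so $\hat i^n$ is exactly the first exit of that frozen trajectory from the tube $\{\,\|\cdot-\zeta^*(t^n_\cdot)\|\le 1\,\}$. Consequently, on the $k$-th block and before $\hat i^n$, the sequence $\{\bar Y^n_j\}$ is, after the $l_0$ burn-in steps, an honest (time-homogeneous) Markov chain with transition kernel $q^{\zeta^*(\tau^\ell_k),\dot\zeta^*(\tau^\ell_k),\delta}$, which by Lemma \ref{lem:optimal_control} is ergodic with unique invariant measure $\mu^{\zeta^*(\tau^\ell_k),\dot\zeta^*(\tau^\ell_k),\delta}$.

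Now fix $k$ and a bounded continuous $f$ on $\mathbb{R}^{d_2}$. Applying the $L^1$-ergodic theorem exactly as in the proof of Lemma \ref{lem:bounded_rel_ent} — with the functionals $\int f\,d\bar\nu^n_{n+km+j}=\int f(z)\,q^{\zeta^*(\tau^\ell_k),\dot\zeta^*(\tau^\ell_k),\delta}(\bar Y^n_{n+km+j-1},dz)$ in place of the relative-entropy terms, and using Lemma \ref{lem:mark_l0} to ensure the burn-in variable $\bar Y^n_{n+km+l_0}$ lies w.p.\ $1$ in the relevant full-measure set — yields
\[
    \frac{1}{m}\sum_{j=l_0+1}^{m}\int f\,d\bar\nu^n_{n+km+j}\ \longrightarrow\ \int f\,d\mu^{\zeta^*(\tau^\ell_k),\dot\zeta^*(\tau^\ell_k),\delta}
\]
in probability as $m\to\infty$. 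The first $l_0$ terms contribute vanishing mass ($l_0/m\to 0$), and the indices beyond $\hat i^n$ correspond to times past $\hat S$; since $h^n\to h$ uniformly by \ref{ass:limith}, this discrete averaging translates into convergence of $\int_{\tau^\ell_k\wedge\hat S}^{\tau^\ell_{k+1}\wedge\hat S}\int f(y)\,\bar\nu^n(dy|t)\tfrac{1}{h^n(t)}\,dt$ to $\int_{\tau^\ell_k\wedge\hat S}^{\tau^\ell_{k+1}\wedge\hat S}\tfrac{1}{h(t)}\big(\int f\,d\mu^{\zeta^*(\tau^\ell_k),\dot\zeta^*(\tau^\ell_k),\delta}\big)\,dt$. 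Letting $f$ range over a countable separating class and $[0,t']$ over a separating class of subintervals identifies $\bar\nu(\cdot|t)$ on $[0,\hat S]$ as claimed; substituting into the integral equation for $\bar x^\ell$ gives the stated block-wise representation for $t\le\hat S$.

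Finally, for (ii): by the Lipschitz property of $g$ in $x$ (Assumption \ref{ass:Lipschitz}) and a Grönwall estimate (cf.\ Lemma \ref{lem_unique}), the stated integral equation has, on $[0,\hat S]$, a unique solution; in particular $\bar x^\ell$ is deterministic, and hence so is $\hat S$, being the first exit time of the deterministic path $\bar x^\ell$ from the tube around $\zeta^*$. Since every subsequential limit of $\{\bar X^n\}$ therefore equals the same deterministic $\bar x^\ell$, the full sequence $\{\bar X^n\}$ converges to $\bar x^\ell$, and convergence in distribution to a constant is convergence in probability. I expect the main obstacle to be the control-identification step: one must run the local ergodic theorem block by block while simultaneously accounting for the stopping index $\hat i^n$ and the time-change weights $1/h^n$, and in particular verify that the stopping does not interfere with the ergodic averages except possibly on the single block straddling $\hat S$, whose contribution is handled by the truncated average $\tau^\ell_k\wedge\hat S,\ \tau^\ell_{k+1}\wedge\hat S$.
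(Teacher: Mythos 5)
Your proof is correct and takes essentially the same route as the paper's: reduce to Theorem \ref{thm:limit} via Lemma \ref{lem:bounded_rel_ent}, identify the limit control on each $\tau^\ell_k$-block by the $L^1$-ergodic theorem for the frozen-state kernel $q^{\zeta^*(\tau^\ell_k),\dot\zeta^*(\tau^\ell_k),\delta}$ (with Lemma \ref{lem:mark_l0} placing the burn-in state in the ergodic class), and conclude by uniqueness of the resulting integral equation. The paper organizes the same content slightly differently -- it introduces an unweighted auxiliary measure $\gamma^m(A\times B)=\int_B\gamma^m(A|t)\,dt$ in place of the $1/h^n$-weighted $\bar\nu^n$ and splits the error into explicit terms $C^k_1,\dots,C^k_4$ (burn-in, time-freezing inside $f$, ergodic averaging, Riemann sum) -- but the substance is the same.
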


\begin{proof}

For $j \in \{n+km +1, \dots, n + (k+1)m \}$ and $k \in \{0, \dots, \ell\}$, consider $t \in [t^n _{j-1}, t^n _j )$.  
Because we will consider the limit as $m \to \infty$, to emphasise the dependence on $m$ in the $\bar \nu ^n _j$s, we define,
\begin{align*}
    \gamma ^m (dz | t) = \bar \nu ^n _j (dz) = \begin{cases} 
        \rho _{\zeta ^* (\tau^\ell _k)} (\bar Y ^n _{j-1}, dz), & n +km +1 \leq j \leq n +km + l_0, \\
        q ^{\zeta ^*(\tau^\ell _k), \dot{\zeta} ^* (\tau^\ell _k), \delta} (\bar Y ^n _{j-1}, dz), & n+km+l_0 +1 \leq j \leq n + (k+1)m -1.
    \end{cases}
\end{align*}
Moreover, for $j \in \{ n +\ell m +1, \dots, n + \beta_n \}$, i.e., when $t \in [t^n _{\ell m}, T)$ or $t\geq \hat{S}^n$, we set,
\begin{align*}
    \gamma ^m (dz|t) = \rho _{\bar X ^n _{j-1}} (\bar Y ^n _{j-1}, dz).
\end{align*}
For notational brevity and clarity, we also define,
\[
    \gamma^m(A\times B) \doteq \int_B \gamma^m(A|t)dt,
\]
and
\[
     \gamma(A\times B) \doteq \int_B \gamma(A|t)dt, \quad \gamma(A|t) = \mu^{\zeta^*(\tau ^\ell _{k}),\dot{\zeta^*}(\tau ^\ell _{k}),\delta}(A), \quad t\leq \hat{S},
\]
where $t$ and $k$ are as above. For $t>\hat{S}$, we set $\gamma(A|t) = \lim_{m\to\infty} \gamma^m(A|t)$. Note that $\gamma ^m$ and $\gamma ^m(\cdot |t)$ are playing the roles of $\bar \nu ^n$ and $\bar \nu ^n (\cdot |t)$. Combining Lemma \ref{lem:bounded_rel_ent} and Theorem \ref{thm:limit}, with these definitions of $\gamma ^m$ and $\gamma$, it is enough to show that $\gamma^m$ converges weakly to $\gamma$ w.\ p.\ 1.

To prove the convergence of $\gamma^m$, 
consider any bounded and uniformly continuous function $f:\mathbb{R}^{d_2} \times [0,T]\to \mathbb{R}$. By the Portmanteau theorem, it is enough to prove that, 
\[
\int _{\bR ^{d_2} \times [0, T]} f(y,t)\gamma ^m(dydt), 
\]
converges, as $m \to \infty$, to,
\begin{align*}
\int_{\bR ^{d_2} \times [0, T]} f(y,t) \gamma (dy dt).
\end{align*}
Since $\gamma^m(dy|t) \to \gamma(dy|t)$ by definition for $t>\hat{S}$ the only interesting case is for the interval $[0,\hat{S}]$. Below the proof is constructed with $\hat{S} = T$. The case with $\hat{S}<T$ is completely analogous (carried out over a shorter time interval). 
From the definition of $\gamma ^m$ we have,
\begin{align*}
     \int _{\bR ^{d_2} \times [0, T]} f(y,t)\gamma ^m(dydt) 
    & = \sum _{k=0} ^{\ell} \sum_{j=n+km + 1}^{n+(k+1)m} \int_{t^n_{j-1}}^{t^n_{j}}\int_{\mathbb{R}^{d_2}}f(y,t)\gamma^m(dy|t)dt \\
    &\quad + \int _{t^n _{n + \ell m}} ^T \int _{\bR ^{d_2}} f(y,t ) \gamma ^m (dy |t)dt.
\end{align*}
As a first step, for each  $k \in \{0, 1, \dots, \ell\}$, we consider the difference 
\begin{align}
\label{eq:fixk}
\sum_{j=n+km + 1}^{n+(k+1)m} \int_{t^n_{j-1}}^{t^n_{j}}\int_{\mathbb{R}^{d_2}}f(y,t)\gamma^m(dy|t)dt - \int _{\bR ^{d_2} \times [\tau ^\ell _k, \tau ^\ell _{k+1})} f(y,t) \gamma (dy dt).
\end{align}

In preparation for studying \eqref{eq:fixk} in the limit $m \to \infty$, we make the following definitions. Let, 
\begin{align*}
    C_1 ^k (m) &= \sum_{j=n+km + 1}^{n+km + l_0} \int_{t^n_{j-1}}^{t^n_{j}}\int_{\mathbb{R}^{d_2}}f(x,t)\rho_{\bar X^n_{j-1}}(\bar{Y}^n_{j-1},dx)dt,\\
    C_2 ^k (m) &= \sum_{j=n+km + l_0+1} ^{n+(k+1) m} \int_{t^n_{j-1}}^{t^n_{j}}\int_{\mathbb{R}^{d_2}}f(x,t)q^{\zeta^*(\tau^\ell _k),\dot{\zeta} ^*(\tau ^\ell _k),\delta}(\bar{Y}^n_{j-1},dx)dt \\
    &- \sum_{j=n+km + l_0+1}^{n+(k+1)m} \int_{t^n_{j-1}}^{t^n_{j}}\int_{\mathbb{R}^{d_2}}f\left(x,t^n_{j-1}\right)q^{\zeta^*(\tau ^\ell _k),\dot{\zeta} ^*(\tau ^\ell _k),\delta}(\bar{Y}^n_{j-1},dx)dt,\\
    C_3 ^k (m) &= \sum_{j=n+km + l_0+1}^{n+(k+1)m} \varepsilon_{j}\int_{\mathbb{R}^{d_2}}f\left(x,t^n_{j-1}\right)q^{\zeta^*(\tau ^\ell _k),\dot{\zeta} ^*(\tau ^\ell _k),\delta}(\bar{Y}^n_{j-1},dx) \\
    &- \sum_{j=n+km + l_0+1}^{n+(k+1)m} \varepsilon_{j}\int_{\mathbb{R}^{d_2}}\int_{\mathbb{R}^{d_2}} f(x,t^n_{j-1})q^{\zeta ^*(\tau ^\ell _k),\dot{\zeta} ^*(\tau ^\ell _k),\delta}(y,dx) \mu^{\zeta ^*(\tau ^\ell _k),\dot{\zeta} ^*(\tau ^\ell _k),\delta}(dy),\\
    C_4 ^k (m) &= \sum_{j=n+km + l_0+1}^{n+(k+1)m} \varepsilon_{j}\int_{\mathbb{R}^{d_2}} f(y,t^n_{j-1}) \mu^{\zeta ^*(\tau ^\ell _k),\dot{\zeta} ^*(\tau ^\ell _k),\delta}(dy) \\
    &- \int_{[\tau ^\ell _k ,\tau^\ell_{k+1}]} \int_{\mathbb{R}^{d_2}} f(y,t) \mu^{\zeta^*(\tau ^\ell _k),\dot{\zeta} ^*(\tau ^\ell _k),\delta}(dy)dt.
\end{align*}
With these definitions, we now rewrite $\sum_{j=n+km + 1}^{n+(k+1)m} \int_{t^n_{j-1}}^{t^n_{j}}\int_{\mathbb{R}^{d_2}}f(y,t)\gamma^m(dy|t)dt$ in terms of $C_i ^k (m)$, $i=1,\dots, 4$, and $\int _{\bR ^{d_2} \times [t^n _{n +km}, t^n _{n + (k+1)m})} f(y,t) \gamma (dy dt)$. First, we split the sum over $j$ into two terms according to $l_0$:

\begin{align*}
    &  \sum_{j=n+km + 1}^{n+(k+1)m} \int_{t^n_{j-1}}^{t^n_{j}}\int_{\mathbb{R}^{d_2}}f(y,t)\gamma^m(dy|t)dt \\
    & \quad = \sum_{j=n+km + l_0 +1 } ^{n+(k+1)m} \int_{t^n_{j-1}}^{t^n_{j}}\int_{\mathbb{R}^{d_2}}f(y,t) q^{\zeta ^*(\tau^\ell _k), \dot \zeta ^* (\tau^\ell _k), \delta } (\bar Y ^n _{j-1}, dy) dt + C^k _1 (m).
\end{align*}
Next, for each interval $[t^n _{j-1}, t^n _j)$, we freeze the time-variable $t$ inside $f(y,t)$ at $t^n _{j-1}$:
\begin{align*}
    & \sum_{j=n+km + l_0 +1 } ^{n+(k+1)m} \int_{t^n_{j-1}}^{t^n_{j}}\int_{\mathbb{R}^{d_2}}f(y,t) q^{\zeta ^*(\tau^\ell _k), \dot \zeta ^* (\tau^\ell _k), \delta } (\bar Y ^n _{j-1}, dy) dt + C^k _1 (m) \\
    & \quad = \sum_{j=n+km + l_0+1}^{n+(k+1)m} \int_{t^n_{j-1}}^{t^n_{j}}\int_{\mathbb{R}^{d_2}}f\left(y,t^n_{j-1}\right)q^{\zeta^*(\tau ^\ell _k),\dot{\zeta} ^*(\tau ^\ell _k),\delta}(\bar{Y}^n_{j-1},dy)dt + C^k _1 (m) + C^k _2 (m) \\
    & \quad = \sum_{j=n+km + l_0+1}^{n+(k+1)m} \varepsilon _j \int_{\mathbb{R}^{d_2}}f\left(y,t^n_{j-1}\right)q^{\zeta^*(\tau ^\ell _k),\dot{\zeta} ^*(\tau ^\ell _k),\delta}(\bar{Y}^n_{j-1},dy) + C^k _1 (m) + C^k _2 (m),
\end{align*}
where in the second step we have used that the integral over the time variable is equal to $t^n _j - t^n _{j-1} = \varepsilon _j$. Next, by averaging over the controlled variable $\bar Y ^n _{j-1}$, we can write the last display as,
\begin{align*}
    & \sum_{j=n+km + l_0+1}^{n+(k+1)m} \varepsilon _j \int_{\mathbb{R}^{d_2}}f\left(y,t^n_{j-1}\right)q^{\zeta^*(\tau ^\ell _k),\dot{\zeta} ^*(\tau ^\ell _k),\delta}(\bar{Y}^n_{j-1},dy) + C^k _1 (m) + C^k _2 (m) \\
    & \quad = \sum_{j=n+km + l_0+1}^{n+(k+1)m} \varepsilon_{j}\int_{\mathbb{R}^{d_2}}\int_{\mathbb{R}^{d_2}} f(y,t^n_{j-1})q^{\zeta^*(\tau^\ell _k),\dot{\zeta} ^*(\tau^\ell _k),\delta}(y,dz) \mu^{\zeta^*(\tau^\ell _k),\dot{\zeta} ^*(\tau^\ell _k),\delta}(dy) \\
    & \qquad \quad + C^k _1(m)+ C^k _2(m) + C^k _3(m).
\end{align*}
Because $\mu^{\zeta^*(\tau^\ell _k),\dot{\zeta} ^*(\tau^\ell _k),\delta}$ is invariant for $q^{\zeta^*(\tau^\ell _k),\dot{\zeta} ^*(\tau^\ell _k),\delta}$, we have,
\begin{align*}
    &\int_{\mathbb{R}^{d_2}}\int_{\mathbb{R}^{d_2}} f(y,t^n_{j-1})q^{\zeta^*(\tau^\ell _k),\dot{\zeta} ^*(\tau^\ell _k),\delta}(y,dz) \mu^{\zeta^*(\tau^\ell _k),\dot{\zeta} ^*(\tau^\ell _k),\delta}(dy) \\ & \quad = \int _{\bR ^{d_2}} f(y, t^n _{j-1}) \mu^{\zeta^*(\tau^\ell _k),\dot{\zeta} ^*(\tau^\ell _k),\delta}(dy).
\end{align*}
Moreover, from the definition of $C^k _4 (m)$, we have,
\begin{align*}
    &\sum_{j=n+km + l_0+1}^{n+(k+1)m} \varepsilon_{j} \int _{\bR ^{d_2}} f(y, t^n _{j-1}) \mu^{\zeta^*(\tau^\ell _k),\dot{\zeta} ^*(\tau^\ell _k),\delta}(dy) \\ & \quad = \int _{\tau ^\ell _{k}} ^{\tau ^\ell _{k+1}} \int _{\bR ^{d_2}} f(y, t^n _{j-1}) \mu^{\zeta^*(\tau^\ell _k),\dot{\zeta} ^*(\tau^\ell _k),\delta}(dy)dt + C^k _4 (m)
\end{align*}
Combining the steps above, and the definition of $\gamma$, we can express the difference \eqref{eq:fixk} as,
\begin{align*}
    \sum_{j=n+km + 1}^{n+(k+1)m} \int_{t^n_{j-1}}^{t^n_{j}}\int_{\mathbb{R}^{d_2}}f(y,t)\gamma^m(dy|t)dt - \int _{\bR ^{d_2}} \int _{\tau ^\ell _k} ^{\tau ^\ell _{k+1}} f(y,t) \gamma (dy dt) = \sum _{i=1} ^4 C^k _i (m).
\end{align*}
We now consider the $C^k _i (m)$-terms, for a fixed $k \in \{0,1, \dots, \ell\}$, as we let $m$ go to infinity. 

For $C^k _1 (m)$, because $f$ is bounded, the sum only contains a finite number of terms, and 
\[ 
t^n _{n +km + l_0} - t^n _{n + km +1} \to 0, \ m \to \infty,
\]
we have that $C^k _1 (m) \to 0$.

For $C^k _2 (m)$, we can write this term as, 
\[
C_2 ^k (m) = \!\!\!\sum_{j=n+km + l_0+1} ^{n+(k+1) m} \int_{t^n_{j-1}}^{t^n_{j}}\int_{\mathbb{R}^{d_2}}\left( f(y,t) - f(y, t^n _{j-1}) \right)q^{\zeta^*(\tau^\ell _k),\dot{\zeta} ^*(\tau ^\ell _k),\delta}(\bar{Y}^n_{j-1},dx)dt .
\]
Using the uniform continuity of $f$, these terms can be made arbitrarily small.

Next, for $C^k _3(m)$, arguments analogous to those used in the proof of Lemma \ref{lem:bounded_rel_ent}, based on the $L^1$-ergodic theorem, gives $C^k _3 (m) \to 0$ as $m \to \infty$; due to the similarity with the previous proof, we omit the details.


For $C^k _4 (m)$, we utilise Riemann integrability of the function $\hat f : [0,T] \to \bR$ defined by,
\[
    t \mapsto \hat f (t) = \int _{\bR ^{d_2}} f(y,t) \mu ^{\zeta ^*(\tau ^\ell _k), \dot \zeta ^* (\tau ^\ell _k), \delta} (dy).
\]
Noting that $t^n _{n + (k+1)m} \to \tau ^\ell _{k+1}$ and $t^n _{n+km +l_0 +1} \to \tau ^\ell _k$, 
\[
    \sum _{j=n +km + l_0 +1} ^{n + (k+1)m} \varepsilon _j \int _{\bR ^{d_2}} f(y, t^n _{j-1}) \mu ^{\zeta ^* (\tau ^\ell _k), \dot \zeta ^*(\tau ^\ell _k), \delta } (dy) ,
\]
is a Riemann sum and converges to $\int _{\tau ^\ell _k} ^{\tau ^\ell _{k+1}} \hat f(t) dt$ as $m \to \infty$. Thus $C^k _4 (m) \to 0$ as $m \to \infty$. 

We have established that, for each $k \in \{0, \dots, \ell\}$, $\sum _{i=1} ^4 C^k _i (m) \to 0$, as $m \to \infty$. It follows that,
\[
    \sum _{k=0} ^{\ell} \sum _{i=1} ^4 C^k _i (m) \to 0, \ \ m \to \infty.
\]
By extension, as $m \to \infty$,
\begin{align*}
\sum _{k=0} ^{\ell} \sum_{j=n+km + 1}^{n+(k+1)m} \int_{t^n_{j-1}}^{t^n_{j}}\int_{\mathbb{R}^{d_2}}f(y,t)\gamma^m(dy|t)dt &\to  \sum _{k=0} ^{\ell} \int _{\tau ^\ell _k} ^{\tau ^\ell _{k+1}} \int _{\bR ^{d_2}} f(y,t) \gamma (dy |t) dt \\
&= \int _{0} ^{T} \int _{\bR ^{d_2}} f(y,t) \gamma (dy dt).
\end{align*}
It remains to consider the term, 
\begin{align}
\label{eq:finalTerm}
\int _{t^n _{n + \ell m}} ^T \int _{\bR ^{d_2}} f(y,t ) \gamma ^m (dy |t)dt,
\end{align}
in the limit as $m \to \infty$. Since $f$ is bounded, $\gamma ^m (\cdot |t)$ is a probability measure for each $t \in [0,T]$, and $t^n _{n +\ell m} \to \tau^\ell _\ell = T$, as $m \to \infty$, we have that \eqref{eq:finalTerm} vanishes in this limit. Thus, we have shown that, w.\ p.\ 1, for arbitrary bounded and uniformly continuous $f:\bR^{d_2} \times [0,T] \to \bR$,
\[
\int _{\bR ^{d_2} \times [0, T]} f(y,t)\gamma ^m(dydt) \to  \int_{\bR ^{d_2} \times [0, T]} f(y,t) \gamma (dy dt), \ \ m \to \infty.
\]
That is, w.\ p.\ 1 we have the weak convergence $\gamma ^m \to \gamma$, as $m \to \infty$. This completes the proof.
\end{proof}

The next step is to prove the convergence of $\bar x ^\ell$ when taking $\delta \to 0$ and $\ell \to \infty$, in that order. We have the following result.

\begin{lemma}\label{lem:limit_delta} 
Assume \ref{ass:Lipschitz}-\ref{ass:limith} hold and let $\{\bar x^{\ell}\}$ be the process defined in Lemma \ref{lem:limit_m}. Then, $\{\bar x^{\ell}\}$ converges to $\zeta^*$, on the time interval $[0,\hat{S}]$, in the limit as $\delta\to 0$ and $\ell\to\infty$. 
\end{lemma}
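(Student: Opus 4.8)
The plan is to combine the integral representation of $\bar x^{\ell}$ from Lemma \ref{lem:limit_m} with the identity $\zeta^*(t) = x_0 + \int_0^t \dot\zeta^*(s)\,ds$ and close the gap by Gronwall's inequality, keeping the dependence on $\delta$ explicit by writing $\bar x^{\ell,\delta}$. For $t\in[0,\hat S]$ and $s\le t$, let $k(s)$ be the index with $s\in[\tau^\ell_{k(s)},\tau^\ell_{k(s)+1})$. Subtracting the two integral equations, Lemma \ref{lem:limit_m} gives $\bar x^{\ell,\delta}(t)-\zeta^*(t)=\int_0^t(A_s+B_s+D_s)\,ds$, where
\[
A_s=\int_{\mathbb{R}^{d_2}}\big(g(\bar x^{\ell,\delta}(s),y)-g(\zeta^*(\tau^\ell_{k(s)}),y)\big)\,\mu^{\zeta^*(\tau^\ell_{k(s)}),\dot\zeta^*(\tau^\ell_{k(s)}),\delta}(dy),
\]
$B_s=\int_{\mathbb{R}^{d_2}}g(\zeta^*(\tau^\ell_{k(s)}),y)\,\mu^{\zeta^*(\tau^\ell_{k(s)}),\dot\zeta^*(\tau^\ell_{k(s)}),\delta}(dy)-\dot\zeta^*(\tau^\ell_{k(s)})$, and $D_s=\dot\zeta^*(\tau^\ell_{k(s)})-\dot\zeta^*(s)$. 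Here $A_s$ is the error from freezing the state at $\zeta^*(\tau^\ell_{k(s)})$, $B_s$ is the error from the $\delta$-regularisation of the measure, and $D_s$ is the error from freezing $\dot\zeta^*$ at the left endpoint of the subinterval.

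Each term is then estimated with $\Delta_\ell\doteq\max_k(\tau^\ell_{k+1}-\tau^\ell_k)$. By Assumption \ref{ass:Lipschitz}, $\|A_s\|\le L_g\big(\|\bar x^{\ell,\delta}(s)-\zeta^*(s)\|+\|\zeta^*(s)-\zeta^*(\tau^\ell_{k(s)})\|\big)$ with $L_g$ a Lipschitz constant of $g(\cdot,y)$, and since $\zeta^*$ is piecewise linear, $\|\zeta^*(s)-\zeta^*(\tau^\ell_{k(s)})\|\le\|\dot\zeta^*\|_\infty\Delta_\ell$. For $B_s$, the definition $\mu^{x,\beta,\delta}=(1-\delta/2)\nu^{x,\beta}+(\delta/2)\pi_x$ together with $\int g(x,y)\nu^{x,\beta}(dy)=\beta$ from Lemma \ref{lem:optimal_control} and $\int g(x,y)\pi_x(dy)=\bar g(x)$ give $B_s=(\delta/2)\big(\bar g(\zeta^*(\tau^\ell_{k(s)}))-\dot\zeta^*(\tau^\ell_{k(s)})\big)$; since $\bar g$ is bounded — by Jensen's inequality and Assumption \ref{ass:logMGF}, $\sup_{x,y}\|\int g(x,z)\rho_x(y,dz)\|<\infty$, and $\bar g(x)=\int\int g(x,z)\rho_x(y,dz)\pi_x(dy)$ by invariance of $\pi_x$ — one gets $\|B_s\|\le c_1\delta$ with $c_1$ depending only on $\zeta^*$. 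For $D_s$: $\dot\zeta^*$ is piecewise constant with finitely many, say $N$, breakpoints, so $D_s=0$ unless $s$ lies in one of the at most $N$ subintervals containing a breakpoint, whence $\int_0^t\|D_s\|\,ds\le 2N\|\dot\zeta^*\|_\infty\Delta_\ell$. Finally one needs $\Delta_\ell\to0$ as $\ell\to\infty$: by Assumption \ref{ass:limith}, $\sup_{[0,T]}h^n\to\sup_{[0,T]}h=:\|h\|_\infty<\infty$, so for $n$ large every step among the first $\beta_n$ iterations satisfies $\varepsilon_{n+j}\le(\|h\|_\infty+1)/\beta_n$; as each of the first $\ell$ segments consists of $m=\lfloor\beta_n/\ell\rfloor$ steps and the final one of fewer than $\ell$ steps, every segment has length at most $(\|h\|_\infty+1)/\ell+o(1)$ as $n\to\infty$, and taking $n\to\infty$ yields $\Delta_\ell\le(\|h\|_\infty+1)/\ell$.

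Collecting the estimates, for all $t\in[0,\hat S]$,
\[
\|\bar x^{\ell,\delta}(t)-\zeta^*(t)\|\le L_g\int_0^t\|\bar x^{\ell,\delta}(s)-\zeta^*(s)\|\,ds+c_1T\delta+\big(L_gT+2N\big)\|\dot\zeta^*\|_\infty\Delta_\ell,
\]
so Gronwall's inequality gives $\sup_{t\in[0,\hat S]}\|\bar x^{\ell,\delta}(t)-\zeta^*(t)\|\le e^{L_gT}\big(c_1T\delta+(L_gT+2N)\|\dot\zeta^*\|_\infty\Delta_\ell\big)$. Since $\Delta_\ell\le(\|h\|_\infty+1)/\ell\to0$, letting $\delta\to0$ and then $\ell\to\infty$ (the bound is uniform, so the order is immaterial) shows $\bar x^{\ell,\delta}\to\zeta^*$ uniformly on $[0,\hat S]$, which is the claim; as the bound is deterministic, the convergence also holds in probability. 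The main obstacle is bookkeeping rather than a deep difficulty: cleanly isolating the three error sources, and in particular establishing $\Delta_\ell\to0$ from the uniform convergence $h^n\to h$ while correctly handling the in-the-limit degenerate final segment. The Lipschitz and boundedness estimates implicitly use that the controlled trajectory stays in the fixed compact tube $\{x:\|x-\zeta^*(t)\|\le1\}$ enforced by the stopping index $\hat i^n$, and the Gronwall step itself is routine.
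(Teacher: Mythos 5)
Your proof is correct, and it takes a genuinely different (and arguably tighter) route than the paper's. The defining fixed-point equation for $\bar x^\ell$ in Lemma \ref{lem:limit_m} has $g(\bar x^\ell(s),y)$ in the integrand, yet the paper's proof opens by \emph{equating} $\|\bar x^\ell - \zeta^*\|_\infty$ with a difference of integrals in which the state argument has already been replaced by $\zeta^*(s)$; that substitution is exactly what needs justification, and it is the Gr\"onwall step you supply. Your $A_s/B_s/D_s$ decomposition makes the error budget transparent: $A_s$ produces the Gr\"onwall kernel plus an $O(\Delta_\ell)$ state-freezing remainder via Lipschitz continuity of $g(\cdot,y)$; $B_s$ is an $O(\delta)$ regularisation error via the identity $\mu^{x,\beta,\delta}=(1-\delta/2)\nu^{x,\beta}+(\delta/2)\pi_x$ together with $\int g\,d\nu^{x,\beta}=\beta$ and the boundedness of $\bar g$, which you correctly upgrade from the mere pointwise integrability of Lemma \ref{lem:g_integrable} to a uniform bound via \ref{ass:logMGF}, Jensen, and invariance of $\pi_x$; and $D_s$ is supported on at most $N$ subintervals, giving $O(\Delta_\ell)$. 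Your derivation of $\Delta_\ell\le(\|h\|_\infty+1)/\ell$ from uniform convergence $h^n\to h$, with the final segment degenerating as $n\to\infty$, is also careful and correct. By contrast the paper splits into a $\mu$-vs-$\nu$ piece and a piecewise-vs-continuous piece without Gr\"onwall, bounding the second via uniform continuity of $\zeta^*$ rather than exploiting the piecewise-constant structure of $\dot\zeta^*$; both bounds vanish in the stated limits, but your approach cleanly closes the self-referential dependence that the paper's opening equality elides.
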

Before embarking on the proof of Lemma \ref{lem:limit_delta}, we show two results that are used in the proof: the integrability of $g(x,\cdot)$ with respect to $\pi _x $, for each $x \in \bR ^{d_1}$ (Lemma \ref{lem:g_integrable}), and that if the process $\bar{x}^l$ converges to $\zeta^*$ on $[0,\hat{S}]$, then it also converges to $\zeta^*$ on $[0,T]$, see Corollary \ref{cor:barX}. 

\begin{lemma} 
\label{lem:g_integrable}
Under \ref{ass:Lipschitz}-\ref{ass:limith}, for any $x\in\mathbb{R}^{d_1}$, the function $y \mapsto g(x,y)$ is integrable with respect to $\pi_x$.
\end{lemma}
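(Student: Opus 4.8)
The plan is to transfer the uniform-in-$y$ exponential moment bound on $g$ under $\rho_x(y,\cdot)$, supplied by \ref{ass:logMGF}, to an exponential moment bound under the invariant measure $\pi_x$, using invariance of $\pi_x$; once exponential moments under $\pi_x$ are controlled, integrability of $y\mapsto g(x,y)$ is immediate, since $|t|\le e^{t}+e^{-t}$.

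Concretely, I would fix $x\in\mathbb{R}^{d_1}$. By \ref{ass:kernel} and \ref{ass:transitivity}, $\rho_x$ admits a unique invariant probability measure $\pi_x$ (as recorded in Section~\ref{sec:ass}), so that $\int f\,d\pi_x=\int\!\!\int f(z)\,\rho_x(y,dz)\,\pi_x(dy)$ for every nonnegative measurable $f$ — this holds first for indicators by the defining identity $\pi_x(A)=\int\rho_x(y,A)\,\pi_x(dy)$ of invariance, and then for general nonnegative measurable $f$ by monotone approximation; this is the only place invariance enters. Applying it with $f(z)=e^{\langle\alpha,g(x,z)\rangle}$ for an arbitrary $\alpha\in\mathbb{R}^{d_1}$, and using Tonelli's theorem together with the first bound in \ref{ass:logMGF},
\begin{align*}
    \int e^{\langle\alpha,g(x,z)\rangle}\,\pi_x(dz)
    &=\int\!\left(\int e^{\langle\alpha,g(x,z)\rangle}\,\rho_x(y,dz)\right)\pi_x(dy)\\
    &\le\sup_{w\in\mathbb{R}^{d_1}}\sup_{y\in\mathbb{R}^{d_2}}\int e^{\langle\alpha,g(w,z)\rangle}\,\rho_w(y,dz)<\infty .
\end{align*}

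Then I would choose $\alpha=e_i$ and $\alpha=-e_i$, with $e_i$ the $i$-th standard basis vector of $\mathbb{R}^{d_1}$, for each $i\in\{1,\dots,d_1\}$; writing $g_i(x,z)$ for the $i$-th coordinate of $g(x,z)$, this gives $\int e^{\pm g_i(x,z)}\,\pi_x(dz)<\infty$. Since $|t|\le e^{t}+e^{-t}$ for all $t\in\mathbb{R}$, it follows that $\int|g_i(x,z)|\,\pi_x(dz)\le\int e^{g_i(x,z)}\,\pi_x(dz)+\int e^{-g_i(x,z)}\,\pi_x(dz)<\infty$ for every $i$, whence $\int\|g(x,z)\|\,\pi_x(dz)\le\sum_{i=1}^{d_1}\int|g_i(x,z)|\,\pi_x(dz)<\infty$ by equivalence of norms on $\mathbb{R}^{d_1}$, which is the assertion. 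I do not expect a genuine obstacle here; the one point that must not be overlooked is that the exponential moment bound in \ref{ass:logMGF} is \emph{uniform} in the initial state $y$ — this uniformity is exactly what lets the bound pass through the $\pi_x$-average, and without it the interchange would not close.
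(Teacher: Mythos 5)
Your proof is correct and follows essentially the same route as the paper's: both rest on the uniform-in-$y$ exponential moment bound of \ref{ass:logMGF} together with the invariance identity $\pi_x\rho_x=\pi_x$, differing only in whether the passage through $\pi_x$ is done before or after extracting first moments from the exponential bound. Your use of $|t|\le e^{t}+e^{-t}$ with $\alpha=\pm e_i$ makes the control of $\int|g_i|\,d\pi_x$ slightly more explicit than the paper's appeal to $1+t\le e^{t}$, but the content is the same.
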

\begin{proof}
Since $1+x\leq e^x$ for all $x\in\mathbb{R}$, from \ref{ass:logMGF} we have that for all $x,\alpha\in\mathbb{R}^{d_1}$ and $y \in \bR ^{d_2}$,
\[
    1+\int _{\bR ^{d_2}} \langle\alpha, g(x,z)\rangle \rho_x(y,dz) \leq \sup_y\int _{\bR ^{d_2}} e^{\langle \alpha, g(x,z)\rangle}\rho_x(y,dz) <\infty.
\]
By taking $\alpha$ as the unit vectors $e_i$, for every $i=1, \dots, d_1$, in the last display, the upper bound implies the finiteness of every component of 
\[
    \sup_y \int _{\bR ^{d_2}}  g(x,z) \rho_x(y,dz).
\]
Moreover, since $\pi_x\rho_x = \pi_x$, we have, 
\[
    \int _{\bR ^{d_2}}  g(x,z) \pi_x(dz) = \int _{\bR ^{d_2}} \left(\int _{\bR ^{d_2}}  g(x,z) \rho_x(y,dz)\right) \pi_x(dy).
\]
Therefore, every component of the left integral is finite, which proves the claim.
\end{proof}

\begin{corollary}
\label{cor:barX}
   Assume that $\bar{x}^l$ converges to $\zeta^*$ on $[0,\hat{S}]$, then $\hat{S} = T$. 
\end{corollary}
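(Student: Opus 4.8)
The plan is to argue by contradiction, exploiting that at the stopping index $\hat{i}^n$ the controlled process has, by construction, left the unit ball around $\zeta^*$. Suppose, then, that $\hat S < T$. First I would record an elementary time-scale estimate: since $\hat{i}^n \le \beta_n + n = \mathbf{m}(t_n+T)$, we have $\hat S^n = \sum_{i=n}^{\hat{i}^n}\varepsilon_i \le \sum_{i=n}^{\beta_n+n}\varepsilon_i = \varepsilon_n + (t_{\mathbf{m}(t_n+T)} - t_n)$, and by \ref{ass:limith} (namely $\varepsilon_k\to 0$ and $\sum_k \varepsilon_k = \infty$, so $\mathbf{m}(t_n+T)\to\infty$ and $t_n+T-\varepsilon_{\mathbf{m}(t_n+T)+1} < t_{\mathbf{m}(t_n+T)} \le t_n+T$) the right-hand side tends to $T$. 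Hence, if $\hat S < T$, then for all large $n$ along the subsequence producing $\hat S$ one has $\hat S^n < \sum_{i=n}^{\beta_n+n}\varepsilon_i$, which is only possible if $\hat{i}^n < \beta_n+n$; by the definition of $\hat{i}^n$ this forces $\| \bar X^n_{\hat{i}^n} - \zeta^*(t^n_{\hat{i}^n}) \| > 1$.

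Next I would pass to the limit $n \to \infty$ (equivalently $m\to\infty$) in this inequality, which is the crux of the proof and the step I expect to be the main obstacle. Note that $t^n_{\hat{i}^n} = \hat S^n - \varepsilon_n \to \hat S$ and that $\bar X^n_{\hat{i}^n}$ is the value $\bar X^n(t^n_{\hat{i}^n})$ of the continuous interpolation at a breakpoint. By Lemma \ref{lem:limit_m}, $\bar X^n \to \bar x^{\ell}$ in probability in $C([0,T]:\bR^{d_1})$; combining this uniform convergence with the uniform continuity of the limit $\bar x^{\ell}$ (and of $\zeta^*$) gives $\bar X^n(t^n_{\hat{i}^n}) \to \bar x^{\ell}(\hat S)$ and $\zeta^*(t^n_{\hat{i}^n}) \to \zeta^*(\hat S)$, whence $\| \bar x^{\ell}(\hat S) - \zeta^*(\hat S) \| \ge 1$. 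The delicate point is that the processes are evaluated at the random, $n$-dependent time $t^n_{\hat{i}^n}$, so one must use the joint convergence $(\bar X^n, \hat S^n) \to (\bar x^{\ell}, \hat S)$ — available by tightness and the Skorokhod representation, since each component converges — together with the equicontinuity of the limit path to carry the strict inequality through; and if $\hat S$ is genuinely random, the entire argument is run on the event $\{\hat S < T\}$, the goal being to show that this event is null.

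Finally, the hypothesis of the corollary — that $\bar x^{\ell}$ converges to $\zeta^*$ on $[0,\hat S]$ as $\delta \to 0$ and $\ell \to \infty$ — yields in particular $\| \bar x^{\ell}(\hat S) - \zeta^*(\hat S) \| \to 0$, which contradicts the bound $\ge 1$ just obtained (if $\hat S$ depends on $\ell,\delta$, one first passes to a subsequence along which $\hat S < T$). Hence $\hat S = T$, and the assumed convergence of $\bar x^{\ell}$ to $\zeta^*$ on $[0,\hat S]$ is then convergence on all of $[0,T]$, as claimed.
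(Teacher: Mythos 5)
Your argument is the same contradiction as the paper's: assume $\hat S < T$, observe that the stopping rule forces $\bar x^\ell$ to be at distance at least $1$ from $\zeta^*$ at $\hat S$, and note that this is incompatible with the assumed convergence and continuity. You carry out more carefully the discrete-to-continuous passage (tracking $\hat{i}^n$, $t^n_{\hat{i}^n}\to\hat S$, and the joint convergence of $(\bar X^n,\hat S^n)$) that the paper compresses into the assertion $\lim_{t\to\hat S^+}\|\bar x^\ell(t)-\zeta^*(t)\|\geq 1$, but the underlying idea and structure of the proof are the same.
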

\begin{proof}
    Assume that $\hat{S}<T$. By the convergence of $\bar{x}^l$ we have that $\|\bar{x}^\ell(\hat{S})- \zeta^*(\hat{S})\|$ can be made arbitrarily small for small enough $\delta$ and large enough $\ell$. The trajectory, $\zeta^*$, is continuous by definition and by Theorem \ref{thm:limit}, $\bar x^\ell$ is continuous on $[0,T]$. From the definition of $\hat{S}$ we have,
    \[
    \lim_{t\to \hat{S}^+}\|x^\ell(t)-\zeta^*(t)\|\geq 1.
    \]
    However this contradicts the continuity of $\zeta^*$ and $x^\ell$ and we conclude that $\hat{S} = T$.
\end{proof}

We now move to the proof of Lemma \ref{lem:limit_delta}. The proof uses arguments similar to those used in Section \ref{sec:limit} to prove Theorem \ref{thm:limit} (specifically, similar to arguments used in the proof of Lemma \ref{lem:1}). For simplicity the proof is done with $\hat{S} = T$, the proof in the case $\hat{S}<T$ is completely analogous. 

\begin{proof}[Proof of Lemma \ref{lem:limit_delta}]
As already noted, by construction of the $\nu ^{\zeta ^* (t), \dot \zeta ^* (t)}$-measures, for all $t \in [0,T]$, $\zeta ^*$ satisfies
\[
    \zeta ^*(t) = x_0 + \int _0 ^t \int _{\bR ^{d_2}} g(\zeta ^*(s), y) \nu ^{\zeta ^* (t), \dot \zeta ^* (t)} (dy) ds,
\]
and Lemma \ref{lem_unique} ensures that the solution is unique. To show the claimed convergence, we consider the difference between $\bar x^\ell$ and $\zeta ^*$:
\begin{align*}
   & \norm{\bar x^\ell - \zeta ^*}_\infty = \sup _{t \in [0,T]} \norm{\bar x ^\ell (t) - \zeta ^*(t)} \\
    &= \sup_{t\in[0,T]}\left\|\sum_{i=0}^{k-1}\int_{\tau^\ell_i}^{\tau^\ell_{i+1}}\int g( \zeta ^*(s),y)\mu^{\zeta ^*(\tau^\ell_i),\dot \zeta ^*(\tau^\ell_i),\delta}(dy)ds + \int_{\tau^\ell_k}^t\int g( \zeta ^*(s),y)\mu^{\zeta ^*(\tau^\ell_k),\dot \zeta ^*(\tau^\ell_k),\delta}(dy)ds\right. \\
    & \left. \qquad \qquad \quad - \int_0^t\int g(\zeta ^*(s),y)\nu^{\zeta ^*(s),\dot{\zeta ^*}(s)}(dy)ds \right\| \\
    &\leq \sup_{t\in[0,T]}\left\|\sum_{i=0}^{k-1}\int_{\tau^\ell_i}^{\tau^\ell_{i+1}}\int g( \zeta ^*(s),y)\mu^{\zeta ^*(\tau^\ell_i),\dot \zeta ^* (\tau^\ell_i),\delta}(dy)ds + \int_{\tau^\ell_k}^t\int g( \zeta ^*(s),y)\mu^{\zeta ^*(\tau^\ell_k),\dot \zeta ^*(\tau^\ell_k),\delta}(dy)ds\right.\\
    &\qquad\qquad \quad\left.- \sum_{i=0}^{k-1}\int_{\tau^\ell_i}^{\tau^\ell_{i+1}}\int g( \zeta ^*(s),y)\nu^{\zeta ^*(\tau^\ell_i),\dot \zeta ^*(\tau^\ell_i)}(dy)ds - \int_{\tau^\ell_k}^t\int g( \zeta ^*(s),y)\nu^{\zeta ^*(\tau^\ell_k),\dot\zeta ^*(\tau^\ell_k)}(dy)ds\right\|\\
    &\qquad +\sup_{t\in[0,T]}\left\| \sum_{i=0}^{k-1}\int_{\tau^\ell_i}^{\tau^\ell_{i+1}}\int g( \zeta ^*(s),y)\nu^{\zeta ^*(\tau^\ell_i),\dot \zeta ^*(\tau^\ell_i)}(dy)ds + \int_{\tau^\ell_k}^t\int g( \zeta ^*(s),y)\nu^{\zeta ^*(\tau^\ell_k),\dot \zeta ^*(\tau^\ell_k)}(dy)ds\right.\\
    &\left. 
    \qquad\qquad\qquad\qquad - \int_0^t\int g(\zeta ^*(s),y)\nu^{\zeta ^*(s),\dot \zeta ^*(s)}(dy)ds
    \right\|.
\end{align*}
We now treat the two suprema in the upper bound separately, and start by considering a fixed but arbitrary $t \in [0,T]$. For the terms inside the first supremum, for any $i \in \{0, 1, \dots, k-1\}$, we have the upper bound,
\begin{align*}
   & \left \| \int_{\tau^\ell_i}^{\tau^\ell_{i+1}}\int g( \zeta ^*(s),y)\mu^{\zeta ^*(\tau^\ell_i),\dot \zeta ^* (\tau^\ell_i),\delta}(dy)ds - \int_{\tau^\ell_i}^{\tau^\ell_{i+1}}\int g( \zeta ^*(s),y)\nu^{\zeta ^*(\tau^\ell_i),\dot \zeta ^*(\tau^\ell_i)}(dy)ds \right \| \\
   & \quad \leq \left\| \int_{\tau^\ell_i}^{\tau^\ell_{i+1}}\int g( \zeta ^*(\tau ^\ell _i),y)\mu^{\zeta ^*(\tau^\ell_i),\dot \zeta ^* (\tau^\ell_i),\delta}(dy)ds - \int_{\tau^\ell_i}^{\tau^\ell_{i+1}}\int g( \zeta ^*(\tau ^\ell _i),y)\nu^{\zeta ^*(\tau^\ell_i),\dot \zeta ^*(\tau^\ell_i)}(dy)ds \right \| \\
   & \qquad + \left \| \int_{\tau^\ell_i}^{\tau^\ell_{i+1}}\int \left ( g( \zeta ^*(\tau ^\ell _i),y) - g(\zeta ^* (s),y) \right)\mu^{\zeta ^*(\tau^\ell_i),\dot \zeta ^* (\tau^\ell_i),\delta}(dy)ds \right \| \\
   & \qquad + \left \| \int_{\tau^\ell_i}^{\tau^\ell_{i+1}}\int \left ( g( \zeta ^*(\tau ^\ell _i),y) - g(\zeta ^* (s),y) \right)\nu^{\zeta ^*(\tau^\ell_i),\dot \zeta ^* (\tau^\ell_i)}(dy)ds \right \|.
\end{align*}
Similarly, for the term involving integrals from $\tau _k ^\ell$ to $t$,
\begin{align*}
    & \left\| \int_{\tau^\ell_k}^t\int g( \zeta ^*(s),y)\mu^{\zeta ^*(\tau^\ell_k),\dot \zeta ^*(\tau^\ell_k),\delta}(dy)ds - \int_{\tau ^\ell _k} ^t\int g(\zeta ^*(s),y)\nu^{\zeta ^*(\tau ^\ell _k),\dot{\zeta ^*}(\tau ^\ell _k)}(dy)ds \right\| \\
    & \quad \leq \left\| \int_{\tau^\ell_k}^t\int g( \zeta ^*(\tau ^\ell _k),y)\mu^{\zeta ^*(\tau^\ell_k),\dot \zeta ^*(\tau^\ell_k),\delta}(dy)ds - \int_{\tau ^\ell _k} ^t\int g(\zeta ^*(\tau ^\ell _k),y)\nu^{\zeta ^*(\tau ^\ell _k),\dot{\zeta ^*}(\tau ^\ell _k)}(dy)ds \right\| \\
    & \qquad + \left\| \int_{\tau^\ell_k}^t\int \left( g( \zeta ^*(\tau ^\ell _k),y) - \int_{\tau ^\ell _k}^t\int g(\zeta ^*(s),y) \right) \mu^{\zeta ^*(\tau^\ell_k),\dot \zeta ^*(\tau^\ell_k),\delta}(dy)ds \right\| \\
    & \qquad + \left\| \int_{\tau^\ell_k}^t\int \left( g( \zeta ^*(\tau ^\ell _k),y) - \int_{\tau ^\ell _k} ^t\int g(\zeta ^*(s),y) \right) \nu^{\zeta ^*(\tau^\ell_k),\dot \zeta ^*(\tau^\ell_k),\delta}(dy)ds \right\|. 
\end{align*}
From the definitions of $\mu ^{\zeta ^*(\tau^\ell_i),\dot \zeta ^* (\tau^\ell_i),\delta}$ and $\nu^{\zeta ^*(\tau^\ell_k),\dot \zeta ^*(\tau^\ell_k),\delta}$, we have that,
\begin{align*}
    & \left\| \int_{\tau^\ell_i}^{\tau^\ell_{i+1}}\int g( \zeta ^*(\tau ^\ell _i),y)\mu^{\zeta ^*(\tau^\ell_i),\dot \zeta ^* (\tau^\ell_i),\delta}(dy)ds - \int_{\tau^\ell_i}^{\tau^\ell_{i+1}}\int g( \zeta ^*(\tau ^\ell _i),y)\nu^{\zeta ^*(\tau^\ell_i),\dot \zeta ^*(\tau^\ell_i)}(dy)ds \right \| \\
    & \quad = | \tau ^\ell _{i+1} - \tau ^\ell _i | \frac{\delta}{2}  \left\| \int g( \zeta ^*(\tau ^\ell _i),y) \pi _{\zeta ^* (\tau ^\ell _i)} (dy) - \int g( \zeta ^*(\tau ^\ell _i),y) \nu^{\zeta ^*(\tau^\ell_i),\dot \zeta ^*(\tau^\ell_i)}(dy)  \right\| \\
    & \quad = | \tau ^\ell _{i+1} - \tau ^\ell _i | \frac{\delta}{2}  \left\| \int g( \zeta ^*(\tau ^\ell _i),y) \pi _{\zeta ^* (\tau ^\ell _i)} (dy) - \dot{\zeta} ^* (\tau ^\ell _i)  \right\|.
\end{align*}
The integral inside the norm is finite by Lemma \ref{lem:g_integrable}. 

Next, by the uniform Lipschitz property for $g$, we have, 
\begin{align*}
    & \left \| \int_{\tau^\ell_i}^{\tau^\ell_{i+1}}\int \left ( g( \zeta ^*(\tau ^\ell _i),y) - g(\zeta ^* (s),y) \right)\mu^{\zeta ^*(\tau^\ell_i),\dot \zeta ^* (\tau^\ell_i),\delta}(dy)ds \right \| \\
    & \quad \leq L_g \int _{\tau ^\ell _i} ^{\tau ^\ell _{i+1}} \norm{\zeta ^*(s) - \zeta ^* (\tau ^\ell _i)}ds.
\end{align*}
In precisely the same way we have,
\begin{align*}
    & \left \| \int_{\tau^\ell_i}^{\tau^\ell_{i+1}}\int \left ( g( \zeta ^*(\tau ^\ell _i),y) - g(\zeta ^* (s),y) \right)\nu^{\zeta ^*(\tau^\ell_i),\dot \zeta ^* (\tau^\ell_i)}(dy)ds \right \| \\
    & \quad \leq L_g \int _{\tau ^\ell _i} ^{\tau ^\ell _{i+1}} \norm{\zeta ^*(s) - \zeta ^* (\tau ^\ell _i)}ds.
\end{align*}
Combining these inequalities yields the upper bound,
\begin{align*}
    & \left \| \int_{\tau^\ell_i}^{\tau^\ell_{i+1}}\int g( \zeta ^*(s),y)\mu^{\zeta ^*(\tau^\ell_i),\dot \zeta ^* (\tau^\ell_i),\delta}(dy)ds - \int_{\tau^\ell_i}^{\tau^\ell_{i+1}}\int g( \zeta ^*(s),y)\nu^{\zeta ^*(\tau^\ell_i),\dot \zeta ^*(\tau^\ell_i)}(dy)ds \right \| \\
    & \quad \leq ( \tau ^\ell _{i+1} - \tau ^\ell _i )  \frac{\delta}{2}  \left\| \int g( \zeta ^*(\tau ^\ell _i),y) \pi _{\zeta ^* (\tau ^\ell _i)} (dy) - \dot{\zeta} ^* (\tau ^\ell _i)  \right\| + 2 L_g \int _{\tau ^\ell _i} ^{\tau ^\ell _{i+1}} \norm{\zeta ^*(s) - \zeta ^* (\tau ^\ell _i)}ds .
\end{align*}

We can use the same arguments as above once more to obtain an upper bound for the term involving integrals from $\tau ^\ell _k$ to $t$:
\begin{align*}
 & \left\| \int_{\tau^\ell_k}^t\int g( \zeta ^*(s),y)\mu^{\zeta ^*(\tau^\ell_k),\dot \zeta ^*(\tau^\ell_k),\delta}(dy)ds - \int_{\tau ^\ell _k} ^t\int g(\zeta ^*(s),y)\nu^{\zeta ^*(\tau ^\ell _k),\dot{\zeta ^*}(\tau ^\ell _k)}(dy)ds \right\| \\
 & \quad \leq ( t - \tau ^\ell _k ) \frac{\delta}{2}  \left\| \int g( \zeta ^*(\tau ^\ell _k),y) \pi _{\zeta ^* (\tau ^\ell _k)} (dy) - \dot{\zeta} ^* (\tau ^\ell _k)  \right\| + 2L_g \int _{\tau ^\ell _k} ^{t} \norm{\zeta ^*(s) - \zeta ^* (\tau ^\ell _k)}ds.
\end{align*}
Combining the upper bounds yields,
\begin{align*}
& \sup_{t\in[0,T]}\left\|\sum_{i=0}^{k-1}\int_{\tau^\ell_i}^{\tau^\ell_{i+1}}\int g( \zeta ^*(s),y)\mu^{\zeta ^*(\tau^\ell_i),\dot \zeta ^* (\tau^\ell_i),\delta}(dy)ds \right. 
\\ & \qquad + \int_{\tau^\ell_k}^t\int g( \zeta ^*(s),y)\mu^{\zeta ^*(\tau^\ell_k),\dot \zeta ^*(\tau^\ell_k),\delta}(dy)ds\\
&\qquad - \sum_{i=0}^{k-1}\int_{\tau^\ell_i}^{\tau^\ell_{i+1}}\int g( \zeta ^*(s),y)\nu^{\zeta ^*(\tau^\ell_i),\dot \zeta ^*(\tau^\ell_i)}(dy)ds  
\\ & \qquad \left.- \int_{\tau^\ell_k}^t\int g( \zeta ^*(s),y)\nu^{\zeta ^*(\tau^\ell_k),\dot\zeta ^*(\tau^\ell_k)}(dy)ds\right\|\\ 
& \quad \leq \sup _{t \in [0,T]} \left\{ \sum _{i=1} ^{k-1} \left( ( \tau ^\ell _{i+1} - \tau ^\ell _i ) \frac{\delta}{2}  \left\| \int g( \zeta ^*(\tau ^\ell _i),y) \pi _{\zeta ^* (\tau ^\ell _i)} (dy) - \dot{\zeta} ^* (\tau ^\ell _i)  \right\| \right.\right.\\ & \qquad \qquad \left. + 2L_g \int _{\tau ^\ell _i} ^{\tau ^\ell _{i+1}} \norm{\zeta ^*(s) - \zeta ^* (\tau ^\ell _i)}ds \right)  \\
& \qquad + \left. ( t - \tau ^\ell _k ) \frac{\delta}{2}  \left\| \int g( \zeta ^*(\tau ^\ell _k),y) \pi _{\zeta ^* (\tau ^\ell _k)} (dy) - \dot{\zeta} ^* (\tau ^\ell _k)  \right\|  + 2L_g \int _{\tau ^\ell _k} ^{t} \norm{\zeta ^*(s) - \zeta ^* (\tau ^\ell _k)}ds \right\},
\end{align*}
where the value for $k$ depends on $t$. 

To deal with the supremum over $t$, we note that increasing $t$ will only add more non-negative terms, and the terms corresponding to time-differences will be maximal for $t = T$. This results in $k = \ell + 1$ and we have, 
\begin{align*}
& \sup_{t\in[0,T]}\left\|\sum_{i=0}^{k-1}\int_{\tau^\ell_i}^{\tau^\ell_{i+1}}\int g( \zeta ^*(s),y)\mu^{\zeta ^*(\tau^\ell_i),\dot \zeta ^* (\tau^\ell_i),\delta}(dy)ds + \int_{\tau^\ell_k}^t\int g( \zeta ^*(s),y)\mu^{\zeta ^*(\tau^\ell_k),\dot \zeta ^*(\tau^\ell_k),\delta}(dy)ds\right.\\
    &\qquad \left.- \sum_{i=0}^{k-1}\int_{\tau^\ell_i}^{\tau^\ell_{i+1}}\int g( \zeta ^*(s),y)\nu^{\zeta ^*(\tau^\ell_i),\dot \zeta ^*(\tau^\ell_i)}(dy)ds - \int_{\tau^\ell_k}^t\int g( \zeta ^*(s),y)\nu^{\zeta ^*(\tau^\ell_k),\dot\zeta ^*(\tau^\ell_k)}(dy)ds\right\|\\ 
    & \quad \leq  \sum _{i=1} ^{\ell} \left( (\tau ^\ell _{i+1} - \tau ^\ell _i ) \frac{\delta}{2}  \left\| \int g( \zeta ^*(\tau ^\ell _i),y) \pi _{\zeta ^* (\tau ^\ell _i)} (dy) - \dot{\zeta} ^* (\tau ^\ell _i)  \right\| + 2 L_g \int _{\tau ^\ell _i} ^{\tau ^\ell _{i+1}} \norm{\zeta ^*(s) - \zeta ^* (\tau ^\ell _i)}ds \right),
\end{align*}
where we have also used that $\tau ^\ell _{\ell+1} = T$. 

For the second supremum, we split it according to,
\begin{align*}
& \sup_{t\in[0,T]}\left\| \sum_{i=0}^{k-1}\int_{\tau^\ell_i}^{\tau^\ell_{i+1}}\int g( \zeta ^*(s),y)\nu^{\zeta ^*(\tau^\ell_i),\dot \zeta ^*(\tau^\ell_i)}(dy)ds + \int_{\tau^\ell_k}^t\int g( \zeta ^*(s),y)\nu^{\zeta ^*(\tau^\ell_k),\dot \zeta ^*(\tau^\ell_k)}(dy)ds\right.\\
    & \left. \qquad - \int_0^t\int g(\zeta ^*(s),y)\nu^{\zeta ^*(s),\dot \zeta ^*(s)}(dy)ds   \right\| \\
    &\quad \leq \sup_{t \in [0,T]} \Bigg\{ \Bigg\| \sum_{i=0}^{k-1}\int_{\tau^\ell_i}^{\tau^\ell_{i+1}}\int g( \zeta ^*(s),y)\nu^{\zeta ^*(\tau^\ell_i),\dot \zeta ^*(\tau^\ell_i)}(dy)ds \\ & \qquad - \sum_{i=0}^{k-1}\int_{\tau^\ell_i}^{\tau^\ell_{i+1}}\int g( \zeta ^*(\tau ^\ell _i),y)\nu^{\zeta ^*(\tau^\ell_i),\dot \zeta ^*(\tau^\ell_i)}(dy)ds \\
    & \qquad  + \int_{\tau^\ell_k}^t\int g( \zeta ^*(s),y)\nu^{\zeta ^*(\tau^\ell_k),\dot \zeta ^*(\tau^\ell_k)}(dy)ds - \int_{\tau^\ell_k}^t\int g( \zeta ^*(\tau ^\ell _k),y)\nu^{\zeta ^*(\tau^\ell_k),\dot \zeta ^*(\tau^\ell_k)}(dy)ds \Bigg\|    \\
    & \qquad + \Bigg\| \sum_{i=0}^{k-1}\int_{\tau^\ell_i}^{\tau^\ell_{i+1}}\int g( \zeta ^*(\tau ^\ell _i),y)\nu^{\zeta ^*(\tau^\ell_i),\dot \zeta ^*(\tau^\ell_i)}(dy)ds \\ & \qquad + \int_{\tau^\ell_k}^t\int g( \zeta ^*(\tau ^\ell _k),y)\nu^{\zeta ^*(\tau^\ell_k),\dot \zeta ^*(\tau^\ell_k)}(dy)ds  \\
    &  \qquad - \int_0^t\int g(\zeta ^*(s),y)\nu^{\zeta ^*(s),\dot \zeta ^*(s)}(dy)ds \Bigg\| \Bigg\}. 
\end{align*}
Similar to the above, we start by treating the terms inside the supremum to obtain suitable upper bounds. In this direction the second norm-term is the easiest to treat. From the definitions of the $\nu ^{\zeta ^* (\tau ^\ell _i), \dot{\zeta} ^* (\tau ^\ell _i)}$-measures and the properties of $\zeta ^*$,
\begin{align*}
& \left\| \sum_{i=0}^{k-1}\int_{\tau^\ell_i}^{\tau^\ell_{i+1}}\int g( \zeta ^*(\tau ^\ell _i),y)\nu^{\zeta ^*(\tau^\ell_i),\dot \zeta ^*(\tau^\ell_i)}(dy)ds + \int_{\tau^\ell_k}^t\int g( \zeta ^*(\tau ^\ell _k),y)\nu^{\zeta ^*(\tau^\ell_k),\dot \zeta ^*(\tau^\ell_k)}(dy)ds \right. \\
    & \left. \qquad   - \int_0^t\int g(\zeta ^*(s),y)\nu^{\zeta ^*(s),\dot \zeta ^*(s)}(dy)ds \right\| \\
    & \quad =  \left\| \sum_{i=0}^{k-1} (\tau ^\ell _{i+1} - \tau ^\ell _i) \dot{\zeta} ^* (\tau ^\ell _i) + (t - \tau ^\ell _k) \dot{\zeta} ^* (\tau ^\ell _k)- \zeta ^* (t) \right\|.
\end{align*}
This term will converge to 0 uniformly in $t$ as $l$ grows, due to the properties of $\zeta ^*$ (see Lemma \ref{lem:piecewise_conti}).

For the other term inside the supremum, we have the upper bound,
\begin{align*}
&\left\| \sum_{i=0}^{k-1}\int_{\tau^\ell_i}^{\tau^\ell_{i+1}}\int g( \zeta ^*(s),y)\nu^{\zeta ^*(\tau^\ell_i),\dot \zeta ^*(\tau^\ell_i)}(dy)ds - \sum_{i=0}^{k-1}\int_{\tau^\ell_i}^{\tau^\ell_{i+1}}\int g( \zeta ^*(\tau ^\ell _i),y)\nu^{\zeta ^*(\tau^\ell_i),\dot \zeta ^*(\tau^\ell_i)}(dy)ds \right.\\
    & \qquad \left. + \int_{\tau^\ell_k}^t\int g( \zeta ^*(s),y)\nu^{\zeta ^*(\tau^\ell_k),\dot \zeta ^*(\tau^\ell_k)}(dy)ds - \int_{\tau^\ell_k}^t\int g( \zeta ^*(\tau ^\ell _k),y)\nu^{\zeta ^*(\tau^\ell_k),\dot \zeta ^*(\tau^\ell_k)}(dy)ds \right\|  \\
    & \quad \leq \sum _{i=0} ^{k-1} \int_{\tau^\ell_i}^{\tau^\ell_{i+1}}\int \left\| g( \zeta ^*(s),y) - g(\zeta ^* (\tau ^\ell _i),y) \right\|\nu^{\zeta ^*(\tau^\ell_i),\dot \zeta ^*(\tau^\ell_i)}(dy)ds \\
    &\qquad \quad + \int _{\tau ^\ell _k} ^t \int \left\| g(\zeta^* (s),y) - g(\zeta^* (\tau ^\ell _k), y) \right\| \nu^{\zeta ^*(\tau^\ell_i),\dot \zeta ^*(\tau^\ell_i)}(dy)ds \\
    & \quad \leq \sum _{i=0} ^{k-1} L_g \int _{\tau _i ^\ell} ^{\tau ^\ell _{i+1}} \norm{\zeta ^* (s) - \zeta ^* (\tau ^\ell _i)} ds +L_g  \int _{\tau ^\ell _k} ^t \norm{g(\zeta^* (s)) - g(\zeta ^* (\tau ^\ell _k))} ds.
\end{align*}
Similar to before, we see that the supremum is achieved at $t=T$, and thus $k=\ell$. Together with the preceding calculations this yields the upper bound,
\begin{align*}
 \norm{\bar x^\ell - \zeta ^*}_\infty &= \sup _{t \in [0,T]} \norm{\bar x ^\ell (t) - \zeta ^*(t)} \\
  &  \leq \sum _{i=1} ^{\ell-1} \left( (\tau ^\ell _{i+1} - \tau ^\ell _i ) \frac{\delta}{2}  \left\| \int g( \zeta ^*(\tau ^\ell _i),y) \pi _{\zeta ^* (\tau ^\ell _i)} (dy) - \dot{\zeta} ^* (\tau ^\ell _i)  \right\| \right. \\
  & \qquad \qquad \left. + 3 L_g \int _{\tau ^\ell _i} ^{\tau ^\ell _{i+1}} \norm{\zeta ^*(s) - \zeta ^* (\tau ^\ell _i)}ds \right). 
\end{align*}
Note that for any $\ell$, by sending $\delta$ to 0, we have,
\begin{align*}
    \sum _{i=1} ^{\ell-1} (\tau ^\ell _{i+1} - \tau ^\ell _i ) \frac{\delta}{2}  \left\| \int g( \zeta ^*(\tau ^\ell _i),y) \pi _{\zeta ^* (\tau ^\ell _i)} (dy) - \dot{\zeta} ^* (\tau ^\ell _i)  \right\| \to 0.
\end{align*}
Next, by the uniform continuity of $\zeta ^*$, for $\ell$ large enough, we have that for any $\tilde \delta>0$, 
\[
\max_{i\in\{0,\ell-1\}}\sup_{s\in [\tau^\ell_{i+1},\tau^\ell_i]}\|\zeta^*(s)-\zeta^*(\tau ^\ell _i)\|<\tilde \delta.
\]
Using the expression in the last display, we obtain, 
\[
\sum _{i=1} ^{\ell-1} 3 L_g \int _{\tau ^\ell _i} ^{\tau ^\ell _{i+1}} \norm{\zeta ^*(s) - \zeta ^* (\tau ^\ell _i)}ds < \sum _{i=1} ^{\ell-1} 3\tilde \delta L_g(\tau ^\ell _{i+1} - \tau ^\ell _{i}) = 3\tilde \delta L_g T,
\]
where the right-hand side can be made arbitrarily small.
\end{proof}
We now have all results needed to prove Theorem \ref{thm:convX} in place. Although a detailed outline of the proof has been given throughout this section, we conclude by collecting the steps in a brief formal proof.
\begin{proof}[Proof of Theorem \ref{thm:convX}]
    By Lemma \ref{lem:bounded_rel_ent}, the controlled measures $\{ \bar \nu ^n\}$ have bounded expected running cost. The conditions of Theorem \ref{thm:limit} are thus satisfied, and it follows that every subsequence of $(\bar \nu ^n, \bar X ^n)$ has a convergent subsequence. Furthermore, the corresponding limit point $(\bar \nu, \bar X)$ is characterized by Theorem \ref{thm:limit} and it remains to show that this limit is of the claimed form. By Lemmas \ref{lem:limit_m}-\ref{lem:limit_delta} and Corollary \ref{cor:barX}, and noting the role of $\ell$ and $m$, and their asymptotics, as $m \to \infty$, $\delta \to 0$ and $\ell \to \infty$,  $\bar X ^n$ converges in probability to $\zeta ^*$. Thus, $\bar X = \zeta ^*$. Moreover, these results combined with the characterization from Theorem \ref{thm:limit} also give the form for $\bar \nu$, as we have that $\zeta ^*$ is the unique solution (see Lemma \ref{lem_unique}) to \eqref{eq:ODEzeta} : it holds that $\bar \nu (\cdot|t)= \nu ^{\zeta ^* (t), \dot \zeta ^* (t)} (\cdot)$. This completes the proof.
\end{proof}

\subsection{Proof of Laplace lower bound}
\label{sec:proofLower}
With the results of Sections \ref{sec:optimalControl}--\ref{sec:conv}, we can now complete the proof of the Laplace lower bound of Theorem \ref{thm:lower}. We start by proving an ancillary result that is used in the proof. 

In Lemma \ref{lem:bounded_rel_ent} we proved that the expected running cost associated with $\{ \bar \nu ^n \} $ is bounded by showing that the sums over the two terms in the alternative representation \eqref{eqn_relative_entropy} are bounded. Using Lemmas  \ref{lem:limit_m} and \ref{lem:limit_delta}, we can also show that the sum over the second term appearing in \ref{eqn_relative_entropy} is negligible in the limit under consideration.
\begin{lemma}\label{lem:negligible}
Assume \ref{ass:Lipschitz}-\ref{ass:limith} hold. Then, 
\begin{align*}
    \limsup_{\ell\to\infty} \limsup_{\delta\to 0}\limsup_{m\to\infty}E\Bigg[\frac{1}{\beta_n} \sum_{k=0}^{{\ell}} \sum_{j=0}^{m-1}\int &\left(\log\frac{d\rho_{\zeta^*(\tau^\ell_k)}(\bar Y^n_{n+km+j},\cdot)}{d\rho_{\bar X^n_{n+km+j}}(\bar Y^n_{n+km+j},\cdot)}\right) \\ & \times q^{\zeta^*(\tau^\ell_k),\dot{\zeta}^*(\tau^\ell_k),\delta}(\bar{Y}^n_{n+km+j},dy)\Bigg] = 0.
\end{align*}
\end{lemma}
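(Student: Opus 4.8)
The overall plan is to convert the convergence $\bar X^n\to\zeta^*$ from Lemmas \ref{lem:limit_m}--\ref{lem:limit_delta} into a uniform smallness estimate for the logarithmic density ratio in the statement, using Assumptions \ref{ass:kernel} and \ref{ass:bounded_kernel} exactly as in the proof of the lower semicontinuity of $J(x,\mu)$ inside Lemma \ref{lem:optimal_control}. I would first prune the sum: as in Lemma \ref{lem:bounded_rel_ent}, only the indices $n+km+j<\hat i^n$ need to be kept, because for $i\ge\hat i^n$ and throughout the terminal block $k=\ell$ the control equals $\rho_{\bar X^n_{i-1}}(\bar Y^n_{i-1},\cdot)$ by construction, so the corresponding quantity is absent from the running-cost decomposition \eqref{eqn_relative_entropy}; and for $j\le l_0$ the control in each block equals $\rho_{\zeta^*(\tau^\ell_k)}$, and these finitely many (order $\ell$) terms are each at most $\log C(K)$, hence contribute $O(\ell/\beta_n)\to0$ as $n\to\infty$ for fixed $\ell$. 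Since moreover $|E[\cdot]|\le E|\cdot|$ reduces the claim to the same statement with absolute values inside (an iterated limsup of $|a|$ being zero forces the iterated limsup of $a$ to be zero), it suffices to show that
\[
\frac{1}{\beta_n}\,E\!\left[\sum_{k=0}^{\ell-1}\ \sum_{\substack{l_0<j<m\\ n+km+j<\hat i^n}}\left|\int \log\frac{\eta_{\zeta^*(\tau^\ell_k)}(\bar Y^n_{n+km+j},y)}{\eta_{\bar X^n_{n+km+j}}(\bar Y^n_{n+km+j},y)}\,q^{\zeta^*(\tau^\ell_k),\dot\zeta^*(\tau^\ell_k),\delta}(\bar Y^n_{n+km+j},dy)\right|\right]\longrightarrow 0
\]
in the iterated limit $m\to\infty$, $\delta\to0$, $\ell\to\infty$.

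For the core estimate, whenever $n+km+j<\hat i^n$ the stopping rule forces both $\bar X^n_{n+km+j}$ and $\zeta^*(\tau^\ell_k)$ into the fixed compact set $K$ (a unit neighbourhood of $\zeta^*([0,T])$), and since $q^{\zeta^*(\tau^\ell_k),\dot\zeta^*(\tau^\ell_k),\delta}(\bar Y^n_{n+km+j},\cdot)$ is a probability measure, each inner integral is bounded in absolute value by $\sup_{y,z}\bigl|\log\bigl(\eta_{\zeta^*(\tau^\ell_k)}(y,z)/\eta_{\bar X^n_{n+km+j}}(y,z)\bigr)\bigr|$. By \ref{ass:bounded_kernel} this supremum is at most $\log C(K)$, and, arguing exactly as in the proof of Lemma \ref{lem:optimal_control} (uniform continuity of $x\mapsto\eta_x(y,z)$ from \ref{ass:kernel} together with \ref{ass:bounded_kernel}), for every $\varepsilon_1>0$ there is $\delta_1>0$ so that this supremum is $<\varepsilon_1$ as soon as $\|\bar X^n_{n+km+j}-\zeta^*(\tau^\ell_k)\|<\delta_1$. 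Splitting the expectation over the event $E_n=\{\|\bar X^n_{n+km+j}-\zeta^*(\tau^\ell_k)\|<\delta_1 \text{ for all admissible }k,j\}$ and its complement, the left-hand side above is at most $\varepsilon_1+\log C(K)\,P(E_n^c)+O(\ell/\beta_n)$, so the lemma follows once $P(E_n^c)\to0$ in the iterated limit, $\varepsilon_1$ being arbitrary.

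To bound $P(E_n^c)$ I would use the triangle inequality (valid up to an $o(1)$ shift since $t^n_{n+km+j}$ lies in $[\tau^\ell_k,\tau^\ell_{k+1})$ asymptotically),
\[
\|\bar X^n_{n+km+j}-\zeta^*(\tau^\ell_k)\|\le \|\bar X^n-\bar x^\ell\|_\infty+\|\bar x^\ell-\zeta^*\|_\infty+\sup_{|s-t|\le\Delta_\ell}\|\zeta^*(s)-\zeta^*(t)\|,\qquad \Delta_\ell=\max_k(\tau^\ell_{k+1}-\tau^\ell_k).
\]
Lemma \ref{lem:limit_m} gives $\|\bar X^n-\bar x^\ell\|_\infty\to0$ in probability as $m\to\infty$; Lemma \ref{lem:limit_delta} with Corollary \ref{cor:barX} gives $\|\bar x^\ell-\zeta^*\|_\infty\to0$ as $\delta\to0$, $\ell\to\infty$; and since $h^n\to h$ uniformly we have $\sup_n\|h^n\|_\infty<\infty$, whence $\Delta_\ell\le\sup_n\|h^n\|_\infty\cdot m/\beta_n\le\sup_n\|h^n\|_\infty/\ell\to0$, so the last term vanishes as $\ell\to\infty$ by uniform continuity of $\zeta^*$ on $[0,T]$. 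Taking the limits in the order $m\to\infty$, then $\delta\to0$, then $\ell\to\infty$ makes the right-hand side strictly below $\delta_1$ with probability tending to $1$, giving $P(E_n^c)\to0$ and closing the argument.

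The main difficulty is organisational rather than analytic: one must obtain the closeness $\|\bar X^n_{n+km+j}-\zeta^*(\tau^\ell_k)\|<\delta_1$ uniformly over all $O(\beta_n)$ pairs $(k,j)$ at once and in the correct nested order of limits, and one must carefully justify that the post-stopping indices and the terminal block --- which appear in the statement with a $q$-integrator although the true control there has switched to $\rho_{\bar X^n}$, so that $\bar X^n$ need not stay in $K$ --- are indeed negligible; here Corollary \ref{cor:barX} (so that $\hat S=T$ and these indices form a vanishing fraction) together with the exponential-moment bound \ref{ass:logMGF} (to control the excursion of $\bar X^n$ over the corresponding shrinking time window) do the work.
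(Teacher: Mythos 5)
Your proposal is correct and follows essentially the same approach as the paper, which proves this lemma in two lines: observe from \ref{ass:kernel} (and implicitly \ref{ass:bounded_kernel}) that $\log\bigl(\eta_x(y,z)/\eta_w(y,z)\bigr)\to 0$ uniformly in $(y,z)$ as $x\to w$, then cite Lemmas \ref{lem:limit_m}--\ref{lem:limit_delta} for $\bar X^n\to\zeta^*$. Your write-up is a careful, detailed expansion of this sketch — pruning the pre-$l_0$, post-stopping, and terminal-block terms, bounding each inner integral by the supremum of the log-ratio (finite by \ref{ass:bounded_kernel} while $\bar X^n$ and $\zeta^*$ lie in the fixed compact $K$), splitting on the event $E_n$ and showing $P(E_n^c)\to0$ via the triangle inequality through $\bar x^\ell$ — all of which are implicit in the paper's argument, and you correctly flag the one delicate point (that $\bar X^n$ need not stay in $K$ past $\hat i^n$) that the paper's proof does not explicitly address.
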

\begin{proof}
Recall that $\rho_x(y,dz)=\eta_x(y,z)\lambda(dz)$ where $\eta_x(y,z)$ is continuous in $x$, uniformly in $y,z$. This implies that 
\[
    \log\frac{d\rho_x(y,\cdot)}{d\rho_w(y,\cdot)} = \log \frac{\eta_x(y,\cdot)}{\eta_w(y,\cdot)} \to 0 \mbox{ as }x\to w.
\]
It now suffices to prove that $\{\bar{X}^n\}$ converges to $\zeta^*(\tau^\ell_k)$, which holds by Lemmas \ref{lem:limit_m}--\ref{lem:limit_delta}.
\end{proof}

\begin{proof}[Proof of Theorem \ref{thm:lower}]
With the continuity of $F$, Lemma \ref{lem:negligible} and Lemma \ref{lem:limit_m}, we have that for any $\varepsilon>0$, we can take $\delta>0$ and $\ell<\infty$ small and large enough, respectively, so that we have the upper bound,
\begin{align*}
    &\limsup_{m\to\infty}-\frac{1}{\beta_n}\log E e^{-\beta_n F(X^n)}\\
    &\quad\leq   \limsup_{m\to\infty}E\left[F(\bar X^n) + \frac{1}{m(\ell+1)} \sum_{k=0}^{{\ell}}\sum_{j=0}^{m-1} R\left( \bar \nu^n_{km+j+1}(\cdot)\| \rho_{\bar{X}^n_{km+j}}(\bar Y^n_{km+j},\cdot)\right)\right]\\
    &\leq \limsup_{m\to\infty}E\left[F(\bar X^n) + \frac{1}{m(\ell+1)} \sum_{k=0}^{{\ell}}\sum_{j=l_0}^{m-1} R\left(q^{\zeta^*(\tau^\ell_k),\dot{\zeta}^*(\tau^\ell_k),\delta}(\bar Y^n_{n+km+j},\cdot)\| \rho_{\zeta^*(\tau^\ell_k}(\bar Y^n_{km+j},\cdot)\right)\right] + \varepsilon\\
    &\quad\leq   E\left[ F(\hat X^\ell) + \frac{1}{\ell+1} \sum_{k=0}^{{\ell}}\int R\left( q^{\zeta^*(\tau^\ell_k),\dot{\zeta}^*(\tau^\ell_k),\delta}(y,\cdot)\| \rho_{\zeta^*(\tau^\ell_k)}(y,\cdot)\right)\mu^{\zeta^*(\tau^\ell_k),\dot{\zeta}^*(\tau^\ell_k),\delta}(dy)\right] + \varepsilon. 
\end{align*}
From Lemma \ref{lem:optimal_control},
\begin{align*}
    &E\left[  \int R\left( q^{\zeta^*(\tau^\ell_k),\dot{\zeta}^*(\tau^\ell_k),\delta},(y,\cdot)\| \rho_{\zeta^*(\tau^\ell_k)}(y,\cdot)\right)\mu^{\zeta^*(\tau^\ell_k),\dot{\zeta}^*(\tau^\ell_k),\delta}(dy)\right]\\
    &\quad= E\left[ R\left( \mu^{\zeta^*(\tau^\ell_k),\dot{\zeta}^*(\tau^\ell_k),\delta}\otimes q^{\zeta^*(\tau^\ell_k),\dot{\zeta}^*(\tau^\ell_k),\delta}(\cdot,\cdot)\| \mu^{\zeta^*(\tau^\ell_k),\dot{\zeta}^*(\tau^\ell_k),\delta}\otimes \rho_{\zeta^*(\tau^\ell_k)}(\cdot,\cdot)\right) \right]
    \\
    &\quad\leq E\left[L\left( \zeta^*(\tau^\ell_k),\dot{\zeta}^*(\tau^\ell_k)\right)\right] + \varepsilon.
\end{align*}
Thus, 
\begin{align*}
    \limsup_{m\to\infty}-\frac{1}{\beta_n}\log E e^{-\beta_n F(X^n)}\leq E\left[ F(\hat X^{\ell}) + \frac{1}{\ell+1} \sum_{k=0}^{{\ell}} L\left( \zeta^*(\tau^\ell_k),\dot{\zeta}^*(\tau^\ell_k)\right)\right]+2\varepsilon.
\end{align*}
We can rewrite the sum inside the expectation as a Riemann sum,
\begin{align*}
    \frac{1}{\ell+1} \sum_{k=0}^{{\ell}} L\left( \zeta^*(\tau^\ell_k),\dot{\zeta}^*(\tau^\ell_k)\right) &=  \sum_{k=0}^{{\ell}} \frac{1}{\ell(\tau^\ell_{k+1}-\tau^\ell_k)}L\left( \zeta^*(\tau^\ell_k),\dot{\zeta}^*(\tau^\ell_k)\right)(\tau^\ell_{k+1}-\tau^\ell_k).
\end{align*}
From \ref{ass:limith}, 
\begin{align*}
    \ell(\tau^\ell_{k+1}-\tau^\ell_k) &= \lim_{n\to\infty} \sum_{i = n + km+1}^{n+(k+1)m} \varepsilon_i\ell \\
    &= \lim_{n\to\infty} \sum_{i = n + km+1}^{n+(k+1)m} \varepsilon_i\beta_n\frac{\ell}{\beta_n} \\
    &\geq \lim_{n\to\infty}\varepsilon_{n+(k+1)m}\beta_n \\
    &= h(\tau^\ell_{k+1}).
\end{align*}
It follows that $\frac{1}{\ell(\tau^\ell_{k+1}-\tau^\ell_k)}$ is bounded from above by $\frac{1}{h(\tau^\ell_{k+1})}$.

To finish the proof, we note that because $\delta$ is arbitrary, $F$ is continuous, $\zeta^*$ is piecewise linear with finitely many pieces, see Lemma \ref{lem:piecewise_conti}, using Lemma \ref{lem:limit_delta} and \eqref{eq:boundFI}, we have, 
\begin{align*}
    &\limsup_{\ell\to\infty}\limsup_{m\to\infty}-\frac{1}{\beta_n}\log E e^{-\beta_n F(X^n)}\\
    &\quad\leq \limsup_{\ell\to\infty}E\left[ \limsup_{\delta\to 0}F(\hat X^{\ell}) +  \sum_{k=0}^{{\ell}-1} \frac{1}{h(\tau^\ell_{k+1})} L\left( \zeta^*(\tau^\ell_k),\dot{\zeta}^*(\tau^\ell_k)\right)(\tau^l_{k+1}-\tau^\ell_k)\right] + 2\varepsilon\\
    &\quad \leq F(\zeta^*) + \int_0^T \frac{1}{h(t)}L(\zeta^*(t),\dot{\zeta}^*(t))dt +2\varepsilon\\
    &\quad = F(\zeta^*)+I(\zeta^*) + 2\varepsilon \\
    & \quad \leq \inf_{\varphi}(F(\varphi)+I(\varphi)) + 3\varepsilon.
\end{align*}
Because $\varepsilon >0$ is arbitrary, and noting the role of $\ell$ and $m$ and their asymptotics, this proves the upper bound,
\[
    \limsup_{n\to\infty}-\frac{1}{\beta_n}\log E \left[e^{-\beta_n F(X^n)}\right] \leq \inf_{\varphi}(F(\varphi)+I(\varphi)).
\]
This completes the proof of Theorem \ref{thm:lower}.

\end{proof}
\section{Proof of Theorem \ref{thm:limit}}
\label{sec:limit}
In this section we carry out the proof of Theorem \ref{thm:limit}, the convergence result for $(\tilde\nu ^n, \tilde X ^n)$ when $\{ \tilde \nu ^n \}$ is a generic sequence of control measures satisfying bounded expected running cost. The first step is proving the following uniform integrability property; the tightness of $\{ \tilde \nu ^n \}$ is an immediate consequence of Lemma \ref{lem:ui}.
\begin{lemma}\label{lem:ui}
Under \ref{ass:Lipschitz}-\ref{ass:limith}, if $\{\tilde \nu^n\}$ has bounded running cost, i.e.,
\begin{equation}
    \sup_nE\left[\frac{1}{\beta_n}\sum_{i=n}^{\beta_n+n-1}R(\tilde\nu_i^n(\cdot)||\rho_{\tilde{X}_i^n}(\bar{Y}^n_i,\cdot))\right]<\infty,
\end{equation}
then it satisfies the uniform integrability property,
\begin{equation}\label{eqn:ui}
    \lim_{C\to\infty}\sup_n E\left[\int_0^T\int_{\|g(\tilde X^n(t),z)\|>C}\|g(\tilde{X}^n(t),z)\|\tilde \nu^n(dz\times dt)\right] = 0.
\end{equation}
\end{lemma}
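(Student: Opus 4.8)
The plan is to adapt the tightness computation from the proof of Lemma~\ref{lem:tightness}, replacing control of the mass of the noise variable by control of the $g$-mass. The workhorse is the elementary Fenchel--Young inequality
\[
    ab \leq e^{\sigma a} + \tfrac{1}{\sigma}\bigl(b\log b - b + 1\bigr), \qquad a, b \geq 0,\ \sigma > 0 .
\]
I would apply it, for each fixed $n$ and each index $i\in\{n,\dots,\beta_n+n-1\}$, pointwise in $z$, with $a = \|g(\tilde X^n_i, z)\|$ and $b$ the Radon--Nikodym density $d\tilde\nu^n_i/d\rho_{\tilde X^n_i}(\tilde Y^n_i,\cdot)$, then integrate over the set $A_C^{n,i}=\{z : \|g(\tilde X^n_i, z)\| > C\}$ against $\rho_{\tilde X^n_i}(\tilde Y^n_i, dz)$. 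Since $b\log b - b + 1 \geq 0$ everywhere and $\int(b\log b - b+1)\,d\rho_{\tilde X^n_i}(\tilde Y^n_i,\cdot) = R(\tilde\nu^n_i\|\rho_{\tilde X^n_i}(\tilde Y^n_i,\cdot))$, this yields
\[
    \int_{A_C^{n,i}}\!\!\|g(\tilde X^n_i, z)\|\,\tilde\nu^n_i(dz)\ \leq\ \int_{A_C^{n,i}}\!\! e^{\sigma \|g(\tilde X^n_i, z)\|}\,\rho_{\tilde X^n_i}(\tilde Y^n_i, dz)\ +\ \tfrac{1}{\sigma} R\bigl(\tilde\nu^n_i(\cdot)\,\|\,\rho_{\tilde X^n_i}(\tilde Y^n_i, \cdot)\bigr),
\]
and on $A_C^{n,i}$ I bound $e^{\sigma\|g\|}\leq e^{-\sigma C}e^{2\sigma\|g\|}$, so the first term on the right is at most $e^{-\sigma C}\sup_x\sup_y\int e^{2\sigma\|g(x,z)\|}\rho_x(y,dz)$.

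The second step is to show this supremum is finite for every $\sigma>0$, which is where Assumption~\ref{ass:logMGF} enters. Using $\|v\| \leq \sqrt{d_1}\,\max_j\max(v_j,-v_j)$ for $v\in\mathbb{R}^{d_1}$, one has $e^{2\sigma\|v\|} \leq \sum_{j=1}^{d_1}\bigl(e^{2\sigma\sqrt{d_1}\langle e_j,v\rangle} + e^{-2\sigma\sqrt{d_1}\langle e_j,v\rangle}\bigr)$; taking $v = g(x,z)$, integrating against $\rho_x(y,dz)$, and invoking \ref{ass:logMGF} with $\alpha = \pm 2\sigma\sqrt{d_1}\,e_j$ gives $\sup_x\sup_y\int e^{2\sigma\|g(x,z)\|}\rho_x(y,dz) \leq 2d_1 e^{\hat C_\sigma} < \infty$, where $\hat C_\sigma$ is the (finite) supremum from \ref{ass:logMGF} over the relevant finite family of $\alpha$'s. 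I would then substitute both bounds into the definition of $\tilde\nu^n(dz\times dt)$, sum over $i$, and use that each $\int_{[t_{n+i-1}-t_n,\,t_{n+i}-t_n)}h^n(t)^{-1}\,dt$ is bounded by $c_0/\beta_n$ uniformly in $n,i$ (this uses $h^n\to h$ uniformly with $h$ positive on $[0,T]$, exactly as in Lemma~\ref{lem:tightness}), to obtain
\[
    E\!\left[\int_0^T\!\!\int_{\|g(\tilde X^n(t),z)\|>C}\!\!\|g(\tilde X^n(t),z)\|\,\tilde\nu^n(dz\times dt)\right]\ \leq\ c\,e^{-\sigma C}\,d_1 e^{\hat C_\sigma}\ +\ \frac{c}{\sigma}\,E\!\left[\frac{1}{\beta_n}\sum_{i=n}^{\beta_n+n-1}R\bigl(\tilde\nu^n_i(\cdot)\,\|\,\rho_{\tilde X^n_i}(\tilde Y^n_i,\cdot)\bigr)\right],
\]
with $c$ an absolute constant. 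By the bounded-running-cost hypothesis the last expectation is bounded uniformly in $n$; hence taking $\sup_n$, then $C\to\infty$ with $\sigma$ fixed (killing the first term), and finally $\sigma\to\infty$ (killing the second), the right-hand side tends to $0$, which is \eqref{eqn:ui}.

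The step I expect to be the main obstacle is the bookkeeping that reconciles the interpolated state $\tilde X^n(t)$ appearing inside $g$ in \eqref{eqn:ui} with the grid value $\tilde X^n_i$ that is compatible with the relative-entropy term $R(\tilde\nu^n_i\|\rho_{\tilde X^n_i}(\tilde Y^n_i,\cdot))$, together with the related mismatch between the $x$-argument of $g$ and that of the transition kernel when estimating the exponential moment. For the former I would use the $z$-uniform Lipschitz continuity of $g(\cdot,z)$ from \ref{ass:Lipschitz} together with the control of $\|\tilde X^n(t)-\tilde X^n_i\|$ on the interval containing $t$; for the latter, when the two $x$-arguments differ, I would pass from $\rho_{\tilde X^n_i}$ to $\rho_{\tilde X^n(t)}$ via the local comparison bound \ref{ass:bounded_kernel} on a common compact set, absorbing the constant $C(K)$ into $c$. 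Everything else is a routine rerun of the weak-convergence tightness estimate already carried out for Lemma~\ref{lem:tightness}.
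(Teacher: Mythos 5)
Your proposal follows the same route as the paper: the Fenchel--Young inequality $ab \leq e^{\sigma a} + \tfrac{1}{\sigma}(b\log b - b + 1)$ applied to $\|g\|$ against the Radon--Nikodym density, a tail estimate $e^{\sigma\|g\|}\leq e^{-\sigma C}e^{2\sigma\|g\|}$ on the region $\{\|g\|>C\}$, control of the exponential moment via \ref{ass:logMGF}, and absorption of the relative-entropy term by the bounded-running-cost hypothesis after integrating the $1/h^n(t)$ weight over time. The only substantive deviations are (i) you give an explicit reduction of $e^{2\sigma\|v\|}$ to the componentwise exponential moments in \ref{ass:logMGF}, which the paper leaves implicit, and (ii) you flag the mismatch between the interpolated state $\tilde X^n(t)$ in \eqref{eqn:ui} and the grid state $\tilde X^n_i$ appearing in $\rho_{\tilde X^n_i}(\tilde Y^n_i,\cdot)$. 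The paper simply writes $a=\|g(\tilde X^n(t),z)\|$ and then bounds by $\sup_{x,y}\int e^{2\sigma\|g(x,z)\|}\rho_x(y,dz)$ with matched $x$-arguments, tacitly treating $\tilde X^n(t)$ as the piecewise-constant grid value on each interval (consistent with the use of $\hat X^n$ in Lemma \ref{mcon}); read that way there is nothing to reconcile and the machinery you propose around \ref{ass:bounded_kernel} and a common compact set---which would require justifying such a set a priori---is not needed. In short: same proof, plus one helpful clarification and one self-imposed complication you can safely drop.
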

\begin{proof}
The proof uses the inequality 
\[
ab\leq e^{\sigma a} + \frac{1}{\sigma}(b\log(b) - b+1),
\]
with $a = \|g(\tilde X ^n (t),z)\|$ and $b = \frac{d \tilde \nu^n_i(\cdot)}{d\rho_{\tilde{X}^n_i}(\tilde{Y}^n_i,\cdot)}$. For $t \in [0,T]$, and fixed $C$ and $n$, we have, 
\begin{align*}
    &\int_{\|g (\tilde X ^n (t), z)\|>C} \|g (\tilde X ^n (t), z)\|d \tilde\nu^n_i(dz) \\
    & \quad = \int_{\|g (\tilde X ^n (t), z)\|>C} \|g (\tilde X ^n (t), z)\|\frac{d\tilde\nu^n_i(z)}{d\rho_{\tilde{X}^n_i}(\tilde{Y}^n_i,z)}\rho_{\tilde{X}^n_i}(\tilde{Y}^n_i,dz)\\
    & \quad \leq \int_{\|g (\tilde X ^n (t), z)\|>C} e^{\sigma\|g (\tilde X ^n (t), z)\|} \rho_{\tilde{X}^n_i}(\tilde{Y}^n_i,dz) \\
    &\qquad  + \frac{1}{\sigma}\int_{\|g (\tilde X ^n (t), z)\|>C}\left(\frac{d\tilde\nu^n_i(z)}{d\rho_{\tilde{X}^n_i}(\tilde{Y}^n_i,z)}\log \left(\frac{d\tilde\nu^n_i(z)}{d\rho_{\tilde{X}^n_i}(\tilde{Y}^n_i,z)}\right) -\frac{d\tilde\nu^n_i(z)}{d\rho_{\tilde{X}^n_i}(\tilde{Y}^n_i,z)} +1  \right)\rho_{\tilde{X}^n_i}(\tilde{Y}^n_i,dz)\\
    &\quad \leq  \int_{\|g (\tilde X ^n (t), z)\|>C} e^{\sigma\|g (\tilde X ^n (t), z)\|} \rho_{\tilde{X}^n_i}(\tilde{Y}^n_i,dz) + \frac{1}{\sigma}R(\tilde\nu^n_i(\cdot)||\rho_{\tilde{X}^n_i}(\tilde{Y}^n_i,\cdot))\\
    &\quad \leq e^{-\sigma C}\sup_x\sup_y \int e^{2\sigma \|g (x, z)\|}\rho_x(y,dz) + \frac{1}{\sigma}R(\tilde\nu^n_i(\cdot)||\rho_{\tilde{X}^n_i}(\tilde{Y}^n_i,\cdot)).
\end{align*}
The first term in the last display is finite by \ref{ass:logMGF}. This upper bound now bound yields an upper bound on the corresponding expectation when we also integrate over time:
\begin{align*}
    &E\left[\int_0^T\int_{\|g(\tilde X^n(t),z)\|>C}\|g(\tilde{X}^n(t),z)\|\tilde \nu^n(dz\times dt)\right]\\
    &\leq E\left[\sum_{i=n}^{\beta_n+n-1} \int_{t_i,t_{i+1}} \frac{1}{h_n(t)} e^{-\sigma C}\sup_x\sup_y \int e^{2\sigma \|z\|}\rho_x(y,dz) + \frac{1}{h_n(t)}\frac{1}{\sigma}R(\tilde\nu^n_i(\cdot)||\rho_{\tilde{X}^n_i}(\tilde{Y}^n_i,\cdot))dt\right] \\
    &=\leq  e^{-\sigma C}\sup_x\sup_y \int e^{2\sigma \|z\|}\rho_x(y,dz) + \frac{1}{\sigma}E\left[\frac{1}{\beta_n}\sum_{i=n}^{\beta_n+n-1}R(\tilde\nu^n_i(\cdot)||\rho_{\tilde{X}^n_i}(\tilde{Y}^n_i,\cdot))\right].
\end{align*}
From \eqref{eq:finite_control}, the relative-entropy term is bounded in $n$. Therefore, since the first term has no dependence on $n$, taking $C\to \infty$, followed by $\sigma \to \infty$ yields the claimed convergence. 
\end{proof}

The proof of Theorem \ref{thm:limit} is dived into several steps and the arguments follow closely those used in \cite [Section 5.3]{dupell4}. As already mentioned, the tightness of $\{\tilde\nu^n\}$ follows from Lemma \ref{lem:ui}. Next, we consider the stochastic process $S^n$ defined as
\begin{equation}
\label{eq:Sn}
    S^n(t) = x + \int_0^t \int_{\mathbb{R}^{d_2}}g(\hat{X}^n(s),y)\tilde\nu^n(dy|s)ds,
\end{equation}
where $\hat{X}^n(t)$ is the piecewise constant function that takes the values $\hat{X}^n(t_{n+k}-t_n) = \tilde{X}^n_k$. This intermediate process $S^n$ is used to bridge the gap between $\tilde{X}^n$ and the limit $\tilde{X}$. 

A detailed outline of the remainder of the proof is as follows. First, in Lemma \ref{mcon} we prove that $\{S^n\}$ is tight. Next, in Lemma \ref{lem_3} we show the convergence
\begin{align}
\label{eq:SXlim}
   P\left(\sup_{t\in[0,T]}\|\tilde{X}^n(t)-S^n(t)\|>\varepsilon\right) = 0 \text{ for any }\varepsilon>0.
\end{align}
Together with Lemma \ref{mcon}, Lemma \ref{lem_3} then gives tightness of $\{\tilde{X}^n\}$. By Prohorov's theorem $\{ S^n \}$ and $\{ \tilde X ^n \}$ both have convergent subsequences, with some limit $S$. The final step in the proof, which is carried out in Lemma \ref{lem:1}, is to show that w.\ p.\ 1, $S = \tilde X$, where $\tilde{X}$ is defined in \eqref{eqn:x_bar}.  


\begin{lemma}
\label{mcon}
Define the modulus of continuity of $S^n$ as, for any $\delta >0$,
\begin{equation*}
    w^n(\delta) = \sup_{|s-t|<\delta}\|S^n(t)-S^n(s)\|.
\end{equation*}
The following statements hold,  
\begin{enumerate}
    \item[(a)] for all $\varepsilon>0$ and $\eta>0$ there exists a $\delta>0$ such that $P(w^n(\delta)>\varepsilon)<\eta$ for all $n$.
    \item[(b)] $S^n$ is tight.
\end{enumerate}
\end{lemma}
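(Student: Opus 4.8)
The plan is to establish equicontinuity-type control on $S^n$ directly from the uniform integrability of $\{\tilde\nu^n\}$ proved in Lemma \ref{lem:ui}, and then invoke the standard tightness criterion for $C([0,T]:\bR^{d_1})$ (Arzel\`a--Ascoli together with tightness of the initial value, which here is trivial since $S^n(0)=x$ is deterministic). Concretely, for (a) I would write, for $s<t$ with $|t-s|<\delta$,
\[
\|S^n(t)-S^n(s)\| = \left\| \int_s^t \int_{\bR^{d_2}} g(\hat X^n(r),y)\,\tilde\nu^n(dy|r)\,\frac{1}{h^n(r)}\,dr \right\| \le \frac{1}{\inf_r h^n(r)}\int_s^t \int_{\bR^{d_2}} \|g(\hat X^n(r),y)\|\,\tilde\nu^n(dy|r)\,dr,
\]
noting that $h^n \to h$ uniformly with $h$ bounded away from $0$ on $[0,T]$ (since $h$ is non-increasing and positive, $\inf h = h(T)>0$), so $\inf_{n,r} h^n(r) =: h_0 > 0$. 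The point is to split the inner integrand at level $C$: the part where $\|g(\hat X^n(r),y)\|\le C$ contributes at most $C|t-s|/h_0 \le C\delta/h_0$, while the part where $\|g(\hat X^n(r),y)\|>C$ is controlled, in expectation, by the uniform integrability bound \eqref{eqn:ui}.

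More precisely, I would estimate
\[
E\bigl[w^n(\delta)\bigr] \le \frac{C\delta}{h_0} + \frac{1}{h_0}\, E\left[\int_0^T \int_{\|g(\tilde X^n(r),z)\|>C} \|g(\tilde X^n(r),z)\|\,\tilde\nu^n(dz\times dr)\right],
\]
where I have used that $\hat X^n(r)$ and $\tilde X^n(r)$ differ only on a set that does not affect the integral against $\tilde\nu^n(dz\times dr)$ (they agree at the breakpoints $t_{n+k}-t_n$ and $\tilde\nu^n$ places mass according to these), or, more cleanly, bound $w^n(\delta)$ by the full integral over $[0,T]$ of the large-$g$ part plus the $C\delta/h_0$ term. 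Given $\varepsilon,\eta>0$, first choose $C$ large enough, using Lemma \ref{lem:ui}, that the second term is at most $h_0\varepsilon\eta/4$; then choose $\delta$ small enough that $C\delta/h_0 < \varepsilon\eta/4$; Markov's inequality then gives $P(w^n(\delta)>\varepsilon) \le E[w^n(\delta)]/\varepsilon < \eta$ uniformly in $n$, which is (a).

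For (b), tightness of $\{S^n\}$ in $C([0,T]:\bR^{d_1})$ follows from (a) by the standard criterion (see, e.g., \cite{dupell4}): the modulus of continuity estimate in (a) together with $S^n(0)=x$ deterministic verifies the two conditions for relative compactness in distribution on $C([0,T]:\bR^{d_1})$. I do not expect any serious obstacle here; the only point requiring a little care is the lower bound $h_0>0$ on the time-scale functions $h^n$, which follows from Assumption \ref{ass:limith} (uniform convergence of $h^n$ to the positive non-increasing limit $h$), and the observation that the large-deviation control on $g$ along the controlled process, rather than on the noise increments themselves, is exactly what Lemma \ref{lem:ui} provides — so the argument is a direct application of that lemma rather than anything new. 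The mild bookkeeping subtlety of replacing $\hat X^n$ by $\tilde X^n$ inside the integrals is handled by noting both interpolations share the same values at the grid points $t_{n+k}-t_n$ and $\tilde\nu^n(\cdot|r)$ is piecewise constant on the corresponding intervals.
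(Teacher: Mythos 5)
Your strategy matches the paper's: split the integrand of $S^n(t)-S^n(s)$ at a level $C$, use the uniform integrability from Lemma~\ref{lem:ui} for the large-$g$ part, take the small-$g$ part as $O(\delta)$, and conclude (b) from (a) via the standard tightness criterion. However, there is a concrete error in how you handle the $h^n$ factor, and it changes which property of $h$ you actually need.

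The very first display is wrong: by definition, $S^n(t)-S^n(s) = \int_s^t \int g(\hat X^n(r),y)\,\tilde\nu^n(dy|r)\,dr$, with no $1/h^n(r)$ factor. The time-scale factor only enters when you convert the conditional kernel into the product measure, and it enters in the \emph{opposite} direction from what you wrote: since $\tilde\nu^n(dy\times dr)=\frac{1}{h^n(r)}\tilde\nu^n(dy|r)\,dr$, we have $\tilde\nu^n(dy|r)\,dr = h^n(r)\,\tilde\nu^n(dy\times dr)$. Consequently the small-$g$ part is bounded simply by $C\delta$ (no $1/h_0$), and the large-$g$ part, after changing to the product measure so that \eqref{eqn:ui} applies, picks up a factor $\sup_{n,r}h^n(r)$ — so what you need is $h$ bounded \emph{above} on $[0,T]$ (which is automatic since $h^n\to h$ uniformly), not $h$ bounded away from zero. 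Your claim that the "key point requiring a little care" is $\inf h = h(T) > 0$ is therefore a misdirection; that property plays no role here. Once the factor is corrected, the bound $P(w^n(\delta)>\varepsilon) \le \frac{1}{\varepsilon}\bigl(C\delta + (\sup_{n,r}h^n(r))\,\sup_n E[\cdots]\bigr)$ follows by Markov, and the rest of your argument (choose $C$ first, then $\delta$) and part (b) go through as you describe, in essentially the same way as the paper.
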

\begin{proof}
For part (a), given any $\varepsilon>0$ and $\eta>0$, from \eqref{eqn:ui} we can choose $C>0$ such that,
\begin{align*}
 &\sup_n E\left[\int_0^T\int_{\|g(S^n(t),y)\|>C}\|g(\hat X^n(t),y)\|\tilde\nu^n(dy\times dt)\right] 
    \leq \frac{\eta\varepsilon}{2e^T}.
\end{align*}
With $\delta = \varepsilon/({2C})$, Markov's inequality yields,
\begin{align*}
    &P\left(w^n(\delta)>\varepsilon\right) = P\left( \sup_{|s-t|<\delta}\|S^n(t)-S^n(s)\|>\varepsilon\right) \\
    & \quad \leq P\left(\sup_{|s-t|<\delta}\int_s^t\int_{\mathbb{R}^{d_2}} \|g(\hat X^n(r),y)\|\tilde\nu^n(dy|r)dr>\varepsilon\right)\\
    &\quad \leq P\left(\sup_{|s-t|<\delta}\int_s^t\int_{\|g(\hat X^n(r),y)\|>C} \|g(\hat X^n(r),y)\|\tilde\nu^n(dy|r)dr\right.\\
    &\qquad\qquad+ \left.\sup_{|s-t|<\delta}\int_s^t\int_{\|g(\hat X^n(r),y)\|\leq C}\|g(\hat X^n(r),y)\|\tilde\nu^n(dy|r)dr>\varepsilon\right)\\
    &\quad \leq P\left(\sup_{|s-t|<\delta}\int_s^t\int_{\|g(\hat X^n(r),y)\|>C} \|g(\hat X^n(r),y)\|\frac{h^n(r)}{h^n(r)}\tilde\nu^n(dy|r)dr + C\delta>\varepsilon\right) \\
    &\quad = P\left(\sup_{|s-t|<\delta}\int_s^t\int_{\|g(\hat X^n(r),y)\|>C} \|g(\hat X^n(r),y)\|h^n(r)\tilde\nu^n(dy\times dr) >\frac{\varepsilon}{2T}\right)\\
    &\quad \leq P\left(\int_0^T\int_{\|g(\hat X^n(r),y)\|>C} \|g(\hat X^n(r),y)\|\tilde\nu^n(dy\times dr) >\frac{\varepsilon}{2e^T}\right)\\
    &\quad \leq\frac{2e^T}{\varepsilon}E\left[\int_0^T\int_{\|g(\hat X^n(r),y)\|>C} \|g(\hat X^n(r),y)\|\tilde\nu^n(dy\times dr)\right] \\
    & \quad < \eta.
\end{align*}
This establishes (a). For (b), because of (a) and that $S^n(0)=x$ for all $n$, the tightness of $\{S^n\}$ follows from Theorem A.3.22 in \cite{dupell4}. 
\end{proof}
\begin{lemma}\label{lem_3}
For any $\varepsilon>0$,
\begin{equation*}
    P\left(\sup_{t\in[0,T]}\|S^n(t)-\tilde{X}^n(t)\|>\varepsilon\right) \to 0,
\end{equation*}
as $n\rightarrow\infty$.
\end{lemma}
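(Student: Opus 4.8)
The plan is to compare the continuous interpolation $\tilde X^n$ with the averaged process $S^n$ defined in \eqref{eq:Sn}, exploiting that both processes are built from the same controlled noise $\{\tilde Y^n_k\}$ and the same update function $g$, and that $S^n$ replaces the discrete increments by their conditional averages under $\tilde\nu^n_k$. First I would write, for $t=t_{n+k}-t_n$ a breakpoint, the telescoping identity
\[
    \tilde X^n(t) - S^n(t) = \sum_{i=n+1}^{n+k} \varepsilon_i \left( g(\tilde X^n_{i-1}, \tilde Y^n_i) - \int_{\mathbb{R}^{d_2}} g(\hat X^n(s_{i-1}),y)\tilde\nu^n_i(dy) \right) + (\text{interpolation error}),
\]
where the interpolation error is controlled because between breakpoints both $\tilde X^n$ and $S^n$ move by $O(\varepsilon_i)$ amounts and $\varepsilon_i\to 0$. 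Since $\hat X^n(s_{i-1}) = \tilde X^n_{i-1}$, the summand splits as a \emph{martingale part} $M^n_i = \varepsilon_i(g(\tilde X^n_{i-1},\tilde Y^n_i) - \int g(\tilde X^n_{i-1},y)\tilde\nu^n_i(dy))$ — which has conditional mean zero given $\mathcal F^n_{i-1}$ because $\tilde\nu^n_i$ is exactly the conditional law of $\tilde Y^n_i$ — plus a part that vanishes identically. Thus $\tilde X^n(t) - S^n(t)$ is, up to the interpolation error, a martingale in $k$.

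The key step is then a maximal inequality for this martingale. I would like to bound $E\big[\sup_k \|\sum_{i\le k} M^n_i\|^2\big]$ or, more robustly given only \emph{integrability} (not square-integrability) of $g$ under the controls, truncate: split $g = g\mathbf 1_{\|g\|\le C} + g\mathbf 1_{\|g\|>C}$. The large part is handled by the uniform integrability \eqref{eqn:ui} from Lemma \ref{lem:ui}, exactly as in the proof of Lemma \ref{mcon}(a): its contribution to $\sup_t\|\tilde X^n(t)-S^n(t)\|$ has small expectation uniformly in $n$ once $C$ is large. For the bounded part, the martingale differences are bounded by $2C\varepsilon_i$, so by Doob's inequality
\[
    E\Big[\sup_{k}\Big\|\sum_{i=n+1}^{n+k}\varepsilon_i\big(g^C(\tilde X^n_{i-1},\tilde Y^n_i)-\textstyle\int g^C(\tilde X^n_{i-1},y)\tilde\nu^n_i(dy)\big)\Big\|^2\Big] \le 4\sum_{i=n+1}^{n+\beta_n} (2C\varepsilon_i)^2 \le 16C^2 \Big(\max_{i}\varepsilon_i\Big)\sum_{i=n+1}^{n+\beta_n}\varepsilon_i,
\]
and since $\sum_{i=n+1}^{n+\beta_n}\varepsilon_i \to \int_0^T h(t)\,dt < \infty$ while $\max_i \varepsilon_{n+i}\to 0$ by \ref{ass:limith}, this goes to $0$ as $n\to\infty$ for each fixed $C$. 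Combining the two estimates via Chebyshev/Markov, choosing $C$ large and then $n$ large, gives the claim. I would also note the interpolation error between breakpoints is dominated by a single term $\varepsilon_i\|g(\tilde X^n_{i-1},\tilde Y^n_i)\|$ plus $\int_{t_{i-1}}^{t}\int\|g\|\tilde\nu^n$, which is absorbed into the same truncation argument.

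The main obstacle I anticipate is the lack of uniform boundedness or uniform square-integrability of $g$ under the family of controls: one only has the bounded-running-cost hypothesis \eqref{eq:finite_control} and the derived uniform integrability \eqref{eqn:ui}, which controls first moments of $\|g\|$ over the occupation measure but not second moments. This is precisely why the truncation at level $C$ is essential — on $\{\|g\|\le C\}$ we regain boundedness and Doob's $L^2$ inequality applies, while on $\{\|g\|>C\}$ we fall back on \eqref{eqn:ui}. A secondary technical point is that $g(\tilde X^n_{i-1},\cdot)$ depends on the random state, so the truncation set $\{\|g(\tilde X^n_{i-1},z)\|>C\}$ is itself random; this is fine because all the inequalities used (Doob, Markov, and the bound in \eqref{eqn:ui}, which is stated with the state-dependent truncation $\|g(\tilde X^n(t),z)\|>C$) are conditional or pathwise, so no additional uniformity in the state is needed.
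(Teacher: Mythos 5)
Your proof is correct and reaches the same structural ingredients — martingale decomposition of $S^n - \tilde X^n$ at the breakpoints, Doob's inequality, and the uniform integrability property of Lemma~\ref{lem:ui} — but via a genuinely different truncation scheme than the paper's. The paper truncates the \emph{increment} $\tilde X^n_{j+1}-\tilde X^n_j = \varepsilon_j g(\tilde X^n_j,\tilde Y^n_j)$ at a fixed level $\theta$, obtaining two tail-type terms bounded by $c^n(\theta)$ (which vanish as $n\to\infty$ for fixed $\theta$, because $\{\|\varepsilon_j g\|>\theta\}$ shrinks as $\varepsilon_j\to 0$) and a martingale term bounded by $\theta$ times a uniformly bounded occupation integral; the limit order is $n\to\infty$ then $\theta\to 0$. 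You instead truncate $\|g\|$ itself at a fixed level $C$, handle the tail directly with \eqref{eqn:ui} uniformly in $n$, and bound the bounded-part martingale by Doob as $O\big(C^2 (\max_i\varepsilon_{n+i})\sum_i\varepsilon_{n+i}\big)$, which vanishes as $n\to\infty$ for fixed $C$; your limit order is $C$ large then $n$ large. Both choices work and your martingale bound is arguably cleaner since it isolates the decay from $\max_i\varepsilon_{n+i}\to 0$ explicitly. One small slip: $\sum_{i=n+1}^{n+\beta_n}\varepsilon_i = t_{n+\beta_n}-t_n \to T$, not $\int_0^T h(t)\,dt$ (these coincide only when $h\equiv 1$); the conclusion is unaffected since all you need is that the sum is bounded by $T$. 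You should also note explicitly, as the paper does, that the interpolation error between breakpoints is $w^n(\varepsilon_n)$ and is handled by Lemma~\ref{mcon}(a), since you gesture at it but do not quantify it.
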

\begin{proof}
We use the notation $t^n_j = t_{j}-t_n$. Because $\tilde{X}^n$ is the piecewise linear interpolation of the random vector $\{\tilde{X}^n_j\}=\{\tilde{X}^n(t^n_j)\}$, we have, 
\begin{align*}
   \sup_{t\in [0,T]}\|S^n(t)-\tilde{X}^n(t)\| 
   &\leq \max_{k\in J}\sup_{t\in[t^n_k,t^n_{k+1}]}\|S^n(t)-\tilde{X}^n(t)\| \\
   & \leq\max_{k\in J} w^n(\varepsilon^n_k)+ \max_{k\in J}\|S^n(t^n_k)-\tilde{X}^n(t^n_k)\|\\
    &\leq w^n(\varepsilon_{n})+ \max_{k\in J }\|S^n(t^n_k)-\tilde{X}^n(t^n_k)\|,
\end{align*}
where $J\doteq\{n,\dots, m(T+t_n) \}$. Lemma \ref{mcon} implies that $w^n(\varepsilon_n)\overset{p}{\to} 0$. Hence, it suffices to show that $\max_{k\in J }\|S^n(t^n_k)-\tilde{X}^n(t^n_k)\| \overset{p}{\to} 0$, as $n\to \infty$. By Markov's inequality, For any $\varepsilon>0$, 
\begin{align*}
    P\left(\max_{k\in J }\|S^n(t^n_k)-\tilde{X}^n(t^n_k)\|>\varepsilon\right)
    \leq \frac{1}{\varepsilon}E\left[\max_{k\in J }\|S^n(t^n_k)-\tilde{X}^n(t^n_k)\|\right].
\end{align*}
It remains to show that the expectation on the right-hand side of the last display converges to $0$. Given $\theta>0$, we define a variable $\Lambda^n_j$ as a truncation of $\varepsilon_{j}g(X^n_j,\bar{Y}^n_j)$:
\begin{align*}
    \Lambda^n_j = \begin{cases}\tilde{X}^n_{j+1}-\tilde{X}^n_j \quad&\text{if } \|\tilde{X}^n_{j+1}-\tilde{X}^n_j\|<\theta, \\
    0 \quad&\text{if } \|\tilde X^n_{j+1}-\tilde X^n_j\|\geq\theta. \end{cases}
\end{align*}
Note that because $\tilde{X}^n_{j+1} = \tilde{X}^n_j +\varepsilon_{j} g(\tilde{X}^n_j,\Bar{Y}^n_j)$, this is indeed a truncated version of $\varepsilon_{j}g(X^n_j,\bar{Y}^n_j)$. From the definitions of $S^n$ and $\tilde{X}^n$, and using that $\tilde\nu^n(dy|s)=\tilde\nu^n_j(dy)$ for $s\in[t^n_j,t^n_{j+1})$, we have,
\begin{align}
     & E\left[\max_{k\in J }\|S^n(t^n_k)-\tilde{X}^n(t^n_k)\|\right] \nonumber \\ 
      &\leq E\left[\max_{k\in J }\left\|x + \sum_{j=n}^{k-1}\varepsilon_{j}\int_{\mathbb{R}^{d_2}}g(\hat X^n(t^n_j),y)\tilde\nu^n_j(dy) -\tilde{X}^n(t^n_k)\right\|\right] \nonumber \\ 
    &\leq E\left[\max_{k\in J }\left\|x + \sum_{j=n}^{k-1}\Lambda^n_j -\tilde{X}^n(t^n_k)\right\|\right] \label{eq:Bnd1}  \\
    & \quad + E\left[\max_{k\in J }\left\| \sum_{j=n}^{k-1}\left(\Lambda^n_j -\varepsilon_{j}\int_{\mathbb{R}^{d_2}}g(\tilde{X}^n_j,y)1_{\{\|\varepsilon_{j}g(\tilde{X}^n_j,y)\|<\theta \}}\tilde\nu^n_j(dy) \right)\right\|\right]\label{eq:Bnd2} 
\\
    & \;\; +\!E \! \! \left[\! \max_{k\in J }\! \left\| \sum_{j=n}^{k-1}\! \varepsilon_{j}\! \left(\int_{\mathbb{R}^{d_2}}g(\tilde{X}^n_j,y)1_{\{\|\varepsilon_{j}\! \! g(\tilde{X}^n_j,y)\|<\theta \}}\tilde\nu^n_j(dy) \! -\! \! \int_{\mathbb{R}^{d_2}}\! \! g(\bar X^n_j,y)\tilde\nu^n_j(dy)\right)\! \right\|\right]\! . \label{eq:Bnd3}
\end{align}
Therefore, to complete the proof, it suffices to show that the three terms on the right-hand side of the inequality converge to $0$ as $n\rightarrow\infty$. 

For the first and third terms, \eqref{eq:Bnd1} and \eqref{eq:Bnd3}, we show that both are bounded from above by $c^n(\theta)$, defined as, 
\begin{equation*}
    c^n(\theta) \doteq E\left[\int_0^T\int_{\mathbb{R}^{d_2}}\|g(\tilde{X}^n_j,y)\|1_{\{\|\varepsilon_{j}g(\tilde{X}^n_j,y)\|>\theta\}}\tilde\nu^n(dy|t)dt\right].
\end{equation*}
In the limit as $n \to \infty$, we have $c^n(\theta) \to 0$ because of the uniform integrability property, see Lemma \ref{lem:ui}, and $\varepsilon_{j}\to 0$. 
For the first term \eqref{eq:Bnd1}, we write $\tilde{X}^n(t^n_k)-x$ as a telescoping sum,
\begin{align*}
    & E\left[\max_{k\in J }\left\|x + \sum_{j=n}^{k-1}\Lambda^n_j -\tilde{X}^n(t^n_k)\right\|\right]
    \\ & \quad \leq E\left[\sum_{j=n}^{m(T+t_n)}\|\tilde{X}^n(t^n_{j+1})- \tilde{X}^n(t^n_{j})-\Lambda_j^n\|\right]\\
    & \quad \leq E\left[\sum_{j=n}^{\beta_n+n}\varepsilon_{j}\|g(\tilde{X}^n_j,\bar{Y}^n_j)\|1_{\{\|\varepsilon_{j}g(\tilde{X}^n_j,\bar{Y}^n_j)\|\geq\theta\}}\right]\\
    &\quad = E\left[\sum_{j=n}^{\beta_n+n}\int_{t_j^n}^{t^n_{j+1}}\int_{\mathbb{R}^{d_2}}\|g(\tilde{X}^n_j,y)\|1_{\{\|\varepsilon_{j} g(\tilde{X}^n_j,y)\|\geq\theta\}}\tilde\nu^n_j(dy)dt\right]\\
    &\quad \leq c^n(\theta).
\end{align*}
For \eqref{eq:Bnd3}, 
\begin{align*}
     &E\left[\max_{k\in J }\left\| \sum_{j=n}^{k-1}\varepsilon_{j}\left(\int_{\mathbb{R}^{d_2}}g(\tilde{X}^n_j,y)1_{\{\|\varepsilon_{j}g(\tilde{X}^n_j,y)\|<\theta \}}\tilde\nu^n_j(dy)-\int_{\mathbb{R}^{d_2}}g(\tilde{X}^n_j,y)\tilde\nu^n_j(dy) \right)\right\|\right]\\
     &\quad\leq E\left[\sum_{j=n}^{m(T+t_n)} \varepsilon_{j}\int_{\mathbb{R}^{d_2}}\| g(\tilde{X}^n_j,y)\|1_{\{\|\varepsilon_{j}g(\tilde{X}^n_j,y)\|\geq\theta \}}\tilde\nu^n_j(dy)\right]\\
     &\quad = E\left[\sum_{j=n}^{\beta_n+n}\int_{t_j^n}^{t^n_{j+1}}\int_{\mathbb{R}^{d_2}}\|g(\tilde{X}^n_j,y)\|1_{\{\|\varepsilon_{j} g(\tilde{X}^n_j,y)\|\geq\theta\}}\tilde\nu^n(dy|t)dt\right]\\
     &\quad \leq c^n(\theta).
\end{align*}

For \eqref{eq:Bnd2}, the second term in the upper bound for $E\left[ \max_{k\in J }\|S^n(t^n_k)-\tilde{X}^n(t^n_k)\|  \right]$, define the sequence $\{M^n_j\}$ by,
\begin{equation*}
    M^n_{j+1}\doteq \Lambda_j^n - \varepsilon_{j}\int_{\mathbb{R}^{d_2}}g(\tilde{X}^n_j,y)1_{\{\|\varepsilon_{j}g(\tilde{X}^n_j,y)\|<\theta\}}\tilde\nu^n_j(dy).
\end{equation*}
This is a martingale difference sequence with respect to the $\sigma$-algebra $\bar{\mathcal{F}}^n_j\doteq \sigma(\tilde{X}^n_n,\dots,\tilde{X}^n_{n+j})$. Indeed, by the definition of $\tilde\nu^n_j$,
\begin{align*}
    E[\Lambda_j^n|\bar{\mathcal{F}}^n_j] &= E\left[(\tilde{X}^n_{j+1}-\tilde{X}^n_j)1_{\|\tilde{X}^n_{j+1}-\tilde{X}^n_j\|< \theta}|\bar{\mathcal{F}}^n_j\right]\\
    &= \varepsilon_{j} E\left[g(X^n_j,\bar{Y}^n_j)1_{\{\|\varepsilon_{j}g(X^n_j,\bar{Y}^n_j)\|< \theta\}}\bar{\mathcal{F}}^n_j\right]\\
    &= \varepsilon_{j}\int_{\mathbb{R}^{d_2}}g(\tilde{X}^n_j,y)1_{\{\|\varepsilon_{j}g(\tilde{X}^n_j,y)\|<\theta\}}\tilde\nu^n_j(dy).
\end{align*}
Therefore, $\{(\sum_{j=n}^{k}M^n_{j},\bar{\mathcal{F}}^n_{k-1}) \}$ is a martingale and for any $i\neq j$, $E\left[\left\langle  M^n_i,M^n_j\right\rangle\right]=0$. Moreover, we can rewrite \eqref{eq:Bnd2} as,
\[
    E\left[\max_{k\in J }\left\| \sum_{j=n}^{k-1}\left(\Lambda^n_j -\varepsilon_{j}\int_{\mathbb{R}^{d_2}}g(\tilde{X}^n_j,y)1_{\{\|\varepsilon_{j}g(\tilde{X}^n_j,y)\|<\theta \}}\tilde\nu^n_j(dy) \right)\right\|\right]
    = E\left[\max_{k\in J }\left\| \sum_{j=n}^{k-1}M^n_{j+1}\right\|\right].
\]
For the expression in the last display, we have the upper bound,
\begin{align*}
    E\left[\max_{k\in J }\left\| \sum_{j=n}^{k-1}M^n_{j+1}\right\|\right] &\leq \left(E\left[\max_{k\in J }\left\| \sum_{j=n}^{k-1}M^n_{j+1}\right\|^2\right]\right)^{1/2} \\
    &\leq 2 \left(E\left[\left\| \sum_{j=n}^{\beta_n +n}M^n_{j+1}\right\|^2\right]\right)^{1/2},
\end{align*}
where the first inequality comes from the Cauchy-Schwarz inequality and the second from Doob's submartingale inequality. Furthermore, because,
\begin{align*}
    E[\Lambda_j^n|\bar{\mathcal{F}}^n_j] = \varepsilon_{j}\int_{\mathbb{R}^{d_2}}g(\tilde{X}^n_j,y)1_{\{\|\varepsilon_{j}g(\tilde{X}^n_j,y)\|<\theta\}}\tilde\nu^n_j(dy),
\end{align*}
and $\|\Lambda_j^n\|\leq \theta$, we have the upper bound,
\begin{align*}
    E\left[\left\| \sum_{j=n}^{\beta_n+n}M^n_{j+1}\right\|^2\right]
    &=\sum_{j=n}^{\beta_n+n}E\left[\left\| \Lambda_j^n - \varepsilon_{j}\int_{\mathbb{R}^{d_2}}g(\tilde{X}^n_j,y)1_{\{\|\varepsilon_{j}g(\tilde{X}^n_j,y)\|<\theta\}}\tilde\nu^n_j(dy)\right\|^2\right]\\
    &\leq \sum_{j=n}^{\beta_n+n}E\left[\| \Lambda_j^n \|^2\right]
    \leq\theta\sum_{j=n}^{\beta_n+n}E\left[\| \Lambda_j^n \|\right]\\
    &\leq \theta\sum_{j=n}^{\beta_n+n}E\left[ \int_{t_j^n}^{t^n_{j+1}}\int_{\mathbb{R}^{d_2}}\|g(\tilde{X}_j^n,y)\|\tilde\nu^n(dy|t)dt\right]\\
    &\leq \theta E\left[ \int_{0}^{T}\int_{\mathbb{R}^{d_2}}\|g(\tilde{X}_j^n,y)\|h^n(t)\tilde\nu^n(dy\times dt)\right].
\end{align*}
The expectation in the last display is bounded by a constant, independent of $n$, due to the uniform integrability property. Therefore, this term vanishes as $\theta\to 0$. 

Combining the bounds for \eqref{eq:Bnd1}--\eqref{eq:Bnd3}, we conclude that,
\begin{align*}
    E\left[\max_{k\in J }\|S^n(t^n_k)-\tilde{X}^n(t^n_k)\|\right] \leq 2c^n (\theta) + \theta E\left[ \int_{0}^{T}\int_{\mathbb{R}^{d_2}}\|g(\tilde{X}_j^n,y)\|h^n(t)\tilde\nu^n(dy\times dt)\right].
\end{align*}
The proof is completed by sending $n \to \infty$ followed by $\theta\to 0$.
\end{proof}

\begin{lemma}\label{lem:1}
Let $S^n$ and $\tilde{X}$ be defined by \eqref{eq:Sn} and \eqref{eqn:x_bar}, respectively. Then, for any $\varepsilon>0$,
\begin{equation*}
    P\left(\sup_{t\in[0,T]}\|S^n(t)-\tilde{X}(t)\|>\varepsilon\right) \to 0,
\end{equation*}
as $n\rightarrow\infty$.
\end{lemma}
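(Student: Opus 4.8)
The plan is to show that $S^n$ converges in probability (in the sup-norm) to $\tilde X$ by exploiting the already-established facts: $\{\tilde\nu^n\}$ is tight and uniformly integrable (Lemma \ref{lem:ui}), $\{S^n\}$ is tight (Lemma \ref{mcon}), $\sup_{t}\|\tilde X^n(t)-S^n(t)\|\to 0$ in probability (Lemma \ref{lem_3}), and hence along any subsequence we may extract a further subsequence along which $(\tilde\nu^n,S^n,\tilde X^n)$ converges weakly to some $(\tilde\nu,S,S)$ (the two state limits agree by Lemma \ref{lem_3}), where by Lemma 3.3.1 in \cite{dupell4} and the uniform convergence $h^n\to h$ from \ref{ass:limith} the limit control disintegrates as $\tilde\nu(A\times B)=\int_B\tilde\nu(A|t)\,h(t)^{-1}dt$. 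By the Skorokhod representation theorem I would put these on a common probability space so that the convergence is almost sure. The goal is then to identify $S$ as the unique solution of \eqref{eqn:x_bar}, which by uniqueness (Lemma \ref{lem_unique} in the Appendix, applied with the relevant vector field) forces $S=\tilde X$, and since the limit is deterministic the convergence upgrades from weak to convergence in probability, giving the claim.

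First I would pass to the limit in the defining identity \eqref{eq:Sn}, namely
\[
S^n(t)=x+\int_0^t\int_{\mathbb{R}^{d_2}}g(\hat X^n(s),y)\,\tilde\nu^n(dy|s)\,ds
 = x+\int_0^t\int_{\mathbb{R}^{d_2}}g(\hat X^n(s),y)\,h^n(s)\,\tilde\nu^n(dy\times ds),
\]
for each fixed $t\in[0,T]$. The three substitutions that need to be justified are: replacing $\hat X^n(s)$ by $S^n(s)$ (they differ by at most $\sup_j\varepsilon_j\|g(\tilde X^n_j,\bar Y^n_j)\|$ on each interval, which is controlled via the truncation/uniform-integrability estimates exactly as in the proof of Lemma \ref{lem_3}, using also $\varepsilon_j\to0$ and Lemma \ref{lem_3} itself); replacing $h^n$ by $h$ (uniform convergence from \ref{ass:limith}); and then using the Lipschitz continuity of $g(\cdot,y)$ in the first argument (\ref{ass:Lipschitz}) together with $S^n\to S$ uniformly, plus the joint convergence $\tilde\nu^n\to\tilde\nu$, to pass to the limit in the integral against the (uniformly integrable, hence the unboundedness of $g$ causes no trouble) family of measures. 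The outcome is
\[
S(t)=x+\int_0^t\int_{\mathbb{R}^{d_2}}g(S(s),y)\,\tilde\nu(dy|s)\,ds,
\]
which is precisely \eqref{eqn:x_bar} with $S$ in place of $\tilde X$.

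The main obstacle is the passage to the limit in $\int_0^t\int g(\hat X^n(s),y)\tilde\nu^n(dy|s)ds$ when $g$ is unbounded in $y$ and the integrand depends on the random variable $\hat X^n(s)$ which is itself converging only weakly (before the Skorokhod coupling). The standard device — writing $g=g\mathbf 1_{\|g\|\le C}+g\mathbf 1_{\|g\|>C}$ — handles this: on $\{\|g\|\le C\}$ one has a bounded continuous integrand and may combine uniform convergence of $S^n$ with weak convergence of $\tilde\nu^n$, controlling the discrepancy between $g(\hat X^n(s),y)$ and $g(S(s),y)$ by $L_g(\|\hat X^n(s)-S^n(s)\|+\|S^n(s)-S(s)\|)$; the tail $\{\|g\|>C\}$ is made uniformly small in $n$ by Lemma \ref{lem:ui} and then one sends $C\to\infty$. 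A secondary, purely technical point is that to invoke Lemma \ref{lem_unique} one must know $g$ satisfies the Lipschitz hypothesis of that lemma in the relevant averaged sense, which follows from \ref{ass:Lipschitz} since $x\mapsto\int g(x,y)\tilde\nu(dy|s)$ inherits the Lipschitz constant $L_g$ uniformly in $s$. Once uniqueness gives $S=\tilde X$ deterministically, every subsequence has a further subsequence along which $S^n\to\tilde X$ a.s.\ in sup-norm, so $S^n\to\tilde X$ in probability, which is the assertion of the lemma; combined with Lemma \ref{lem_3} this also yields $\tilde X^n\to\tilde X$ in probability, completing the proof of Theorem \ref{thm:limit}.
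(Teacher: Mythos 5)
Your proposal is correct in substance but takes a genuinely different route from the paper's own proof. The paper never invokes the Skorokhod representation theorem or Lemma~\ref{lem_unique} in this lemma. Instead it applies Markov's inequality to reduce the problem to showing $E\left[\sup_{t\in[0,T]}\|S^n(t)-\tilde X(t)\|\right]\to 0$, then decomposes the difference of integral equations into three error terms $D_1(n)$ (replacing $h^n$ by $h$), $D_2(n)$ (replacing $\tilde\nu^n$ by $\tilde\nu$; this is where the truncation, uniform-integrability, and an Arzel\`a--Ascoli argument appear), $D_3(n)$ (replacing $\hat X^n$ by $S^n$, which is Lemma~\ref{lem_3}), plus a feedback term $\int_0^t K\|S^n(s)-\tilde X(s)\|\,ds$, and then absorbs the feedback term by Gr\"onwall's inequality. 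The Gr\"onwall step is thus the paper's inline substitute for the uniqueness you invoke via Lemma~\ref{lem_unique}; your Skorokhod coupling plus soft identification of the limit ODE plus uniqueness buys you the same conclusion at the cost of an extra lemma, while the paper's route is quantitative and self-contained. What you call the ``main obstacle'' (passing to the limit against $\tilde\nu^n$ when $g$ is unbounded in $y$) is handled in the paper's $D_2$ estimate in exactly the way you sketch (truncation at level $C$, uniform integrability for the tail, weak convergence plus bounded continuity on $\{\|g\|\le C\}$), so the technical core is identical.

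One imprecision worth flagging: your first paragraph says ``since the limit is deterministic the convergence upgrades from weak to convergence in probability.'' The limit $\tilde X$ is not deterministic --- it is a random element of $C([0,T]:\mathbb{R}^{d_1})$ because it is the solution of \eqref{eqn:x_bar} driven by the random control $\tilde\nu$. What you actually need, and what your final paragraph correctly uses, is that after the Skorokhod coupling $S^n\to S$ almost surely in sup-norm and $S=\tilde X$ almost surely by uniqueness, so $S^n\to\tilde X$ almost surely and hence in probability on the coupled space. Since the statement of Lemma~\ref{lem:1} is in any case to be read along a fixed weakly convergent subsequence on a space where $\tilde\nu^n$ and $\tilde\nu$ (hence $S^n$ and $\tilde X$) are jointly defined, this is the right conclusion; just drop the ``deterministic limit'' heuristic, which would only apply if $\tilde X$ were a constant.
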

\begin{proof}
By Markov's inequality, for any $\varepsilon>0$,
\begin{align*}
    P\left(\sup_{t\in[0,T]}\|S^n(t)-\tilde{X}(t)\|>\varepsilon\right) 
    \leq \frac{1}{\varepsilon}E\left[\sup_{t\in[0,T]}\|S^n(t)-\tilde{X}(t)\|\right].
\end{align*}
It remains to show that the expectation on the right-hand side converges to $0$. To show this, we notice that,
\begin{align*}
    &E\left[\sup_{t\in[0,T]}\|S^n(t)-\tilde{X}(t)\|\right]\nonumber\\
&=E\left[\sup_{t\in[0,T]}\left\|\int_{0}^t\int_{\mathbb{R}^{d_2}} g(\hat X^n(s),y)d\tilde\nu^n(y|s)ds - \int_{0}^t\int_{\mathbb{R}^{d_2}}g(\tilde{X}(s),y)d\tilde\nu(y|s)ds\right\|\right] \nonumber\\
    &\leq E\left[\sup_{t\in[0,T]}\left\|\int_{0}^t\int_{\mathbb{R}^{d_2}}g(\hat X^n(s),y)d\tilde\nu^n(y|s)ds - \int_{0}^t\int_{\mathbb{R}^{d_2}}g(\hat X^n,y)d\tilde\nu(y|s)ds\right\|\right]\nonumber\\
&\quad+E\left[\sup_{t\in[0,T]}\int_{0}^t\int_{\mathbb{R}^{d_2}}\left\|g(\hat X^n,y) -  g(\tilde{X}(s),y)\right\|d\tilde\nu(y|s)ds\right]\nonumber\\
    &\leq E\left[\sup_{t\in[0,T]}\int_{\mathbb{R}^{d_2}}\|g(\hat X^n,y)\|\|h^n(s)-h(s)\|\tilde\nu^n(d\times ds)\right]\nonumber\\  &\quad+E\left[\sup_{t\in[0,T]}\left\|\int_{\mathbb{R}^{d_2}\times[0,t]} g(\hat X^n,y)h(s)\tilde\nu^n(dy\times ds) - \int_{\mathbb{R}^{d_2}\times[0,t]} g(\hat X^n,y)h(s)\tilde\nu(dy\times ds)\right\|\right]\nonumber\\
    &\quad+E\left[\sup_{t\in[0,T]}\int_{0}^t \int_{\mathbb{R}^{d_2}}K\|\hat X^n(s)-\tilde{X}(s)\|d\tilde\nu(y|s)ds\right]\nonumber\\
    &\leq E\left[\sup_{t\in[0,T]}\int_{\mathbb{R}^{d_2}}\|g(\hat X^n,y)\|\|h^n(s)-h(s)\|\tilde\nu^n(dy\times ds)\right] \\ 
&\quad+E\left[\sup_{t\in[0,T]}\left\|\int_{\mathbb{R}^{d_2}\times[0,t]} g(\hat X^n,y)h(s)\tilde\nu^n(dy\times ds) - \int_{\mathbb{R}^{d_2}\times[0,t]} g(\hat X^n,y)h(s)\tilde\nu(dy\times ds)\right\|\right] \\ 
    &\quad+E\left[\sup_{t\in[0,T]}\int_{0}^t \int_{\mathbb{R}^{d_2}}K\|\hat X^n(s)-S^n(s)\|d\tilde\nu(y|s)ds\right] \\
    &\quad+E\left[\sup_{t\in[0,T]}\int_{0}^t \int_{\mathbb{R}^{d_2}}K\|S^n(s)-\bar X(s)\|d\tilde\nu(y|s)ds\right].\nonumber
\end{align*}
We define $D_i (n)$, $i=1,2,3$, by
\begin{align}
\label{eqn:bounds_1}
    D_1 (n) &= E\left[\sup_{t\in[0,T]}\int_{\mathbb{R}^{d_2}}\|g(\hat X^n,y)\|\|h^n(s)-h(s)\|\tilde\nu^n(dy\times ds)\right],\\
\label{eqn:bounds_2}
    D_2 (n) &= E\left[\sup_{t\in[0,T]}\left\|\int_{\mathbb{R}^{d_2}\times[0,t]} g(\hat X^n,y)h(s)\tilde\nu^n(dy\times ds)\right.\right. \\ & \qquad \qquad \left.\left. - \int_{\mathbb{R}^{d_2}\times[0,t]} g(\hat X^n,y)h(s)\tilde\nu(dy\times ds)\right\|\right], \nonumber
\end{align}
and
\begin{align}
\label{eqn:bounds_3}
    D_3(n) = E\left[\sup_{t\in[0,T]}\int_{0}^t \int_{\mathbb{R}^{d_2}}K\|\hat X^n(s)-S^n(s)\|d\tilde\nu(y|s)ds\right].
\end{align}
With these definitions, we have the upper bound,
\begin{align*}
    E\left[\sup_{t\in[0,T]}\|S^n(t)-\tilde{X}(t)\|\right] &\leq \sum _{i=1} ^3 D_i(n) + E\left[\sup_{t\in[0,T]}\int_{0}^t \int_{\mathbb{R}^{d_2}}K\|S^n(s)-\bar X(s)\|d\tilde\nu(y|s)ds\right].
\end{align*}
By Gr\"onwall's inequality we have,
\begin{align*}
    E\left[\sup_{t\in[0,T]}\|S^n(t)-\tilde{X}(t)\|\right]\leq E\left[e^{KT} \sum _{i=1} ^3 D_i(n)\right].
\end{align*}
It remains to prove that the $D_i$ terms converge to $0$. 

First, $D_3(n)$ converges to zero due to Lemma \ref{lem_3}. Next, we consider $D_1(n)$, given in \eqref{eqn:bounds_1}. Due to the uniform integrability property \eqref{eqn:ui}, there exists some $C>0$ such that,
\[
    \sup_n E\left[\int_0^T\int_{\|g(\hat X^n(t),y)\|>C}\|g(x,y)\|\tilde\nu^n(dy\times dt)\right]
    \leq 1.
\]
With this $C>0$, we have,
\begin{align*}
    D_1 (n) &\leq E\left[\int_0^T\int_{\|g(\hat X^n(s),y)\|\leq C} \|g(\hat X^n(s),y)\|\|h^n(s)-h(s)\|\tilde\nu^n(dy\times ds)\right]\\
    &\quad +E\left[\int_0^T\int_{\|g(\hat X^n(s),y)\|> C} \|g(\hat X^n(s),y)\|\|h^n(s)-h(s)\|\tilde\nu^n(dy\times ds)\right]\\
    & \leq (CT+1) \sup_{t\in[0,T]}\|h^n(t)-h(t)\|.
\end{align*}
Thus, $D_1(n)$ converges to $0$ due to the uniform converge of $\{h^n\}$ to $h$ as $n\rightarrow\infty$, see Assumption \ref{ass:limith}. 

Finally, we consider $D_2(n)$ in \eqref{eqn:bounds_2}. Note that, for any $C>0$ and $n_0>0$,
\begin{align*}
    D_2(n) &\leq E\Bigg[\sup_{t\in[0, T]}\Bigg\|\int_{ \mathbb{R}^{d_2}\times[0,t]} g(\tilde{X}^{n_0}(s),y)h(s)\tilde\nu^n(dy\times ds) \\ & \qquad\qquad  - \int_{ \mathbb{R}^{d_2}\times[0,t]} g(\tilde{X}^{n_0}(s),y)h(s)\tilde\nu(dy\times ds)\Bigg\|\Bigg]\\
    &\quad +E\Bigg[\sup_{t\in[0, T]}\Bigg\|\int_{ \mathbb{R}^{d_2}\times[0,t]} g(\hat{X}^{n}(s),y)h(s)\tilde\nu^n(dy\times ds)\\ & \qquad\qquad  - \int_{ \mathbb{R}^{d_2}\times[0,t]} g(\tilde{X}^{n_0}(s),y)h(s)\tilde\nu^n(dy\times ds)\Bigg\|\Bigg]\\
     & \quad + E\Bigg[\sup_{t\in[0, T]}\Bigg\|\int_{ \mathbb{R}^{d_2}\times[0,t]} g(\tilde{X}^{n_0}(s),y)h(s)\tilde\nu(dy\times ds)\\  & \qquad\qquad - \int_{ \mathbb{R}^{d_2}\times[0,t]} g(\hat{X}^{n}(s),y)h(s)\tilde\nu(dy\times ds)\Bigg\|\Bigg]\\
        &\leq E\Bigg[\sup_{t\in[0, T]}\Bigg\|\int_{ \mathbb{R}^{d_2}\times[0,t]} g(\tilde{X}^{n_0}(s),y)h(s)\tilde\nu^n(dy\times ds)  \\ & \qquad\qquad- \int_{ \mathbb{R}^{d_2}\times[0,t]} g(\tilde{X}^{n_0}(s),y)h(s)\tilde\nu(dy\times ds)\Bigg\|\Bigg]\\
    & \quad +E\left[\sup_{t\in[0, T]}\int_{ \mathbb{R}^{d_2}\times[0,t]} \|\hat X^n(s) - \bar X^{n_0}(s)\|h(s)\tilde\nu^n(dy\times ds)\right]\\
    & \quad +E\left[\sup_{t\in[0, T]}\int_{ \mathbb{R}^{d_2}\times[0,t]} \|\bar X^{n_0}(s) - \hat X^{n_0}(s)\|h(s)\tilde\nu(dy\times ds)\right]
    \end{align*}
    Because $\tilde{X}^n$ and $\hat{X}^n$ converge to the same process, as $n \to \infty$, the last two terms in the last display can be made arbitrarily small by choosing large enough $n$ and $n_0$. For the first term in the upper bound, we split it up into two parts, one that is bounded where we can use the weak convergence of $\tilde\nu^n$ and one part that can be made arbitrarily small due to uniform integrability property:
    \begin{align*}
    &E\left[\sup_{t\in[0, T]}\left\|\int_{ \mathbb{R}^{d_2}\times[0,t]} g(\tilde{X}^{n_0}(s),y)h(s)\tilde\nu^n(dy\times ds) - \int_{ \mathbb{R}^{d_2}\times[0,t]} g(\tilde{X}^{n_0}(s),y)h(s)\tilde\nu(dy\times ds)\right\|\right]\\
    &\leq E\left[\sup_{t\in[0, T]}\left\|\int_{ \mathbb{R}^{d_2}\times[0,t]} \left(g(\tilde{X}^{n_0}(s),y)1_{\{\|g(\tilde{X}^{n_0}(s),y)\|\leq C\}}\right.\right.\right.\\
    &\qquad\qquad\qquad\qquad \qquad\left.+C\frac{g(\tilde{X}^{n_0}(s),y)}{\|g(\tilde{X}^{n_0}(s),y)\|} 1_{\{\|g(\tilde{X}^{n_0}(s),y)\|> C\}} \right)h(s)\tilde\nu^n(dy\times ds)\\
    &\quad\quad\qquad \qquad - \int_{ \mathbb{R}^{d_2}\times[0,t]} \left(g(\tilde{X}^{n_0}(s),y)1_{\{\|g(\tilde{X}^{n_0}(s),y)\|\leq C\}}\right.\\
    &\qquad\qquad\qquad\qquad \qquad \quad  \left.\left.\left.+C\frac{g(\tilde{X}^{n_0},y)}{\|g(\tilde{X}^{n_0}(s),y)\|} 1_{\{\|g(\tilde{X}^{n_0}(s),y)\|> C\}} \right)h(s)\tilde\nu(dy\times ds)\right\|\right]\\
    &+E\left[\sup_{t\in[0, T]}\left\|\int_{ \mathbb{R}^{d_2}\times[0,t]} \left(g(\tilde{X}^{n_0}(s),y)-C\frac{g(\tilde{X}^{n_0}(s),y)}{\|g(\tilde{X}^{n_0}(s),y)\|}\right)1_{\{\|g(\tilde{X}^{n_0}(s),y)\|> C\}}h(s)\tilde\nu^n(dy\times ds)\right.\right.\\
    &\qquad \left.\left.- \int_{ \mathbb{R}^{d_2}\times[0,t]} \left(g(\tilde{X}^{n_0}(s),y)-C\frac{g(\tilde{X}^{n_0}(s),y)}{\|g(\tilde{X}^{n_0}(s),y)\|}\right)1_{\{\|g(\tilde{X}^{n_0}(s),y)\|> C\}}h(s)\tilde\nu(dy\times ds)\right\|\right].
\end{align*}
To simplify the expressions, we define
\[
    \varphi_C(x)\doteq x1_{\{\|x\|\leq C\}}+C\frac{x}{\|x\|}1_{\{\|x\|>C\}},
\]
and 
\begin{align*}
    \ell^n(t) &\doteq  \int_{ \mathbb{R}^{d_2}\times[0,t]} \varphi_C(g(\tilde{X}^{n_0}(s),y))h(s)\tilde\nu^n(dy\times ds) \\ & \qquad - \int_{ \mathbb{R}^{d_2}\times[0,t]} \varphi_C(g(\tilde{X}^{n_0}(s),y))h(s)\tilde\nu(dy\times ds).
\end{align*}
We can then write the first expectation in the upper bound in the previous display as, 
\[
E[\sup_{t\in [0,T]}\|\ell^n(t)\|].
\] 
Moreover, using that $h(s) \leq h(0), \forall s \in [0,T]$, and
\[
    \left\|x - C\frac{x}{\|x\|}\right\|  1_{\{\|x\|>C\}}\leq (\|x\|+C)1_{\{\|x\|>C\}}\leq 2\|x\|1_{\{\|x\|>C\}},
\]
for all $x$, combined with the uniform integrability property \eqref{eqn:ui}, we obtain the upper bound,
    \begin{align*}
    &E\Bigg[\sup_{t\in[0, T]}\Bigg\|\int_{ \mathbb{R}^{d_2}\times[0,t]} g(\tilde{X}^{n_0}(s),y)h(s)\tilde\nu^n(dy\times ds) \\ & \qquad \qquad - \int_{ \mathbb{R}^{d_2}\times[0,t]} g(\tilde{X}^{n_0}(s),y)h(s)\tilde\nu(dy\times ds)\Bigg\|\Bigg]\\
    &\leq E[\sup_{t\in [0,T]}\|\ell^n(t)\|] + 4 h(0)  \sup_n E\left[\int_0^T\int_{\|g(\bar X^{n_0}(t),y)\|>C}\|g(x,y)\|\tilde\nu^n(dy\times dt)\right].
\end{align*}
For any $t\in[0,T]$, because  $\tilde\nu(\mathbb{R}^{d_2}\times\{t\})=0$,  $\tilde\nu^n$ converges weakly to $\tilde\nu$ w.\ p.\ 1. Moreover, $\varphi_C$ is bounded and continuous, and thus $ \ell^n(t)\rightarrow 0$ w.\ p.\ 1 as $n\rightarrow\infty$. Without loss of generality, we assume $\ell^n(t,\omega)\rightarrow 0$ for all $\omega\in\Omega$ and $t\in [0,T]$. Now for any fixed $\omega\in \Omega$, it is not hard to see that $\{\ell^n(t,\omega):t\in [0,T]\}_{n\in\mathbb{N}}$ is equicontinuous and uniformly bounded by $2C$. By the Arzel\`{a}-Ascoli theorem, since $\ell^n(t,\omega)\rightarrow 0$, we can conclude that  $\sup_{t\in [0,T]}\|\ell^n(t,\omega)\|\rightarrow 0$ for that given $\omega$; here we use that if every subsequence has a further subsequence which converges uniformly to the same limit, then the whole sequence converges uniformly to the same limit. Because $\omega$ is arbitrary, this means that  $\sup_{t\in [0,T]}\|\ell^n(t)\|\rightarrow 0$ w.\ p.\ 1. Moreover, becasue $\sup_{t\in [0,T]}\|\ell^n(t)\|\leq 2C$, Lebesgue's dominated convergence theorem ensures that $E[\sup_{t\in [0,T]}\|\ell^n(t)\|] \to 0$. Lastly. we have that the second term in the previous display, 
\[
    4 h(0)  \sup_n E\left[\int_0^T\int_{\|g(\bar X^{n_0}(t),y)\|>C}\|g(x,y)\|\tilde\nu^n(dy\times dt)\right],
\]
converges to $0$ by sending $C\rightarrow\infty$. It follows that $D_2(n)$ converges to 0 as well. This concludes the proof.

\end{proof}


\begin{appendix}
\section{Proof that $I$ is a rate function}
In order for Theorem \ref{thm:main} to be a Laplace principle, we must verify that the function $I$ defined in \ref{eqn_rate_function} is indeed a rate function. This is done in the following lemma. 
\begin{lemma}
\label{lem:rateFcn}
Under Conditions \ref{sufficient_conditions}, the function $I:C[0,T]\to [0,\infty]$ defined by,
\[
    I(\varphi) = \int_0^T \frac{1}{h(t)}L(\varphi(t),\dot\varphi(t))dt,
\]
where 
\[
    L(x,\beta) =  \inf_{\mu\in\mathcal{P}(\mathbb{R}^{d_2})}\left\{ \inf_{\eta\in\mathcal{A}(\mu)}\{R(\eta\|\mu \otimes \rho_x(\cdot,\cdot))\}: \beta = \int_{\mathbb{R}^{d_2}} g(x,y)\mu(dy)\right\},
\]
is a rate function, i.e., $I$ has compact level sets.
\end{lemma}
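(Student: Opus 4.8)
The plan is to verify the two defining properties of a rate function as stated in Section \ref{sec:LD}: that $I$ is lower semi-continuous and that its sub-level sets are compact. Nonnegativity of $I$ is immediate, since $L\ge0$ (relative entropy is nonnegative) and $1/h>0$. Before the main argument I would record three structural facts that are already available. First, $h$ is continuous on $[0,T]$, being a uniform limit of the continuous functions $h^n$ (Assumption \ref{ass:limith}), it is non-increasing (as noted after Assumption \ref{ass:limith}), and it is strictly positive; hence $1/h$ is bounded, with $1/h(0)\le 1/h(t)\le 1/h(T)<\infty$ for all $t\in[0,T]$. Second, by the proof of Lemma \ref{lem:conti_L} the family $\{H_n\}$ is equicontinuous, so the limiting Hamiltonian $H$ is jointly continuous in $(x,\alpha)$; consequently, by Proposition \ref{prop:limitHamiltonian}, $L(x,\beta)=\sup_\alpha\{\langle\alpha,\beta\rangle-H(x,\alpha)\}$ is lower semi-continuous on $\bR^{d_1}\times\bR^{d_1}$ as a supremum of continuous functions, and $L(x,\cdot)$ is convex for every $x$. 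Third, inequality \eqref{eq:boundH} from the proof of Lemma \ref{lem:piecewise_conti} provides, for each $M<\infty$, a finite constant $C_M=\sup_x\sup_{\|\alpha\|=M}H(x,\alpha)$, and evaluating the supremum defining $L$ at $\alpha=M\beta/\|\beta\|$ gives the uniform superlinear lower bound $L(x,\beta)\ge M\|\beta\|-C_M$ for all $x,\beta$ and all $M$.

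Next I would establish relative compactness of the sub-level sets. Fix $K\ge0$ and set $\Phi_K=\{\varphi: I(\varphi)\le K\}$; every $\varphi\in\Phi_K$ lies in $AC_{x_0}([0,T]:\bR^{d_1})$. Using $1/h(t)\ge1/h(0)$ and the pointwise bound $\|\dot\varphi(t)\|\le M^{-1}(L(\varphi(t),\dot\varphi(t))+C_M)$, for any measurable $E\subseteq[0,T]$ one obtains
\[
\int_E\|\dot\varphi(t)\|\,dt\ \le\ \frac{h(0)K}{M}+\frac{C_M\,|E|}{M}.
\]
Taking $E=[0,T]$ and $M=1$ bounds $\int_0^T\|\dot\varphi\|\,dt$, hence $\|\varphi\|_\infty$, uniformly over $\Phi_K$; sending $M\to\infty$ first and then $|E|\to0$ shows that $\{\dot\varphi:\varphi\in\Phi_K\}$ is uniformly integrable, so $\Phi_K$ is equicontinuous. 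By the Arzel\`{a}--Ascoli theorem, $\Phi_K$ is relatively compact in $C([0,T]:\bR^{d_1})$.

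Then I would prove lower semi-continuity of $I$. Suppose $\varphi_n\to\varphi$ uniformly with $\liminf_n I(\varphi_n)=:K'<\infty$; pass to a subsequence along which $I(\varphi_n)\to K'$ and $\sup_n I(\varphi_n)\le K'+1$. The uniform integrability of $\{\dot\varphi_n\}$ from the previous step shows $\varphi\in AC_{x_0}$ and, by the Dunford--Pettis theorem, that along a further subsequence $\dot\varphi_n\rightharpoonup\dot\varphi$ weakly in $L^1([0,T]:\bR^{d_1})$. Since for each $t$ the map $(x,\beta)\mapsto h(t)^{-1}L(x,\beta)$ is lower semi-continuous and convex in $\beta$, and $h^{-1}$ is bounded and measurable, a classical lower semi-continuity theorem for integral functionals (Ioffe's theorem; the same mechanism used for the Freidlin--Wentzell rate function, cf.\ the treatment of action functionals in \cite{buddup4}) yields $I(\varphi)\le\liminf_n I(\varphi_n)=K'$. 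Hence $I$ is lower semi-continuous, so each $\Phi_K$ is closed; being also relatively compact, $\Phi_K$ is compact, and $I$ is a rate function.

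The only non-routine point is the lower semi-continuity of the integral functional under merely weak $L^1$ convergence of the derivatives, which needs both the joint lower semi-continuity of $L$ and the convexity of $L(x,\cdot)$; both are consequences of the representation $L(x,\beta)=\sup_\alpha\{\langle\alpha,\beta\rangle-H(x,\alpha)\}$ together with the joint continuity of $H$ (established through the equicontinuity of $\{H_n\}$ in the proof of Lemma \ref{lem:conti_L}). Everything else is bookkeeping built on the uniform superlinearity \eqref{eq:boundH} and the boundedness of $1/h$, and parallels the corresponding arguments in \cite{buddup4}.
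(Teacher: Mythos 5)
Your proof is correct but takes a genuinely different route from the paper's. The paper's appendix proof of Lemma~\ref{lem:rateFcn} stays close to the probabilistic representation of $L$: for a sequence $\{\phi^j\}$ in a sub-level set it selects $\varepsilon$-optimal pairs $(\mu^j,q^j)$ attaining $L(\phi^j(t),\dot\phi^j(t))$ approximately, proves tightness of these measures by the same $ab\le e^{\sigma a}+\sigma^{-1}(b\log b-b+1)$ bound used in Lemmas~\ref{lem:tightness} and~\ref{lem:ui}, passes to a weak limit $\mu$, defines $\phi$ as the (unique, by Lemma~\ref{lem_unique}) solution of the limiting ODE, and closes the argument with Fatou plus lower semi-continuity of relative entropy and the Feller property. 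You instead work entirely on the trajectory side: the key facts you extract are the joint lower semi-continuity and convexity in $\beta$ of $L$ (via the Legendre representation $L(x,\beta)=\sup_\alpha\{\langle\alpha,\beta\rangle-H(x,\alpha)\}$ with $H$ jointly continuous, proved inside Lemma~\ref{lem:conti_L}) together with the uniform superlinearity supplied by \eqref{eq:boundH}; you then get relative compactness from Arzel\`a--Ascoli and closedness from Dunford--Pettis and Ioffe's weak lower semi-continuity criterion. Your version is arguably cleaner: it separates relative compactness from lower semi-continuity, does not need the ODE uniqueness lemma, and makes explicit (via the $M$--$C_M$ inequality) precisely where equicontinuity of sub-level sets comes from, a step the paper's proof elides when asserting that ``there exists a subsequence of $\phi^j$ that converges.'' The only small things to record if you adopt this route: you implicitly use $h(T)>0$ and $h$ non-increasing (both are consistent with the paper, the latter stated after Assumption~\ref{ass:limith}, but positivity of $h$ on all of $[0,T]$ should be flagged as part of the standing hypothesis), and one should spell out that $h(t)^{-1}L(\cdot,\cdot)$ is a normal integrand so that Ioffe's theorem applies; both points are routine given the continuity of $h^{-1}$ and joint lower semi-continuity of $L$.
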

In order to prove Lemma \ref{lem:rateFcn}, we use the following result. Note that the uniqueness proved in Lemma \ref{lem_unique} is also used in Section \ref{sec:conv} for proving Theorem \ref{thm:convX} (specifically in proving Lemma \ref{lem:limit_delta}). 
\begin{lemma}
\label{lem_unique}
The equation 
\[
    \phi(t)  = x_0 + \int_0^t\int g(\phi(s),y)\nu^{\zeta(s),\dot{\zeta}(s)}(dy)ds,
\]
has a unique solution.
\end{lemma}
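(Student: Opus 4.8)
The plan is to recognize the fixed-point equation as a non-autonomous ordinary differential equation, in integral form, whose right-hand side is globally Lipschitz in the state variable, and then invoke the classical Picard--Lindel\"of/Gr\"onwall argument. Concretely, I would set
\[
    b(s,x) \doteq \int_{\bR^{d_2}} g(x,y)\,\nu^{\zeta(s),\dot\zeta(s)}(dy), \qquad s\in[0,T],\ x\in\bR^{d_1},
\]
so the equation reads $\phi(t)=x_0+\int_0^t b(s,\phi(s))\,ds$. The key structural observation is that the measure $\nu^{\zeta(s),\dot\zeta(s)}$ depends on $s$ only, not on $\phi$; the $\phi$-dependence of the vector field enters solely through the first argument of $g$.

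First I would record the two properties of $b$ that drive the argument. By Assumption \ref{ass:Lipschitz}, $x\mapsto g(x,y)$ is Lipschitz with a constant $L_g$ independent of $y$, and integrating against the probability measure $\nu^{\zeta(s),\dot\zeta(s)}$ preserves this, so $\|b(s,x)-b(s,x')\|\le L_g\|x-x'\|$ uniformly in $s$. Moreover, by construction of the $\nu^{\zeta(s),\dot\zeta(s)}$-measures in Lemma \ref{lem:optimal_control} one has $b(s,\zeta(s))=\dot\zeta(s)$, which is bounded on $[0,T]$ (for the piecewise linear $\zeta$ it is a step function); together with the Lipschitz bound this yields the linear-growth estimate $\|b(s,x)\|\le \|\dot\zeta(s)\|+L_g(\|x\|+\|\zeta(s)\|)$, so $s\mapsto b(s,\phi(s))$ is integrable for every continuous $\phi$ and the integral equation is well posed. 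Measurability of $s\mapsto \nu^{\zeta(s),\dot\zeta(s)}$, hence of $s\mapsto b(s,x)$, is guaranteed by the measurable-selection part of Lemma \ref{lem:optimal_control} combined with the continuity of $\zeta$ and $\dot\zeta$ on each linear piece.

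With these facts in hand, uniqueness follows from Gr\"onwall's inequality: if $\phi_1,\phi_2$ both solve the equation then $\|\phi_1(t)-\phi_2(t)\|\le L_g\int_0^t\|\phi_1(s)-\phi_2(s)\|\,ds$, forcing $\phi_1\equiv\phi_2$. For existence (should it be wanted in addition to the explicit $\zeta$ already exhibited in Section \ref{sec:conv}), I would run the Picard iteration $\phi^{(0)}\equiv x_0$, $\phi^{(k+1)}(t)=x_0+\int_0^t b(s,\phi^{(k)}(s))\,ds$, and use the standard bound $\sup_{t\le T}\|\phi^{(k+1)}(t)-\phi^{(k)}(t)\|\le \tfrac{(L_gT)^k}{k!}\sup_{t\le T}\|\phi^{(1)}(t)-\phi^{(0)}(t)\|$ to conclude uniform convergence to a solution on $[0,T]$.

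I expect the only genuine subtlety to be verifying that $s\mapsto b(s,\phi(s))$ is measurable and integrable, i.e., that the right-hand side of the integral equation is a legitimate Lebesgue/Bochner integral; this is why I would dispatch it first via Lemma \ref{lem:optimal_control}. Everything afterward is the textbook Picard--Lindel\"of argument for a Carath\'eodory vector field that is globally Lipschitz in the state, so no further obstacle is anticipated.
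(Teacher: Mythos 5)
Your proof is correct and takes essentially the same route as the paper's: both are standard Lipschitz-drift uniqueness arguments for an ODE in integral form. The only difference is in the implementation of the final step. The paper avoids Gr\"onwall's inequality and instead runs an iterated contraction estimate on subintervals of length $\Delta$ with $\Delta K < 1$, obtaining $\sup_{[0,\Delta]}\|\phi^1-\phi^2\| \le (K\Delta)^N \sup_{[0,\Delta]}\|\phi^1-\phi^2\|$ and then bootstrapping from $[0,\Delta]$ to $[0,2\Delta]$ and so on; you instead apply Gr\"onwall's lemma directly on $[0,T]$, which yields the same conclusion in one stroke and avoids the subinterval bookkeeping. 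Your proposal is also slightly more careful than the paper on two points the paper leaves implicit: you record the linear-growth bound $\|b(s,x)\|\le\|\dot\zeta(s)\|+L_g(\|x\|+\|\zeta(s)\|)$ (via $b(s,\zeta(s))=\dot\zeta(s)$) so that the integral equation is well posed, and you explicitly invoke the measurable-selection clause of Lemma \ref{lem:optimal_control} to ensure $s\mapsto b(s,\phi(s))$ is measurable; neither of these affects correctness of the paper's argument, but they are legitimate small improvements in rigor.
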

\begin{proof}
Let $\phi^1$ and $\phi^2$ be solutions to the ODE
\[
    \phi^1(t)  = x_0 + \int_0^t\int g(\phi^1(s),y)\nu^{\zeta(t),\dot{\zeta}(t)}(dy)ds,
\]
\[
    \phi^2(t)  = x_0 + \int_0^t\int g(\phi^2(s),y)\nu^{\zeta(s),\dot{\zeta}(s)}(dy)ds,
\]
and let $\Delta$ be such that $\Delta K<1$, where $K$ is the Lipschitz constant to $g$. Then we will prove that for $t\in [0,\Delta]$, $\phi^1(t) = \phi^2(t)$:
\begin{align*}
    &\sup_{t\in[0,\Delta]}||\phi^1(t)-\phi^2(t)||  \\ & \quad =  \sup_{t\in[0,\Delta]}||\int_0^t\int g(\phi^1(s),y)\nu^{\zeta(s),\dot{\zeta}(s)}(dy)ds - \int_0^t\int g(\phi^2(s),y)\nu^{\zeta(s),\dot{\zeta}(s)}(dy)ds||\\
    &\quad \leq \int_0^\Delta\int \sup_{t\in [0,\Delta]}||g(\phi^1(t),y)-g(\phi^2(t),y)||\nu^{\zeta(s),\dot{\zeta}(s)}(dy)ds \\
    &\quad \leq  \int_0^\Delta\int K\sup_{t\in[0,\Delta]}||\phi^1(t)-\phi^2(t)||\nu^{\zeta(s),\dot{\zeta}(s)}(dy)ds \\ & \quad = K\Delta\sup_{t\in[0,\Delta]}||\phi^1(t)-\phi^2(t)||.
\end{align*}
This is a contraction and the same procedure can be iterated arbitrary number of times, leading to
\begin{equation*}
    \sup_{t\in[0,\Delta]}||\phi^1(t)-\phi^2(t)||\leq (K\Delta)^N \sup_{t\in[0,\Delta]}||\phi^1(t)-\phi^2(t)|| \to 0,\quad N\to \infty.
\end{equation*}
Now we need to extend this to $t\in[0,T]$. For $t\in [0,2\Delta]$ we can use the above argument to obtain
\begin{align*}
    \sup_{t\in[0,2\Delta]}||\phi^1(t)-\phi^2(t)|| &= \sup_{t\in[0,2\Delta]}||\phi^1(t)-\phi^1(\Delta)-(\phi^2(t)-\phi^2(\Delta))|| \\&= \sup_{t\in[\Delta,2\Delta]}||\phi^1(t)-\phi^1(\Delta)-(\phi^2(t)-\phi^2(\Delta))|| \\&
    \leq \Delta K \sup_{t\in[\Delta,2\Delta]}||\phi^1(t)-\phi^2(t)|| \leq \Delta K \sup_{t\in[0,2\Delta]}||\phi^1(t)-\phi^2(t)||
\end{align*}
The same argument as above can now be applied to show that $\phi^1(t) = \phi^2(t)$ for $t\in [0,2\Delta]$. Repeating this procedure yields the result for $t\in [0,T]$.
\end{proof}

\begin{proof}[Proof of Lemma \ref{lem:rateFcn}]
We must show that, for any $K < \infty$, any sequence of functions $\{\phi^j\} \subset C([0,T]:\mathbb{R}^{d_1})$ such that $I(\phi^j)\leq K$ has a convergent subsequence, where the corresponding limit $\phi$ satisfies $I(\phi)\leq K$. To this end, note that for any $\varepsilon$ and $j=1,2,\dots$, there exist a probability measure $ \mu^j(dy\times dt) = \mu^j(dy|t)dt$ and a transition kernel $q^j(y,dz|t)$ such that
\begin{align*}
    \int_0^T \frac{1}{h(t)}\int_{\mathbb{R}^{d_2}} R(q^j(y,\cdot|t)||\rho_{\phi^j(t)}(y,\cdot))\mu^j(dy|t)dt &\leq I(\phi^j) + \varepsilon,\\
    \int_{\mathbb{R}^{d_2}} g(\phi^j(t),y)\mu^j(dy|t) &= \dot{\phi}^j(t),\\
    \mu^jq^j &= \mu^j,
\end{align*}
In order to show tightness for $\{ \mu ^j \}$, we prove that the sequence is uniformly integrable. For this, it is enough to prove that 
\begin{equation*}
    \lim_{m \to \infty} \limsup_{j \to \infty}\int_0^T\int_{\mathbb{R}^{d_2}}\int_{\|z\|>M} \|z\|q^j(y,dz|t)\mu^j(dy\times dt) = 0
\end{equation*}
Using the inequality $ab \leq e^{\sigma a} + \frac{1}{\sigma}(b\log(b)-b+1)$, we have
\begin{align*}
   &\int_0^T\int_{\mathbb{R}^{d_2}}\int_{\|z\|>M} \|z\|q^j(y,dz)\mu^j(dy\times dt) \\ & \quad  \leq \int_0^T \int_{\mathbb{R}^{d_2}} \int_{\|z\|>M}e^{\sigma\|z\|}\rho_{\phi^j(t)}(y,dz)\mu^j(dy|t)dt \\
    & \qquad + \frac{1}{\sigma}\int_0^T R(q^j(y,dz|t)\mu^j(dy|t)||\rho_{\phi^j(t)}(y,dz)\mu^j(dy|t))dt\\ 
   & \quad \leq \int_0^T \sup_y \int_{\|z\|>M}e^{\sigma\|z\|}\rho_{\phi^j(t)}(y,dz)dt + \frac{1}{\sigma}(K+\varepsilon)\\
   & \quad \leq \int_0^T e^{-\sigma M}\sup_x\sup_y \int_{\|z\|>M}e^{2\sigma\|z\|}\rho_{x}(y,dz)dt + \frac{1}{\sigma}(K+\varepsilon).
\end{align*}
Similar to Lemmas \ref{lem:tightness} and \ref{lem:ui}, taking $j\to \infty$, $M\to \infty$ and then $\sigma \to \infty$, in that order, yields the desired limit. This proves that $\mu^j$ is tight. 

To finish the proof, take a convergent subsequence of $\mu^j$, with some limit $\mu$ and we define $\phi$ as the solution to the following ODE:
\begin{equation*}
    \phi(t) = x_0 + \int_0^tg(\phi(s,y))\mu(dy|s)ds.
\end{equation*}
From Lemma \ref{lem_unique} we know that the solution is unique. Moreover, an argument by contradiction using different $\varepsilon$ shows that this solution $\phi$ is independent of $\varepsilon$. Therefore, there exists a subsequence of $\phi^j$ that converges and thus is precompact. The proof is complete if we can show that the limiting $\phi$ satisfies $I(\phi)\leq K$. Using Fatou's lemma, the lower semi-continuity of $R$ and the Feller property of $\rho$, we have,
\begin{align*}
    K&\geq \liminf_{j \to \infty} I(\phi^j) \\
    &\geq \liminf_{j \to \infty} \int_0^T R(q^j(y,dz|t)\mu^j(dy|t)||\rho_{\phi^j(t)}(y,dz)\mu^j(dy|t))dt - \varepsilon\\
    &\geq \int_0^T R(q(y,dz|t)\mu(dy|t)||\rho_{\phi(t)}(y,dz)\mu(dy|t))dts \\
    &\geq I(\phi) - \varepsilon.
\end{align*}
Since $\varepsilon$ is arbitrary, we have $I(\phi)\leq K$. Thus $I$ has compact level sets and is a rate function.
\end{proof}
\end{appendix}
\begin{acks}[Acknowledgments]
The authors thank Prof A.\ Budhiraja for helpful feedback on a first version of the paper in connection with the Ph.D.\ defence of AL. 

The research of HH, AL and PN was supported by the Wallenberg AI, Autonomous Systems and Software Program (WASP) funded by the Knut and Alice Wallenberg Foundation, Sweden.
HH was also supported in part by the Swedish Research Council (VR-2021-05181). PN was also supported in part by the Swedish Research Council (VR-2018-07050, VR-2023-03484). The research of GW was supported in part by the Swedish e-Science Research Centre. 
\end{acks}
\bibliographystyle{imsart-nameyear}
\bibliography{refs}

\end{document}